\theoremstyle{theorem}
\newtheorem{theorem}{\sc Theorem}[section]
\newtheorem{lemma}[theorem]{\sc Lemma}
\newtheorem{proposition}[theorem]{\sc Proposition}
\newtheorem{corollary}[theorem]{\sc Corollary}
\theoremstyle{definition}
\newtheorem{definition}[theorem]{\sc Definition}
\theoremstyle{remark}
\newtheorem{remark}[theorem]{\sc Remark}
\renewcommand{\d}{\text{\rm d}}
\newcommand{\vep}{\varepsilon}
\newcommand{\R}{\mathbb{R}}
\newcommand{\N}{\mathbb{N}}
\newcommand{\X}{\mathcal{X}_\theta(\Omega)}
\newcommand{\prf}[1]{}  % without proof
\newcommand{\UUU}{\color{blue}}
\begin{document}
\title[Fractional nonlinear diffusion equations in domains]{Energy solutions of the Cauchy--Dirichlet problem for fractional nonlinear diffusion equations\prf{\\{\tt [Extended Edition]}}}
\author{Goro Akagi}
\address{Mathematical Institute and Graduate School of Science, Tohoku University, 6-3 Aoba, Aramaki, Aoba-ku, Sendai 980-8578, Japan}
\email{goro.akagi@tohoku.ac.jp}
\author{Florian Salin}
\address{Mathematical Institute and Graduate School of Science, Tohoku University, 6-3 Aoba, Aramaki, Aoba-ku, Sendai 980-8578, Japan, and Institut Camille Jordan, \'Ecole Centrale de Lyon, 36 avenue Guy de Collongue 69134 Ecully Cedex, France}
\email{salin.florian.denis.jacques.r2@dc.tohoku.ac.jp}

\thanks{G.A.~is supported by JSPS KAKENHI Grant Numbers JP21KK0044, JP21K18581, JP20H01812 and JP20H00117. This work was also supported by the Research Institute for Mathematical Sciences, an International Joint Usage/Research Center located in Kyoto University. F.S.~is supported by JST SPRING, Grant Number JPMJSP2114, and by a Scholarship of Tohoku University, Division for Interdisciplinary Advanced Research and Education.}
\date{\today}
\subjclass[2020]{\emph{Primary}: 35K67 \emph{Secondary}: 35B40}
\keywords{Nonlinear diffusion equation; restricted fractional Laplacian; energy solution; {\L}ojasiewicz--Simon gradient inequality}
\maketitle
\begin{abstract}
The present paper is concerned with the Cauchy--Dirichlet problem for \emph{fractional} (and non-fractional) \emph{nonlinear diffusion equations} posed in bounded domains. Main results consist of well-posedness in an \emph{energy class} with no sign restriction and convergence of such (possibly sign-changing) energy solutions to asymptotic profiles after a proper rescaling. They will be proved in a variational scheme only, without any use of semigroup theories nor classical quasilinear parabolic theories. Proofs are self-contained and performed in a totally unified fashion for both fractional and non-fractional cases as well as for both porous medium and fast diffusion cases.
\end{abstract}

\section{Introduction}

Let $\Omega$ be a bounded domain of $\mathbb{R}^d$ with boundary $\partial \Omega$. The present paper is concerned with (possibly sign-changing) energy solutions to the Cauchy--Dirichlet problem for the (fractional) nonlinear diffusion equation of the form,
\begin{alignat}{3}
\partial_t (|u|^{q-2}u) + (-\Delta)^\theta u &= 0 \quad &&\text{ in } \ \Omega \times (0,\infty),\label{pde}\\ 
u &= 0 \quad &&\text{ on } \ (\R^d \setminus \Omega) \times (0,\infty),\label{bc}\\
(|u|^{q-2}u)(\,\cdot\,,0) &= \rho_0 \quad &&\text{ in } \ \Omega,\label{ic}
\end{alignat}
where $\partial_t := \partial/\partial t$, $1 < q < \infty$, $0 < \theta \leq 1$ and $\rho_0$ is an initial datum. In case $0 < \theta < 1$, $(-\Delta)^\theta$ denotes the so-called \emph{fractional Laplacian} given by
$$
(-\Delta)^{\theta} f(x) = \mathcal{F}^{-1} \left[ |\xi|^{2\theta} \mathcal{F} f \right](x)
= C(d,\theta) \, \mathrm{P.V.} \int_{\R^d} \frac{f(x)-f(y)}{|x-y|^{d+2\theta}} \, \d y
$$
for, e.g., $f \in C^\infty_c(\R^d)$ (see \S \ref{Ss:fLap} below for details). In case $\theta = 1$, $(-\Delta)^\theta$ stands for the standard (minus) Laplacian (i.e., $-\Delta$), and the Dirichlet condition \eqref{bc} is replaced by the standard one,
$$
u = 0 \quad \text{ on } \ \partial \Omega \times (0,\infty).\eqno{\eqref{bc}'}
$$
Therefore the case $\theta = 1$ fully corresponds to the classical \emph{porous medium} ($1 < q < 2$) and \emph{fast diffusion equations} ($q > 2$), for which an enormous number of contributions have already been made (see, e.g.,~\cite{Vazquez,BoFi24}). Moreover, due to the change of variables,
$$
\rho(x,t) = (|u|^{q-2}u)(x,t) \ \mbox{ and } \ m = \frac 1{q-1} \in (0,\infty),
$$
the Cauchy--Dirichlet problem \eqref{pde}--\eqref{ic} can (equivalently) be rewritten as
\begin{alignat*}{2}
\partial_t \rho + (-\Delta)^\theta (|\rho|^{m-1}\rho) &= 0 \quad &&\mbox{ in } \ \Omega \times (0,\infty),\\
(|\rho|^{m-1}\rho) &= 0 \quad &&\mbox{ on } \ (\R^d \setminus \Omega) \times (0,\infty),\\
\rho(\cdot,0) &= \rho_0 \quad &&\mbox{ in } \ \Omega
\end{alignat*}
in the sense of Definition \ref{D:sol} below. In case $\theta = 1$, the homogeneous Dirichlet boundary condition $\eqref{bc}'$ is nothing but the following:
$$
(|\rho|^{m-1}\rho) = 0 \quad \text{ on } \ \partial \Omega \times (0,\infty).
$$

Most of studies on fractional nonlinear diffusion equations have been performed for the Cauchy problem, where the equations are posed in $\R^d$ (see, e.g.,~\cite{PQRV11,PQRV12,Va14,BoVa14,PQR16,PQR18,BoEn23} and references therein). On the other hand, there have also been a limited number of contributions to the Cauchy--Dirichlet problem in bounded domains. Among those, the \emph{fractional porous medium equation} ($0 < \theta < 1$, $1 < q < 2$) is studied in~\cite{BSV,BV15-2,BV15-1,NgVa}, where well-posedness is proved in the class of (possibly sign-changing) ``$H^*$-solutions'', which is an analogue of the $H^{-1}$ solution for $\theta = 1$ to the fractional setting, by a direct application of the Br\'ezis--K\=omura theory (see~\cite{HB1,HB2}) and asymptotic behaviors of nonnegative $H^*$ solutions are also studied (see also~\cite{BoFiRO,BoFiVa18}). Thanks to a B\'enilan--Crandall type estimate, the convergence of nonnegative $H^*$ solutions (after a rescaling) to a single asymptotic profile is proved. Moreover, convergence of (possibly) sign-changing weak solutions (after the rescaling) to sign-definite asymptotic profiles in the uniform norm are also proved under certain variational criteria for initial energy in~\cite{FraVol23}, whose strategy is based on the paper~\cite{BraVol} concerned with the (non-fractional) porous medium equation (see also~\cite[Remark 3.3]{AK15} for some instability results). The study on the \emph{fractional fast diffusion equation} ($0 < \theta < 1$, $q > 2$) is still ongoing. We refer the reader to a pioneer work~\cite{KiLe11}, where H\"older regularity and asymptotic behavior of nonnegative solutions are studied, as well as to a recent comprehensive study~\cite{BII}, where existence and uniqueness for \emph{nonnegative} ``weak dual solutions'' are proved by means of the Br\'ezis--K\=omura theory (to construct $H^*$ solutions first as an approximation) along with a Steklov averaging and a monotonicity argument (available only for nonnegative data), and where $L^p$-$L^q$ smoothing estimates and  (finite-time) extinction estimates (in various norms) are further discussed for nonnegative weak dual solutions. Moreover,  some of these works (see, e.g.,~\cite{BSV,BII}) also cover more general nonlocal elliptic operators (e.g., spectral fractional Laplacian). On the other hand, concerning $0 < \theta < 1$, the study of (possibly) \emph{sign-changing} weak solutions has not yet been fully pursued, and furthermore, \emph{full convergence} to asymptotic profiles has never been proved even for nonnegative weak solutions to the fractional fast diffusion equation (cf.~see \cite{FeiSim00} for the classical one $\theta = 1$). 

In the present paper, we are concerned with existence, uniqueness and continuous dependence on initial data of (possibly \emph{sign-changing}) weak solutions in an \emph{energy class} to the Cauchy--Dirichlet problem for \eqref{pde}--\eqref{ic} with $1 < q < \infty$ and $0 < \theta \leq 1$, i.e., involving both nonlocal and local, both porous medium and fast diffusion equations.
%The main purpose of this paper is to present a comprehensive scheme for proving existence, uniqueness and continuous dependence on initial data of (possibly \emph{sign-changing}) \emph{energy} solutions to the Cauchy--Dirichlet problem for both nonlocal and local, both degenerate and singular, nonlinear diffusion equations posed in bounded domains. 
Concerning $\theta = 1$ (i.e., the local case), the well-posedness of the Cauchy--Dirichlet problem has been proved in several different schemes, the most standard one of which may rely on \emph{classical theories for quasilinear parabolic equations} (see, e.g.,~\cite{Friedman,LSU,Lieberman}). Suitable modifications of the equation \eqref{pde} and initial data $\rho_0$ provide approximate problems being neither degenerate nor singular, and being instead uniformly parabolic (see~\cite{Vazquez}). In this scheme, approximate solutions enjoy classical regularity, e.g., of class $C^{2+\alpha,1+\alpha/2}(\Omega \times (0,T))$, and hence, one can manipulate the approximate equations as well as their solutions rigorously without any additional arguments. On the other hand, such approximations may not preserve intrinsic structures of the original equation, and therefore, one may make an extra effort to fill the gap between structures of the original and approximate problems. Another scheme is based on a couple of \emph{nonlinear semigroup theories}. One of them is the so-called $H^{-1}$ framework, where the Cauchy--Dirichlet problem \eqref{pde}--\eqref{ic} is reduced, without any approximation, into the Cauchy problem for a nonlinear evolution equation governed by a subdifferential operator in $H^{-1}(\Omega)$, the dual space of $H^1_0(\Omega)$. Then well-posedness can be proved by means of the Br\'ezis--K\=omura theory (see~\cite{HB1,HB2}). Another one is often called an $L^1$ framework, which is based on the celebrated Crandall--Liggett theory (see~\cite{CL}). In this scheme, well-posedness of the Cauchy--Dirichlet problem is proved in a certain class of solutions (i.e., $H^{-1}$ and $L^1$ solutions) by a direct application of nonlinear semigroup theories. On the other hand, this scheme is less flexible to extract additional information from solutions beyond the frame of each theory. Actually, although the $H^{-1}$ framework is consistent with a variational (or energy) structure of \eqref{pde}, the regularity of $H^{-1}$ solutions may be too weak to perform some of energy estimates rigorously. The $L^1$ framework can provide stronger solutions, but it is less relevant to the variational structure. The third scheme relies on a \emph{metric gradient flow approach}, where the Cauchy--Dirichlet problem is reduced to the Cauchy problem for a gradient flow in the Monge--Kantrovich metric (see~\cite{Otto01}). It enables us to extract an entropy structure from the equation as well as to prove well-posedness in a fairly weak sense by a direct application of a general theory (see~\cite{AGS}). In each scheme, one can prove well-posedness, for which the class of solutions may depend on the scheme. On the other hand, these solutions eventually coincide each other, since the uniqueness can be proved in a rather weak formulation. % (cf.~see (i) of Theorem \ref{T:main} below). %unless energy inequalities and corresponding regularities are required for any time $t \geq 0$ (see Remark \ref{R:gen} in \S \ref{S:main}). 

In this paper, we are also interested in revealing asymptotic behaviors of solutions to the Cauchy--Dirichlet problem posed in bounded domains, for which energy structures of the equation play a crucial role. Therefore we shall work in a proper class of weak solutions (called \emph{energy solutions}) which enjoy energy identities (or inequalities) and corresponding regularities required for the analysis of their asymptotic behaviors. For the local case $\theta = 1$, this purpose can be achieved with the first scheme mentioned above based on the quasilinear parabolic theory. However, for the nonlocal case $0 < \theta < 1$, this scheme is no longer applicable due to the nonlocality of the equation; indeed, the classical quasilinear parabolic theory is established for local equations only, and hence, we already face a difficulty in the approximation level. On the other hand, it is enough to exploit a nonlinear semigroup theory just for proving well-posedness in a weak formulation (like $H^*$ solutions) and it is also possible to cover an even more general class of (nonlocal and nonlinear) diffusion equations simultaneously; however, there may still remain a delicate issue on whether or not energy inequalities and sufficient regularities are assured for such weak solutions (see Remark \ref{R:gen} in \S \ref{S:main}). \prf{Indeed, even for $\theta = 1$, it requires a lot of preliminary steps and tedious calculations to prove that. }In the present paper, we shall hence select another scheme, which is based on the time-discretization and energy methods, to prove well-posedness of the Cauchy--Dirichlet problem \eqref{pde}--\eqref{ic} posed in general bounded domains as well as to derive energy identities (and inequalities) for the local and nonlocal cases. Our methods of proofs are fully variational and do not rely on the classical quasilinear parabolic theories nor nonlinear semigroup theories. It is self-contained, and moreover, it can handle the local ($\theta = 1$) and nonlocal cases ($0 < \theta < 1$) as well as the porous medium ($1 < q < 2$) and fast diffusion cases ($q > 2$) in a totally unified fashion. Furthermore, in order to prove existence of energy solutions, we can preserve intrinsic variational structures of the equation, and therefore, energy identities and inequalities can be derived in detail without extra complexity. To the best of authors' knowledge, there is no literature providing a complete description of such a variational proof for existence of (possibly sign-changing) energy solutions even for the local case $\theta = 1$ (cf.~see~\cite{A:EnSol} for an outline).

This paper is composed of six sections. The next section is devoted to preliminary facts on the fractional Sobolev space and the fractional Laplacian. Main results of the present paper will be stated in Section \ref{S:main}. The main technical novelties of the present paper may reside in proofs for (i) well-posedness along with energy identities and inequalities (in Section \ref{S:pr_main}) and also for (ii) the monotonicity of the Rayleigh quotient (see \eqref{R} below) through the evolution of energy solutions (in Section \ref{S:Rayleigh}). We shall pay a careful attention to the defect of absolute continuity of the Rayleigh quotient and bridge such a gap by means of the theory for functions of bounded variation (see, e.g.,~\cite{AFP}). Then the results such as (i) and (ii) enable us to derive immediately (optimal) decay and extinction estimates for the energy solutions (see also Section \ref{S:Rayleigh}). Finally, Section \ref{S:conv} is dedicated to discussing convergence of (properly rescaled) energy solutions to asymptotic profiles. The novelty of this part may be found in an application of a {\L}ojasiewicz--Simon inequality of Feireisl--Simon type for the restricted fractional Laplacian (see~\cite{ASS19}) to prove the full convergence to a single asymptotic profile for the fractional fast diffusion equation.

\bigskip
\noindent
{\bf Notation.} Let $u = u(x,t): \Omega \times [0,\infty) \to \mathbb R$ be a function with space and time variables. Throughout the paper, for each $t \geq 0$ fixed, we simply denote by $u(t)$ the function $u(\cdot,t) : \Omega \to \mathbb R$ with only the space variable. For each $\alpha > 0$ and $r \in \R$, we simply write $|r|^{\alpha-1}r = |r|^\alpha \, \mathrm{sgn} \,r$, where $\mathrm{sgn} \,r = r/|r|$ if $r \neq 0$ and $\mathrm{sgn} \,r = 0$ if $r = 0$. Moreover, $q' := q/(q-1)$ denotes the H\"older conjugate of $q \in (1,\infty)$. For each interval $I$ in $\R$, $C_{\rm weak}(I;X)$ and $C_+(I;X)$ stand for the sets of weakly continuous and strongly right-continuous functions on $I$ with values in a normed space $X$, respectively. We denote by $C$ a generic nonnegative constant which may vary from line to line. 

\section{Preliminaries}

In this section, we shall briefly recall some preliminary facts on the \emph{fractional Sobolev space} as well as the \emph{fractional Laplacian} for later use.

\subsection{Fractional Sobolev spaces}\label{Ss:fSob}

For each $\theta\in (0,1)$, the \emph{fractional Sobolev space} $H^\theta(\mathbb{R}^d)$ is defined by
$$
H^\theta(\mathbb{R}^d) := \left\{ w \in L^2(\mathbb{R}^d) \colon [w]_{H^\theta(\mathbb{R}^d)} < \infty \right\},
$$
where $[\,\cdot\,]_{H^\theta(\R^d)}$ is the \emph{Gagliardo seminorm} defined by
$$
[w]_{H^\theta(\mathbb{R}^d)}^2 := \frac{1}{2} \iint_{\mathbb{R}^d \times \mathbb{R}^d} \frac{|w(x)-w(y)|^2}{|x-y|^{d+2\theta}} \, \d x \d y
\quad \text{ for } \ w \in L^2(\R^d).
$$
Then $H^\theta(\mathbb{R}^d)$ is a Hilbert space endowed with the inner product,
\begin{align*}
(v,w)_{H^\theta(\R^d)}
&:= (v,w)_{L^2(\R^d)}\\
&\quad + \frac{1}{2} \iint_{\R^d \times \R^d} \frac{(v(x)-v(y))(w(x)-w(y))}{|x-y|^{d+2\theta}} \, \d x \d y
\end{align*}
for $v,w \in H^\theta(\R^d)$. Hence we have
$$
\|w\|_{H^\theta(\R^d)}^2 = \|w\|_{L^2(\R^d)}^2 + [w]_{H^\theta(\mathbb{R}^d)}^2 \quad \text{ for } \ w \in H^\theta(\R^d).
$$
%On the other hand, as for $\theta = 1$, $H^1(\R^d)$ stands for the usual Sobolev space endowed with the inner product $(v,w)_{H^1(\R^d)} = (v,w)_{L^2(\R^d)} + \int_{\R^d} \nabla v(x) \cdot \nabla w(x) \, \d x$ and the norm $\|w\|_{H^1(\R^d)} = (\|w\|_{L^2(\R^d)}^2 + \| |\nabla w| \|_{L^2(\R^d)}^2)^{1/2}$ for $v,w \in H^1(\R^d)$.

We set 
$$
\mathcal{X}_1(\Omega) := H^1_0(\Omega)
$$
and define a subspace of $H^\theta(\R^d)$ as
$$
\X:= \left\{ w \in H^\theta(\mathbb{R}^d) \colon w = 0 \ \text{ a.e.~on } \mathbb{R}^d \setminus \Omega \right\} \quad \text{ for } \ \theta \in (0,1),
$$
for which we also refer the reader to~\cite{BrLiPa,EdEv} and references therein. In what follows, we shall use the same letter $f$ to denote the restriction of $f \in \X$ to $\Omega$ when no confusion can arise. Moreover, we may also use the same notation for the zero extension to $\R^d$ of $f : \Omega \to \R$.

A Poincar\'e type inequality is also available for the class $\X$, $0 < \theta < 1$ (see, e.g.,~\cite{ASS16},~{\cite[Proposition A.1]{ASS19}}).

\begin{proposition}[Poincar\'e type inequality]\label{P:Poincare}
Let $\Omega$ be a bounded domain of $\R^d$ and let $\theta \in (0,1)$. Then there exists a constant $C > 0$ depending only on $d$, $\theta$ and the diameter of $\Omega$ such that
$$
\|w\|_{L^2(\mathbb{R}^d)} \leq C[w]_{H^\theta(\R^d)} \quad \text{ for all } \ w \in \X.
$$
\end{proposition}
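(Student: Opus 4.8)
The plan is to prove the Poincar\'e inequality
$$
\|w\|_{L^2(\R^d)} \le C [w]_{H^\theta(\R^d)} \quad \text{ for all } w \in \X
$$
by exploiting the fact that every $w \in \X$ vanishes outside the bounded set $\Omega$, which gives us ``room'' to detect the $L^2$-norm of $w$ through finite differences. First I would fix a ball $B = B_R(x_0) \supset \Omega$ of radius $R$ comparable to the diameter of $\Omega$, and also fix an auxiliary ball $B' = B_{2R}(x_0)$ (or simply a translate of $B$ disjoint-ish from it), so that for a generic point $x \in \Omega$ and a generic point $y$ in a set of controlled positive measure, we have $w(y) = 0$ and $|x - y| \le C R$. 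Then for such $y$,
$$
|w(x)|^2 = |w(x) - w(y)|^2 \le (CR)^{d+2\theta} \, \frac{|w(x)-w(y)|^2}{|x-y|^{d+2\theta}}.
$$

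Next I would integrate this inequality in $y$ over the chosen reference set $A$ (of measure $|A| > 0$, $A$ staying at bounded distance from $\Omega$, e.g.\ $A = B_{3R}(x_0) \setminus B_{2R}(x_0)$ on which indeed $w \equiv 0$ since $A \cap \Omega = \emptyset$), dividing by $|A|$:
$$
|w(x)|^2 \le \frac{(CR)^{d+2\theta}}{|A|} \int_{A} \frac{|w(x)-w(y)|^2}{|x-y|^{d+2\theta}} \, \d y.
$$
Then I would integrate in $x$ over $\Omega$ (equivalently over $\R^d$, since $w$ vanishes off $\Omega$), and bound the resulting double integral over $\Omega \times A$ from above by the full Gagliardo double integral over $\R^d \times \R^d$, which by definition equals $2[w]_{H^\theta(\R^d)}^2$. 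This yields $\|w\|_{L^2(\R^d)}^2 \le C(d,\theta,R) \, [w]_{H^\theta(\R^d)}^2$, with a constant depending only on $d$, $\theta$, and $R \sim \operatorname{diam}\Omega$, as claimed. An essentially equivalent route, if one prefers a cleaner statement, is to first establish a fractional Sobolev (Sobolev--Gagliardo) embedding $\|w\|_{L^{2^*_\theta}(\R^d)} \le C[w]_{H^\theta(\R^d)}$ with $2^*_\theta = 2d/(d-2\theta)$ when $d > 2\theta$ (and an $L^p$-for-all-$p$ statement when $d \le 2\theta$), and then apply H\"older's inequality on the bounded set $\Omega$ to pass from $L^{2^*_\theta}(\Omega)$ to $L^2(\Omega)$; but this imports a nontrivial embedding theorem, so I would favor the elementary direct argument above.

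The only genuinely delicate point is the bookkeeping of the comparison constant: one must make sure the reference set $A$ on which $w$ vanishes is chosen to have both positive measure bounded below and bounded-above distance to $\Omega$ purely in terms of $\operatorname{diam}\Omega$ (translating $\Omega$ to sit inside a fixed ball handles this), so that the final constant depends only on $d$, $\theta$, and $\operatorname{diam}\Omega$ and not on the shape or position of $\Omega$. There is also the routine measurability/Fubini justification for exchanging the order of the $x$- and $y$-integrations, which is harmless since the integrand is nonnegative and measurable. The case $\theta = 1$ is not covered by this proposition (there $\mathcal{X}_1(\Omega) = H^1_0(\Omega)$ and one invokes the classical Poincar\'e inequality instead), so no unification step is needed here.
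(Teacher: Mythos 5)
Your proof is correct. Note that the paper itself does not prove Proposition \ref{P:Poincare} at all; it simply cites \cite{ASS16} and \cite[Proposition A.1]{ASS19}, so your self-contained argument is a genuine addition rather than a variant of an in-paper proof. The mechanism you use --- exploiting that $w\in\X$ vanishes a.e.\ outside $\Omega$ to replace $|w(x)|$ by a finite difference $|w(x)-w(y)|$ with $y$ ranging over a reference set of measure $\gtrsim R^d$ at distance $\lesssim R$ from $\Omega$ --- is exactly the standard route in the cited references, and your bookkeeping of the constant ($|A|\sim R^d$, $|x-y|\le 4R$, hence $C\sim R^{\theta}$ with $R=\operatorname{diam}\Omega$) is right. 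A cosmetic simplification: you do not need the annulus $A$ at all, since for $x\in\Omega$ the set $\{y\in\R^d\colon |y-x|>\operatorname{diam}\Omega\}$ is automatically contained in $\R^d\setminus\Omega$, so
$$
[w]_{H^\theta(\R^d)}^2 \ \ge\ \frac12\int_\Omega |w(x)|^2\left(\int_{|y-x|>\operatorname{diam}\Omega}\frac{\d y}{|x-y|^{d+2\theta}}\right)\d x \ =\ \frac{d\omega_d}{4\theta}\,(\operatorname{diam}\Omega)^{-2\theta}\,\|w\|_{L^2(\R^d)}^2,
$$
which gives the inequality in one line with an explicit constant. Your alternative route through the Sobolev embedding plus H\"older is also valid for $d>2\theta$, but, as you say, it imports a nontrivial theorem and needs a separate statement when $d\le 2\theta$, so the direct argument is preferable.
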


Thanks to Proposition \ref{P:Poincare}, for each $\theta \in (0,1)$, the Gagliardo seminorm $[\,\cdot\,]_{H^\theta(\mathbb{R}^d)}$ turns out to be a norm of $\X$ which is equivalent to $\|\cdot\|_{H^\theta(\R^d)}$. Therefore we endow the space $\X$ with $\|\cdot\|_{\X}:=[\,\cdot\,]_{H^\theta(\R^d)}$. Moreover, we set $\|\cdot\|_{\mathcal{X}_1(\Omega)} := \| |\nabla \cdot| \|_{L^2(\Omega)}$. Therefore, for $\theta \in (0,1]$, the space $(\X,\|\cdot\|_{\X})$ is a Hilbert space equipped with the inner product,
$$
(v,w)_{\X} := \begin{cases}
\int_\Omega \nabla v(x) \cdot \nabla w(x) \, \d x &\text{ if } \ \theta = 1,\\
\frac{1}{2} \iint_{\mathbb{R}^d \times \mathbb{R}^d} \frac{(v(x)-v(y))(w(x)-w(y))}{|x-y|^{d+2\theta}} \, \d x \d y &\text{ if } \ \theta \in (0,1)
\end{cases}
$$
for $v,w \in \X$. We also refer the reader to, e.g.,~\cite{LM},~\cite{Lunardi},~\cite{Hitchhiker} for more details on fractional Sobolev spaces. 

We close this subsection with stating fractional Sobolev embeddings for the space $\X$ and density results. To this end, we define
\begin{equation}\label{crt_exp}
 2_\theta^*:= \begin{cases}
	       \frac{2d}{d-2\theta} &\text{ if } \ d > 2\theta,\\
	       \infty &\text{ otherwise}.
	      \end{cases}
\end{equation}
We first recall fractional Sobolev embeddings (see, e.g.,~\cite[Theorems 6.7, 6.9 and 7.2]{Hitchhiker}). 

\begin{proposition}[Sobolev--Poincar\'e type inequality]\label{P:SP}
Let $\Omega$ be a bounded domain of $\R^d$ and let $\theta \in (0,1]$. Suppose that
$$
q \in \begin{cases}
       [1,2_\theta^*] &\text{ if } \ d > 2\theta,\\
       [1,\infty) &\text{ if } \ d = 2\theta,\\
       [1,\infty] &\text{ if } \ d < 2\theta.
      \end{cases}
$$
Then $\X$ is continuously embedded in $L^q(\Omega)$, i.e.,~the following Sobolev--Poincar\'e type inequality holds,
\begin{equation}\label{SP}
\|w\|_{L^q(\Omega)} \leq C_q \|w\|_{\X} \quad \text{ for all } \ w \in \X,
\end{equation}
where $C_q>0$ denotes the best constant. Moreover, $\X$ is compactly embedded in $L^q(\Omega)$, provided that $1 \leq q < 2_\theta^*$.
\end{proposition}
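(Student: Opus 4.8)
The plan is to reduce the statement to the classical and fractional Sobolev inequalities on all of $\R^d$ together with the Poincar\'e inequality of Proposition~\ref{P:Poincare}, exploiting that every element of $\X$ is supported in the bounded set $\Omega$. I would first settle the endpoint case $q = 2_\theta^*$. For $\theta = 1$ this is the usual Sobolev inequality $\|w\|_{L^{2_\theta^*}(\R^d)} \leq C\||\nabla w|\|_{L^2(\R^d)}$ applied to the zero extension of $w \in H^1_0(\Omega)$; for $\theta \in (0,1)$ with $d > 2\theta$ it is the fractional Sobolev inequality $\|w\|_{L^{2_\theta^*}(\R^d)} \leq C[w]_{H^\theta(\R^d)}$ (see, e.g., \cite[Theorem 6.5]{Hitchhiker}), again applied to the zero extension. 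In both cases the right-hand side is, by definition, exactly $\|w\|_{\X}$ (here one must be careful to invoke Proposition~\ref{P:Poincare} so as to use the seminorm rather than the full $H^\theta(\R^d)$-norm), so \eqref{SP} holds at the endpoint.

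Next, for $1 \leq q < 2_\theta^*$ (and, when $d = 2\theta$ or $d < 2\theta$, where one instead starts from the embedding of $H^\theta$-functions supported in a bounded set into every $L^p(\R^d)$ with $p < \infty$, resp.\ into $L^\infty$), I would apply H\"older's inequality on the bounded domain $\Omega$, namely $\|w\|_{L^q(\Omega)} \leq |\Omega|^{1/q - 1/2_\theta^*}\|w\|_{L^{2_\theta^*}(\Omega)}$, and invoke the endpoint estimate just obtained. This yields continuity of the embedding $\X \hookrightarrow L^q(\Omega)$ for the full range of $q$ stated, and the finiteness of the best constant $C_q$ is precisely what has been shown.

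For the compactness assertion when $1 \leq q < 2_\theta^*$, I would invoke the Rellich--Kondrachov theorem in the local case $\theta = 1$ and its fractional analogue for $\theta \in (0,1)$ (see \cite[Theorem 7.1]{Hitchhiker}): a bounded sequence in $\X$ is, by Proposition~\ref{P:Poincare}, bounded in $H^\theta(\R^d)$ and supported in the fixed bounded set $\Omega$, hence precompact in $L^q_{\rm loc}(\R^d)$ for $q < 2_\theta^*$, and since all supports lie in $\Omega$ this is precompactness in $L^q(\Omega)$. Alternatively, one may argue directly via the Fr\'echet--Kolmogorov criterion, bounding the $L^q$-modulus of continuity of translates in terms of the Gagliardo seminorm.

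The only genuinely nontrivial inputs are the endpoint fractional Sobolev inequality and its fractional Rellich counterpart; since these are standard and available in the cited references, there is no real obstacle here, and the proof reduces to the extension/restriction bookkeeping, the correct use of Proposition~\ref{P:Poincare}, and the H\"older interpolation on the bounded domain $\Omega$.
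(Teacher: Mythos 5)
Your argument is correct and is exactly the standard one underlying the result: the paper itself gives no proof but simply cites the corresponding theorems of the fractional Sobolev space literature (\cite[Theorems 6.7, 6.9 and 7.2]{Hitchhiker}), which proceed precisely via the endpoint Sobolev inequality on $\R^d$ applied to the zero extension, H\"older's inequality on the bounded set $\Omega$, and the fractional Rellich--Kondrachov theorem (applied on a large ball containing $\overline{\Omega}$, so that no boundary regularity of $\Omega$ is needed). The only cosmetic remark is that at the endpoint $q=2_\theta^*$ the sharp fractional Sobolev inequality already controls $\|w\|_{L^{2_\theta^*}(\R^d)}$ by the Gagliardo seminorm alone, so Proposition~\ref{P:Poincare} is not actually needed there; it is needed only to handle the $L^2$ part when one prefers to quote embeddings stated with the full $H^\theta(\R^d)$-norm.
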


In particular, we note that, for any $0 < \theta \leq 1$,
\begin{equation}\label{pivot}
\X \hookrightarrow L^2(\Omega) \equiv L^2(\Omega)^* \hookrightarrow \X^*
\end{equation}
with compact and continuous canonical injections. Indeed, the compact embedding $L^2(\Omega)^* \hookrightarrow \X^*$ follows from the compact embedding $\X \hookrightarrow L^2(\Omega)$ along with Schauder's theorem (see, e.g.,~\cite[Theorem 6.4]{B-FA}). 

In order to give a representation to the space $\X$, we present  a density result. See Appendix \S \ref{S:apdx} for a proof.
\begin{proposition}[Density]\label{P:density}
Let $\Omega$ be a smooth bounded domain of $\R^d$ and let $0<\theta< 1$. Then the space $C^\infty_c(\Omega)$ is dense in $\X$.
\end{proposition}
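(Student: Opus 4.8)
The statement to prove is Proposition \ref{P:density}: for a smooth bounded domain $\Omega \subset \R^d$ and $0 < \theta < 1$, the space $C^\infty_c(\Omega)$ is dense in $\X = \{w \in H^\theta(\R^d) : w = 0 \text{ a.e. on } \R^d \setminus \Omega\}$.

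The plan is to combine a cutoff/truncation argument with mollification, using the smoothness of $\partial\Omega$ in an essential way. First I would take $w \in \X$ and aim to approximate it by functions in $C^\infty_c(\Omega)$ in the $H^\theta(\R^d)$ norm. The naive approach — directly mollifying $w$ — fails because the mollified function need not vanish outside $\Omega$; it will have support spilling slightly outside $\Omega$. So the key idea is to first "push" the support of $w$ strictly inside $\Omega$ before mollifying. Since $\partial\Omega$ is smooth (in particular Lipschitz), one can use a family of diffeomorphisms or translations adapted to the boundary: for $w$ supported in $\overline\Omega$, define $w_\lambda(x) = w(x/\lambda)$ for $\lambda$ slightly bigger than $1$ (a dilation about an interior point, after possibly translating so that $0$ is an interior point with $\Omega$ star-shaped with respect to a ball — which for smooth bounded domains can be arranged locally via a partition of unity). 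The dilated function $w_\lambda$ is then supported in a compact subset of $\Omega$, still lies in $H^\theta(\R^d)$, and $w_\lambda \to w$ in $H^\theta(\R^d)$ as $\lambda \downarrow 1$ (continuity of dilation on $H^\theta$). Then mollifying $w_\lambda$ with a mollifier of small enough radius keeps the support inside $\Omega$ and produces a $C^\infty_c(\Omega)$ function arbitrarily close to $w_\lambda$ in $H^\theta(\R^d)$. A diagonal argument finishes the proof.

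The steps in order: (1) reduce to the star-shaped case by a partition of unity subordinate to a finite cover of $\partial\Omega$ by charts in which $\Omega$ looks like a subgraph — write $w = \sum_j \varphi_j w$ with $\varphi_j \in C^\infty_c$, noting multiplication by a smooth compactly supported function is bounded on $H^\theta(\R^d)$ and preserves the vanishing-outside-$\Omega$ property; (2) for each piece, which is now supported near a boundary chart, apply a local translation in the inward normal direction (or local dilation) to move the support into $\Omega$, and verify convergence in $H^\theta$ as the translation parameter goes to $0$ — this uses continuity of translation on $H^\theta(\R^d)$, which holds since $H^\theta(\R^d)$ embeds in (indeed is defined via) $L^2$ and Fourier characterization, or directly from density of continuous functions; (3) mollify the translated piece, whose support is now at positive distance from $\R^d \setminus \Omega$, with radius less than that distance, to land in $C^\infty_c(\Omega)$; (4) sum the pieces and take a diagonal sequence. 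One should also handle the interior piece (the part of $w$ supported well inside $\Omega$) trivially by direct mollification.

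The main obstacle I expect is step (2): making the "push the support inward" argument rigorous and quantitative near the boundary. One must show that the local inward translation (or dilation) genuinely moves $\mathrm{supp}(\varphi_j w) \cap \overline\Omega$ to positive distance from $\partial\Omega$, which requires the smoothness (or at least $C^1$ / Lipschitz with a uniform inner cone condition) of $\partial\Omega$ so that translating in a fixed direction within a chart works uniformly; and one must verify the $H^\theta$-continuity of this operation, which is cleanest if one uses a rigid translation of the whole of $\R^d$ (where translation continuity on $H^\theta(\R^d)$ is immediate from the Fourier-side definition $\|\tau_h w - w\|_{H^\theta}^2 = \int (1+|\xi|^2)^\theta |e^{i h\cdot\xi}-1|^2 |\hat w(\xi)|^2\,d\xi \to 0$ by dominated convergence) rather than a genuinely nonlinear diffeomorphism, for which one would instead need to control the Gagliardo seminorm under a change of variables. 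I would therefore favor the partition-of-unity-plus-rigid-translation route, so that each localized piece is translated by a genuine global translation of $\R^d$ in a direction that points into $\Omega$ on the support of that piece, keeping all the $H^\theta$ estimates elementary.
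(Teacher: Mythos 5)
Your proof is correct, but it takes a genuinely different route from the paper. You give the direct, constructive argument: partition of unity subordinate to boundary charts, a rigid inward translation of each localized piece (so that its essential support lands at positive distance from $\R^d\setminus\Omega$), mollification at a scale below that distance, and a diagonal argument; the only delicate points are the continuity of translation in $H^\theta(\R^d)$ (immediate from the Fourier-side characterization or from Theorem \ref{T:density}) and the boundedness on $H^\theta(\R^d)$ of multiplication by a fixed Lipschitz cutoff, both of which you correctly flag, and you rightly discard the dilation/star-shaped variant of your first paragraph in favour of rigid translations, which avoids having to control the Gagliardo seminorm under a nonlinear change of variables. The paper instead argues ``softly'': it first identifies $\X$ with the Lions--Magenes space $H^\theta_0(\Omega)$ when $\theta\neq 1/2$ (via the extension and trace theorems, Theorems \ref{T:ext0} and \ref{T:trace_ker}) and with $H^{1/2}_{00}(\Omega)=(L^2(\Omega),H^1_0(\Omega))_{1/2,2}$ when $\theta=1/2$ (Theorem \ref{T:ident}, which requires an explicit computation with the distance function), and then reads off the density from the definition of $H^\theta_0(\Omega)$, respectively from the dense embedding $H^1_0(\Omega)\hookrightarrow (L^2(\Omega),H^1_0(\Omega))_{1/2,2}$ given by interpolation theory. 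The trade-off: the paper's route simultaneously delivers the identification of $\X$ with the classical spaces $H^\theta_0(\Omega)$ and $H^{1/2}_{00}(\Omega)$, which it wants for other purposes, but forces a separate (and more laborious) treatment of the exceptional exponent $\theta=1/2$ and leans on the smoothness of $\partial\Omega$ through the Lions--Magenes machinery; your argument is self-contained, uniform in $\theta\in(0,1)$ with no case distinction at $\theta=1/2$, and only needs $\partial\Omega$ Lipschitz (this is the approach of Fiscella--Servadei--Valdinoci for density in fractional Sobolev spaces), though it yields the density statement alone and not the identification theorems.
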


Hence, if $\theta \in (0,1] \setminus \{1/2\}$ and $\Omega$ is smooth, then $\X$ can be identified with 
$$
 H^\theta_0(\Omega) := \overline{ C^\infty_c(\Omega) }^{H^\theta(\Omega)}.
$$
Here for each $\theta \in (0,1)$ we denote 
$$
H^\theta(\Omega) := \left\{ w \in L^2(\Omega) \colon [w]_{H^\theta(\Omega)} < \infty \right\}
$$
equipped with a seminorm $[\,\cdot\,]_{H^\theta(\Omega)}$ and a norm $\|\cdot\|_{H^\theta(\Omega)}$ defined by
$$
[w]_{H^\theta(\Omega)}^2 := \frac 12 \iint_{\Omega \times \Omega} \frac{|w(x)-w(y)|^2}{|x-y|^{d+2\theta}} \, \d x \d y \quad \text{ for } \ w \in H^\theta(\Omega)
$$
and
$$
\|w\|_{H^\theta(\Omega)}^2 := \|w\|_{L^2(\Omega)}^2 + [w]_{H^\theta(\Omega)}^2 \quad \text{ for } \ w \in H^\theta(\Omega),
$$
respectively (see, e.g.,~\cite{LM}). Indeed, Theorem \ref{T:ext0} (proved in~\cite{LM}) shows that $H^\theta_0(\Omega) \hookrightarrow \X$ when $\theta \neq 1/2$, and Proposition \ref{P:density} implies that $\X \hookrightarrow H^\theta_0(\Omega)$ when $\Omega$ is smooth. Furthermore, as for $\theta = 1/2$, $\X$ is equivalent to the space $H^{1/2}_{00}(\Omega)$, which is introduced in~\cite{LM} and different from both $H^{1/2}(\Omega)$ and $H^{1/2}_0(\Omega)$. When $\theta \leq 1/2$ and $\Omega$ is smooth, it is shown in~\cite{LM} that $H^\theta_0(\Omega) = H^\theta(\Omega)$ (see also Appendix \ref{S:apdx}).

\subsection{Restricted fractional Laplacian}\label{Ss:fLap}

For each $\theta \in (0,1)$, the \emph{fractional Laplacian} $(-\Delta)^\theta$ is defined for, e.g., $\varphi \in C_c^\infty(\R^d)$ by
\begin{align*}
(-\Delta)^\theta \varphi(x) 
&= \mathcal{F}^{-1} \left[ |\xi|^{2\theta} \mathcal{F} f \right](x) \nonumber\\
&= C(d,\theta) \, \mathrm{P.V.} \int_{\R^d} \frac{\varphi(x)-\varphi(y)}{|x-y|^{d+2\theta}} \, \d y \nonumber\\
&= C(d,\theta) \lim_{\varepsilon \to 0} \int_{\R^d \setminus B(x;\vep)} \frac{\varphi(x)-\varphi(y)}{|x-y|^{d+2\theta}} \, \d y \quad \mbox{ for } \ x \in \R^d,%\label{fraclap}
\end{align*}
where $\mathcal{F}$ and $\mathcal{F}^{-1}$ denote the Fourier transform operator and its inverse, respectively, and $\mathrm{P.V.}$ stands for the \emph{Cauchy principal value}, and moreover, $B(x;\vep)$ denotes the open ball of radius $\vep$ centered at $x$ and $C(d,\theta)$ is the constant given by
$$
C(d,\theta) = \left( \int_{\R^d} \frac{1-\cos \xi_1}{|\xi|^{d+2\theta}} \, \d \xi \right)^{-1}.
$$
In what follows, we may neglect the constant $C(d,\theta)$ for simplicity. Note that, even though $\varphi$ has a compact support, the support of $(-\Delta)^\theta \varphi$ may no longer be compact.

For every $\theta \in (0,1)$, the \emph{restricted fractional Laplacian}, still denoted by $(-\Delta)^\theta$, is defined for each $\varphi \in C^\infty_c(\Omega)$ in terms of the restriction of $(-\Delta)^\theta \varphi : \R^d \to \R$ to the domain $\Omega$.\footnote{Here and henceforth, we use the same letter for the zero extension to $\R^d$ of each function defined in $\Omega$, when no confusion can arise} For each $\varphi,\psi \in C_c^\infty(\Omega)$, one can show that
\begin{align*}
\int_\Omega \psi(x) (-\Delta)^\theta \varphi(x) \, \d x 
&= \frac{1}{2} \iint_{\R^d \times \R^d} \frac{(\psi(x)-\psi(y))(\varphi(x)-\varphi(y))}{|x-y|^{d+2\theta}} \, \d x \d y\\
&= (\psi,\varphi)_{\X},
\end{align*}
which is symmetric and continuous in $\X$ with respect to $\varphi$ and $\psi$. Hence a weak form of the restricted fractional Laplacian with the Dirichlet condition is given as follows:

\begin{definition}[A weak form of the restricted fractional Laplacian]\label{D:RFL}
Let $\Omega$ be a domain of $\R^d$. For each $\theta \in (0,1)$, the \emph{restricted fractional Laplacian with the Dirichlet condition} is defined in a weak sense as an operator, still denoted by $(-\Delta)^\theta$, from $\X$ to $\X^*$ by the relation,
$$
\langle (-\Delta)^\theta u,v \rangle_{\X}
= \frac{1}{2} \iint_{\R^d \times \R^d} \frac{ (u(x)-u(y))(v(x)-v(y)) }{ |x-y|^{d+2\theta} } \, \d x \d y
$$
for $u, v \in \X$.
\end{definition}

As for $\theta = 1$, we denote by $(-\Delta)^1 := - \Delta$ the (minus) classical Dirichlet Laplacian, whose weak form is given as
$$
\langle - \Delta u, v \rangle_{\mathcal X_1(\Omega)} = \int_\Omega \nabla u(x) \cdot \nabla v(x) \, \d x
$$
for $u,v \in \mathcal X_1(\Omega) = H^1_0(\Omega)$ (and then, $\mathcal X_1(\Omega)^* = H^{-1}(\Omega)$).

From the above, for every $\theta \in (0,1]$, $(-\Delta)^\theta$ turns out to be the Riesz map from $\X$ to its dual space $\X^*$, and moreover, we observe that
\begin{equation}\label{Lap-vari}
(-\Delta)^\theta w = \d \left( \frac{1}{2} \|w\|_{\X}^2 \right) \ \text{ for } \ w \in \X,
\end{equation}
where $\d$ means the Fr\'echet derivative in $\X$.

\section{Main results}\label{S:main}

This section is devoted to stating main results of the present paper. To this end, we begin with defining \emph{{weak} solutions} for the Cauchy--Dirichlet problem \eqref{pde}--\eqref{ic}.

\begin{definition}[{Weak} solutions to \eqref{pde}--\eqref{ic}]\label{D:sol}
Let $1 < q < \infty$, $\theta \in (0,1]$ and $\rho_0 \in \X^*$. For $0 < T < \infty$, a pair of measurable functions $(u,\rho) : \R^d \times (0,T) \to \R^2$ is called a \emph{{weak} solution} on $[0,T]$ to the Cauchy--Dirichlet problem \eqref{pde}--\eqref{ic} if the following (i)--(iii) hold true:
\begin{enumerate}
 \item For any $\vep \in (0,T)$, it holds that $u \in L^2(\vep,T;\X) \cap L^q(Q_T)$ and $\rho \in C([0,T];\X^*) \cap W^{1,2}(\vep,T;\X^*) \cap L^{q'}(Q_T)$, where $Q_T := \Omega \times (0,T)$.
 \item It holds that $\rho = |u|^{q-2}u$ a.e.~in $Q_T$, and moreover,
\begin{equation}\label{weak_form}
\left\langle \partial_t \rho(t), w \right\rangle_{\X} + ( u(t), w )_{\X} = 0 \quad \mbox{ for all } \ w \in \X
\end{equation}
for a.e.~$t \in (0,T)$. Here $\partial_t \rho$ stands for the derivative of the function $t \mapsto \rho(t)$.
 \item The initial condition $\rho(0) = \rho_0$ holds, that is, $\rho(t) \to \rho_0$ strongly in $\X^*$ as $t \to 0_+$.
\end{enumerate}
Furthermore, for $0 < T \leq \infty$, a function $(u,\rho) : \R^d \times (0,T) \to \R^2$ is called a {weak} solution on $[0,T)$ to the Cauchy--Dirichlet problem \eqref{pde}--\eqref{ic} if it is a weak solution on $[0,S]$ for any $0 < S < T$. In particular, when $T = \infty$, it may simply be called a {weak} solution to the Cauchy--Dirichlet problem \eqref{pde}--\eqref{ic}. 

In what follows, the {weak} solution may be denoted by $u$ only (instead of the pair $(u,\rho)$), when no confusion can arise; indeed, $\rho$ can uniquely be determined by $u$ from the relation $\rho = |u|^{q-2}u$ of (ii). Moreover, $\rho$ will then be denoted by $|u|^{q-2}u$ simply.
\end{definition}

Weak solutions complying with some energy inequalities and regularity are called \emph{energy solutions} (see (ii) of Theorem \ref{T:main} below for more details).

\begin{remark}[Other classes of solutions]\label{R:others}
As for $\theta = 1$, the class of {weak} solutions in the sense of Definition \ref{D:sol} clearly contains $H^{-1}$ solutions, that is, $\rho \in C([0,T];H^{-1}(\Omega))$ complying with the following conditions:
\begin{itemize}
 \item $\rho \in L^{m+1}(Q_T) \cap W^{1,2}_{\rm loc}((0,T];H^{-1}(\Omega))$,
 \item $u = |\rho|^{m-1}\rho \in L^2_{\rm loc}((0,T];H^1_0(\Omega))$, 
 \item the absolute continuity of the function $t \mapsto \|\rho(t)\|_{L^m(\Omega)}^m$ on $(0,T]$, 
 \item the weak form,
$$
\langle \partial_t \rho(t), w \rangle_{H^1_0(\Omega)} + \int_\Omega \nabla u(t) \cdot \nabla w \, \d x \quad \mbox{ for } \ w \in H^1_0(\Omega)
$$
for a.e.~$t \in (0,T)$,
 \item and the initial condition $\rho (0) = \rho_0$
\end{itemize}
(see~\cite{HB2},~\cite{Vazquez}). Conversely, as will be seen in \S \ref{Ss:enid}, the {weak} solutions turn out to be $H^{-1}$ solutions. Thus the class of {weak} solutions coincides with that of $H^{-1}$ solutions. Such an observation can also be extended to the fractional case $0 < \theta < 1$; indeed, the $H^*$ solution introduced in~\cite{BSV} is an analogue of the $H^{-1}$ solution to the fractional case $0 < \theta < 1$ and it is equivalent to the weak solution in the sense of Definition \ref{D:sol}. On the other hand, the notion of \emph{weak dual solutions} (see~\cite{BII}) seems like a ``very weak'' version of a \emph{weighted} $L^1$ solution. Hence there may be no direct relation between the weak dual solution and the weak solution concerned in this paper; however, as proved in~\cite{BII}, due to the weight, every \emph{nonnegative} weak solution turns out to be a weak dual solution. 
\end{remark}

\begin{remark}[As a gradient flow]\label{R:GF}
The weak form \eqref{weak_form} can also be written as an \emph{evolution equation},
\begin{equation}\label{ee}
\partial_t \rho(t) + (-\Delta)^\theta u(t) = 0 \ \text{ in } \X^* \quad \text{ for a.e. } t \in (0,T),
\end{equation}
which along with the relation $\rho = |u|^{q-2}u$ has been well studied so far. Moreover, Definition \ref{D:sol} is consistent with a (generalized) gradient flow structure of the nonlinear diffusion equation \eqref{pde} (along with \eqref{bc}). Indeed, thanks to \eqref{Lap-vari}, one can rewrite \eqref{ee} as
\begin{equation*}
\partial_t ( |u|^{q-2}u )(t) = - \d \varphi (u(t))  \ \text{ in } \X^* \quad \text{ for a.e. } t\in(0,T),
\end{equation*}
where $\d \varphi : \X \to \X^*$ denotes the Fr\'echet derivative of the functional $\varphi : \X \to \R$ given by
$$
\varphi(w) = \frac{1}{2} \|w\|_{\X}^2 \ \text{ for } \ w \in \X.
$$
Such a generalized gradient flow structure enables us to derive energy identities and inequalities, which will play a crucial role in analyzing long-time behaviors of weak solutions to \eqref{pde}--\eqref{ic}.
\end{remark}

\begin{remark}[Energy identity]\label{R:ws-reg}
The following facts follow immediately from Definition \ref{D:sol} for general weak solutions $(u,\rho)$ on $[0,T]$ to the Cauchy--Dirichlet problem \eqref{pde}--\eqref{ic}: We first observe that the function 
$$
t \mapsto \| \rho(t)\|_{\X^*}^2 = \left\langle \rho(t), (-\Delta)^{-\theta} \rho(t) \right\rangle_{\X}
$$
is absolutely continuous on $(0,T]$. Testing \eqref{ee} by $(-\Delta)^{-\theta} \rho(t)$ directly, we then find that
$$
\frac 1 2 \frac{\d}{\d t} \left\| \rho(t) \right\|_{\X^*}^2 + \|u(t)\|_{L^q(\Omega)}^q = 0 \quad \mbox{ for a.e. } t \in (0,T),
$$
which implies
\begin{align}
\frac 12 \left\| \rho(t) \right\|_{\X^*}^2 + \int^t_s \|u(r)\|_{L^q(\Omega)}^q \, \d r
& = \frac 12 \left\| \rho(s) \right\|_{\X^*}^2\label{enineq0}
\end{align}
for $0 \leq s \leq t \leq T$ (including $s = 0$ by virtue of the fact that $\rho \in C([0,T];\X^*)$). We shall further derive another energy identity for general weak solutions in \S \ref{Ss:enid}.
\end{remark}

The main results of the present paper are stated as follows:

\begin{theorem}[Well-posedness and regularity of energy solutions]\label{T:main}
Let $1 < q < \infty$ and $0 < \theta \leq 1$. Then the following {\rm (i)--(iii)} hold true\/{\rm :}
\begin{enumerate}
\item[\rm (i)] For each $\rho_0 \in \X^*$ and $T > 0$, the {weak} solution on $[0,T]$ of the Cauchy--Dirichlet problem \eqref{pde}--\eqref{ic} is unique. Moreover, let $\rho_{0,1}, \rho_{0,2} \in \X^*$ and let $u_1, u_2$ be {weak} solutions on $[0,T]$ to \eqref{pde}--\eqref{ic} for the initial data $\rho_{0,1},\rho_{0,2}$, respectively. Then it holds that
\begin{align}
\MoveEqLeft{
\left\| (|u_1|^{q-2}u_1)(t) - (|u_2|^{q-2}u_2)(t) \right\|_{\X^*} 
}\nonumber\\
&\leq \| \rho_{0,1} - \rho_{0,2} \|_{\X^*}\label{conti-dep}
\end{align}
for any $t \in [0,T]$.
 \item[\rm (ii)] For every $u_0 \in \X \cap L^q(\Omega)$ satisfying that
\begin{equation}\label{ini-hyp+}
\rho_0 := |u_0|^{q-2}u_0 \in L^{(2_\theta^*)'}(\Omega) %\subset \X^* 
\ \text{ if } \ q > 2_\theta^*,
\end{equation}
where $(2_\theta^*)'$ is the H\"older conjugate of $2_\theta^*$, the Cauchy--Dirichlet problem \eqref{pde}--\eqref{ic} admits a unique {weak} solution $u = u(x,t)$ such that
\begin{align*}
 u &\in C_{\rm weak}([0,\infty);\X) \cap C_+([0,\infty);\X) \\
 &\quad \cap C([0,\infty);L^q(\Omega)) \cap L^q(\Omega \times (0,\infty)),
\end{align*}
which also implies
$$
u(t) \to u_0 \quad \text{ strongly in } \X \ \text{ as } \ t \to 0_+\/{\rm ;}
$$
hence the initial condition \eqref{ic} can also be written as 
$$
u(\cdot,0) = u_0 \ \text{ in } \Omega.
$$
Moreover, it holds that
\begin{align*}
|u|^{q-2}u &\in C([0,\infty);L^{q'}(\Omega)) \cap L^{q'}(\Omega \times (0,\infty)),\\ 
\partial_t (|u|^{q-2}u ) &\in C_{\rm weak}([0,\infty);\X^*) \cap C_+([0,\infty);\X^*),\\ 
|u|^{(q-2)/2}u &\in W^{1,2}(0,\infty;L^2(\Omega)).
\end{align*}
Furthermore, the function $t \mapsto \|u(t)\|_{L^q(\Omega)}^q$ is absolutely continuous on $[0,\infty)$ and the following energy identity holds\/{\rm :}
\begin{equation}\label{energyineq1}
\frac 1 {q'} \frac{\d}{\d t} \|u(t)\|_{L^q(\Omega)}^q + \|u(t)\|_{\X}^2 = 0
\end{equation}
for a.e.~$t > 0$, and moreover, the following energy inequality holds\/{\rm :}
\begin{align}
\MoveEqLeft{
\frac{4}{qq'} \int_s^t \left\| \partial_t ( |u|^{(q-2)/2}u )(r) \right\|_{L^2(\Omega)}^2 \, \d r + \frac{1}{2}\|u(t)\|_{\X}^2
}\nonumber\\
&\leq \frac{1}{2} \|u(s)\|_{\X}^2\label{energyineq2}
\end{align}
for all $0 \leq s < t <\infty$. Therefore the function $t \mapsto \|u(t)\|_{\X}^2$ is nonincreasing on $[0,\infty)$, and hence, it is differentiable a.e.~in $(0,\infty)$ and the following differential form also holds\/{\rm :}
\begin{equation}\label{en-ineq}
 \frac 4{qq'} \left\| \partial_t (|u|^{(q-2)/2}u)(t) \right\|_{L^2(\Omega)}^2 + \frac 1 2 \frac{\d}{\d t} \|u(t)\|_{\X}^2 \leq 0
\end{equation}
for a.e.~$t > 0$. In what follows, such a weak solution is called an \emph{energy solution} to the Cauchy--Dirichlet problem \eqref{pde}--\eqref{ic} {\rm (}cf.~see also Corollary {\rm \ref{C:wex}} below{\rm )}. Furthermore, if $q > 2$, then $\partial_t (|u|^{q-2}u)$ belongs to $L^2(0,\infty;L^{q'}(\Omega))$.

In addition, if $\rho_0 \in L^\alpha(\Omega)$ for some $\alpha \in (1,\infty)$, then
\begin{align*}
|u|^{q-2}u &\in C_{\rm weak}([0,\infty);L^\alpha(\Omega)) \cap C_+([0,\infty);L^\alpha(\Omega)),\\ 
|u|^{(\beta-2)/2}u &\in L^2(0,\infty;\X),
\end{align*}
where $\beta := (q-1)(\alpha-1)+1$, and
\begin{align}
\MoveEqLeft{
\frac{1}{\alpha} \left\| (|u|^{q-2}u)(t) \right\|_{L^\alpha(\Omega)}^\alpha + \frac{4}{\beta\beta'} \int^t_s \left\| (|u|^{(\beta-2)/2}u)(r) \right\|_{\X}^2 \,\d r
}\nonumber\\
&\leq \frac{1}{\alpha} \left\| (|u|^{q-2}u)(s) \right\|_{L^\alpha(\Omega)}^\alpha \nonumber%\label{enineq4}
\end{align}
for $0 \leq s \leq t < \infty$.
\item[\rm (iii)] Let $u_{0,1}, u_{0,2}$ fulfill the same assumptions for $u_0$ as in {\rm (ii)} and set $\rho_{0,j} = |u_{0,j}|^{q-2}u_{0,j} \in L^1(\Omega)$ for $j = 1,2$. Let $u_1,u_2$ be the energy solutions to the Cauchy--Dirichlet problem \eqref{pde}--\eqref{ic} for the initial data $\rho_{0,1}, \rho_{0,2}$, respectively. Then it holds that
\begin{align}\label{L1-contr}
\MoveEqLeft{
\left\| (|u_1|^{q-2}u_1)(t) - (|u_2|^{q-2}u_2)(t) \right\|_{L^1(\Omega)} 
}\nonumber\\
&\leq \| \rho_{0,1} - \rho_{0,2} \|_{L^1(\Omega)}
\end{align}
for $t \geq 0$. In addition, if $\rho_{0,1} \leq \rho_{0,2}$ a.e.~in $\Omega$, then $u_1 \leq u_2$ a.e.~in $\Omega \times (0,\infty)$.

Moreover, let $u = u(x,t)$ be a unique energy solution to \eqref{pde}--\eqref{ic} with an initial datum $\rho_0 = |u_0|^{q-2}u_0 \in L^1(\Omega)$ satisfying the same assumptions as in {\rm (ii)}. Then 
$$
|u|^{q-2}u \in W^{1,1}_{\rm loc}((0,\infty);L^1(\Omega))
$$
and the following B\'enilan--Crandall estimate holds\/{\rm :} there exists a constant $C > 0$ such that
\begin{equation}\label{BC}
\| (-\Delta)^\theta u(t) \|_{L^1(\Omega)} = \left\| \partial_t (|u|^{q-2}u)(t) \right\|_{L^1(\Omega)} \leq \frac C t \| \rho_0 \|_{L^1(\Omega)}
\end{equation}
for a.e.~$t > 0$. \prf{The above estimate also holds with $L^1(\Omega)$ replaced by $\X^*$. }In addition, if $\rho_0 \geq 0$ a.e.~in $\Omega$, then it holds that
\begin{equation}\label{BC-pnt}
\partial_t (|u|^{q-2}u)(x,t) \leq \frac{q-1}{q-2} \frac{(|u|^{q-2}u)(x,t)}{t} 
 \quad \mbox{ if } \ q > 2
\end{equation}
and
\begin{equation}\label{BC-pnt+}
\partial_t (|u|^{q-2}u)(x,t) \geq - \frac{q-1}{2-q} \frac{(|u|^{q-2}u)(x,t)}{t} 
 \quad \mbox{ if } \ 1 < q < 2
\end{equation}
for a.e.~$x \in \Omega$ and $t > 0$.
\begin{comment}
 \item[\rm (iv)] In addition to the assumptions as in {\rm (ii)}, suppose that $|u_0|^{q-2}u_0 \in L^2(\Omega)$. Then it holds that
\begin{align}
|u|^{q-2}u &\in C_{\rm weak}([0,\infty);L^2(\Omega)) \cap C_+([0,\infty);L^2(\Omega)), \label{reg5}\\
|u|^{(q-2)/2}u &\in L^2(0,\infty;\X),\label{reg6} 
\end{align}
and the following energy inequality holds\/{\rm :}
\begin{align*}
\MoveEqLeft{
\frac 1 2 \left\| (|u|^{q-2}u)(t) \right\|_{L^2(\Omega)}^2 + \frac 4{qq'} \int^t_s \left\| (|u|^{(q-2)/2}u)(r) \right\|_{\X}^2 \, \d r
}\\
&\leq \frac 1 2 \left\|(|u|^{q-2}u)(s) \right\|_{L^2(\Omega)}^2
\end{align*}
for all $0 \leq s < t < \infty$. Hence the function $t \mapsto \|(|u|^{q-2}u)(t)\|_{L^2(\Omega)}^2$ is nonincreasing on $[0,\infty)$, and moreover, it holds that
\begin{equation*}
 \frac 1 2 \frac{\d}{\d t} \left\|(|u|^{q-2}u)(t)\right\|_{L^2(\Omega)}^2 + \frac 4{qq'} \left\|(|u|^{(q-2)/2}u)(t)\right\|_{\X}^2 \leq 0
\end{equation*}
for a.e.~$t \in (0,\infty)$.
\end{comment}
\end{enumerate}  
\end{theorem}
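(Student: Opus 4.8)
\smallskip
\noindent\emph{Proof strategy.}
The plan is to establish (i) by a monotonicity estimate on the difference of two weak solutions, (ii) by an implicit (backward Euler) time-discretization of the evolution equation \eqref{ee}, each step of which is the Euler--Lagrange equation of a strictly convex variational problem, followed by a compactness and Minty-type monotonicity argument, and (iii) by transferring the $L^1$-contractive and order-preserving character of the discrete elliptic resolvents to the limit, combined with the scaling invariance of the equation. \emph{Part (i).} Given two weak solutions $u_1,u_2$ with $\rho_j=|u_j|^{q-2}u_j$, I subtract the weak forms \eqref{weak_form} and test with $w=(-\Delta)^{-\theta}(\rho_1(t)-\rho_2(t))\in\X$. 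On each $[\vep,T]$ the function $t\mapsto\|\rho_1(t)-\rho_2(t)\|_{\X^*}^2$ is absolutely continuous with derivative $2\langle\partial_t(\rho_1-\rho_2),(-\Delta)^{-\theta}(\rho_1-\rho_2)\rangle_{\X}$, while the remaining term equals $\int_\Omega(\rho_1-\rho_2)(u_1-u_2)\,\d x\ge0$ since $r\mapsto|r|^{q-2}r$ is nondecreasing; hence $\|\rho_1(t)-\rho_2(t)\|_{\X^*}$ is nonincreasing on $(\vep,T]$, and letting $\vep\to0_+$ together with $\rho_j\in C([0,T];\X^*)$ yields \eqref{conti-dep}, uniqueness being the case $\rho_{0,1}=\rho_{0,2}$.

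\smallskip
\noindent\emph{Part (ii).}
For $\tau=T/N$ set $\rho_0^\tau=|u_0|^{q-2}u_0$ and define $u_n\in\X\cap L^q(\Omega)$ recursively as the unique minimizer over $\X\cap L^q(\Omega)$ of $w\mapsto\frac\tau2\|w\|_{\X}^2+\frac1q\|w\|_{L^q(\Omega)}^q-\langle\rho_{n-1}^\tau,w\rangle$ (strictly convex, coercive, weakly lower semicontinuous by Propositions \ref{P:Poincare} and \ref{P:SP}; the datum lies in $(\X\cap L^q(\Omega))^*$ thanks to \eqref{ini-hyp+}), so that $\tau(-\Delta)^\theta u_n+|u_n|^{q-2}u_n=\rho_{n-1}^\tau=:\rho_{n-1}$. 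Testing this identity with $u_n$ and with $u_n-u_{n-1}$, and using the convexity inequality $\int_\Omega(\rho_n-\rho_{n-1})u_n\,\d x\ge\frac1{q'}(\|u_n\|_{L^q(\Omega)}^q-\|u_{n-1}\|_{L^q(\Omega)}^q)$, the inequality $(u_n,u_n-u_{n-1})_{\X}\ge\frac12(\|u_n\|_{\X}^2-\|u_{n-1}\|_{\X}^2)$, and the elementary pointwise estimate $(|a|^{q-2}a-|b|^{q-2}b)(a-b)\ge\frac4{qq'}\big||a|^{(q-2)/2}a-|b|^{(q-2)/2}b\big|^2$, I obtain the discrete counterparts of \eqref{energyineq1}--\eqref{energyineq2}. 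Summation produces, for the piecewise-constant and piecewise-affine interpolants, uniform bounds for $u$ in $L^2(0,T;\X)\cap L^\infty(0,T;L^q(\Omega))$, for $|u|^{q-2}u$ in $L^\infty(0,T;L^{q'}(\Omega))$, for $\partial_t(|u|^{(q-2)/2}u)$ in $L^2(0,T;L^2(\Omega))$, and --- through the equation itself --- for $\partial_t(|u|^{q-2}u)$ in $L^2(0,T;\X^*)$. Since $\X\hookrightarrow\hookrightarrow L^2(\Omega)$ and $L^{q'}(\Omega)\hookrightarrow\hookrightarrow\X^*$ when $q<2_\theta^*$ (Proposition \ref{P:SP} and Schauder's theorem; the range $q\ge2_\theta^*$ being accommodated by \eqref{ini-hyp+}), the Aubin--Lions--Simon lemma provides a subsequence along which $|u_\tau|^{q-2}u_\tau\to\rho$ strongly in $C([0,T];\X^*)$ and $u_\tau\rightharpoonup u$ weakly in $L^2(0,T;\X)$ and in $L^q(Q_T)$; pairing the strong and weak convergences gives $\iint_{Q_T}|u_\tau|^q\,\d x\,\d t\to\iint_{Q_T}\rho u\,\d x\,\d t$, which is precisely the hypothesis allowing Minty's monotonicity trick to conclude $\rho=|u|^{q-2}u$. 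Passing to the limit in the weak form yields a weak solution, and passing to the limit in the discrete energy relations --- using weak lower semicontinuity of the norms, and upgrading the $L^q$-identity to an equality via the absolute continuity of $t\mapsto\|u(t)\|_{L^q(\Omega)}^q$, itself a consequence of $\partial_t\rho\in L^2(\vep,T;\X^*)$ and $u\in L^2(\vep,T;\X)$ --- yields \eqref{energyineq1}--\eqref{en-ineq}. The claimed continuity and regularity are then standard: monotonicity of $t\mapsto\|u(t)\|_{\X}$ plus weak continuity forces $u\in C_{\rm weak}([0,\infty);\X)\cap C_+([0,\infty);\X)$ (weak convergence plus norm convergence implies strong convergence in the Hilbert space $\X$); continuity of the Nemytskii map $r\mapsto|r|^{q-2}r$ upgrades this to $|u|^{q-2}u\in C([0,\infty);L^{q'}(\Omega))$; $\partial_t(|u|^{q-2}u)=-(-\Delta)^\theta u$ inherits the regularity of $u$; and for $q>2$ the bound $\partial_t(|u|^{q-2}u)\in L^2(0,\infty;L^{q'}(\Omega))$ follows by expressing $\partial_t(|u|^{q-2}u)$ through $|u|^{(q-2)/2}$ and $\partial_t(|u|^{(q-2)/2}u)$ and applying H\"older's inequality. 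Finally, for $\rho_0\in L^\alpha(\Omega)$ I test the (suitably truncated) discrete equation with $|\rho_n|^{\alpha-2}\rho_n=|u_n|^{\beta-2}u_n$ and invoke the Stroock--Varopoulos / C\'ordoba--C\'ordoba inequality $\langle(-\Delta)^\theta v,|v|^{\beta-2}v\rangle_{\X}\ge\frac4{\beta\beta'}\big\||v|^{(\beta-2)/2}v\big\|_{\X}^2$ (which is an integration by parts when $\theta=1$); summation and passage to the limit give the stated $L^\alpha$-energy inequality, the bound $|u|^{(\beta-2)/2}u\in L^2(0,\infty;\X)$ and the continuity of $|u|^{q-2}u$ in $L^\alpha(\Omega)$.

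\smallskip
\noindent\emph{Part (iii).}
The discrete elliptic resolvents are order preserving and $L^1$-contracting: one tests the difference of two discrete equations with $\mathrm{sgn}_\vep(u_n^{(1)}-u_n^{(2)})\in\X$ (respectively with $\mathrm{sgn}_\vep$ of the positive part), notes that $\mathrm{sgn}(\rho_1-\rho_2)=\mathrm{sgn}(u_1-u_2)$, uses the Kato-type inequality $\langle(-\Delta)^\theta v,j'(v)\rangle_{\X}\ge0$ valid for every nondecreasing $j'$ with $j'(0)=0$ (immediate from the bilinear form of $(-\Delta)^\theta$), and lets $\vep\to0$. These properties pass to the limit, giving \eqref{L1-contr} and the comparison statement. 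The B\'enilan--Crandall estimate \eqref{BC} follows from the scaling invariance $\rho(x,t)\mapsto\lambda\,\rho(x,\lambda^{m-1}t)$, $m=1/(q-1)$, combined with \eqref{L1-contr}: the rescaled function solves the problem with datum $\lambda\rho_0$, hence $\|\rho(t)-\lambda\,\rho(\lambda^{m-1}t)\|_{L^1(\Omega)}\le|1-\lambda|\,\|\rho_0\|_{L^1(\Omega)}$, and letting $\lambda\to1$ yields $\|\partial_t(|u|^{q-2}u)(t)\|_{L^1(\Omega)}\le\frac Ct\|\rho_0\|_{L^1(\Omega)}$ and $|u|^{q-2}u\in W^{1,1}_{\rm loc}((0,\infty);L^1(\Omega))$; the identity $\|(-\Delta)^\theta u(t)\|_{L^1(\Omega)}=\|\partial_t(|u|^{q-2}u)(t)\|_{L^1(\Omega)}$ is just \eqref{ee} read in $L^1(\Omega)$. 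When $\rho_0\ge0$, comparing $|u|^{q-2}u$ with its time-rescaling $\mu^{-(q-1)/(q-2)}(|u|^{q-2}u)(\cdot,\mu t)$, which carries the ordered datum $\mu^{-(q-1)/(q-2)}\rho_0$, and letting $\mu\to1$ gives the one-sided pointwise bounds \eqref{BC-pnt}--\eqref{BC-pnt+}.

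\smallskip
\noindent\emph{Main obstacle.}
The delicate step is the limit passage in part (ii): securing enough compactness to identify the nonlinearity $\rho=|u|^{q-2}u$ while \emph{simultaneously} propagating both energy relations with their sharp constants --- which is exactly why the pointwise inequality with constant $4/(qq')$ and, in the nonlocal case, the Stroock--Varopoulos inequality are indispensable --- and keeping track of the absolute continuity of $t\mapsto\|u(t)\|_{L^q(\Omega)}^q$ needed to promote \eqref{energyineq1} from an inequality to an identity. In the nonlocal regime there is the additional wrinkle that compositions with truncations and sign functions must first be shown to remain in $\X$ before the Kato and Stroock--Varopoulos inequalities can be applied.
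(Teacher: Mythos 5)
Your overall strategy coincides with the paper's: implicit Euler discretization realized as a sequence of strictly convex minimization problems, discrete energy inequalities driven by the pointwise inequality with constant $4/(qq')$, compactness and a monotonicity argument to identify the nonlinearity, a chain-rule (convex-conjugate) argument for the absolute continuity of $t\mapsto\|u(t)\|_{L^q(\Omega)}^q$ and the identity \eqref{energyineq1}, and sign-function tests plus the scaling invariance for part (iii). Two steps, however, do not go through as written.

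First, your compactness argument in part (ii) breaks down in the supercritical range $q\geq 2_\theta^*$. You invoke the compact embedding $L^{q'}(\Omega)\hookrightarrow\hookrightarrow\X^*$ and claim the range $q\geq 2_\theta^*$ is ``accommodated by \eqref{ini-hyp+}''; but \eqref{ini-hyp+} only places $\rho_0$ in $L^{(2_\theta^*)'}(\Omega)$, and $L^{(2_\theta^*)'}(\Omega)\hookrightarrow\X^*$ is \emph{not} compact (it is dual, via Schauder, to the critical embedding $\X\hookrightarrow L^{2_\theta^*}(\Omega)$, which is merely continuous). So neither the pointwise precompactness of $(\rho_\tau(t))$ needed for Ascoli, nor the strong convergence $\rho_\tau\to\rho$ in $C([0,T];\X^*)$ on which your Minty pairing $\iint\rho_\tau u_\tau\to\iint\rho u$ rests, is available. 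The fix is a two-tier approximation: first prove existence under the stronger hypothesis $\rho_0\in L^2(\Omega)$, where the compact embedding $L^2(\Omega)\hookrightarrow\X^*$ holds for every $0<\theta\leq 1$ and the scheme closes; then, for general $u_0$ satisfying \eqref{ini-hyp+}, truncate $u_{0,n}:=\max\{\min\{u_0,n\},-n\}$, and use the $\X^*$-contraction \eqref{conti-dep} from part (i) to see that the corresponding energy solutions form a Cauchy sequence in $C([0,T];\X^*)$ — no compactness of $L^{(2_\theta^*)'}(\Omega)\hookrightarrow\X^*$ is needed. The hypothesis \eqref{ini-hyp+} is then used only to keep $|u_n|^{q-2}u_n$ bounded in $L^\infty(0,T;L^{(2_\theta^*)'}(\Omega))$ so that the duality pairing $\langle |u_n|^{q-2}u_n - |u|^{q-2}u,\,u_n-u\rangle$ in the identification step makes sense against $u\in\X\subset L^{2_\theta^*}(\Omega)$.

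Second, in part (iii) you pass from the difference-quotient bound $\|\rho(t+h)-\rho(t)\|_{L^1(\Omega)}\leq C|h|/t$ directly to $\rho=|u|^{q-2}u\in W^{1,1}_{\rm loc}((0,\infty);L^1(\Omega))$ and to the pointwise-in-time estimate \eqref{BC}. A local Lipschitz bound in $L^1(\Omega)$ only yields that $\rho$ is of bounded variation with values in $L^1(\Omega)$; since $L^1(\Omega)$ lacks the Radon--Nikod\'ym property, this does not by itself give an a.e.\ strong derivative. For $q>2$ one can conclude from $|u|^{q-2}u\in W^{1,2}_{\rm loc}([0,\infty);L^{q'}(\Omega))\subset W^{1,1}_{\rm loc}((0,\infty);L^1(\Omega))$, but for $1<q<2$ an additional argument is required: one writes $\rho$ as a primitive composed with $w=|u|^{(q-2)/2}u\in W^{1,2}(0,\infty;L^2(\Omega))$ and invokes a chain-rule/BV result (the paper uses~\cite[Theorem 1.1]{BG95}) to upgrade $BV$ to $W^{1,1}$. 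Without this step, \eqref{BC} holds only in an integrated (BV) sense rather than as stated for a.e.\ $t>0$.
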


With the aid of energy inequalities obtained so far, we can further verify the following:

\begin{theorem}[Monotonicity of the Rayleigh quotient]\label{T:Rayleigh}
Let $1 < q < \infty$ and $0 < \theta \leq 1$. Let $u_0$ satisfy the same assumptions as in {\rm (ii)} of Theorem {\rm \ref{T:main}} and $u_0 \not\equiv 0$ and let $u = u(x,t)$ be the energy solution to the Cauchy--Dirichlet problem \eqref{pde}--\eqref{ic} with the initial datum $\rho_0 := |u_0|^{q-2}u_0$. Define the Rayleigh quotient by
\begin{equation}\label{R}
R(w):=\frac{\|w\|_{\X}^2}{\|w\|_{L^q(\Omega)}^2}\quad\text{ for } \ w \in (\X \cap L^q(\Omega)) \setminus \{0\}.
\end{equation}
Then the function $t \mapsto R(u(t))$ is nonincreasing as far as $u(t) \neq 0$.
\end{theorem}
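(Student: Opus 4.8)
The plan is to show that $t\mapsto R(u(t))$ is nonincreasing by differentiating $R(u(t))$ where possible and exploiting the energy identities of Theorem \ref{T:main}(ii). Write $N(t):=\|u(t)\|_{\X}^2$ and $D(t):=\|u(t)\|_{L^q(\Omega)}^2$, so $R(u(t))=N(t)/D(t)$. By \eqref{energyineq1} the map $t\mapsto \|u(t)\|_{L^q(\Omega)}^q$ is absolutely continuous with derivative $-q'\|u(t)\|_{\X}^2=-q'N(t)$, hence $D(t)=\big(\|u(t)\|_{L^q(\Omega)}^q\big)^{2/q}$ is locally Lipschitz (away from the extinction time, where $\|u(t)\|_{L^q}$ stays bounded below on compact subintervals) with $D'(t)=\tfrac{2}{q}\big(\|u(t)\|_{L^q}^q\big)^{2/q-1}\cdot(-q'N(t))=-\tfrac{2}{q-1}\|u(t)\|_{L^q}^{2-q}N(t)$. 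The function $N(t)=\|u(t)\|_{\X}^2$ is only known to be nonincreasing (from \eqref{energyineq2}), hence of locally bounded variation; this is the source of the technical difficulty flagged in the introduction.

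First I would fix a compact interval $[s_0,t_0]$ on which $u(t)\neq 0$, so that $D$ is bounded above and below by positive constants and $N$ is bounded. On such an interval $N$ is BV and nonincreasing, $D$ is Lipschitz and bounded below, so $R(u(\cdot))=N/D$ is BV. The strategy is to compute the distributional derivative of $N/D$ and show it is a nonpositive measure. Using the decomposition of the Radon measure $\d N$ into its absolutely continuous part $N'_{\rm ac}\,\d t$ (with $N'_{\rm ac}\le 0$ a.e.) and its singular part $\d N_{\rm s}\le 0$, together with the Leibniz rule for BV functions (see \cite{AFP}), one gets
\begin{equation*}
\d\!\left(\frac{N}{D}\right) = \frac{\d N}{D} - \frac{N}{D^2}\,D'\,\d t = \frac{1}{D}\left(N'_{\rm ac} - \frac{N}{D}D'\right)\d t + \frac{\d N_{\rm s}}{D}.
\end{equation*}
The singular part is automatically $\le 0$. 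For the absolutely continuous part it remains to prove the pointwise inequality
\begin{equation*}
N'_{\rm ac}(t) \le \frac{N(t)}{D(t)}\,D'(t) = -\frac{2}{q-1}\,\frac{\|u(t)\|_{\X}^2}{\|u(t)\|_{L^q}^q}\,\|u(t)\|_{\X}^2 \qquad\text{for a.e. } t.
\end{equation*}

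The heart of the matter is therefore this a.e. pointwise estimate, which should follow from \eqref{en-ineq} together with a Cauchy--Schwarz / Poincaré-type argument identifying $N'_{\rm ac}(t)$. Testing \eqref{ee} with $u(t)$ (which is licit for energy solutions on a.e.\ $t$, using the regularity $|u|^{(q-2)/2}u\in W^{1,2}(0,\infty;L^2)$ so that $\tfrac{d}{dt}\|u(t)\|_{L^q}^q$ and $\langle\partial_t\rho,u\rangle$ are related as in \eqref{energyineq1}) gives control of $\|u(t)\|_{\X}^2$; then differentiating $N(t)=\|u(t)\|_{\X}^2$ and using the equation once more, one estimates $N'_{\rm ac}(t)$ by a quantity like $-2\langle\partial_t\rho(t),\partial_t u(t)\rangle$ or $-\,c\,\|\partial_t(|u|^{(q-2)/2}u)(t)\|_{L^2}^2$ type term, and compares it with $D'(t)$ via the elementary inequality relating $\langle \partial_t(|u|^{q-2}u),u\rangle$, $\|\partial_t(|u|^{(q-2)/2}u)\|_{L^2}^2$, and $\|u\|_{L^q}^q$ (essentially the chain-rule inequality for the convex function $r\mapsto |r|^q$). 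I expect the main obstacle to be precisely making the formal computation $\tfrac{d}{dt}\|u(t)\|_{\X}^2 = 2\langle(-\Delta)^\theta u(t),\partial_t u(t)\rangle$ rigorous — $\partial_t u$ need not exist in $\X$ — so one works instead with difference quotients, the nonincreasing monotonicity of $N$, and the BV Leibniz rule, pushing all the regularity issues into the singular part of $\d N$, which is harmless since it has the right sign. Finally, letting $s_0\downarrow$ any earlier time and $t_0\uparrow$ (up to the extinction time) yields monotonicity of $t\mapsto R(u(t))$ as long as $u(t)\neq 0$.
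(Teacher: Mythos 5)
Your argument is correct and is essentially the paper's own proof: the a.e.\ inequality $N'_{\rm ac}\le (N/D)D'$ is exactly what one obtains by combining \eqref{en-ineq} with the identity $\|u(t)\|_{\X}^2=-\tfrac{2}{q'}\bigl(\partial_t w(t),w(t)\bigr)_{L^2(\Omega)}$ for $w=|u|^{(q-2)/2}u$ (which follows from \eqref{energyineq1}) and the Cauchy--Schwarz inequality $(\partial_t w,w)_{L^2}^2\le\|\partial_t w\|_{L^2}^2\|w\|_{L^2}^2$, and your BV Leibniz decomposition with a nonpositive singular part is the same device the paper uses via \cite{AFP}. The one detail worth making explicit is that passing from the measure-theoretic statement to the pointwise monotonicity $R(u(t))\le R(u(s))$ for \emph{every} $s<t$ uses the right-continuity of $t\mapsto\|u(t)\|_{\X}^2$, i.e.\ $u\in C_+([0,\infty);\X)$ from Theorem \ref{T:main}(ii).
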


As in~\cite{BH80,Kwong88-3,DKV91,SavareVespri,AK13}, Theorems \ref{T:main} and \ref{T:Rayleigh} imply

\begin{corollary}[Decay and extinction estimates for energy solutions (cf.{~\cite[\S 2.6]{BII},~\cite{KiLe11}})]\label{C:decay}
Let $0 < \theta \leq 1$ and suppose that
\begin{equation}\label{hyp-decay}
1 < q \leq 2_\theta^*, \quad u_0\in \X \ \text{ and } \ u_0 \not\equiv 0.
\end{equation}
Let $u = u(x,t)$ be the energy solution to \eqref{pde}--\eqref{ic} with the initial datum $\rho_0 = |u_0|^{q-2}u_0$. Then it holds that
\begin{enumerate}
\item in case $1 < q < 2$, there exists a constant $c_1 > 0$ such that
\begin{equation}\label{asymp1}
c_1(t+1)^{-1/(2-q)}\leq \|u(t)\|_{L^q(\Omega)} \leq c_1^{-1}(t+1)^{-1/(2-q)}
\end{equation}
for $t \geq 0$\/{\rm ;}
\item in case $2 < q \leq 2_\theta^*$, there exist constants $t_* > 0$ and $c_2 > 0$ such that
\begin{equation}\label{asymp2}
c_2(t_*-t)_+^{1/(q-2)}\leq \|u(t)\|_{L^q(\Omega)} \leq c_2^{-1}(t_*-t)_+^{1/(q-2)}
\end{equation}
for $t \geq 0$. Here $t_* = t_*(u_0)$ is called the \emph{extinction time} of the energy solution $u$ and uniquely determined by the initial datum $u_0$.
\end{enumerate}
Moreover, \eqref{asymp1} and \eqref{asymp2} also hold true with the norm $\|\cdot\|_{L^q(\Omega)}$ replaced by $\|\cdot\|_{\X}$. Furthermore, in case $2 < q \leq 2_\theta^*$, the extinction time $t_*$ satisfies
$$
\frac{q-1}{q-2} \frac{\|u_0\|_{L^q(\Omega)}^q}{\|u_0\|_{\X}^2} \leq t_*(u_0) \leq \frac{q-1}{q-2} C_q^2 \|u_0\|_{L^q(\Omega)}^{q-2},
$$
where $C_q$ is the best constant of the Sobolev--Poincar\'e type inequality \eqref{SP}.
\end{corollary}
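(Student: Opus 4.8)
The plan is to derive the two estimates in Corollary~\ref{C:decay} directly from the energy identity \eqref{energyineq1} together with the monotonicity of the Rayleigh quotient $R$ from Theorem~\ref{T:Rayleigh}, treating the cases $1<q<2$ and $2<q\leq 2_\theta^*$ in parallel. First I would set $X(t):=\|u(t)\|_{L^q(\Omega)}^q$ and observe that the energy identity reads $\tfrac{1}{q'}X'(t) = -\|u(t)\|_{\X}^2$. Rewriting $\|u(t)\|_{\X}^2 = R(u(t))\,\|u(t)\|_{L^q(\Omega)}^2 = R(u(t))\,X(t)^{2/q}$, we obtain the scalar ODE (inequality after using monotonicity of $R$)
\begin{equation*}
\frac{1}{q'}\,X'(t) = -R(u(t))\,X(t)^{2/q},
\end{equation*}
and since $t\mapsto R(u(t))$ is nonincreasing, $R(u(0))=:R_0$ is an upper bound and $R(u(t_-))$ (or the infimum over the relevant interval) a lower bound; a Sobolev--Poincar\'e lower bound $R(w)\geq C_q^{-2}$ from \eqref{SP} gives a uniform positive lower bound on $R$. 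So $X$ satisfies a differential inequality of Bernoulli type $X' \asymp -X^{2/q}$, whose exponent $2/q$ is less than $1$ when $q>2$ (leading to finite-time extinction) and greater than $1$ when $q<2$ (leading to algebraic decay). Integrating the two-sided differential inequalities
\begin{equation*}
-q'\,C_q^{-2}\,X^{2/q} \;\leq\; X' \;\leq\; -q'\,R_0^{-1}\cdot(\text{lower Rayleigh bound})\cdots
\end{equation*}
— more precisely, using $R$ squeezed between two constants on the time interval of interest — yields, after elementary quadrature of $\tfrac{d}{dt}X^{(q-2)/q} = \tfrac{q-2}{q}X^{-2/q}X'$, the explicit bounds: for $1<q<2$, $X(t)^{(q-2)/q}$ grows affinely in $t$, giving $X(t)\sim (t+1)^{-q/(2-q)}$, i.e.\ \eqref{asymp1} after taking $q$-th roots; for $2<q\leq 2_\theta^*$, $X(t)^{(q-2)/q}$ decreases affinely and hits zero at a finite time $t_*$, giving \eqref{asymp2}.

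For the extinction-time bounds in the case $2<q\leq 2_\theta^*$, I would extract them from the same quadrature: integrating $\tfrac{1}{q'}X' = -R(u)X^{2/q} \leq -C_q^{-2}X^{2/q}$ from $0$ to $t_*$ and using $X(t_*)=0$ gives the upper bound $t_*\leq \tfrac{q-1}{q-2}C_q^2\|u_0\|_{L^q(\Omega)}^{q-2}$; integrating with the reverse inequality $R(u(t))\leq R(u_0)=\|u_0\|_{\X}^2/\|u_0\|_{L^q(\Omega)}^2$ gives the lower bound $t_*\geq \tfrac{q-1}{q-2}\|u_0\|_{L^q(\Omega)}^q/\|u_0\|_{\X}^2$. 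One must also argue that $X$ actually reaches zero in finite time (rather than merely being bounded above by a function vanishing at $t_*$): this follows because if $u(t)\neq 0$ on an interval then $R(u(t))$ stays finite and the differential inequality forces $X$ down to $0$; uniqueness of the energy solution and the fact that $u\equiv 0$ solves the equation then pin down $t_*$ as the first zero, and continuity of $t\mapsto\|u(t)\|_{L^q(\Omega)}$ shows it is well defined.

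To upgrade the estimates from the $L^q(\Omega)$-norm to the $\X$-norm, I would use $\|u(t)\|_{\X}^2 = R(u(t))\|u(t)\|_{L^q(\Omega)}^2$ once more: since $R(u(t))$ lies between the uniform constant $C_q^{-2}$ and the (finite, possibly time-dependent but bounded on compact intervals away from extinction) value $R(u_0)$, the $\X$-norm is comparable to $\|u(t)\|_{L^q(\Omega)}$ with constants of the same nature, so \eqref{asymp1}--\eqref{asymp2} transfer verbatim (with adjusted constants $c_1,c_2$). A small technical point is the behaviour near $t=0$ for the lower bound in \eqref{asymp1}: one uses that $\|u(t)\|_{L^q(\Omega)}$ is continuous and strictly positive at $t=0$ (as $u_0\not\equiv0$), so the $(t+1)^{-1/(2-q)}$ normalisation absorbs this via the choice of $c_1$.

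The main obstacle I anticipate is not the ODE quadrature, which is routine, but making rigorous the passage from the a.e.\ differential identity \eqref{energyineq1} to the pointwise (in $t$) integrated inequalities, given that $t\mapsto R(u(t))$ is only monotone (hence BV) rather than absolutely continuous. The clean way around this is to avoid differentiating $R$ at all: use only that $X(t)=\|u(t)\|_{L^q(\Omega)}^q$ is absolutely continuous (as asserted in Theorem~\ref{T:main}(ii)) and that for a.e.\ $t$ one has $X'(t) = -q'\|u(t)\|_{\X}^2 \leq -q' C_q^{-2}X(t)^{2/q}$ for the upper bound on $X$, and $X'(t) = -q'R(u(t))X(t)^{2/q} \geq -q' R(u_0)X(t)^{2/q}$ (using $R(u(t))\leq R(u_0)$ by monotonicity) for the lower bound — then integrate these absolutely continuous differential inequalities by comparison with the explicit solutions of the corresponding ODEs. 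This sidesteps any delicate BV/chain-rule issue and keeps the argument self-contained.
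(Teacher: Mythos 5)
Your proposal is correct and follows essentially the same route as the paper: the energy identity \eqref{energyineq1} combined with the Sobolev--Poincar\'e inequality \eqref{SP} in one direction and with the monotonicity of the Rayleigh quotient (Theorem \ref{T:Rayleigh}, via $R(u(t))\leq R(u_0)$) in the other, followed by quadrature of the resulting Bernoulli-type differential inequalities for the absolutely continuous function $t\mapsto\|u(t)\|_{L^q(\Omega)}^q$, with the $\X$-norm version obtained from $C_q^{-2}\leq R(u(t))\leq R(u_0)$. (Only note that your intermediate two-sided display has the constants on the wrong sides --- it should read $-q'R(u_0)X^{2/q}\leq X'\leq -q'C_q^{-2}X^{2/q}$ --- but your final paragraph states the correct inequalities, so this is merely a typo.)
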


As the optimal decay/extinction rate has been determined in Corollary \ref{C:decay}, we next discuss an \emph{asymptotic profile} for the energy solution $u = u(x,t)$ to \eqref{pde}--\eqref{ic} with an initial datum $\rho_0 = |u_0|^{q-2}u_0$, $u_0\in \X$. To this end, we define
\begin{align}
v(x,s) &:=
\begin{cases}
(t + 1)^{1/(2-q)} u(x,t) &\text{ if } \ 1 < q < 2,\\
(t_* - t)_+^{-1/(q-2)} u(x,t) &\text{ if } \ 2 < q \leq 2_\theta^*,
\end{cases}\label{def1}\\
s &:=
\begin{cases}
\log (t + 1) &\text{ if } \ 1 < q < 2,\\
\log (t_*/(t_*-t)) &\text{ if } \ 2 < q \leq 2_\theta^*.
\end{cases}
\label{def2}
\end{align}
In particular, when $2 < q \leq 2_\theta^*$, the finite interval $[0,t_*)$ for $t$ is mapped by \eqref{def2} onto the half-line $[0,\infty)$ for $s$. Now, an \emph{asymptotic profile} $\phi = \phi(x)$ of the energy solution $u = u(x,t)$ is defined as a limit of $v(\cdot,s)$ in $H^1_0(\Omega)$ as $s \to \infty$. Moreover, the rescaled function $v = v(x,s)$ turns out to be the energy solution to the following Cauchy--Dirichlet problem:
\begin{alignat}{4}
\partial_s \left( |v|^{q-2}v \right) + (-\Delta)^\theta v &= \lambda_q |v|^{q-2}v \quad &&\text{ in } \ \Omega \times (0,\infty),\label{eq:1.6}\\ 
v &= 0 \quad && \text{ on } \ (\R^d \setminus \Omega) \times (0,\infty),\label{eq:1.7}\\
(|v|^{q-2}v)(\cdot,0) &= |v_0|^{q-2}v_0 \quad && \text{ in } \ \Omega,\label{eq:1.8}
\end{alignat}
where $\lambda_q := (q-1)/|q-2| > 0$ and $v_0$ is the initial datum given by
\begin{equation}\label{v0}
 v_0 := \begin{cases}
	u_0 &\text{ if } \ 1 < q < 2,\\
	t_*(u_0)^{-1/(q-2)} u_0 &\text{ if } \ 2 < q \leq 2_\theta^*
       \end{cases}
\end{equation}
(see also \S \ref{Ss:rescale} below). In particular, if $\theta = 1$, the Dirichlet condition \eqref{eq:1.7} is replaced by the standard one, that is, $v = 0$ on $\partial \Omega \times (0,\infty)$. Here the \emph{energy solution} to \eqref{eq:1.6}--\eqref{eq:1.8} is also defined in an analogous way to Definition \ref{D:sol}. Moreover, it is worth mentioning that \eqref{eq:1.6}, \eqref{eq:1.7} can be reformulated as a generalized gradient system of the form,
$$
\partial_s \left( |v|^{q-2}v \right)(s) = - \d J(v(s)) \ \text{ in } \X^*, \quad 0 < s < \infty,
$$
where $\d J : \X \to \X^*$ denotes the Fr\'echet derivative of the functional $J : \X \to \R$ defined by
$$
J(w) := \frac 12 \|w\|_{\X}^2 - \frac{\lambda_q}{q} \int_\Omega |w|^q \, \d x
\quad \text{ for } \ w \in \X.
$$
Therefore the issue is reduced into proving the convergence of the rescaled solution $v(\cdot,s)$ for such a (generalized) gradient system to an equilibrium $\phi$ as $s \to \infty$.

As in~\cite{BH80,Kwong88-3,DKV91,SavareVespri,AK13}, we can verify the quasi-convergence of $v(\cdot,s)$ as $s \to \infty$ to some stationary solution to \eqref{eq:1.6}, \eqref{eq:1.7} under the assumption that
\begin{equation}\label{hyp-ap}
q\in(1,2_\theta^*)\setminus\{2\}, \quad u_0 \in \mathcal{X}_\theta(\Omega) \ \text{ and } \ u_0 \not\equiv 0,
\end{equation}
where the fractional Sobolev critical exponent $2_\theta^*$ (defined as in \eqref{crt_exp}) is excluded from the range for $q$.

\begin{theorem}[Quasi-convergence to asymptotic profiles (cf.{~\cite[Theorem 5.4]{KiLe11}})]\label{T:ap}
Let $0 < \theta \leq 1$ and assume that \eqref{hyp-ap} holds. Let $u=u(x,t)$ be an energy solution to \eqref{pde}--\eqref{ic} and let $v=v(x,s)$ be defined by \eqref{def1}, \eqref{def2}. Then, for any sequence $(s_n)$ in $(0,\infty)$ diverging to $\infty$, there exist a subsequence $(n_k)$ of $(n)$ and $\phi \in \X\setminus\{0\}$ such that 
$$
v(s_{n_k}) \to \phi \quad \text{ strongly in } \X
$$
and $\phi$ is a nontrivial solution to the Dirichlet problem,
\begin{alignat}{4}
(-\Delta)^\theta \phi &= \lambda_q |\phi|^{q-2}\phi \ &&\text{ in } \ \Omega,\label{eq:1.10}\\
\phi &= 0 \ &&\text{ on } \ \R^d \setminus \Omega,\label{eq:1.11}
\end{alignat}
where \eqref{eq:1.11} is replaced by the standard Dirichlet condition, i.e., $\phi = 0$ on $\partial \Omega$, when $\theta = 1$.
\end{theorem}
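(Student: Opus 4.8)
The plan is to exploit the gradient-flow structure of the rescaled equation together with the energy dissipation already collected in Theorem \ref{T:main} and the monotonicity of the Rayleigh quotient from Theorem \ref{T:Rayleigh}. First I would record the energy (Lyapunov) identity for the rescaled solution $v = v(x,s)$: since $v$ solves \eqref{eq:1.6}--\eqref{eq:1.8}, which is a generalized gradient system $\partial_s(|v|^{q-2}v) = -\d J(v)$, testing by the natural multiplier (essentially $\partial_s v$ up to the factor $|v|^{(q-2)/2}v$) yields
\begin{equation*}
\frac{4}{qq'} \int_s^\tau \left\| \partial_\sigma (|v|^{(q-2)/2}v)(\sigma) \right\|_{L^2(\Omega)}^2 \, \d \sigma + J(v(\tau)) \leq J(v(s)),
\end{equation*}
so $s \mapsto J(v(s))$ is nonincreasing. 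The next point is to obtain uniform-in-$s$ bounds on $v(s)$ in $\X$: here the key is that Theorem \ref{T:Rayleigh} (monotonicity of $R$) together with the explicit decay/extinction rates of Corollary \ref{C:decay} pins the norm $\|v(s)\|_{L^q(\Omega)}$ between two positive constants, and then $\|v(s)\|_{\X}^2 = R(v(s)) \|v(s)\|_{L^q(\Omega)}^2$ is bounded above (by $R(v(0)) = R(u_0)$) and, crucially, bounded \emph{away from zero}, which is what guarantees the limit $\phi$ is nontrivial.

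Second, I would extract a convergent subsequence. The uniform bound $\sup_s \|v(s)\|_{\X} < \infty$ gives, via the compact embedding $\X \hookrightarrow L^q(\Omega)$ (Proposition \ref{P:SP}, using $q < 2_\theta^*$ — this is precisely why the critical exponent is excluded), weak convergence $v(s_{n_k}) \rightharpoonup \phi$ in $\X$ and strong convergence in $L^q(\Omega)$ along a subsequence. Since $J(v(s))$ is nonincreasing and bounded below (it is bounded below on bounded subsets of $\X$ because $q < 2_\theta^*$ makes the term $\int |w|^q$ subcritical, controlled by $\|w\|_{\X}^2$), the total dissipation integral $\int_0^\infty \|\partial_\sigma(|v|^{(q-2)/2}v)\|_{L^2(\Omega)}^2 \, \d\sigma$ is finite. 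From this I would deduce, by a standard argument over unit time-windows $[s_{n_k}, s_{n_k}+1]$, that $\partial_s(|v|^{(q-2)/2}v) \to 0$ in $L^2(\Omega\times(s_{n_k},s_{n_k}+1))$; combined with the $\X$-bound and the equation \eqref{eq:1.6}, this forces $(-\Delta)^\theta v(\sigma_k) - \lambda_q |v(\sigma_k)|^{q-2}v(\sigma_k) \to 0$ in $\X^*$ for a suitable sequence $\sigma_k$ near $s_{n_k}$, i.e.\ the limit is a weak solution of \eqref{eq:1.10}--\eqref{eq:1.11}.

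Third, I would upgrade weak convergence to strong convergence in $\X$. Having $v(\sigma_k) \rightharpoonup \phi$ in $\X$, $v(\sigma_k) \to \phi$ in $L^q(\Omega)$, and $(-\Delta)^\theta v(\sigma_k) \to (-\Delta)^\theta \phi$ in $\X^*$ (from the equation, since $|v(\sigma_k)|^{q-2}v(\sigma_k) \to |\phi|^{q-2}\phi$ strongly in $L^{q'}(\Omega) \hookrightarrow \X^*$), one tests the difference: $\|v(\sigma_k)\|_{\X}^2 = \langle (-\Delta)^\theta v(\sigma_k), v(\sigma_k)\rangle_{\X} \to \langle (-\Delta)^\theta \phi, \phi\rangle_{\X} = \|\phi\|_{\X}^2$, so the norms converge, and in a Hilbert space norm convergence plus weak convergence gives strong convergence. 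Finally, I would need to reconcile the convergence along the auxiliary sequence $\sigma_k$ with convergence along the original $s_{n_k}$; this is handled by the fact that $\sigma_k - s_{n_k} \to 0$ together with a modulus-of-continuity estimate for $s \mapsto |v|^{q-2}v(s)$ in $\X^*$ (from $W^{1,2}_{\mathrm{loc}}$ regularity), plus the $\X$-bound, to transfer the strong $\X$-limit. The main obstacle I anticipate is precisely this last bookkeeping — ensuring the limit is taken along the \emph{prescribed} sequence $s_{n_k}$ rather than a shifted one, and simultaneously keeping $\phi \neq 0$; the nontriviality hinges entirely on the lower bound for $\|v(s)\|_{L^q(\Omega)}$ coming from Corollary \ref{C:decay}, so I would make sure that estimate is genuinely uniform down to $s = \infty$ and not merely asymptotic.
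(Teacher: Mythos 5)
Your overall architecture matches the paper's proof: uniform two-sided bounds on $\|v(s)\|_{L^q(\Omega)}$ and $\|v(s)\|_{\X}$ from Corollary \ref{C:decay} (which is also what forces $\phi\neq 0$), a finite dissipation integral used to select good times $\sigma_k$ in unit windows $[s_{n_k},s_{n_k}+1]$, passage to the limit in \eqref{eq:1.6} with the compact embedding $\X\hookrightarrow L^q(\Omega)$, upgrade of weak to strong $\X$-convergence via norm convergence and uniform convexity, and finally the transfer from $\sigma_k$ back to $s_{n_k}$ using the $\X^*$-modulus of continuity of $|v|^{q-2}v$ together with the monotone decrease of $J(v(\cdot))$. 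All of these steps are sound and are exactly the paper's steps.

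There is, however, one genuine gap: you dissipate the \emph{wrong quantity}. From the $J$-energy inequality you only control $\int_0^\infty\|\partial_s(|v|^{(q-2)/2}v)(s)\|_{L^2(\Omega)}^2\,\d s<\infty$, and you then claim that smallness of $\partial_s(|v|^{(q-2)/2}v)(\sigma_k)$ in $L^2(\Omega)$ ``forces'' $(-\Delta)^\theta v(\sigma_k)-\lambda_q|v(\sigma_k)|^{q-2}v(\sigma_k)=-\partial_s(|v|^{q-2}v)(\sigma_k)\to 0$ in $\X^*$. That conversion requires the chain rule
$\partial_s(|v|^{q-2}v)=\tfrac{2}{q'}\,|v|^{(q-2)/2}\,\partial_s(|v|^{(q-2)/2}v)$
followed by H\"older with the weight $\||v|^{(q-2)/2}\|_{L^{2q/(q-2)}(\Omega)}=\|v\|_{L^q(\Omega)}^{(q-2)/2}$; this works for $q>2$ (and is precisely what the paper does later, in the proof of \eqref{enin-LS}), but for $1<q<2$ the exponent $(q-2)/2$ is negative, the weight is an unbounded negative power of $v$ with no a priori integrability near the zero set of $v$, and the step fails. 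Since the theorem covers $q\in(1,2^*_\theta)\setminus\{2\}$, you must handle this range. The paper avoids the issue entirely by introducing a second Lyapunov functional, $K(w)=\tfrac{1}{q'}\|w\|_{L^q(\Omega)}^q-\tfrac{\lambda_q}{2}\||w|^{q-2}w\|_{\X^*}^2$: testing \eqref{eq:1.6} by $(-\Delta)^{-\theta}\partial_s(|v|^{q-2}v)$ gives the identity \eqref{estK}, whence $\int_0^\infty\|\partial_s(|v|^{q-2}v)(s)\|_{\X^*}^2\,\d s<\infty$ directly (Lemma \ref{L:v-enin}\,(ii)), for every $1<q<2^*_\theta$. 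Replacing your dissipation integral by this one closes the gap and leaves the rest of your argument intact; note also that with this choice the tail $\int_{s_{n_k}}^\infty\|\partial_s(|v|^{q-2}v)\|_{\X^*}^2\,\d s\to 0$ is what makes the $\sigma_k$-to-$s_{n_k}$ transfer work even though $\sigma_k-s_{n_k}$ is only bounded by $1$, not tending to $0$.
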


As for the full-convergence, we state the following theorem:

\begin{theorem}[Full-convergence to asymptotic profiles]\label{T:conv}
Let $\Omega$ be a bounded $C^{1,1}$ domain of $\R^d$. In addition to the same assumptions as in Theorem {\rm \ref{T:ap}}, suppose that $d > 2\theta$ when $0 < \theta < 1$. Let $v = v(x,s)$ be the energy solution to the Cauchy--Dirichlet problem \eqref{eq:1.6}--\eqref{eq:1.8} and let $\phi = \phi(x)$ be a nontrivial solution to the Dirichlet problem \eqref{eq:1.10}, \eqref{eq:1.11} such that $v(s_n)\rightarrow \phi$ strongly in $\X$ for some sequence $(s_n)$ in $(0,\infty)$ diverging to $\infty$. In addition, assume that either of the following holds true\/{\rm :}
\begin{enumerate}
 \item[\rm (i)] $\phi \geq 0$ a.e.~in $\Omega$,
 \item[\rm (ii)] $q$ is even {\rm(}then $|r|^{q-2}r$ is a monomial{\rm )}.
\end{enumerate}
Then it holds that
$$
v(s) \to \phi \quad \text{ strongly in } \X \ \text{ as } \ s \to \infty.
$$
\end{theorem}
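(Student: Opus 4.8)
The plan is to prove full convergence by combining the already-established quasi-convergence (Theorem \ref{T:ap}) with a \L ojasiewicz--Simon gradient inequality for the energy $J$ near the limit point $\phi$, following the Feireisl--Simon strategy. First I would recall that, since $v$ is the energy solution of the rescaled equation \eqref{eq:1.6}--\eqref{eq:1.8}, the energy $s \mapsto J(v(s))$ is nonincreasing and the energy (dissipation) identity/inequality analogous to \eqref{en-ineq} holds, with dissipation rate controlled from below by $\| \partial_s(|v|^{(q-2)/2}v)(s)\|_{L^2(\Omega)}^2$. Because $J(v(s))$ is nonincreasing and bounded below along the bounded trajectory (boundedness of $\{v(s)\}$ in $\X$ follows from the decay/extinction estimates of Corollary \ref{C:decay} transferred through \eqref{def1}), the limit $J_\infty := \lim_{s\to\infty} J(v(s))$ exists, and by lower semicontinuity and the strong convergence $v(s_n)\to\phi$ we get $J_\infty = J(\phi)$, with $\phi$ a nontrivial solution of \eqref{eq:1.10}--\eqref{eq:1.11}, i.e.\ a critical point of $J$.

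Next I would invoke the \L ojasiewicz--Simon inequality of Feireisl--Simon type for the restricted fractional Laplacian from \cite{ASS19}: there exist constants $C>0$, $\sigma>0$ and $\eta\in(0,1/2]$ such that
\begin{equation*}
| J(w) - J(\phi) |^{1-\eta} \leq C \, \| \mathrm{d}J(w) \|_{\X^*}
\end{equation*}
for all $w\in\X$ with $\|w-\phi\|_{\X} \leq \sigma$. This is where the hypotheses of the theorem enter: the smoothness assumption $\Omega\in C^{1,1}$, the restriction $d>2\theta$ for $0<\theta<1$, and either nonnegativity of $\phi$ or $q$ even are precisely what is needed for the nonlinearity $w\mapsto \lambda_q|w|^{q-2}w$ to be regular enough (analytic, or at least $C^2$ with the required Fredholm/analyticity structure on the relevant function spaces via elliptic regularity for the restricted fractional Laplacian) so that the abstract \L ojasiewicz--Simon machinery of \cite{ASS19} applies at $\phi$. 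I would then combine the gradient inequality with the dissipation estimate: along the flow, $-\frac{\mathrm d}{\mathrm ds}\big(J(v(s))-J(\phi)\big)^{\eta} \geq c\,\| \partial_s(|v|^{(q-2)/2}v)(s)\|_{L^2(\Omega)}$ on the set where $v(s)$ lies in the $\sigma$-ball, after bounding $\|\mathrm dJ(v(s))\|_{\X^*}$ by the dissipation rate (using the equation, a Hölder/chain-rule bound relating $\partial_s(|v|^{q-2}v)$ to $\partial_s(|v|^{(q-2)/2}v)$, and the uniform $L^\infty$- or $L^q$-bound on $v(s)$). Integrating in $s$ then shows $\partial_s(|v|^{(q-2)/2}v) \in L^1(s_0,\infty;L^2(\Omega))$, which yields that $|v(s)|^{(q-2)/2}v(s)$ converges strongly in $L^2(\Omega)$, hence $v(s)$ converges in $L^q(\Omega)$, to some limit; a standard continuation/open-closed argument (the trajectory cannot leave the $\sigma$-ball once it has entered and accumulated enough, since the tail length of the path is controlled) shows the limit must be $\phi$ and that the whole orbit eventually stays near $\phi$.

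Finally I would upgrade the convergence to strong convergence in $\X$: from $v(s)\to\phi$ in $L^q(\Omega)$ together with $J(v(s))\to J(\phi)$, and the explicit form $J(w)=\frac12\|w\|_{\X}^2 - \frac{\lambda_q}{q}\int_\Omega|w|^q\,\mathrm dx$, one gets $\|v(s)\|_{\X}^2 \to \|\phi\|_{\X}^2$; combined with weak convergence $v(s)\rightharpoonup\phi$ in $\X$ (extracted along any sequence from the bounded orbit, with the $L^q$-limit identifying the weak limit as $\phi$), the convergence of norms in the Hilbert space $\X$ upgrades weak to strong convergence. The main obstacle I anticipate is twofold: (a) making rigorous the chain of inequalities linking $\|\mathrm dJ(v(s))\|_{\X^*}$ to the dissipation functional $\|\partial_s(|v|^{(q-2)/2}v)(s)\|_{L^2(\Omega)}$ appearing in the energy inequality — this requires care with the nonlinear time derivative and the admissible range $q\in(1,2_\theta^*)\setminus\{2\}$, exactly as the fast-diffusion case is delicate; and (b) verifying that the regularity hypotheses ($C^{1,1}$ domain, $d>2\theta$, and case (i) or (ii)) genuinely bring us into the scope of the \L ojasiewicz--Simon inequality in \cite{ASS19}, in particular that $\phi$ enjoys enough elliptic regularity and the linearized operator $(-\Delta)^\theta - \lambda_q(q-1)|\phi|^{q-2}$ has the Fredholm property on the appropriate spaces.
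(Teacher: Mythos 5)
Your strategy for the fast-diffusion range $2<q<2_\theta^*$ is essentially the paper's: quasi-convergence from Theorem \ref{T:ap}, the {\L}ojasiewicz--Simon inequality of \cite{ASS19} applied to $J$ at $\phi$ (with the hypotheses $C^{1,1}$, $d>2\theta$, and (i) or (ii) entering exactly where you say), the chain of estimates turning the dissipation inequality \eqref{estJ} into $-\frac{\d}{\d s}\bigl(J(v(s))-J(\phi)\bigr)^\sigma \gtrsim \|\partial_s(|v|^{q-2}v)(s)\|_{\X^*}$, and an integration/trapping argument. Two points, however, are genuine gaps. First, your argument cannot cover the porous-medium case $1<q<2$ (where only hypothesis (i) is relevant, since no even $q$ lies in $(1,2)$): the nonlinearity $g(r)=-\lambda_q|r|^{q-2}r$ is not $C^1$ at $r=0$ when $q<2$, so hypothesis (H0) of Theorem \ref{T:LS} fails, and the H\"older/chain-rule step relating $\partial_s(|v|^{q-2}v)$ to $|v|^{(q-2)/2}\partial_s(|v|^{(q-2)/2}v)$ breaks down because $|v|^{(q-2)/2}$ is then a negative power of $|v|$. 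The paper avoids {\L}ojasiewicz--Simon entirely there: for $1<q<2$ and $\phi\geq 0$ it combines quasi-convergence with the uniqueness of the positive solution of the sublinear stationary problem \eqref{eq:1.10}, \eqref{eq:1.11} (Lemma \ref{L:uniq-DP}, a Br\'ezis--Oswald argument), so that all subsequential limits coincide. You need a separate argument of this kind for $1<q<2$.

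Second, even for $q>2$, you treat $s\mapsto J(v(s))$ as if it were absolutely continuous when you ``integrate in $s$''. It is only right-continuous and of locally bounded variation (its $\X$-norm part is merely nonincreasing; see Lemma \ref{L:v-enin}), so the differential inequality holds only a.e.\ and does not by itself control increments of $(J(v(\cdot))-J(\phi))^\sigma$. One must invoke the BV calculus (e.g.\ \cite[Corollary 3.29]{AFP}): since $(J(v(\cdot))-J(\phi))^\sigma$ is nonincreasing, the singular part of its distributional derivative is nonpositive, and hence $\int_{s_1}^{s_2}\|\partial_s(|v|^{q-2}v)(s)\|_{\X^*}\,\d s \leq c_0^{-1}\bigl[(J(v(s_1))-J(\phi))^\sigma-(J(v(s_2))-J(\phi))^\sigma\bigr]$ still holds on the time interval where the trajectory stays in the $\delta$-ball. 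With these two repairs your outline matches the paper's proof; your concluding upgrade from $L^q$-convergence to strong $\X$-convergence via convergence of the $\X$-norms is fine, and your trapping argument is an equivalent variant of the paper's interleaved-sequence contradiction.
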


The porous medium case (i.e., $1 < q < 2$ and $\phi \geq 0$) has already been discussed in~\cite[Theorem 2.3]{BSV} and~\cite[Theorem 1.1]{FraVol23} for sign-definite and (possibly) sign-changing rescaled weak solutions $v = v(x,s)$, respectively, and hence, the novelty of Theorem \ref{T:conv} may reside in the fast diffusion case (i.e., $2 < q < 2^*_\theta$ and either $\phi \geq 0$ or $q$ is even). 

All these results will be proved in subsequent sections.

\section{Proof of Theorem \ref{T:main}}\label{S:pr_main}

In this section, we give a proof for Theorem \ref{T:main}. In \S \ref{Ss:uniq}, we shall briefly prove the uniqueness part (i) in a standard way. The main ingredients of the proof will be exhibited in \S \ref{Ss:ex}, \S \ref{Ss:enid} and \S \ref{Ss:gen}, where the additional regularity of the energy solution as well as the energy identity and inequalities will directly be proved within the weak formulation as in Definition \ref{D:sol} without relying on any classical parabolic theories (see, e.g.,~\cite{LSU,Friedman,Lieberman}) even for $\theta = 1$; actually, they are no longer useful for the fractional case $0 < \theta < 1$. The $L^1$ contraction estimate and the B\'enilan--Crandall estimate may be more or less standard (see, e.g.,~\cite{Vazquez,PQRV12}), but they will finally be proved in \S \ref{Ss:BC} for the completeness.

\subsection{Uniqueness of energy solutions}\label{Ss:uniq}

In this subsection, we shall give a proof for (i) of Theorem \ref{T:main}, i.e., the uniqueness of weak solutions to \eqref{pde}--\eqref{ic}. 

\begin{proof}[Proof of {\rm (i)} of Theorem {\rm \ref{T:main}}]
Let $\rho_{0,1}, \rho_{0,2}\in \X^*$ be two initial data and let $(u_1,\rho_1), (u_2,\rho_2)$ be weak solutions on $[0,T]$ to the Cauchy--Dirichlet problem for the initial data $\rho_{0,1}, \rho_{0,2}$, respectively. Then it follows from Remark \ref{R:GF} that
\begin{equation}\label{uniq:ei}
\partial_t \left( \rho_1-\rho_2 \right)(t) + (-\Delta)^\theta \left( u_1(t)-u_2(t) \right) = 0 \ \text{ in } \X^*,
\end{equation}
where $\rho_j := |u_j|^{q-2}u_j$ ($j=1,2$), for a.e.~$t \in (0,T)$. Let $(-\Delta)^{-\theta} : \X^* \to \X$ be the inverse of the restricted fractional Laplacian $(-\Delta)^\theta : \X \to \X^*$. Testing both sides of \eqref{uniq:ei} by $(-\Delta)^{-\theta} \left( \rho_1(t)-\rho_2(t) \right)$, we see that
\begin{align*}
\MoveEqLeft{
\left\langle \partial_t ( \rho_1-\rho_2 )(t), (-\Delta)^{-\theta} \left( \rho_1(t)-\rho_2(t) \right) \right\rangle_{\X}
}\\
&+ \left\langle (-\Delta)^\theta (u_1(t)-u_2(t)), (-\Delta)^{-\theta} \left(\rho_1(t)-\rho_2(t)\right) \right\rangle_{\X}=0
\end{align*}
for a.e.~$t \in (0,T)$. Here \prf{we note that the functional $w \mapsto \|w\|_{\X^*}^2$ is of class $C^1$ in $\X^*$, and }we easily observe that
$$
(-\Delta)^{-\theta} w = \d \left( \frac 12 \|w\|_{\X^*}^2 \right) \ \text{ for } \ w \in \X^*.
$$
Since $\rho_1 - \rho_2 \in W^{1,2}_{\rm loc}((0,T];\X^*)$ and $(-\Delta)^{-\theta}(\rho_1 - \rho_2) \in W^{1,2}_{\rm loc}((0,T];\X)$, thanks to the standard chain-rule formula, the function $t \mapsto \|\rho_1(t)-\rho_2(t)\|_{\X^*}^2$ turns out to be absolutely continuous on $(0,T]$ and to fulfill the relation,
$$
\frac 12 \frac{\d}{\d t} \|\rho_1(t)-\rho_2(t)\|_{\X^*}^2
= \left\langle \partial_t ( \rho_1 - \rho_2 )(t), (-\Delta)^{-\theta} ( \rho_1(t) - \rho_2(t) ) \right\rangle_{\X}
$$
for a.e.~$t \in (0,T)$. On the other hand, since $(- \Delta)^\theta$ is symmetric and $\rho_j = |u_j|^{q-2}u_j$ for $j = 1,2$, it follows that
\begin{align*}
\MoveEqLeft{
\left\langle(-\Delta)^\theta(u_1(t)-u_2(t)),(-\Delta)^{-\theta}\left(\rho_1(t)-\rho_2(t)\right)\right\rangle_{\X}
}\\
&= \int_\Omega \left( u_1(t) - u_2(t) \right) \left( \rho_1(t) - \rho_2(t) \right) \, \d x \geq 0
\end{align*}
for a.e.~$t \in (0,T)$. Therefore combining all these facts, we obtain
$$
\frac{1}{2} \frac{\d}{\d t} \|\rho_1(t)-\rho_2(t)\|_{\X^*}^2 \leq 0 \quad \text{ for a.e. } \ t \in (0,T),
$$
which yields
$$
\|\rho_1(t)-\rho_2(t)\|_{\X^*}^2 \leq \|\rho_1(s)-\rho_2(s)\|_{\X^*}^2
$$
for any $0 < s < t \leq T$. Passing to the limit as $s \to 0_+$ and using the fact that $\rho_1,\rho_2 \in C([0,T];\X^*)$ along with $\rho_j(0_+) = \rho_{0,j}$ for $j=1,2$ (see Definition \ref{D:sol}), we obtain
$$
\|\rho_1(t)-\rho_2(t)\|_{\X^*}^2 \leq \|\rho_{0,1}-\rho_{0,2}\|_{\X^*}^2
$$
for all $t \in [0,T]$. This completes the proof for (i) of Theorem \ref{T:main}.
\end{proof}

\subsection{Existence of energy solutions}\label{Ss:ex}

In this subsection, we prove existence of energy solutions and energy inequalities under the assumption that
\begin{equation}\label{hyp}
u_0 \in \X \cap L^q(\Omega) \quad \text{ and } \quad \rho_0 := |u_0|^{q-2}u_0 \in L^2(\Omega),
\end{equation}
which will slightly be relaxed in \S \ref{Ss:gen} for proving (ii) of Theorem \ref{T:main}. More precisely, this subsection is devoted to proving the following lemma:

\begin{lemma}[Existence of energy solutions under \eqref{hyp}]\label{L:ex-ensol}
Under the assumption \eqref{hyp}, the Cauchy--Dirichlet problem \eqref{pde}--\eqref{ic} admits a unique weak solution $u = u(x,t)$ complying with the regularity properties\/{\rm :}
\begin{align*}
u &\in C([0,\infty);L^q(\Omega)) \cap L^q(\Omega \times (0,\infty)),\\
u &\in C_{\rm weak}([0,\infty);\X) \cap C_+([0,\infty);\X),\\
|u|^{(q-2)/2}u &\in W^{1,2}(0,\infty;L^2(\Omega)) \cap L^2(0,\infty;\X),\\
|u|^{q-2}u &\in W^{1,\infty}(0,\infty;\X^*) \cap C([0,\infty);L^{q'}(\Omega)) \cap L^{q'}(\Omega \times (0,\infty)),\\
|u|^{q-2}u &\in C_{\rm weak}([0,\infty);L^2(\Omega)) \cap C_+([0,\infty);L^2(\Omega)),\\
\partial_t (|u|^{q-2}u) &\in C_{\rm weak}([0,\infty);\X^*) \cap C_+([0,\infty);\X^*),
\end{align*}
the initial condition\/{\rm :}
$$
u(t) \to u_0 \quad \text{ strongly in } \X \ \text{ as } \ t \to 0_+,
$$
and the following energy inequalities\/{\rm :}
\begin{equation}
 \frac{1}{q'} \|u(t)\|_{L^q(\Omega)}^q + \int_s^t \|u(r)\|_{\X}^2 \,\d r 
\leq \frac{1}{q'} \|u(s)\|_{L^q(\Omega)}^q,\label{enineq1}
\end{equation}
\begin{align}
\MoveEqLeft{
\frac{4}{qq'} \int_s^t \left\| \partial_t (|u|^{(q-2)/2}u)(r) \right\|_{L^2(\Omega)}^2 \,\d r + \frac 12 \|u(t)\|_{\X}^2
}\nonumber\\
 &\leq \frac 12 \|u(s)\|_{\X}^2,\label{enineq2}
\end{align}
\begin{align}
\MoveEqLeft{
\frac{1}{2} \left\| (|u|^{q-2}u)(t) \right\|_{L^2(\Omega)}^2 + \frac{4}{qq'} \int_s^t \left\| (|u|^{(q-2)/2}u)(r) \right\|_{\X}^2 \,\d r
}\nonumber\\
 &\leq \frac{1}{2} \left\|(|u|^{q-2}u)(s) \right\|_{L^2(\Omega)}^2 \label{enineq3}
\end{align}
for any $0 \leq s \leq t < \infty$. Furthermore, if $q > 2$, then $\partial_t (|u|^{q-2}u)$ lies on $L^2(0,\infty;L^{q'}(\Omega))$. 

In addition, if $\rho_0 \in L^\alpha(\Omega)$ for some $\alpha \in (1,\infty)$, then
\begin{align*}
|u|^{q-2}u &\in C_{\rm weak}([0,\infty);L^\alpha(\Omega)) \cap C_+([0,\infty);L^\alpha(\Omega)),\\ 
|u|^{(\beta-2)/2}u &\in L^2(0,\infty;\X),
\end{align*}
where $\beta := (q-1)(\alpha-1)+1$, and
\begin{align}
\MoveEqLeft{
\frac{1}{\alpha} \left\| (|u|^{q-2}u)(t) \right\|_{L^\alpha(\Omega)}^\alpha + \frac{4}{\beta\beta'} \int^t_s \left\| (|u|^{(\beta-2)/2}u)(r) \right\|_{\X}^2 \,\d r
}\nonumber\\
&\leq \frac{1}{\alpha} \left\| (|u|^{q-2}u)(s) \right\|_{L^\alpha(\Omega)}^\alpha \label{enineq4}
\end{align}
for $0 \leq s \leq t < \infty$.
\end{lemma}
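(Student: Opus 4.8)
The plan is to prove Lemma~\ref{L:ex-ensol} by a time-discretization (Rothe/minimizing-movement) scheme that respects the variational structure of \eqref{pde}. First I would fix a step size $h = T/N > 0$ and, setting $u_0^h := u_0$ (or a suitable smoothing of it), define $u_k^h \in \X \cap L^q(\Omega)$ recursively as a minimizer of the functional
$$
w \mapsto \frac{1}{q' h}\int_\Omega \left(|w|^q - q\,(|u_{k-1}^h|^{q-2}u_{k-1}^h)\,w\right)\d x + \frac12 \|w\|_{\X}^2,
$$
whose Euler--Lagrange equation is the implicit Euler scheme $h^{-1}\big(|u_k^h|^{q-2}u_k^h - |u_{k-1}^h|^{q-2}u_{k-1}^h\big) + (-\Delta)^\theta u_k^h = 0$ in $\X^*$. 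Existence of the minimizer follows from coercivity and weak lower semicontinuity (using the compact embedding $\X \hookrightarrow L^q(\Omega)$ for $q < 2_\theta^*$, and the growth/convexity of $w\mapsto\int|w|^q$ together with Sobolev--Poincar\'e \eqref{SP} for $q \le 2_\theta^*$; the hypothesis \eqref{hyp} that $\rho_0 \in L^2(\Omega)$ handles the borderline integrability of the linear term). Uniqueness of $u_k^h$ is clear from strict convexity of both terms in $w$ (the map $w \mapsto |w|^{q-2}w$ being strictly monotone).

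Next I would derive the discrete energy estimates. Testing the implicit Euler relation by $u_k^h$ and using the elementary convexity inequality $(|a|^{q-2}a - |b|^{q-2}b)\,a \ge \frac{1}{q'}(|a|^q - |b|^q)$ gives the discrete version of \eqref{enineq1}. Testing instead by $u_k^h - u_{k-1}^h$ and using that $\frac{1}{h}\int_\Omega(|u_k^h|^{q-2}u_k^h - |u_{k-1}^h|^{q-2}u_{k-1}^h)(u_k^h - u_{k-1}^h)\,\d x \ge \frac{4}{qq'}\frac1h\big\||u_k^h|^{(q-2)/2}u_k^h - |u_{k-1}^h|^{(q-2)/2}u_{k-1}^h\big\|_{L^2(\Omega)}^2$ (the standard inequality $(|a|^{q-2}a-|b|^{q-2}b)(a-b)\ge \frac{4}{qq'}\big||a|^{(q-2)/2}a - |b|^{(q-2)/2}b\big|^2$), together with $\langle (-\Delta)^\theta u_k^h, u_k^h-u_{k-1}^h\rangle_{\X} \ge \frac12\|u_k^h\|_{\X}^2 - \frac12\|u_{k-1}^h\|_{\X}^2$, yields the discrete version of \eqref{enineq2}; in particular $\|u_k^h\|_{\X}$ is nonincreasing in $k$. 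Testing by $|u_k^h|^{q-2}u_k^h$ (legitimate since this lies in $\X$ once $u_k^h$ does, by the chain rule for fractional/local gradients — here one uses that $s\mapsto|s|^{q-2}s$ is Lipschitz on bounded sets and an approximation, or the Stroock--Varopoulos inequality $\langle(-\Delta)^\theta u, |u|^{q-2}u\rangle_{\X} \ge \frac{4}{qq'}\||u|^{(q-2)/2}u\|_{\X}^2$) gives the discrete \eqref{enineq3}; and testing by $|u_k^h|^{\beta-2}u_k^h$ with $\beta=(q-1)(\alpha-1)+1$ gives the discrete \eqref{enineq4} under the extra hypothesis $\rho_0\in L^\alpha(\Omega)$. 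Finally, when $q>2$ one tests by $\partial$-difference of $|u_k^h|^{q-2}u_k^h$ against itself in $L^{q'}$, or rather estimates $\|(-\Delta)^\theta u_k^h\|_{L^{q'}} = h^{-1}\|\Delta$-difference of $\rho^h\|_{L^{q'}}$ using the summed bound — this yields the $L^2(0,\infty;L^{q'}(\Omega))$ claim.

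Then I would pass to the limit. Define the piecewise-constant and piecewise-linear interpolants $\bar u_h, \hat u_h$ and similarly $\bar\rho_h, \hat\rho_h$ for $\rho=|u|^{q-2}u$. The discrete estimates give, uniformly in $h$: $\bar u_h$ bounded in $L^\infty(0,\infty;\X)\cap L^q(Q_\infty)$, $\bar\rho_h$ bounded in $L^{q'}(Q_\infty)\cap L^\infty(0,\infty;L^2(\Omega))$, $\partial_t\hat\rho_h$ bounded in $L^\infty(0,\infty;\X^*)$ (from the equation and the $L^\infty_t\X$ bound on $u$) and in $L^2(0,\infty;L^{q'})$ when $q>2$, and $\hat m_h := |\,\cdot\,|^{(q-2)/2}(\cdot)$ applied to $u_h$ bounded in $W^{1,2}(0,\infty;L^2)\cap L^2(0,\infty;\X)$. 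An Aubin--Lions--Simon argument with the compact chain $\X\hookrightarrow\hookrightarrow L^2(\Omega)\hookrightarrow\X^*$ (cf.~\eqref{pivot}) yields strong convergence $\hat\rho_h\to\rho$ in $C([0,T];\X^*)$ and, via the $W^{1,2}(0,\infty;L^2)$ bound on $|u_h|^{(q-2)/2}u_h$, strong convergence in $L^2(Q_T)$ hence a.e.; this identifies the weak $L^{q'}$ limit of $\bar\rho_h$ as $|u|^{q-2}u$ with $u$ the weak-$L^q$ limit of $\bar u_h$ (a monotonicity/Minty argument, or Egorov, closes the nonlinearity). Passing to the limit in the piecewise equation gives \eqref{weak_form}/\eqref{ee}; lower semicontinuity transfers the discrete energy inequalities to \eqref{enineq1}--\eqref{enineq4}. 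The weak and right-continuity regularity ($C_{\rm weak}$, $C_+$) of $u$ into $\X$ and of $\rho$ into $L^q$, $L^\alpha$, $\X^*$, $\partial_t\rho$ into $\X^*$, follows from the monotonicity of $t\mapsto\|u(t)\|_{\X}$ (resp.~of the relevant norms), the energy identity \eqref{enineq0}, and a standard argument combining weak continuity with norm upper-semicontinuity from the right. The initial condition $u(t)\to u_0$ strongly in $\X$ comes from $\limsup_{t\to0_+}\|u(t)\|_{\X}\le\|u_0\|_{\X}$ (energy inequality) plus weak convergence. Uniqueness was already proved in part~(i) of Theorem~\ref{T:main}.

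\textbf{Main obstacle.} The delicate point is the rigorous justification of the various test-function computations at the discrete level — in particular testing by $|u_k^h|^{q-2}u_k^h$ and by $|u_k^h|^{\beta-2}u_k^h$, which requires knowing these belong to $\X$ (a chain-rule/composition issue that is genuinely nontrivial for the nonlocal seminorm $[\,\cdot\,]_{H^\theta}$, where one relies on the pointwise inequality $\big||a|^{(q-2)/2}a - |b|^{(q-2)/2}b\big|^2 \le c_q(a-b)(|a|^{q-2}a-|b|^{q-2}b)$ evaluated at $a=u(x),b=u(y)$ and its counterpart for $|u|^{\beta-2}u$) — and the passage to the limit in the term $\int|u|^q$ and in the product $\langle\partial_t\rho,\varphi\rangle$, where one must upgrade weak convergence to strong/a.e.~convergence. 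The unified treatment of $1<q<2$ and $q>2$ here is what forces careful bookkeeping of the exponents, especially the borderline cases $q=2_\theta^*$ and the handling, via hypothesis \eqref{hyp}, of the low integrability of $\rho_0$ when $q>2_\theta^*$; the relaxation of \eqref{hyp} to the weaker hypothesis \eqref{ini-hyp+} of Theorem~\ref{T:main}(ii) is deferred to \S\ref{Ss:gen} by approximating $u_0$ and using the contraction \eqref{conti-dep}.
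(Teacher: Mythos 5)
Your proposal follows essentially the same route as the paper: the same minimizing-movement discretization, the same three discrete energy estimates obtained by testing with $u_{k}^h$, the difference quotient, and the (truncated) powers $|u_k^h|^{q-2}u_k^h$, $|u_k^h|^{\beta-2}u_k^h$, the same compactness chain $\X\hookrightarrow\hookrightarrow L^2(\Omega)\hookrightarrow \X^*$ with an Ascoli/Aubin--Lions argument, the same monotonicity identification of the nonlinear limit via \eqref{EleIne}, and the same derivation of $C_+$ regularity and the initial condition from the energy inequality plus uniform convexity. Only two trivial points to fix: the coefficient of the $L^q$ term in the discrete functional should be $\frac{1}{qh}$ rather than $\frac{1}{q'h}$ (so that its Fr\'echet derivative is $h^{-1}|w|^{q-2}w$), and to pass from the inequalities on $[0,t]$ to general $[s,t]$ you should, as the paper does, restart the flow at time $s$ and invoke the already-established uniqueness.
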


\begin{proof}
This proof is divided into four steps.

\smallskip
\noindent
{\bf Step 1 (Discretization via minimization).} We shall introduce a time-discretization for the Cauchy--Dirichlet problem \eqref{pde}--\eqref{ic} and prove existence of solutions to some discretized problems. Set 
$$
X := \X \cap L^q(\Omega)
$$
being endowed with norm
$$
\|w\|_X := \|w\|_{\X} + \|w\|_{L^q(\Omega)} \ \text{ for } \ w \in X.
$$
Let $T>0$ be arbitrarily fixed and let $N \in \N$ be the number of steps. Set $\tau:=T/N > 0$ being the time-step. We now consider the following discretized equations:
\begin{equation}\label{En}
\frac{|u_{n+1}|^{q-2} u_{n+1} - |u_n|^{q-2}u_n}{\tau} + (-\Delta)^\theta u_{n+1} = 0 \ \text{ in } X^*, \quad u_{n+1}\in X
\end{equation}
for $n=0,1,\dots,N-1$. For each fixed $n$, we shall prove existence of $u_{n+1} \in X$ satisfying \eqref{En} and $|u_{n+1}|^{q-2}u_{n+1} \in L^2(\Omega)$ for each (prescribed) $u_n \in X$ satisfying $|u_n|^{q-2}u_n \in L^2(\Omega)$ in a variational fashion. To this end, define a functional $I_n : X \to \R$ by
$$
I_n(w) = \frac{1}{q\tau}\|w\|_{L^q(\Omega)}^q + \frac{1}{2} \|w\|_{\X}^2 - \int_\Omega \frac{|u_n|^{q-2}u_n}{\tau}w \,\d x
$$
for $w \in X$. Then $I_n$ is strictly convex and of class $C^1$ in $X$, and moreover, we find that
$$
\lim_{\|w\|_X\to \infty} I_n(w) = \infty.
$$
Indeed, using H\"older's and Young's inequalities, we have
\begin{align*}
\left| \int_\Omega \frac{|u_n|^{q-2}u_n}{\tau} w \,\d x \right|
&\leq \frac{1}{\tau} \|u_n\|_{L^q(\Omega)}^{q-1} \|w\|_{L^q(\Omega)},\\
&\leq \frac 1 \tau \left( \frac{1}{q'\epsilon^{q'}} \|u_n\|_{L^q(\Omega)}^q + \frac{\epsilon^q}{q} \|w\|_{L^q(\Omega)}^q \right)
\end{align*}
for $\epsilon>0$. Choosing $\epsilon>0$ small enough, we can take a constant $C$ (which may depend on $u_n$ prescribed) such that
$$
\frac{1}{q \tau} \|w\|_{L^q(\Omega)}^q - \int_\Omega \frac{|u_n|^{q-2}u_n}{\tau} w \,\d x \geq \frac{1}{2 q \tau}\|w\|_{L^q(\Omega)}^q - C.
$$
Thus we derive a coercive estimate,
$$
I_n(w) \geq \frac{1}{2 q \tau} \|w\|_{L^q(\Omega)}^q + \frac{1}{2}\|w\|_{\X}^2 - C
$$
for $w \in X$. Combining the coercivity with the convexity and lower semicontinuity of $I_n$ on $X$, we infer that $I_n$ admits a unique minimizer $u_{n+1}$ over $X$ (see, e.g.,~\cite[Corollary 3.23]{B-FA}). Furthermore, the minimizer $u_{n+1} \in X$ solves the Euler-Lagrange equation,
$$
\d I_n(u_{n+1}) = 0 \ \text{ in } X^*,
$$
which is equivalent to the equation \eqref{En}. Moreover, noting that $u_n \in X = \X \cap L^q(\Omega)$, we can observe from \eqref{En} that $(-\Delta)^\theta u_{n+1}$ lies on both $\X^*$ and $L^{q'}(\Omega)$, and hence, it follows that
$$
\frac{|u_{n+1}|^{q-2}u_{n+1} - |u_n|^{q-2}u_n}{\tau} + (-\Delta)^\theta u_{n+1} = 0 \ \text{ in } \X^* \cap L^{q'}(\Omega).
$$
We shall also prove in the next step that $|u_{n+1}|^{q-2}u_{n+1} \in L^2(\Omega)$.

\smallskip
\noindent
{\bf Step 2 (Discrete energy inequalities).} We next derive energy inequalities for the solutions $\{u_n\}_{n=1}^N$ of \eqref{En}. They will provide uniform (with respect to the time-step $\tau$) estimates for approximate solutions to \eqref{pde}--\eqref{ic}. Let $j \in \{0,\ldots,N-1\}$ and test \eqref{En} with $n = j$ by $u_{j+1} \in X$. Then we see that
$$
\int_\Omega \frac{|u_{j+1}|^{q-2}u_{j+1} - |u_j|^{q-2}u_j}{\tau} u_{j+1} \, \d x + \|u_{j+1}\|_{\X}^2 = 0.
$$
By virtue of Young's inequality (or convexity), we have
\begin{align*}
\MoveEqLeft{
\int_\Omega \frac{|u_{j+1}|^{q-2}u_{j+1} - |u_j|^{q-2}u_j}{\tau} u_{j+1} \, \d x}\\
&\geq \frac{1}{\tau q'} \int_\Omega |u_{j+1}|^q \, \d x - \frac{1}{\tau q'} \int_\Omega |u_j|^q \, \d x,
\end{align*} 
which yields
$$
\frac{1}{q'} \|u_{j+1}\|_{L^q(\Omega)}^q - \frac{1}{q'} \|u_j\|_{L^q(\Omega)}^q + \tau\|u_{j+1}\|_{\X}^2 \leq 0.
$$
Summing it up for $j=0,\dots,n$, we infer that
\begin{equation}\label{est1disc}
\frac{1}{q'} \|u_{n+1}\|_{L^q(\Omega)}^q + \sum_{j=0}^n \tau \|u_{j+1}\|_{\X}^2\leq \frac{1}{q'} \|u_0\|_{L^q(\Omega)}^q
\end{equation}
for $n=0,\dots,N-1$.

We next test \eqref{En} with $n = j$ by $(u_{j+1}-u_j)/\tau \in \X$. It then follows that
$$
\int_\Omega \frac{|u_{j+1}|^{q-2}u_{j+1}-|u_j|^{q-2}u_j}{\tau} \frac{u_{j+1}-u_j}{\tau} \, \d x + \left(u_{j+1},\frac{u_{j+1}-u_j}{\tau} \right)_{\X} = 0.
$$ 
Employing Young's inequality (or convexity), we see that
$$
\left( u_{j+1}, \frac{u_{j+1}-u_j}{\tau} \right)_{\X}
\geq \frac{1}{2\tau} \|u_{j+1}\|_{\X}^2 - \frac{1}{2\tau} \|u_j\|_{\X}^2.
$$
On the other hand, from the following elementary inequality:
\begin{equation}\label{EleIne}
 (|a|^{q-2}a - |b|^{q-2}b)(a-b) \geq \frac 4{qq'} \left| |a|^{(q-2)/2}a - |b|^{(q-2)/2}b \right|^2
\end{equation}
for $a,b \in \R$ (see, e.g.,~\cite[Appendix]{A23}), we derive that
\begin{align*}
\MoveEqLeft{
\frac{|u_{j+1}|^{q-2}u_{j+1} - |u_j|^{q-2}u_j}{\tau} \frac{u_{j+1}-u_j}{\tau}
}\\
&\geq \frac{4}{qq'}\left| \frac{|u_{j+1}|^{(q-2)/2}u_{j+1}-|u_j|^{(q-2)/2}u_j}{\tau}\right|^2.
\end{align*}
Therefore we observe that
\begin{align*}
\MoveEqLeft{
\frac{4}{qq'} \tau \left\|\frac{|u_{j+1}|^{(q-2)/2}u_{j+1}-|u_j|^{(q-2)/2}u_j}{\tau}\right\|_{L^2(\Omega)}^2
}\\
&\quad +\frac{1}{2}\|u_{j+1}\|_{\X}^2-\frac{1}{2}\|u_j\|_{\X}^2\leq 0.
\end{align*}
Summing it up for $j=0,\dots,n$, we obtain
\begin{align}
\MoveEqLeft{
\frac{4}{qq'}\sum_{j=0}^n\tau \left\|\frac{|u_{j+1}|^{(q-2)/2}u_{j+1}-|u_j|^{(q-2)/2}u_j}{\tau}\right\|_{L^2(\Omega)}^2
}\nonumber\\
&\quad +\frac{1}{2}\|u_{n+1}\|_{\X}^2\leq \frac{1}{2}\|u_0\|_{\X}^2\label{est2disc}
\end{align}
for any $n=0,\dots,N-1$.

We (formally) test \eqref{En} by $|u_{j+1}|^{q-2}u_{j+1}$ to see that
\begin{align*}
\MoveEqLeft{
\int_\Omega \frac{|u_{j+1}|^{q-2}u_{j+1}-|u_j|^{q-2}u_j}{\tau} |u_{j+1}|^{q-2}u_{j+1} \, \d x
}\\
&\quad +
\left( u_{j+1}, |u_{j+1}|^{q-2}u_{j+1} \right)_{\X} \leq 0.
\end{align*}
Using Young's inequality (or convexity), we see that
\begin{align*}
\MoveEqLeft{
\int_\Omega \frac{|u_{j+1}|^{q-2}u_{j+1}-|u_j|^{q-2}u_j}{\tau} |u_{j+1}|^{q-2}u_{j+1} \, \d x
}\\
&\geq \frac{1}{2\tau} \int_\Omega |u_{j+1}|^{2(q-1)} \, \d x - \frac{1}{2\tau} \int_\Omega |u_{j}|^{2(q-1)} \, \d x.
\end{align*}
It also follows from the elementary inequality \eqref{EleIne} that
\begin{align*}
\MoveEqLeft{
( u_{j+1}(x) - u_{j+1}(y) ) \left( (|u_{j+1}|^{q-2}u_{j+1})(x) - (|u_{j+1}|^{q-2}u_{j+1})(y) \right)
}\\
&\geq \frac{4}{qq'} \left| (|u_{j+1}|^{(q-2)/2}u_{j+1})(x)- (|u_{j+1}|^{(q-2)/2}u_{j+1})(y) \right|^2
\end{align*}
for a.e.~$x,y\in\R^d$. Thus we finally obtain
\begin{align*}
\left( u_{j+1}, |u_{j+1}|^{q-2}u_{j+1} \right)_{\X} \geq \frac{4}{qq'} \left\| |u_{j+1}|^{(q-2)/2}u_{j+1} \right\|_{\X}^2
\end{align*}
for both cases $\theta \in (0,1)$ and $\theta = 1$ (see \S \ref{Ss:fSob}). Therefore if $|u_j|^{q-2}u_j \in L^2(\Omega)$, one can verify that $|u_{j+1}|^{q-2}u_{j+1}\in L^2(\Omega)$, $|u_{j+1}|^{(q-2)/2}u_{j+1}\in \X$ and
\begin{align}
\MoveEqLeft{
\frac{1}{2\tau} \left\| |u_{j+1}|^{q-2}u_{j+1} \right\|_{L^2(\Omega)}^2 + \frac{4}{qq'}\left\||u_{j+1}|^{(q-2)/2}u_{j+1}\right\|_{\X}^2
}\nonumber\\
& \leq \frac{1}{2\tau} \left\| |u_{j}|^{q-2}u_j \right\|_{L^2(\Omega)}^2.
\label{step_n}
\end{align}
Hence summing it up \eqref{step_n} for $j=0,\dots,n$ and using the assumption that $|u_0|^{q-2}u_0 \in L^2(\Omega)$ (see \eqref{hyp}), we conclude that
\begin{align}
\MoveEqLeft{
\frac{1}{2} \left\| |u_{n+1}|^{q-2}u_{n+1} \right\|_{L^2(\Omega)}^2 + \frac{4}{qq'} \sum_{j=0}^n \tau \left\| |u_{j+1}|^{(q-2)/2} u_{j+1} \right\|_{\X}^2
}\nonumber\\
&\leq \frac{1}{2} \left\| |u_0|^{q-2}u_0 \right\|_{L^2(\Omega)}^2\label{est3disc}
\end{align}
for any $n=0,\dots,N-1$.

These formal computations can be justified in a standard way; in case $0 < \theta < 1$, we may introduce a family $(\beta_\vep)_{\vep \in (0,1)}$ of monotone Lipschitz continuous functions satisfying $\beta_\vep(0) = 0$, $|\beta_\vep(r)| \leq |r|^{q-1}$ for $r \in \R$ and $\vep \in (0,1)$ and $\beta_\vep(r) \to |r|^{q-2}r$ as $\vep \to 0_+$ for each $r \in \R$, and then, test \eqref{En} by $\beta_\vep(u_{j+1}) \in \X$ instead of $|u_{j+1}|^{q-2}u_{j+1}$. We can then reach the same conclusion by passing to the limit as $\vep \to 0_+$ and by employing the dominated convergence theorem. In case $\theta = 1$, we may  also require additional properties that $|\beta_\vep'(r)| \leq C|r|^{q-2}$ for $r \in \R$ and $\vep \in (0,1)$ and $\beta'_\vep(r) \to (q-1)|r|^{q-2}$ as $\vep \to 0_+$ for each $r \in \R$.

For later use, we further derive an $L^\alpha$ estimate for $|u_n|^{q-2}u_n$, provided that $\rho_0 \in L^\alpha(\Omega)$ for some $\alpha > 1$. Setting $\rho_{j+1} := |u_{j+1}|^{q-2} u_{j+1}$, we also (formally) test \eqref{En} by $| \rho_{j+1} |^{\alpha-2} \rho_{j+1} = |u_{j+1}|^{\beta-2}u_{j+1}$ with $\beta = (q-1)(\alpha-1)+1 > 1$ and sum it up for $j = 0,\ldots,n$. We can similarly deduce from \eqref{EleIne} that
\begin{equation}\label{est4disc}
\frac 1 \alpha \left\| \rho_{n+1} \right\|_{L^\alpha(\Omega)}^\alpha + \frac{4}{\beta \beta'} \sum_{j=0}^n \left\| |u_{j+1}|^{(\beta-2)/2}u_{j+1} \right\|_{\X}^2 \leq \frac 1 \alpha \left\| \rho_0 \right\|_{L^\alpha(\Omega)}^\alpha,
\end{equation}
where $\beta' = \beta/(\beta - 1)$, for any $n = 0,\ldots, N-1$, provided that $\rho_0 \in L^\alpha(\Omega)$. These formal procedures can also be justified as above. 

\smallskip
\noindent
{\bf Step 3 (Interpolation and weak convergence).} We define a \emph{piecewise constant interpolant} $\bar u_\tau : [0,T] \to X$ of $\{u_n\}_{n=0}^N$ and \emph{piecewise linear interpolants} $\rho_\tau : [0,T] \to L^2(\Omega)$ and $w_\tau: [0,T] \to \X$ of $\{|u_n|^{q-2}u_n\}_{n=0}^N$ and $\{|u_n|^{(q-2)/2}u_n\}_{n=0}^N$, respectively, by setting
\begin{align*}
\bar u_\tau(t) &:= u_{n+1},\\
\rho_\tau(t) &:= |u_n|^{q-2}u_n + \frac{|u_{n+1}|^{q-2}u_{n+1}-|u_n|^{q-2}u_n}{\tau}(t-n\tau),\\
w_\tau(t) &:= |u_n|^{(q-2)/2}u_n + \frac{|u_{n+1}|^{(q-2)/2}u_{n+1} - |u_n|^{(q-2)/2}u_n}{\tau} (t-n\tau)
\end{align*}
for $t\in(n\tau, (n+1)\tau]$ and $n = 0,1,\ldots,N-1$ and
\begin{equation}\label{ini-disc}
\bar u_\tau(0) := u_0, \quad \rho_\tau(0) := |u_0|^{q-2}u_0, \quad w_\tau(0) := |u_0|^{(q-2)/2}u_0. 
\end{equation}
Equations \eqref{En} for $n = 0,1,\ldots,N-1$ are then reduced to the evolution equation,
\begin{equation}\label{eedisc}
\partial_t \rho_\tau(t) + (-\Delta)^\theta \bar u_\tau(t) = 0 \ \text{ in } \X^*, \quad t \in (0,T) \setminus I_0,
\end{equation}
where $I_0 := \{n\tau \colon n = 1,2,\ldots,N-1\}$. Moreover, the discrete energy inequalities \eqref{est1disc}, \eqref{est2disc} and \eqref{est3disc} can also be rewritten as
\begin{align}
\frac{1}{q'} \|\bar u_\tau(t)\|_{L^q(\Omega)}^q + \int_0^t \|\bar u_\tau(r)\|_{\X}^2\,\d r 
&\leq \frac{1}{q'} \|u_0\|_{L^q(\Omega)}^q,\label{enineq10}\\
\frac{4}{qq'} \int_0^t \|\partial_t w_\tau(r)\|_{L^2(\Omega)}^2\,\d r + \frac 12 \|\bar u_\tau(t)\|_{\X}^2 
&\leq \frac 12 \|u_0\|_{\X}^2 \label{enineq20}
\end{align}
and
\begin{align}
\MoveEqLeft{
\frac{1}{2} \left\| (|\bar u_\tau|^{q-2}\bar u_\tau)(t) \right\|_{L^2(\Omega)}^2 + \frac{4}{qq'} \int_0^t \left\| (|\bar u_\tau|^{(q-2)/2}\bar u_\tau)(r) \right\|_{\X}^2 \,\d r
}\nonumber\\
&\leq \frac{1}{2} \left\||u_0|^{q-2}u_0\right\|_{L^2(\Omega)}^2 \label{enineq30}
\end{align}
for all $t \in [0,T]$. Moreover, it follows from the definitions of $\rho_\tau$ and $w_\tau$ that
\begin{align}
\sup_{t \in [0,T]} \| \rho_\tau(t) \|_{L^2(\Omega)}^2
&= \sup_{t \in [0,T]} \left\| (|\bar u_\tau|^{q-2} \bar u_\tau)(t) \right\|_{L^2(\Omega)}^2,\nonumber\\
\sup_{t \in [0,T]} \| w_\tau(t) \|_{L^2(\Omega)}^2
&= \sup_{t \in [0,T]} \| \bar u_\tau(t) \|_{L^q(\Omega)}^q,\nonumber\\
\sup_{t \in (0,T) \setminus I_0} \| \partial_t \rho_\tau(t) \|_{\X^*}
&\stackrel{\eqref{eedisc}}= \sup_{t \in (0,T) \setminus I_0} \left\| (-\Delta)^\theta \bar u_\tau(t) \right\|_{\X^*}\nonumber\\
&=\sup_{t \in (0,T) \setminus I_0} \| \bar u_\tau(t) \|_{\X}.\label{drho}
\end{align}
Therefore taking a sequence $(\tau_n)$ in $(0,1)$ converging to zero, we can deduce that
\begin{alignat}{2}
\bar u_{\tau_n} &\to u \quad && \text{ weakly star in } L^\infty(0,T;X),\label{cvg2}\\
w_{\tau_n} &\to w \quad && \text{ weakly in } W^{1,2}(0,T;L^2(\Omega)),\label{cvg3}\\
|\bar u_{\tau_n}|^{(q-2)/2}\bar u_{\tau_n} &\to \tilde w && \text{ weakly in } L^2(0,T;\X),\label{cvg3.1}\\
\rho_{\tau_n} &\to \rho \quad && \text{ weakly star in } W^{1,\infty}(0,T;\X^*),\label{cvg4}\\
|\bar u_{\tau_n}|^{q-2}\bar u_{\tau_n} &\to \tilde\rho && \text{ weakly star in } L^\infty(0,T;L^2(\Omega)) \nonumber%\label{cvg4.1}
\end{alignat}
for some weak (star) limits $u \in L^\infty(0,T;X)$, $w \in W^{1,2}(0,T;L^2(\Omega))$, $\tilde w \in L^2(0,T;\X)$, $\rho \in W^{1,\infty}(0,T;\X^*)$ and $\tilde \rho \in L^\infty(0,T;L^2(\Omega))$. Indeed, we observe that \eqref{enineq10} implies the boundedness of $(w_\tau)$ in $L^\infty(0,T;L^2(\Omega))$. Moreover, \eqref{enineq10} and \eqref{enineq20} yield \eqref{cvg2} and \eqref{cvg3}. Furthermore, \eqref{enineq20} along with \eqref{drho} implies that $(\partial_t \rho_{\tau})$ is bounded in $L^\infty(0,T;\X^*)$. Noting that
\begin{align*}
\|\rho_\tau(t)\|_{\X^*} \leq \left\| |u_0|^{q-2}u_0 \right\|_{\X^*} + \int^t_0 \|\partial_t \rho_\tau(r)\|_{\X^*} \, \d r
\end{align*}
for $t \in [0,T]$, we also find from \eqref{drho} that $(\rho_{\tau})$ is bounded in $L^\infty(0,T;\X^*)$. Thus \eqref{cvg4} follows.

Moreover, we can verify that $w = \tilde w$ and $\rho = \tilde \rho$, since $(\partial_t w_\tau)$ and $(\partial_t \rho_\tau)$ are uniformly bounded in $L^2(0,T;L^2(\Omega))$ and $L^\infty(0,T;\X^*)$, respectively. Indeed, we observe from the definition of $w_\tau$ that
\begin{align*}
\sup_{t \in [0,T]} \left\| w_\tau(t) - (|\bar{u}_\tau|^{(q-2)/2} \bar{u}_\tau)(t) \right\|_{L^2(\Omega)}
\stackrel{\eqref{est2disc}}\leq C \tau^{1/2} \|u_0\|_{\X} \to 0
\end{align*}
as $\tau \to 0_+$. Thus one derives $w = \tilde w$ from \eqref{cvg3} and \eqref{cvg3.1}. Similarly, we can prove $\rho = \tilde \rho$. It hence follows from \eqref{cvg2} and \eqref{cvg4} that
\begin{equation*}%\label{equlim}
\partial_t \rho(t) + (-\Delta)^\theta u(t) = 0 \ \text{ in } \X^*
\end{equation*}
for a.e.~$t \in (0,T)$.

When $|u_0|^{q-2}u_0 \in L^\alpha(\Omega)$ for some $\alpha > 1$, we can further derive from \eqref{est4disc} that
\begin{align}
\MoveEqLeft{
\frac{1}{\alpha} \left\| (|\bar u_\tau|^{q-2}\bar u_\tau)(t) \right\|_{L^\alpha(\Omega)}^\alpha + \frac{4}{\beta\beta'} \int_0^t \left\| (|\bar u_\tau|^{(\beta-2)/2}\bar u_\tau)(r) \right\|_{\X}^2 \,\d r
}\nonumber\\
&\leq \frac{1}{\alpha} \left\||u_0|^{q-2}u_0\right\|_{L^\alpha(\Omega)}^\alpha \label{enineq40}
\end{align}
for all $t \in [0,T]$, and it then follows that
\begin{alignat*}{3}
|\bar u_\tau|^{q-2}\bar u_\tau &\to \rho \quad &&\mbox{ weakly star in } L^\infty(0,T;L^\alpha(\Omega)),\\
|\bar u_\tau|^{(\beta-2)/2}\bar u_\tau &\to \zeta \quad &&\mbox{ weakly in } L^2(0,T;\X)
\end{alignat*}
for some $\zeta \in L^2(0,T;\X)$. Also, $\rho$ turns out to be of class $L^\infty(0,T;L^\alpha(\Omega))$.

\smallskip
\noindent
{\bf Step 4 (Strong convergence and identification of weak limits).} We first verify a \emph{strong} convergence of the approximate solutions in order to identify the weak limits $\rho = |u|^{q-2}u$ and $w = |u|^{(q-2)/2}u$ as well as to check the initial condition. Recall from Step 3 that $(\partial_t \rho_\tau)$ is bounded in $L^\infty(0,T;\X^*)$ uniformly for $\tau \in (0,1)$. Therefore one has
\begin{align*}
\|\rho_\tau(t)-\rho_\tau(s)\|_{\X^*} 
&\leq \int_s^t \|\partial_t \rho_\tau(r)\|_{\X^*} \, \d r\\
&\leq C |t-s| \quad \text{ for any } \ 0 \leq s < t \leq T
\end{align*}
for some $C > 0$ independent of $\tau \in (0,1)$, that is, the family $(\rho_\tau)$ is equicontinuous on $[0,T]$ in the strong topology of $\X^*$. Moreover, it follows from \eqref{est3disc} that
\begin{equation*}
\sup_{t \in [0,T]}\|\rho_{\tau}(t)\|_{L^2(\Omega)} \leq \left\||u_0|^{q-2}u_0 \right\|_{L^2(\Omega)},
\end{equation*}
provided that $|u_0|^{q-2}u_0 \in L^2(\Omega)$ (see \eqref{hyp}). Since $\X$ is compactly embedded in $L^2(\Omega)$, we can deduce (with the aid of Schauder's theorem) that $L^2(\Omega)\equiv L^2(\Omega)^*$ is compactly embedded into $\X^*$ (see Theorem \ref{P:SP} and \eqref{pivot}). Therefore, for each $t \in [0,T]$, the family $(\rho_\tau(t))$ is precompact in $\X^*$. Applying Ascoli's theorem, we can extract a (not relabeled) subsequence of $(\tau_n)$ such that 
\begin{equation}\label{cvgstrong}
\rho_{\tau_n} \to \rho \quad \text{ strongly in } C([0,T];\X^*).
\end{equation}
Moreover, recalling the definitions of $\bar u_\tau$ and $\rho_\tau$ as well as the boundedness of $(\partial_t \rho_\tau)$, we find that
$$
\sup_{t \in [0,T]} \left\| (|\bar u_\tau|^{q-2} \bar u_\tau)(t) - \rho_\tau(t) \right\|_{\X^*} \leq \tau \sup_{t \in [0,T]} \| \partial_t \rho_\tau(t) \|_{\X^*} \leq C \tau \to 0,
$$
which along with \eqref{cvgstrong} implies that
\begin{equation}\label{cvguq1}
|\bar u_{\tau_n}|^{q-2} \bar u_{\tau_n} \to \rho \quad \text{ strongly in } L^\infty(0,T;\X^*).
\end{equation}
We next claim that $w = |u|^{(q-2)/2}u$ and $\rho = |u|^{q-2}u$ a.e.~in $Q_T := \Omega \times (0,T)$. Indeed, it follows from \eqref{EleIne}, \eqref{cvg2} and \eqref{cvguq1} that
\begin{align*}
\MoveEqLeft{
\frac{4}{qq'} \int_0^T \int_{\Omega} \left| |\bar u_{\tau_n}|^{(q-2)/2} \bar u_{\tau_n} - |u|^{(q-2)/2}u \right|^2\, \d x \d t
}\\
&\leq \int_0^T \int_{\Omega} \left( |\bar u_{\tau_n}|^{q-2} \bar u_{\tau_n} - |u|^{q-2}u \right) ( \bar u_{\tau_n} - u ) \, \d x \d t \\
&= \int_0^T \left\langle |\bar u_{\tau_n}|^{q-2} \bar u_{\tau_n} - \rho, \bar u_{\tau_n} - u \right\rangle_{\X} \, \d t \\
&\quad + \int_0^T \left\langle \rho - |u|^{q-2}u, \bar u_{\tau_n} - u \right\rangle_{\X} \,\d t \to 0.
\end{align*}
Therefore we deduce that $|\bar u_{\tau_n}|^{(q-2)/2} \bar u_{\tau_n} \to |u|^{(q-2)/2}u$ strongly in $L^2(Q_T)$, and hence, we obtain $w = |u|^{(q-2)/2}u$. Moreover, we can verify that
\begin{alignat}{2}
\bar u_{\tau_n} &\to u \quad &&\text{ strongly in } L^q(Q_T),\label{cvgu2}\\
|\bar u_{\tau_n}|^{q-2} \bar u_{\tau_n} &\to |u|^{q-2}u \quad &&\text{ strongly in } L^{q'}(Q_T) \label{cvguq2}
\end{alignat}
(see Lemma \ref{L:NemOp} in Appendix). Hence \eqref{cvguq1} and \eqref{cvguq2} yield $\rho = |u|^{q-2}u$ a.e.~in $Q_T$. 

We further observe that
$$
\rho(0) \stackrel{\eqref{cvgstrong}}= \lim_{n \to \infty} \rho_{\tau_n}(0) \stackrel{\eqref{ini-disc}}= |u_0|^{q-2}u_0,
$$
which along with the fact that $\rho \in C([0,T];\X^*)$ gives
$$
(|u|^{q-2}u)(t) = \rho(t) \to \rho_0 = |u_0|^{q-2}u_0 \quad \text{ strongly in } \X^*
$$
as $t \to 0_+$. Thus $u$ turns out to be a weak solution on $[0,T]$ to the Cauchy--Dirichlet problem \eqref{pde}--\eqref{ic} in the sense of Definition \ref{D:sol}.

Finally, we derive additional regularity of the weak solution as well as energy inequalities. Since $W^{1,2}(0,T;L^2(\Omega)) \subset C([0,T];L^2(\Omega))$, we also see that $w = |u|^{(q-2)/2}u \in C([0,T];L^2(\Omega))$. Moreover, we can deduce from Lemma \ref{L:NemOp} that $u \in C([0,T];L^q(\Omega))$ and $\rho = |u|^{q-2}u \in C([0,T];L^{q'}(\Omega))$, which along with $u \in L^\infty(0,T;\X)$ and $|u|^{q-2}u \in L^\infty(0,T;L^2(\Omega))$ yield $u \in C_{\rm weak}([0,T];\X)$ and $|u|^{q-2}u \in C_{\rm weak}([0,T];L^2(\Omega))$, respectively (see~\cite[Chap.\,3, Lemma 8.1]{LM}).\prf{\footnote{\UUU We have already known that $u\in C([0,\infty);L^q(\Omega)) \cap L^\infty(0,\infty;\X)$. Let $t_0 \in [0,\infty)$ and let $t_n \in [0,\infty)$ be an arbitrary sequence converging to $t_0$. Since $(u(t_n))$ is bounded in $\X$, we may extract a subsequence (still denoted by $(t_n)$) such that $u(t_n)$ converges to a function $v \in \X$ weakly in $\X$. Moreover, $u(t_n) \to u(t_0)$ strongly in $L^q(\Omega)$, and hence, we can deduce that $v = u(t_0)$. Thus,
$$
u(t_n) \to u(t_0) \quad \text{ weakly in } \X,
$$
and therefore, we obtain $u \in C_{\rm weak}([0,\infty);\X)$.}} Furthermore, recalling the uniform estimates from \eqref{enineq10}--\eqref{enineq30} along with \eqref{cvgu2} and \eqref{cvguq2}, for a.e.~$t \in (0,T)$, we can derive, up to a (not relabeled) subsequence, that
\begin{alignat*}{2}
\bar u_{\tau_n} (t) &\to u(t) \quad &&\text{ weakly in } \X,\\
(|\bar u_{\tau_n}|^{q-2}\bar u_{\tau_n})(t) &\to (|u|^{q-2}u)(t) \quad &&\text{ weakly in } L^2(\Omega).
\end{alignat*}
Thus combining \eqref{enineq10}--\eqref{enineq30} with the convergence proved so far, we obtain
\begin{align}
\frac{1}{q'} \|u(t)\|_{L^q(\Omega)}^q + \int_0^t \|u(r)\|_{\X}^2\,\d r
&\leq \frac{1}{q'} \|u_0\|_{L^q(\Omega)}^q,\label{enineq10'}\\
\frac{4}{qq'} \int_0^t \left\| \partial_t (|u|^{(q-2)/2}u)(r) \right\|_{L^2(\Omega)}^2\, \d r + \frac 12 \|u(t)\|_{\X}^2 
&\leq \frac 12 \|u_0\|_{\X}^2,\label{enineq20'}
\end{align}
\begin{align}
\MoveEqLeft{
\frac{1}{2} \left\| (|u|^{q-2}u)(t) \right\|_{L^2(\Omega)}^2 + \frac{4}{qq'} \int_0^t \left\| (|u|^{(q-2)/2}u)(r) \right\|_{\X}^2 \,\d r 
}\nonumber\\
&\leq \frac{1}{2} \left\| |u_0|^{q-2}u_0 \right\|_{L^2(\Omega)}^2\label{enineq30'}
\end{align}
for all $t \in [0,T]$. Indeed, we first derive the energy inequalities above for a.e.~$t \in (0,T)$, and then, using the weak continuities along with the weak lower semicontinuity of norms, they can be extended to any $t \in [0,T]$. Furthermore, thanks to the uniqueness of weak solutions (see \S \ref{Ss:uniq}), the above energy inequalities can also be extended to the general forms of \eqref{enineq1}--\eqref{enineq3} for any $0 \leq s \leq t \leq T$. Indeed, for any fixed $s \in [0,T)$, the function $\tilde{u}(\cdot) := u(\,\cdot\,+s)$ turns out to be a weak solution on $[0,T-s]$ of \eqref{pde}--\eqref{ic} with the initial datum $\rho(s) = (|u|^{q-2}u)(s)$, which satisfies \eqref{hyp} due to the regularity obtained so far. Hence from the uniqueness of weak solutions as well as the energy inequalities \eqref{enineq10'}--\eqref{enineq30'}, the assertions \eqref{enineq1}--\eqref{enineq3} follow immediately.

We further claim that $u \in C_+([0,T];\X)$. Indeed, we have already proved that $u \in C_{\rm weak}([0,T];\X)$. Moreover, it follows from \eqref{enineq2} that
$$
\limsup_{t \searrow s} \|u(t)\|_{\X} \leq \|u(s)\|_{\X}
$$
for each $s \in [0,T)$. Thus due to the uniform convexity of $\|\cdot\|_{\X}$, $u(t)$ converges to $u(s)$ strongly in $\X$ as $t \searrow s$ for each $s \in [0,T)$, that is, $u \in C_+([0,T];\X)$. We can similarly show that $|u|^{q-2}u \in C_+([0,T];L^2(\Omega))$ by using \eqref{enineq3} instead of \eqref{enineq2}. Moreover, the (weak) continuity of $(-\Delta)^\theta : \X \to \X^*$ gives
$$
\partial_t (|u|^{q-2}u) = - (-\Delta)^\theta u \in C_+([0,T];\X^*) \cap C_{\rm weak}([0,T];\X^*).
$$

Due to the arbitrariness of $T > 0$ as well as the uniqueness of weak solutions, one can construct a global-in-time weak solution $u : \Omega \times [0,\infty) \to \R$ to the Cauchy--Dirichlet problem \eqref{pde}--\eqref{ic} satisfying the energy inequalities \eqref{enineq1}--\eqref{enineq3} for any $0 \leq s \leq t < \infty$ as well as the regularities listed in the statement of Lemma \ref{L:ex-ensol} (see also \eqref{enineq0} in Remark \ref{R:ws-reg}). 

Concerning $q > 2$, we claim that $\partial_t (|u|^{q-2}u)$ belongs to $L^2(0,\infty;L^{q'}(\Omega))$. Indeed, by the Mean-Value Theorem and H\"older's inequality, we find that
\begin{align*}
\MoveEqLeft{
\left\| (|u|^{q-2}u)(t_1) - (|u|^{q-2}u)(t_2) \right\|_{L^{q'}(\Omega)}
}\\
&\leq C \left\| (|u|^{(q-2)/2}u)(t_1) - (|u|^{(q-2)/2}u)(t_2) \right\|_{L^2(\Omega)}\\
&\leq C \int^{t_2}_{t_1} \left\| \partial_t (|u|^{(q-2)/2}u)(r) \right\|_{L^2(\Omega)} \,\d r \quad \mbox{ for } \ 0 < t_1 < t_2 < \infty,
\end{align*}
which along with $\|\partial_t (|u|^{(q-2)/2}u)(\cdot) \|_{L^2(\Omega)} \in L^2(0,\infty)$ and~\cite[Theorem 1.4.40]{CaHa} yields the desired conclusion.

When $\rho_0 \in L^\alpha(\Omega)$ for some $\alpha > 1$, we can also derive from \eqref{enineq40} that
\begin{align*}
\MoveEqLeft{
\frac{1}{\alpha} \left\| (|u|^{q-2}u)(t) \right\|_{L^\alpha(\Omega)}^\alpha + \frac{4}{\beta\beta'} \int_0^t \left\| (|u|^{(\beta-2)/2}u)(r) \right\|_{\X}^2 \,\d r
}\\
&\leq \frac{1}{\alpha} \left\||u_0|^{q-2}u_0\right\|_{L^\alpha(\Omega)}^\alpha \quad \mbox{ for } \ t \geq 0,
\end{align*}
which along with the uniqueness result yields \eqref{enineq4} as well. Here we used the relation $\zeta = |u|^{(\beta-2)/2}u$, which follows from the fact that $\bar u_\tau \to u$ a.e.~in $\Omega \times (0,\infty)$, up to a subsequence, by virtue of \eqref{cvgu2}. Finally, \eqref{enineq4} together with $|u|^{q-2}u\in C([0,\infty),L^{q'}(\Omega))$ implies $|u|^{q-2}u \in C_{\rm weak}([0,\infty);L^\alpha (\Omega))$ and $|u|^{q-2}u \in C_+([0,\infty);L^\alpha (\Omega))$. This completes the proof.
\end{proof}

\subsection{Energy identity}\label{Ss:enid}

In this subsection, we shall derive the energy \emph{identity} \eqref{energyineq1} of Theorem \ref{T:main} for general {weak} solutions in the sense of Definition \ref{D:sol}. Actually, we have obtained a similar inequality \eqref{enineq1} in Lemma \ref{L:ex-ensol}; however, it may not be enough to verify the absolute continuity of the function $t \mapsto \|u(t)\|_{L^q(\Omega)}^q$ as well as the energy \emph{identity} \eqref{energyineq1}. Instead, we shall use a chain-rule formula for subdifferentials and directly prove the absolute continuity and energy identity for general {weak} solutions to \eqref{pde}--\eqref{ic}. 

Let $E$ and $E^*$ be a reflexive Banach space and its dual space, respectively. Let $\psi : E \to (-\infty,\infty]$ be a proper (i.e., $\psi \not\equiv \infty$) lower-semicontinuous convex functional. Moreover, let $\psi^* : E^* \to (-\infty,\infty]$ be the \emph{convex conjugate} (or \emph{Legendre transform}) of $\psi$ defined by
$$
\psi^*(\eta) = \sup_{w \in E} \left\{ \langle \eta,w \rangle_E - \psi(w) \right\} \ \text{ for } \ \eta \in E^*.
$$
Then it is well known that $\psi^*$ is also proper lower-semicontinuous and convex in $E^*$ (see, e.g.,~\cite[\S 1.4]{B-FA}). Let $\partial_E \psi : E \to 2^{E^*}$ and $\partial_{E^*} \psi^* : E^* \to 2^E$ denote subdifferential operator of $\psi$ and $\psi^*$, respectively, that is,
$$
\partial_E \psi(w) = \left\{ \eta \in E^* \colon \psi(v) - \psi(w) \geq \langle \eta, v - w \rangle_E \ \text{ for } \ v \in E\right\}
$$
for $w \in D(\psi) := \{v \in E \colon \psi(v) < \infty\}$ with domain $D(\partial_E \psi) := \{v \in D(\psi) \colon \partial_E \psi(v) \neq \emptyset\}$ (moreover, $\partial_{E^*} \psi^*$ can be defined analogously with the identification $E^{**} \equiv E$). Then we recall that, for $(w,\eta) \in E \times E^*$, 
$$
\eta \in \partial_E \psi(w) \quad \text{ if and only if } \quad w \in \partial_{E^*} \psi^*(\eta)
$$ 
(see, e.g.,~\cite{B}). We recall that

\begin{lemma}[Chain-rule for subdifferentials of convex conjugates]\label{L:chainrule}
Let $1 < p < \infty$ and let $(a,b)$ be a nonempty open interval in $\R$. Let $E$ be a reflexive Banach space and let $\psi :E \to (-\infty,\infty]$ be a proper lower semicontinuous convex functional over $E$. Let $u \in L^p(a,b;E)$ and $\xi \in W^{1,p'}(a,b;E^*)$ be such that 
\begin{equation}\label{xi-hyp}
\xi(t) \in \partial_E \psi(u(t)) \quad \text{ for a.e. } t \in (a,b).
\end{equation}
Then the function $t\mapsto \psi^*(\xi(t))$ is absolutely continuous on $[a,b]$, and moreover, it holds that
$$
\frac{\d}{\d t} \psi^*(\xi(t)) = \langle \partial_t \xi(t), u(t) \rangle_E \quad \text{ for a.e. } t \in (a,b).
$$
\end{lemma}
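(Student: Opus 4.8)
The plan is to establish the chain rule by a standard smoothing-plus-Riemann-sum argument, using the fundamental Fenchel identity $\psi(u(t)) + \psi^*(\xi(t)) = \langle \xi(t), u(t)\rangle_E$, which holds for a.e.~$t$ precisely because of hypothesis \eqref{xi-hyp}. First I would record the two discrete/one-sided inequalities coming from convexity: for any $s,t \in (a,b)$,
\[
\psi^*(\xi(t)) - \psi^*(\xi(s)) \geq \langle \xi(t) - \xi(s), u(t) \rangle_E
\quad\text{and}\quad
\psi^*(\xi(t)) - \psi^*(\xi(s)) \leq \langle \xi(t) - \xi(s), u(s) \rangle_E,
\]
where the first uses $u(t) \in \partial_{E^*}\psi^*(\xi(t))$ and the second uses $u(s) \in \partial_{E^*}\psi^*(\xi(s))$ (recall the duality $\eta \in \partial_E\psi(w) \iff w \in \partial_{E^*}\psi^*(\eta)$ quoted just before the lemma). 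Since $\xi \in W^{1,p'}(a,b;E^*)$, the map $t \mapsto \xi(t)$ is absolutely continuous into $E^*$ with $\xi(t) - \xi(s) = \int_s^t \partial_t\xi(r)\,\d r$, and $u \in L^p(a,b;E)$; hence the right-hand sides are controlled by $\|u(t)\|_E \int_s^t \|\partial_t \xi(r)\|_{E^*}\,\d r$ (resp.~with $u(s)$), and an application of H\"older's inequality shows $t \mapsto \psi^*(\xi(t))$ is absolutely continuous on any $[a+\delta, b-\delta]$; the absolute continuity up to the endpoints follows since $\|u\|_E$ and $\|\partial_t\xi\|_{E^*}$ are globally integrable on $(a,b)$.

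Once absolute continuity is in hand, $\frac{\d}{\d t}\psi^*(\xi(t))$ exists for a.e.~$t$. To identify it, I would divide the two displayed inequalities by $t - s$ and let $s \to t$ (from both sides). For the lower bound this gives $\frac{\d}{\d t}\psi^*(\xi(t)) \geq \langle \partial_t \xi(t), u(t)\rangle_E$ at every Lebesgue point $t$ of $r \mapsto \langle \partial_t\xi(r), u(t)\rangle$ where $\xi$ is differentiable; for the upper bound one needs to pass $u(s) \to u(t)$, which is where a little care is required. The cleanest route is: at a point $t$ that is simultaneously a Lebesgue point of $r \mapsto \partial_t\xi(r)$ in $E^*$ and a Lebesgue point of $r \mapsto u(r)$ in $E$ (a.e.~$t$ qualifies), write $\langle \xi(t)-\xi(s), u(s)\rangle_E = \langle \int_s^t \partial_t\xi(r)\,\d r,\, u(s)\rangle_E$ and estimate the difference from $\langle \int_s^t \partial_t\xi,\, u(t)\rangle$ by $\|\int_s^t\partial_t\xi\|_{E^*}\,\|u(s)-u(t)\|_E$; dividing by $|t-s|$, the first factor stays bounded (Lebesgue point of $\partial_t\xi$) while $\frac{1}{|t-s|}\int_s^t\|u(r)-u(t)\|_E\,\d r \to 0$, which handles the averaged version. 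Combining the two one-sided bounds yields the asserted identity a.e.

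The main obstacle is precisely this last passage to the limit in the upper inequality: unlike the lower bound, it involves $u$ evaluated at the moving point $s$ rather than at $t$, and $u$ is merely $L^p$, not continuous, so pointwise evaluation is not literally meaningful. The remedy is to work with difference quotients integrated against test functions, or equivalently to invoke Lebesgue differentiation for the vector-valued map $u$, so that the bad term is killed in an averaged sense; this suffices since we only need the identity for a.e.~$t$. An alternative, perhaps more robust, strategy would be to first prove the lemma under the extra assumption $u \in C([a,b];E)$ (where both limits are immediate) and then approximate a general $u \in L^p$ by mollification $u_h = u * \eta_h$; however, one must check that $\xi(t) \in \partial_E\psi(u_h(t))$ fails in general, so this variant actually requires mollifying $\xi$ instead and using lower semicontinuity of $\psi^*$ together with monotonicity of $\partial_E\psi$ to pass to the limit — which is essentially the same analytic content repackaged. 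I would therefore carry out the direct argument above, citing the standard reference for the chain rule for subdifferentials (e.g.~\cite{B}) for the parts that are entirely routine.
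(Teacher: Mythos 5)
Your proposal attempts a self-contained proof, whereas the paper disposes of the lemma in two lines by reducing it, via reflexivity and the equivalence $\xi(t)\in\partial_E\psi(u(t))\iff u(t)\in\partial_{E^*}\psi^*(\xi(t))$, to the chain-rule formula of \cite[Lemma 4.1]{Colli}. Your skeleton (sandwiching the increment of $\psi^*(\xi(\cdot))$ between two subdifferential inequalities) is the right one, but as written it has three defects. The first is a sign slip: $u(t)\in\partial_{E^*}\psi^*(\xi(t))$ yields $\psi^*(\xi(s))\ge\psi^*(\xi(t))+\langle\xi(s)-\xi(t),u(t)\rangle_E$, i.e.\ the \emph{upper} bound $\psi^*(\xi(t))-\psi^*(\xi(s))\le\langle\xi(t)-\xi(s),u(t)\rangle_E$, while $u(s)\in\partial_{E^*}\psi^*(\xi(s))$ yields the \emph{lower} bound with $u(s)$; you have attributed each inequality to the wrong generator. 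The sandwich survives the swap, so this is repairable, but the next two points are substantive.

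Absolute continuity does not follow ``by H\"older'' from the endpoint estimate $|\psi^*(\xi(t))-\psi^*(\xi(s))|\le\max\{\|u(s)\|_E,\|u(t)\|_E\}\int_s^t\|\partial_t\xi\|_{E^*}\,\d r$: the factor $\|u(\cdot)\|_E$ is merely in $L^p$, so over a family of disjoint intervals of small total length the endpoint values are uncontrolled (and need not even be finite at prescribed points). What is actually needed is the telescoping argument you allude to but do not carry out: apply the sandwich on each subinterval of a partition of $[s,t]$, sum, and pass to the mesh-zero limit using Riemann-sum convergence for Bochner integrals (or mollify $\xi$); this is precisely the content of the cited lemma. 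Finally, your identification of the derivative conflates $\|u(s)-u(t)\|_E$ with its average $\tfrac{1}{|t-s|}\int_s^t\|u(r)-u(t)\|_E\,\d r$; Lebesgue points control the latter, not the former, so the limit in the bound involving $u(s)$ does not close as described. The clean fix avoids the issue entirely: in each of the two inequalities $u$ appears at only one endpoint, so let the \emph{other} endpoint move — fix $s$ and let $t\to s^+$ in the inequality carrying $u(s)$, fix $t$ and let $s\to t^-$ in the one carrying $u(t)$. At a.e.\ point (a differentiability point of $\psi^*(\xi(\cdot))$ that is also a Lebesgue point of $\partial_t\xi$) the two one-sided derivatives coincide and the identity follows without ever sending $u(s)\to u(t)$.
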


\begin{proof}
The lemma above is a variant of the well-known chain-rule formula for subdifferentials in convex analysis (cf.~see~\cite[Lemma 3.3]{HB1}). Indeed, one can prove it by combining the chain-rule formula~\cite[Lemma 4.1]{Colli} (see the appendix of~\cite{A23s} for a proof) with the facts that $E^{**}$ is identified with $E$ by reflexivity and \eqref{xi-hyp} is equivalent to $u(t) \in \partial_{E^*}\psi(\xi(t))$ for a.e.~$t \in (a,b)$.
\end{proof}

Let $T \in (0,\infty)$ and let $u = u(x,t)$ be a {weak} solution on $[0,T]$ to \eqref{pde}--\eqref{ic} in the sense of Definition \ref{D:sol}. Testing \eqref{weak_form} with $u(t)\in\X$, we find that
\begin{equation}\label{enidpre}
\left\langle \partial_t (|u|^{q-2}u)(t), u(t) \right\rangle_{\X} + \|u(t)\|_{\X}^2 = 0
\end{equation}
for a.e.~$t \in (0,T)$. We apply Lemma \ref{L:chainrule} by setting
$$
E = X := \X \cap L^q(\Omega) \quad \text{ and } \quad \psi(w) = \frac 1q \|w\|_{L^q(\Omega)}^q \ \text{ for } \ w \in X.
$$
Then $\psi$ is of class $C^1$ in $X$ and $\partial_X \psi(w) = \{|w|^{q-2}w\}$ for $w \in X$ (indeed, the functional $w \mapsto (1/q)\|w\|_{L^q(\Omega)}^q$ defined for $w \in L^q(\Omega)$ is already of class $C^1$ and its Fr\'echet derivative at $w$ coincides with $|w|^{q-2}w$ in $(L^q(\Omega))^* \equiv L^{q'}(\Omega)$, and hence, a.e.~in $\Omega$ as well). Hence from Definition \ref{D:sol}, we find that $\xi = |u|^{q-2}u \in W^{1,2}_{\rm loc}((0,T];\X^*) \hookrightarrow W^{1,2}_{\rm loc}((0,T];X^*)$ and $u \in L^2_{\rm loc}((0,T];X)$, which is obvious for $q \geq 2$ and still holds true for $1 < q < 2$ from the relation $X = \X$. Therefore thanks to Lemma \ref{L:chainrule} the function $t \mapsto \psi^*(\xi(t)) = \psi^*((|u|^{q-2}u)(t))$ turns out to be absolutely continuous on $(0,T]$ such that
$$
\frac{\d}{\d t} \psi^*( (|u|^{q-2}u)(t) ) = \left\langle \partial_t (|u|^{q-2}u)(t), u(t) \right\rangle_X
$$
for a.e.~$t \in (0,T)$. Moreover, we note that
\begin{align*}
 \psi^*((|u|^{q-2}u)(t)) &= \sup_{w \in X} \left\{ \left\langle (|u|^{q-2}u)(t), w \right\rangle_X - \psi(w) \right\}\\
&\geq \left\langle (|u|^{q-2}u)(t), u(t) \right\rangle_X - \psi(u(t)) = \frac 1 {q'} \|u(t)\|_{L^q(\Omega)}^q,
\end{align*}
since $u(t)$ lies on $X$, for a.e.~$t \in (0,T)$. On the other hand, since $X \subset L^q(\Omega)$, it follows by a simple computation that
\begin{align*}
 \psi^*((|u|^{q-2}u)(t)) &\leq \sup_{w \in L^q(\Omega)} \left\{ \left\langle (|u|^{q-2}u)(t), w \right\rangle_X - \psi(w) \right\}\\
&= \frac 1 {q'} \left\| (|u|^{q-2}u)(t) \right\|_{L^{q'}(\Omega)}^{q'} = \frac 1 {q'} \|u(t)\|_{L^q(\Omega)}^q.
\end{align*}
Combining these facts, we obtain
$$
\psi^*( (|u|^{q-2}u)(t)) = \frac 1 {q'} \|u(t)\|_{L^q(\Omega)}^q
$$
for a.e.~$t \in (0,T)$. Therefore the function $t\mapsto \|u(t)\|_{L^q(\Omega)}^q$ is absolutely continuous on $(0,T]$, and 
$$
\frac{1}{q'} \frac{\d}{\d t} \|u(t)\|_{L^q(\Omega)}^q = \left\langle \partial_t (|u|^{q-2}u)(t),u(t) \right\rangle_{\X}
$$
for a.e.~$t \in (0,T)$. Here we also used the fact that $\partial_t (|u|^{q-2}u)(t)$ and $u(t)$ lie in $\X^*$ and $\X$, respectively, for a.e.~$t \in (0,T)$. Thus using \eqref{enidpre}, we obtain \eqref{energyineq1} for a.e.~$t \in (0,T)$. 

Moreover, as we have seen in Lemma \ref{L:ex-ensol} that $|u|^{q-2}u \in W^{1,2}_{\rm loc}([0,\infty);\X^*)$ and $u \in L^2(0,\infty;\X)$, we also obtain the absolute continuity of the function $t \mapsto \|u(t)\|_{L^q(\Omega)}^q$ on $[0,\infty)$ as well as the energy identity \eqref{energyineq1} for a.e.~$t > 0$. %Thus (iv) of Theorem \ref{T:main} has been proved.

\begin{remark}
In this subsection, we have proved the energy identity \eqref{energyineq1} within the present scheme. As for $\theta = 1$, one can prove it more easily by combining the $H^{-1}$ framework for \eqref{pde}--\eqref{ic}. Indeed, the absolute continuity of the function $t \mapsto (m+1)^{-1}\|\rho(t)\|_{L^{m+1}(\Omega)}^{m+1} = (1/q')\|u(t)\|_{L^q(\Omega)}^q$ and the energy identity follow immediately in the $H^{-1}$ framework (see~\cite{HB2}). Moreover, due to the uniqueness (see \S \ref{Ss:uniq} and Remark \ref{R:others}), they also hold true for {weak} solutions. A similar argument may be available for $0 < \theta < 1$.
\end{remark}

\subsection{For general initial data}\label{Ss:gen}

In this subsection, we shall extend the results obtained in Lemma \ref{L:ex-ensol} (with some exceptions) to initial data
\begin{equation}\label{hyp2}
u_0 \in \X \cap L^q(\Omega) \quad \text{ satisfying \eqref{ini-hyp+} \ (instead of \eqref{hyp})}
\end{equation}
for completing a proof for (ii) of Theorem \ref{T:main}.

\begin{lemma}
For every $u_0 \in \X \cap L^q(\Omega)$ satisfying \eqref{ini-hyp+}, all the assertions as in {\rm (ii)} of Theorem {\rm \ref{T:main}} hold true.
\end{lemma}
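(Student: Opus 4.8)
The strategy is to recover the case of general data satisfying \eqref{ini-hyp+} from Lemma \ref{L:ex-ensol} by a density/approximation argument, using the continuous dependence estimate \eqref{conti-dep} to pass to the limit. First I would fix $u_0 \in \X \cap L^q(\Omega)$ with $\rho_0 = |u_0|^{q-2}u_0$ and approximate: when $q \le 2_\theta^*$ the hypothesis \eqref{ini-hyp+} is vacuous and one checks that $\rho_0 \in L^{q'}(\Omega) \subset L^2(\Omega)$ already, so Lemma \ref{L:ex-ensol} applies directly and nothing is needed beyond bookkeeping. The substantive case is $q > 2_\theta^*$ (fast diffusion), where $\rho_0$ is only assumed to lie in $L^{(2_\theta^*)'}(\Omega)$, which need not embed into $L^2(\Omega)$. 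Here I would choose a sequence $u_{0,k} \in \X \cap L^q(\Omega)$ with $\rho_{0,k} := |u_{0,k}|^{q-2}u_{0,k} \in L^2(\Omega)$ such that $u_{0,k} \to u_0$ strongly in $\X \cap L^q(\Omega)$ (e.g.\ truncations $u_{0,k} = T_k(u_0)$ with $T_k(r) = \max\{-k,\min\{k,r\}\}$, which are bounded hence have bounded $\rho_{0,k}$, converge strongly in $\X$ by the contraction property of truncations on the Gagliardo seminorm, and converge strongly in $L^q(\Omega)$ by dominated convergence; one also gets $\rho_{0,k}\to\rho_0$ in $L^{(2_\theta^*)'}(\Omega)$). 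Let $u_k$ be the corresponding energy solution from Lemma \ref{L:ex-ensol}.

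Next I would pass to the limit. By \eqref{conti-dep}, $(|u_k|^{q-2}u_k)$ is Cauchy in $C([0,T];\X^*)$, hence converges to some $\rho \in C([0,T];\X^*)$. The energy inequality \eqref{enineq1} applied to $u_k$ gives a uniform bound on $u_k$ in $L^\infty(0,T;L^q(\Omega)) \cap L^2(0,\infty;\X)$ (using $\|u_{0,k}\|_{L^q}$ bounded), and \eqref{enineq2} gives a uniform bound on $|u_k|^{(q-2)/2}u_k$ in $W^{1,2}(0,\infty;L^2(\Omega))$; these, together with \eqref{eedisc}-type bounds on $\partial_t(|u_k|^{q-2}u_k)$ in $L^2(\vep,T;\X^*)$ for each $\vep>0$, let me extract weakly convergent subsequences with limit $u$ and identify the nonlinear limits $\rho = |u|^{q-2}u$ and the square-root-type limit exactly as in Step 4 of Lemma \ref{L:ex-ensol} (via the elementary inequality \eqref{EleIne} and Lemma \ref{L:NemOp}). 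Lower semicontinuity of norms then transfers \eqref{enineq1}, \eqref{enineq2}, \eqref{enineq3} — here \eqref{enineq3}'s right-hand side involves $\|\rho_{0,k}\|_{L^2}$, which is \emph{not} uniformly bounded, so \eqref{enineq3} is one of the ``exceptions'' that does not survive; instead, when $q>2_\theta^*$ one works with the $L^\alpha$-estimate \eqref{enineq4} at $\alpha = (2_\theta^*)'$ (so $\rho_0 \in L^\alpha(\Omega)$), whose right-hand side $\|\rho_{0,k}\|_{L^\alpha}^\alpha$ \emph{is} uniformly bounded by hypothesis. The regularities $u \in C_{\rm weak}([0,\infty);\X)\cap C_+([0,\infty);\X)$, $|u|^{q-2}u \in C([0,\infty);L^{q'}(\Omega))$, etc., follow from the same uniform-convexity and weak-continuity arguments used at the end of the proof of Lemma \ref{L:ex-ensol}, and the strong initial condition $u(t)\to u_0$ in $\X$ follows from $\limsup_{t\to 0_+}\|u(t)\|_{\X} \le \|u_0\|_{\X}$ (via \eqref{enineq2} and the strong convergence $u_{0,k}\to u_0$ in $\X$) combined with $u \in C_{\rm weak}([0,\infty);\X)$.

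Finally, the energy identity \eqref{energyineq1} and the differential/integral forms \eqref{en-ineq}, \eqref{energyineq2} hold for the limit solution because \S\ref{Ss:enid} derives \eqref{energyineq1} for \emph{any} weak solution in the sense of Definition \ref{D:sol} (only using $|u|^{q-2}u \in W^{1,2}_{\rm loc}((0,T];\X^*)$ and $u \in L^2_{\rm loc}((0,T];\X)$, both of which the limit enjoys); likewise \eqref{energyineq2} is just the ``identity part'' \eqref{enineq2} extended to arbitrary $0\le s<t$ by the semigroup/uniqueness trick already used in Lemma \ref{L:ex-ensol}. The claim $\partial_t(|u|^{q-2}u)\in L^2(0,\infty;L^{q'}(\Omega))$ when $q>2$ is recovered from the Mean-Value-Theorem argument in Lemma \ref{L:ex-ensol}, which only uses $|u|^{(q-2)/2}u \in W^{1,2}(0,\infty;L^2(\Omega))$. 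The main obstacle is the bookkeeping around which estimates degrade: the $L^2$-based inequality \eqref{enineq3} is genuinely lost for $q>2_\theta^*$, so one must be careful to route every argument that previously invoked $\rho_0\in L^2(\Omega)$ (in particular the precompactness-in-$\X^*$ step) through either \eqref{conti-dep} or the $L^\alpha$-estimate \eqref{enineq4} with $\alpha=(2_\theta^*)'$ instead, checking that $L^\alpha(\Omega)\hookrightarrow\X^*$ still holds (it does, since $\X\hookrightarrow L^{2_\theta^*}(\Omega)$ dualizes to $L^{(2_\theta^*)'}(\Omega)\hookrightarrow\X^*$), which is what makes the equicontinuity-plus-Ascoli argument go through.
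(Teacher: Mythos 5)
Your overall strategy---truncate $u_0$ to $u_{0,k}=T_k(u_0)$, solve with Lemma \ref{L:ex-ensol}, use \eqref{conti-dep} to get a Cauchy sequence in $C([0,T];\X^*)$, obtain uniform bounds from \eqref{enineq1}, \eqref{enineq2} and from \eqref{enineq4} with $\alpha=(2_\theta^*)'$ in place of the lost $L^2$-estimate \eqref{enineq3}, identify the nonlinear limits via \eqref{EleIne}, and recover \eqref{energyineq1} from \S\ref{Ss:enid}---is exactly the paper's argument, and your bookkeeping of which estimates survive is accurate. However, your case reduction contains a genuine error. You claim that for $q\le 2_\theta^*$ one has $\rho_0=|u_0|^{q-2}u_0\in L^{q'}(\Omega)\subset L^2(\Omega)$, so that Lemma \ref{L:ex-ensol} applies directly and no approximation is needed. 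On a bounded domain $L^{q'}(\Omega)\subset L^2(\Omega)$ requires $q'\ge 2$, i.e.\ $q\le 2$; for $q>2$ the inclusion goes the wrong way. What $u_0\in\X\hookrightarrow L^{2_\theta^*}(\Omega)$ actually gives is $\rho_0\in L^{2_\theta^*/(q-1)}(\Omega)$, and $2_\theta^*/(q-1)\ge 2$ only when $q\le 2_\theta^*/2+1$. Hence in the nonempty range $2_\theta^*/2+1<q\le 2_\theta^*$ (e.g.\ $4<q\le 6$ for $d=3$, $\theta=1$) the hypothesis \eqref{ini-hyp+} is vacuous yet $\rho_0$ need not lie in $L^2(\Omega)$, and your claimed shortcut fails: Lemma \ref{L:ex-ensol} is not directly applicable there.

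The repair is immediate and is what the paper does: run the approximation argument uniformly for all $q$, without case-splitting. Nothing in it uses $q>2_\theta^*$; one only needs $\rho_0\in L^{(2_\theta^*)'}(\Omega)$, which is \emph{assumed} via \eqref{ini-hyp+} when $q>2_\theta^*$ and is \emph{automatic} when $q\le 2_\theta^*$, since $2_\theta^*/(q-1)\ge(2_\theta^*)'$ is equivalent to $q\le 2_\theta^*$. With that correction your proof coincides with the paper's; the remaining details (strong convergence of the truncations in $\X$ by dominated convergence on the Gagliardo double integral, the $W^{1,\infty}(0,T;\X^*)$ bound on $\partial_t(|u_k|^{q-2}u_k)$ from $\|(-\Delta)^\theta u_k(t)\|_{\X^*}=\|u_k(t)\|_{\X}$, the uniform-convexity argument for $C_+([0,\infty);\X)$ and for the strong initial condition, and the extension of \eqref{energyineq2} to arbitrary $s$ via uniqueness) are all as in Lemma \ref{L:ex-ensol}.
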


\begin{proof}
Set $u_{0,n} := \max\{\min \{u_0,n\},-n\} \in \X \cap L^\infty(\Omega)$ for $n \in \N$. Then it follows that
\begin{alignat*}{2}
u_{0,n} &\to u_0 \quad && \mbox{ strongly in } \X \cap L^q(\Omega),\\
\rho_{0,n} := |u_{0,n}|^{q-2}u_{0,n} &\to \rho_0 \quad &&\text{ strongly in } L^{(2^*_\theta)'}(\Omega) \hookrightarrow \X^*
\end{alignat*}
as $n \to \infty$. For each $n \in \N$, let $u_n$ be the unique energy solution to the Cauchy--Dirichlet problem \eqref{pde}--\eqref{ic} with the initial datum $u_{0,n}$. Let $T>0$ be fixed. We deduce from \eqref{conti-dep} that $(|u_n|^{q-2}u_n)$ forms a Cauchy sequence in $C([0,T];\X^*)$, and hence,
\begin{equation}\label{2:cvgs} 
|u_n|^{q-2}u_n \to \rho \quad \text{ strongly in } C([0,T];\X^*)
\end{equation}
for some $\rho \in C([0,T];\X^*)$. In particular, since $(|u_n|^{q-2}u_n)(0) \to \rho(0)$ strongly in $\X^*$, we have $\rho(0)=\rho_0$. Moreover, one can derive from \eqref{enineq1} (or \eqref{energyineq1}) and \eqref{enineq2} that
\begin{align*}
\frac{1}{q'} \|u_n(t)\|_{L^q(\Omega)}^q + \int_0^t \|u_n(r)\|_{\X}^2 \, \d r
&\leq \frac{1}{q'} \|u_{0,n}\|_{L^q(\Omega)}^q
\end{align*}
and
\begin{align}
\MoveEqLeft{
\frac{4}{qq'} \int_0^t \left\| \partial_t (|u_n|^{(q-2)/2}u_n)(r) \right\|_{L^2(\Omega)}^2 \, \d r + \frac{1}{2} \|u_n(t)\|_{\X}^2
}\nonumber\\
&\leq \frac{1}{2} \|u_{0,n}\|_{\X}^2\label{equ2}
\end{align}
for all $t \in [0,T]$. Since $u_{0,n}\rightarrow u_0$ strongly in $\X \cap L^q(\Omega)$, the sequences $(u_n)$ and $(|u_n|^{(q-2)/2}u_n)$ are bounded in $L^\infty(0,T; \X \cap L^q(\Omega))$ and in $W^{1,2}(0,T; L^2(\Omega))$, respectively. Moreover, using the relations
\begin{equation}\label{equ}
\partial_t (|u_n|^{q-2}u_n)(t) + (-\Delta)^\theta u_n(t) = 0 \ \text{ in } \X^* 
\end{equation}
as well as $\|(-\Delta)^\theta u_n(t)\|_{\X^*} = \|u_n(t)\|_{\X}$ for a.e.~$t \in (0,T)$, one can verify that $(|u_n|^{q-2}u_n)$ is bounded in $W^{1,\infty}(0,T;\X^*)$ as in \S \ref{Ss:ex}. Moreover, recalling \eqref{enineq4} with $\alpha = (2^*_\theta)'$ and $\beta = (q-1)((2^*_\theta)'-1)+1$, we see that
\begin{align*}
\MoveEqLeft{
\frac{1}{\alpha} \left\| (|u_n|^{q-2}u_n)(t) \right\|_{L^\alpha(\Omega)}^\alpha + \frac{4}{\beta\beta'} \int_0^t \left\| (|u_n|^{(\beta-2)/2}u_n)(r) \right\|_{\X}^2 \,\d r
}\nonumber\\
&\leq \frac{1}{\alpha} \left\| \rho_{0,n} \right\|_{L^\alpha(\Omega)}^\alpha
\end{align*}
for $t \geq 0$ and $n \in \N$. Since $\rho_{0,n} \to \rho_0$ strongly in $L^{(2^*_\theta)'}(\Omega)$, we can conclude that $(|u_n|^{q-2}u_n)$ is bounded in $L^\infty(0,T;L^{(2^*_\theta)'}(\Omega))$. 

Therefore there exists a (not relabeled) subsequence of $(n)$ such that
\begin{alignat}{2}
u_n &\to u \quad &&\text{ weakly star in } L^\infty(0,T,\X \cap L^q(\Omega)),\label{2:cvg1}\\
|u_n|^{q-2}u_n &\to \rho \quad &&\text{ weakly star in } W^{1,\infty}(0,T;\X^*),\nonumber\\
& &&\mbox{ weakly star in } L^\infty(0,T;L^{(2^*_\theta)'}(\Omega)),\nonumber\\
|u_n|^{(q-2)/2}u_n &\to w \quad &&\text{ weakly in } W^{1,2}(0,T;L^2(\Omega))\nonumber
\end{alignat}
for some weak (star) limits $u$, $\rho$ and $w$. Passing to the limit in \eqref{equ}, we obtain $\partial_t \rho + (-\Delta)^\theta u = 0$ in $L^\infty(0,T;\X^*)$. It remains to check the relations $\rho = |u|^{q-2}u$ and $w = |u|^{(q-2)/2}u$. As in the proof of Lemma \ref{L:ex-ensol} (see Step 4), we can verify from \eqref{EleIne}, \eqref{2:cvgs} and \eqref{2:cvg1} that
\begin{align*}
\MoveEqLeft{
\frac{4}{pp'} \int_0^T \int_{\Omega} \left| |u_n|^{(q-2)/2}u_n - |u|^{(q-2)/2}u \right|^2\, \d x\d t
}\\
&\leq \int_0^T \int_{\Omega} \left( |u_n|^{q-2}u_n - |u|^{q-2}u \right) ( u_n - u )\, \d x \d t\\
&= \int_0^T \left\langle |u_n|^{q-2}u_n - \rho, u_n - u \right\rangle_{\X}\, \d t\\
&\quad + \int_0^T \left\langle \rho - |u|^{q-2}u, u_n - u \right\rangle_{\X}\, \d t \to 0.
\end{align*}
Therefore $(|u_n|^{(q-2)/2}u_n)$ converges to $|u|^{(q-2)/2}u$ strongly in $L^2(Q_T)$, and moreover, as in Step 4 of the proof for Lemma \ref{L:ex-ensol}, we can deduce that $\rho = |u|^{q-2}u$ and $w = |u|^{(q-2)/2}u$ a.e.~in $Q_T$. Furthermore, repeating the same argument as in Step 4 of the proof for Lemma \ref{L:ex-ensol}, one can obtain $u \in C([0,T];L^q(\Omega)) \cap C_{\rm weak}([0,T];\X) \cap C_+([0,T];\X)$, $\rho = |u|^{q-2}u \in C([0,T];L^{q'}(\Omega)) \cap C_{\rm weak}([0,T];L^{(2^*_\theta)'}(\Omega)) \cap C_+([0,T];L^{(2^*_\theta)'}(\Omega))$ and $w = |u|^{(q-2)/2}u \in C([0,T];L^2(\Omega))$. Hence it follows from \eqref{2:cvgs} that
$$
u_n(t) \to u(t) \quad \text{ weakly in } \X \quad \text{ for all } \ t \in [0,T],
$$
which along with \eqref{equ2} yields
\begin{align*}
\frac{4}{qq'} \int_0^t \left\|\partial_t (|u|^{(q-2)/2}u)(r) \right\|_{L^2(\Omega)}^2 \, \d r + \frac{1}{2} \|u(t)\|_{\X}^2
&\leq \frac{1}{2} \|u_0\|_{\X}^2
\end{align*}
for $t \in [0,T]$. Moreover, since $u(s)$ complies with \eqref{hyp2} for $s > 0$, from the uniqueness of the weak solution, we can derive \eqref{energyineq2} for $0 \leq s < t < \infty$ as in the proof of Lemma \ref{L:ex-ensol}. 
\end{proof}

This completes the proof for (ii) of Theorem \ref{T:main}.

\begin{remark}[Weak solutions for more general initial data]\label{R:gen}
If we restrict ourselves to weak solutions of \eqref{pde}--\eqref{ic}, the assumptions on initial data can be relaxed more by establishing smoothing estimates. Suppose that $\Omega$ is smooth so that $\X$ is dense in $L^2(\Omega)$ (see Proposition \ref{P:density}). Let $\rho_0 \in \X^*$ be fixed. We can then take a sequence $(u_{0,n})$ in $\X \cap L^q(\Omega)$ such that $\rho_{0,n} := |u_{0,n}|^{q-2}u_{0,n} \to \rho_0$ strongly in $\X^*$ and $\rho_{0,n} \in L^2(\Omega)$ for $n \in \N$ (see Appendix \S \ref{S:regu}). Then thanks to (ii) of Theorem \ref{T:main}, the Cauchy--Dirichlet problem \eqref{pde}--\eqref{ic} with $\rho_0 = \rho_{0,n}$ admits a unique energy solution $u_n$. It follows from \eqref{conti-dep} that 
$$
|u_n|^{q-2}u_n \to \rho \quad \mbox{ strongly in } C([0,T];\X^*)
$$
for some $\rho \in C([0,T];\X^*)$. Furthermore, we test \eqref{ee} with $u = u_n$ by $(-\Delta)^{-\theta} (|u_n|^{q-2}u_n)(t)$ to see that
$$
\frac 1 2 \frac{\d}{\d t} \left\| (|u_n|^{q-2}u_n)(t) \right\|_{\X^*}^2 + \|u_n(t)\|_{L^q(\Omega)}^q = 0
$$
for a.e.~$t > 0$. Integrating both sides over $(0,t)$, we find that
\begin{equation*}
\frac 1 2 \left\| (|u_n|^{q-2}u_n)(t) \right\|_{\X^*}^2 + \int^t_0 \|u_n(r)\|_{L^q(\Omega)}^q \, \d r = \frac 1 2 \left\| \rho_{0,n} \right\|_{\X^*}^2
\end{equation*}
for $t \geq 0$. Moreover, multiplying \eqref{energyineq1} with $u = u_n$ by $t$, we infer that 
$$
\frac 1 {q'} \frac{\d}{\d t} \left( t \|u_n(t)\|_{L^q(\Omega)}^q \right) + t \|u_n(t)\|_{\X}^2 = \|u_n(t)\|_{L^q(\Omega)}^q
$$
for a.e.~$t > 0$. Therefore combining these facts, we obtain
\begin{equation}\label{ap:1}
\sup_{t \in [0,\infty)} \left( t \|u_n(t)\|_{L^q(\Omega)}^q \right) + \int^\infty_0 t \|u_n(t)\|_{\X}^2 \, \d t \leq C \left\| \rho_{0,n} \right\|_{\X^*}^2.
\end{equation}
Hence for any $\vep > 0$ one can take $s_\vep \in (\vep,2\vep)$ (which may depend on $n$) such that
$$
s_\vep \|u_n(s_\vep)\|_{\X}^2 \leq \frac 1 \vep \int^{2\vep}_\vep t\|u_n(t)\|_{\X}^2 \, \d t,
$$
which along with \eqref{ap:1} yields
\begin{equation}\label{ap:2}
\vep^2 \|u_n(s_\vep)\|_{\X}^2 \leq C \|\rho_{0,n}\|_{\X^*}^2.
\end{equation}
Furthermore, recalling \eqref{energyineq2} with $u = u_n$ and $s = s_\vep$ and using the above facts, we deduce that
\begin{align*}
\MoveEqLeft{
\int^t_{2\vep} \left\| \partial_t (|u_n|^{(q-2)/2}u_n)(r) \right\|_{L^2(\Omega)}^2 \, \d r + \frac 12 \|u_n(t)\|_{\X}^2 
}\\
&\leq \frac 12 \|u_n(s_\vep)\|_{\X}^2 \stackrel{\eqref{ap:2}}\leq C \vep^{-2} \|\rho_{0,n}\|_{\X^*}^2
\end{align*}
for $0 < 2\vep < t < \infty$ and $n \in \N$. Thus since $(\rho_{0,n})$ is bounded in $\X^*$, repeating the same argument so far, we can obtain a \emph{weak} solution $u$ of the Cauchy--Dirichlet problem \eqref{pde}--\eqref{ic} for $\rho_0 \in \X^*$, and therefore, as we have seen in \S \ref{Ss:enid}, the function $t \mapsto \|u(t)\|_{L^q(\Omega)}^q$ is absolutely continuous in $(0,\infty)$ and \eqref{energyineq1} holds for a.e.~$t > 0$. Therefore we can further observe that
\begin{equation}
\left\{
\begin{aligned}
& u \in C_{\rm weak}((0,\infty);\X) \cap C((0,\infty);L^q(\Omega)),\\
& \partial_t (|u|^{q-2}u) \in C_{\rm weak}((0,\infty);\X^*),\\
& |u|^{(q-2)/2}u \in W^{1,2}(\tau,\infty;L^2(\Omega))
\end{aligned}
\right.\label{reg-weak}
\end{equation}
for $\tau > 0$. Thus we obtain

\begin{corollary}[Weak solutions along with the energy identity]\label{C:wex}
Let $1 < q < \infty$ and $0 < \theta \leq 1$. For each $\rho_0 \in \X^*$, the Cauchy--Dirichlet problem \eqref{pde}--\eqref{ic} admits a unique weak solution $u$ such that 
\begin{itemize}
 \item the regularity properties as in \eqref{reg-weak} hold\/{\rm ;}
 \item the function $t \mapsto \|u(t)\|_{L^q(\Omega)}^q$ is absolutely continuous in $(0,\infty)$ and the energy identity \eqref{energyineq1} holds for a.e.~$t > 0$.
\end{itemize}
\end{corollary}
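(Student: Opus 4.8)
The plan is to assemble into a self-contained argument the approximation scheme already outlined in Remark~\ref{R:gen}. Uniqueness requires nothing new: any two weak solutions in the sense of Definition~\ref{D:sol} sharing the initial datum $\rho_0 \in \X^*$ coincide by (i) of Theorem~\ref{T:main}, whose proof uses only the weak formulation. Hence the substance is to construct, for an arbitrary $\rho_0 \in \X^*$, a weak solution enjoying the regularity \eqref{reg-weak} together with the absolute continuity of $t \mapsto \|u(t)\|_{L^q(\Omega)}^q$ and the energy identity \eqref{energyineq1}.

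First I would select, by the regularization result of Appendix~\S\ref{S:regu} (where the density of $\X$ in $L^2(\Omega)$, i.e.\ smoothness of $\Omega$, enters), a sequence $u_{0,n} \in \X \cap L^q(\Omega)$ with $\rho_{0,n} := |u_{0,n}|^{q-2}u_{0,n} \in L^2(\Omega)$ and $\rho_{0,n} \to \rho_0$ strongly in $\X^*$. By (ii) of Theorem~\ref{T:main}, applied with $\alpha = 2$ (the hypothesis \eqref{ini-hyp+} being vacuous since $\rho_{0,n} \in L^2(\Omega)$), each $\rho_{0,n}$ yields a unique energy solution $u_n$, and the contraction estimate \eqref{conti-dep} shows that $(|u_n|^{q-2}u_n)$ is Cauchy in $C([0,T];\X^*)$ for each $T$, hence converges to some $\rho \in C([0,T];\X^*)$ with $\rho(0) = \rho_0$.

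The core of the proof is then a set of $n$-uniform estimates away from $t=0$. Testing \eqref{ee} (with $u = u_n$) by $(-\Delta)^{-\theta}(|u_n|^{q-2}u_n)(t)$ gives $\tfrac12\tfrac{\d}{\d t}\|(|u_n|^{q-2}u_n)(t)\|_{\X^*}^2 + \|u_n(t)\|_{L^q(\Omega)}^q = 0$; multiplying \eqref{energyineq1} by $t$ and combining produces the weighted bound
$$
\sup_{t \geq 0}\bigl(t\,\|u_n(t)\|_{L^q(\Omega)}^q\bigr) + \int_0^\infty t\,\|u_n(t)\|_{\X}^2 \, \d t \leq C\,\|\rho_{0,n}\|_{\X^*}^2 .
$$
For $\vep > 0$ a pigeonhole choice of $s_\vep \in (\vep,2\vep)$ then forces $\vep^2\|u_n(s_\vep)\|_{\X}^2 \leq C\|\rho_{0,n}\|_{\X^*}^2$, and inserting $s = s_\vep$ into \eqref{energyineq2} bounds $(u_n)$ in $L^\infty(2\vep,\infty;\X\cap L^q(\Omega))$, $(\partial_t(|u_n|^{(q-2)/2}u_n))$ in $L^2(2\vep,\infty;L^2(\Omega))$, and---via \eqref{equ} and $\|(-\Delta)^\theta u_n\|_{\X^*} = \|u_n\|_{\X}$---$(|u_n|^{q-2}u_n)$ in $W^{1,\infty}(2\vep,\infty;\X^*)$, uniformly in $n$. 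Extracting weak-star limits on $(2\vep,\infty)$ and diagonalizing over $\vep \to 0_+$, I would identify the nonlinear limits exactly as in Step~4 of the proof of Lemma~\ref{L:ex-ensol}: the monotonicity inequality \eqref{EleIne} combined with the strong convergence $|u_n|^{q-2}u_n \to \rho$ in $C([0,T];\X^*)$ forces $|u_n|^{(q-2)/2}u_n \to |u|^{(q-2)/2}u$ in $L^2_{\rm loc}((0,\infty);L^2(\Omega))$ and hence $\rho = |u|^{q-2}u$ a.e. Thus $u$ is a weak solution of \eqref{pde}--\eqref{ic} on every $[0,T]$, carrying the regularity \eqref{reg-weak}; the absolute continuity of $t \mapsto \|u(t)\|_{L^q(\Omega)}^q$ and the identity \eqref{energyineq1} then follow from the chain-rule argument of \S\ref{Ss:enid} applied on each interval $(\tau,\infty)$.

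The delicate point I expect is precisely the behavior at $t = 0$: since the smoothing estimates only control $u_n$ on $(\vep,\infty)$, the convergence cannot be upgraded to $C_{\rm weak}([0,\infty);\X)$, so the constructed weak solution satisfies the initial condition only in the $\X^*$-sense provided by $\rho \in C([0,T];\X^*)$ with $\rho(0) = \rho_0$ (which is, however, exactly what Definition~\ref{D:sol} demands). Making the diagonal subsequence coherent across all $\vep$, and observing that the limit is independent of the exhausting sequence (automatic by uniqueness), are routine but should be stated carefully. Everything else is a transcription of computations already performed in \S\ref{Ss:ex}, \S\ref{Ss:enid}, and the proof of the lemma in \S\ref{Ss:gen}.
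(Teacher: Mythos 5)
Your proposal is correct and follows essentially the same route as the paper, which carries out exactly this program in Remark \ref{R:gen}: regularize $\rho_0$ via Appendix \S\ref{S:regu}, use \eqref{conti-dep} for convergence in $C([0,T];\X^*)$, derive the weighted estimate \eqref{ap:1} and the pigeonhole bound \eqref{ap:2} to get $n$-uniform control on $(2\vep,\infty)$, pass to the limit as in Lemma \ref{L:ex-ensol}, and invoke \S\ref{Ss:enid} for the energy identity. Your remarks on the roles of the smoothness of $\Omega$ and of the behavior at $t=0$ match the paper's treatment.
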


Concerning $\theta = 1$, these facts can be obtained by combining the conclusion in the $H^{-1}$ framework and the uniqueness. An analogous argument are also available for $0 < \theta < 1$ (cf.~see~\cite[Theorem 2.3]{BII}).

Here we emphasize that the right-continuity of $t \mapsto u(t)$ in the strong topology of $\X$ has not yet been proved for general $\rho_0 \in \X^*$. When $q \leq 2^*_\theta$, the weak solution $u$ turns out to fulfill \eqref{energyineq2} for all $0 < s < t < \infty$ (as well as \eqref{en-ineq} for a.e.~$t > 0$), and therefore, $u$ belongs to $C_+((0,\infty);\X)$. Indeed, since $q \leq 2^*_\theta$, one finds that $(|u|^{q-2}u)(t)$ lies on $L^{(2^*_\theta)'}(\Omega)$ for any $t > 0$. Moreover, we observe that $u(t+\,\cdot\,)$ is a weak solution to the Cauchy--Dirichlet problem with the initial datum $u(t) \in \X$. Therefore, from the uniqueness of weak solutions, $u(t+\,\cdot\,)$ satisfies all the energy inequalities, and thus, the right-continuity of $u$ follows as in (ii) of Theorem \ref{T:main}. 

Concerning $q > 2^*_\theta$, employing Steklov's averaging technique (as in~\cite{BII}), we can still derive \eqref{energyineq2} for \emph{almost every} $s > 0$ and all $t \geq s$. Indeed, for each function $t \mapsto w(t)$, we define the \emph{Steklov average} $w^h$ of $w$ as
$$
w^h(t) := \frac 1 h \int^{t+h}_t w(r) \, \d r 
$$
for $t \geq 0$ and $h > 0$. Taking the Steklov average of both sides of \eqref{ee}, we have
$$
\partial_t \rho^h(t) + (-\Delta)^\theta u^h(t) = 0 \ \mbox{ in } \X^* \quad \mbox{ for } \ t \geq 0.
$$ 
Here we used the fact that 
$$
(\partial_t \rho)^h(t) = \frac{\rho(t+h)-\rho(t)}h = \partial_t \rho^h(t)
$$
for $t \geq 0$ and $h > 0$. We further note that $u^h \in W^{1,2}(0,T;\X)$ and $(-\Delta)^\theta u^h \in L^2(0,T;\X^*)$, and therefore, the function $t \mapsto \|u^h(t)\|_{\X}^2$ is absolutely continuous on $[0,\infty)$. Hence testing both sides by $\partial_t u^h(t) = (\partial_t u)^h(t) = [u(t+h)-u(t)]/h \in \X$, we deduce from \eqref{EleIne} that
$$
\frac 4 {qq'} \left\| \partial_t (|u|^{(q-2)/2}u)^h(t) \right\|_{L^2(\Omega)}^2 + \frac 12 \frac{\d}{\d t} \|u^h(t)\|_{\X}^2 \leq 0
$$
for a.e.~$t > 0$ and all $h > 0$. Integrating both sides over $(s,t)$, we infer that
$$
\frac 4 {qq'} \int^t_s \left\| \partial_t (|u|^{(q-2)/2}u)^h(r) \right\|_{L^2(\Omega)}^2 \, \d r + \frac 12 \|u^h(t)\|_{\X}^2 \leq \frac 12 \|u^h(s)\|_{\X}^2
$$
for $0 \leq s \leq t < \infty$ and $h > 0$. Since $u$ lies on $L^2(0,T;\X)$, thanks to Lebesgue's differentiation theorem, it follows that
$$
u^h(s) \to u(s) \quad \mbox{ strongly in } \X \ \mbox{ as } \ h \to 0_+
$$
for \emph{almost every} $s > 0$. Hence passing to the limit as $h \to 0_+$ and using $u \in C_{\rm weak}((0,\infty);\X)$, we can deduce that
$$
\frac 4 {qq'} \int^t_s \left\| \partial_t (|u|^{(q-2)/2}u)(r) \right\|_{L^2(\Omega)}^2 \, \d r + \frac 12 \|u(t)\|_{\X}^2 \leq \frac 12 \|u(s)\|_{\X}^2
$$
for \emph{almost every} $s > 0$ and all $t \geq s$. However, it may not yield $u \in C_+((0,\infty);\X)$, which will be used to bridge the defect of the (absolute) continuity of the energy $t \mapsto \|u(t)\|_{\X}^2$ in later sections. Such an observation may shed light on some utility of our scheme to construct energy solutions.
\end{remark}

\subsection{$L^1$ contraction and B\'enilan--Crandall estimate}\label{Ss:BC}

This subsection is devoted to providing a proof for (iii) of Theorem \ref{T:main}, which follows the same line as~\cite{Vazquez} for $\theta = 1$ and~\cite{PQRV12,KiLe11,BII} for $0 < \theta < 1$ but will also be exhibited for the completeness. 

Suppose that $u_{0,1}, u_{0,2}$ satisfy \eqref{hyp}. Let $\{u_{1,n}\}_{n=1}^N$ and $\{u_{2,n}\}_{n=1}^N$ be solutions to the discretized problem \eqref{En} with initial data $\rho_{0,1} := |u_{0,1}|^{q-2}u_{0,1}$ and $\rho_{0,2} := |u_{0,2}|^{q-2}u_{0,2}$, respectively. Then we see that
\begin{align*}
\MoveEqLeft{
\frac{ ( |u_{1,n+1}|^{q-2}u_{1,n+1} - |u_{2,n+1}|^{q-2}u_{2,n+1} ) - (|u_{1,n}|^{q-2}u_{1,n} - |u_{2,n}|^{q-2}u_{2,n} ) }{\tau}
}\\
&+ (-\Delta)^\theta (u_{1,n+1} - u_{2,n+1}) = 0 \ \text{ in } \X^*
\end{align*}
for $n = 0,1,\ldots,N-1$. Test it by $\eta_m(u_{1,n+1} - u_{2,n+1}) \in \X \cap L^\infty(\Omega)$, where $\eta_m$ is a Lipschitz continuous function given by
$$
\eta_m(s) = \begin{cases}
	     1 &\text{ if } \ s \geq 1/m,\\
	     ms &\text{ if } \ 0 \leq s < 1/m,\\
	     0 &\text{ if } \ s \leq 0
	    \end{cases}
$$
and it converges pointwisely to the sign-function given by
$$
\mathrm{sgn}(s) = \begin{cases}
		   1 &\text{ if } \ s > 0,\\
		   0 &\text{ if } \ s \leq 0
		  \end{cases}
$$
as $m \to \infty$. Then we note from the monotonicity of $\eta_m$ that
\begin{align*}
\left\langle (-\Delta)^\theta(u_{1,n+1} - u_{2,n+1}), \eta_m (u_{1,n+1} - u_{2,n+1}) \right\rangle_{\X} \geq 0.
\end{align*}
For simplicity, we write $u_{j,n+1}^{q-1} := |u_{j,n+1}|^{q-2} u_{j,n+1}$ and $u_{j,n}^{q-1} := |u_{j,n}|^{q-2} u_{j,n}$ for $j = 1,2$. Then we observe that
\begin{align*}
\MoveEqLeft{
\left\langle \frac{ (u_{1,n+1}^{q-1} - u_{2,n+1}^{q-1}) - (u_{1,n}^{q-1} - u_{2,n}^{q-1}) }{\tau}, \eta_m(u_{1,n+1} - u_{2,n+1}) \right\rangle_{\X}
}\\
&= \int_\Omega \frac{ (u_{1,n+1}^{q-1} - u_{2,n+1}^{q-1}) - (u_{1,n}^{q-1} - u_{2,n}^{q-1}) }{\tau} \eta_m(u_{1,n+1} - u_{2,n+1}) \, \d x\\
&\to \int_\Omega \frac{ (u_{1,n+1}^{q-1} - u_{2,n+1}^{q-1}) - (u_{1,n}^{q-1} - u_{2,n}^{q-1}) }{\tau} \mathrm{sgn}(u_{1,n+1} - u_{2,n+1}) \, \d x\\
&= \int_\Omega \frac{(u_{1,n+1}^{q-1} - u_{2,n+1}^{q-1}) - (u_{1,n}^{q-1} - u_{2,n}^{q-1})}{\tau} \mathrm{sgn}(u_{1,n+1}^{q-1} - u_{2,n+1}^{q-1}) \, \d x\\
&\geq \frac 1 \tau \int_\Omega (u_{1,n+1}^{q-1} - u_{2,n+1}^{q-1})_+ \, \d x
- \frac 1 \tau \int_\Omega (u_{1,n}^{q-1} - u_{2,n}^{q-1})_+ \, \d x,
\end{align*}
where $(s)_+ := \max\{0,s\}$ for $s \in \R$. Therefore it follows that
\begin{equation}\label{+-contr}
\int_\Omega (u_{1,n+1}^{q-1} - u_{2,n+1}^{q-1})_+ \, \d x \leq \int_\Omega ( |u_{0,1}|^{q-2}u_{0,1} - |u_{0,2}|^{q-2}u_{0,2} )_+ \, \d x.
\end{equation}
We can similarly prove that
$$
\int_\Omega (u_{1,n+1}^{q-1} - u_{2,n+1}^{q-1})_- \, \d x \leq \int_\Omega ( |u_{0,1}|^{q-2}u_{0,1} - |u_{0,2}|^{q-2}u_{0,2} )_- \, \d x,
$$
where $(s)_- := \max\{0,-s\}$ for $s \in \R$. Thus we obtain
$$
\int_\Omega | u_{1,n+1}^{q-1} - u_{2,n+1}^{q-1} | \, \d x \leq \int_\Omega \left| |u_{0,1}|^{q-2}u_{0,1} - |u_{0,2}|^{q-2}u_{0,2} \right| \, \d x.
$$
Let $\bar u_{j,\tau}$ be the piecewise constant interpolants of $\{u_{j,n}\}_{n=0}^N$ for $j=1,2$. Then
\begin{align*}
\MoveEqLeft{
\int_\Omega \left| (|\bar u_{1,\tau}|^{q-2}\bar u_{1,\tau})(t) - (|\bar u_{2,\tau}|^{q-2}\bar u_{2,\tau})(t) \right| \, \d x
}\\
&\leq \int_\Omega \left| |u_{0,1}|^{q-2}u_{0,1} - |u_{0,2}|^{q-2}u_{0,2} \right| \, \d x  \quad \text{ for } \ t \in [0,T],
\end{align*}
which along with the convergence of $(\bar u_{j,\tau})$ (see \S \ref{Ss:ex}) and the arbitrariness of $T > 0$ implies \eqref{L1-contr}. Moreover, repeating the same argument as in \S \ref{Ss:gen}, we can prove \eqref{L1-contr} for $u_{0,1}$, $u_{0,2}$ satisfying the assumptions as in (ii) of Theorem \ref{T:main}, that is, \eqref{hyp2}. Similarly, one can prove from \eqref{+-contr} the comparison principle: if $u_{0,1} \leq u_{0,2}$ a.e.~in $\Omega$, then $u_1 \leq u_2$ a.e.~in $\Omega \times (0,\infty)$.

The B\'enilan--Crandall estimate can be proved by a scaling argument as in~\cite{Vazquez}. For the completeness, we shall give a proof. We first note that, if $u = u(x,t)$ solves \eqref{pde} and \eqref{bc}, then so does the scaled functions,
$$
u_\lambda = u_\lambda(x,t) := \lambda^{-1/(q-2)} u(x,\lambda t) \quad \text{ for } \ \lambda > 0.
$$
Fix $t > 0$. Let $h \in \R$ be small enough and take $\lambda > 0$ such that $t+h = \lambda t$. Then we observe that
\begin{align*}
\MoveEqLeft{
\left\| (|u|^{q-2}u)(t+h) - (|u|^{q-2}u)(t) \right\|_{L^1(\Omega)}
}\nonumber\\
&= \left\| \lambda^{(q-1)/(q-2)} (|u_\lambda|^{q-2}u_\lambda)(t) - (|u|^{q-2}u)(t) \right\|_{L^1(\Omega)} \nonumber\\
&\leq \left| \lambda^{(q-1)/(q-2)} - 1 \right| \left\| (|u_\lambda|^{q-2}u_\lambda)(t) \right\|_{L^1(\Omega)} \nonumber\\
&\quad + \left\| (|u_\lambda|^{q-2}u_\lambda)(t) - (|u|^{q-2}u)(t) \right\|_{L^1(\Omega)} \nonumber\\
\prf{&= \left| 1 - \lambda^{-(q-1)/(q-2)} \right| \| (|u|^{q-2}u)(\lambda t) \|_{L^1(\Omega)} \nonumber\\
&\quad + \left\| (|u_\lambda|^{q-2}u_\lambda)(t) - (|u|^{q-2}u)(t) \right\|_{L^1(\Omega)}\nonumber\\}
&\stackrel{\eqref{L1-contr}}\leq \left| 1 - \lambda^{-(q-1)/(q-2)} \right| \left\| |u_0|^{q-2} u_0 \right\|_{L^1(\Omega)} \nonumber\\
&\quad + \left\| (|u_\lambda|^{q-2}u_\lambda)(0) - (|u|^{q-2}u)(0) \right\|_{L^1(\Omega)}\nonumber\prf{\\
&= 2 \left| 1 - \lambda^{-(q-1)/(q-2)} \right| \left\| |u_0|^{q-2} u_0 \right\|_{L^1(\Omega)}\nonumber}.
\end{align*}
Since $\lambda = 1 + h/t$, we have
\begin{align}
\MoveEqLeft{
\left\| (|u|^{q-2}u)(t+h) - (|u|^{q-2}u)(t) \right\|_{L^1(\Omega)}
}\nonumber\\
&\leq 2 \left| 1 - \lambda^{-(q-1)/(q-2)} \right| \left\| |u_0|^{q-2}u_0 \right\|_{L^1(\Omega)}\nonumber\\
&= 2 \frac{q-1}{|q-2|} \frac{\| |u_0|^{q-2}u_0 \|_{L^1(\Omega)}} t |h| + O(|h|^2) \ \mbox{ as } \ h \to 0,\label{BV-L1}
\end{align}
which also yields
\begin{align}
\MoveEqLeft{
\left\| \frac{(|u|^{q-2}u)(t+h) - (|u|^{q-2}u)(t)} h \right\|_{L^1(\Omega)}
}\nonumber\\
&\leq 2 \frac{q-1}{|q-2|} \frac{\||u_0|^{q-2}u_0\|_{L^1(\Omega)}} t + O(|h|) \ \mbox{ as } \ h \to 0.\label{BC0}
\end{align}
In case $q > 2$, we have already seen in \S \ref{Ss:ex} that 
$$
|u|^{q-2}u \in W^{1,2}_{\rm loc}([0,\infty);L^{q'}(\Omega)) \subset W^{1,1}_{\rm loc}((0,\infty);L^1(\Omega)),
$$
and therefore, one can obtain
$$
\frac{(|u|^{q-2}u)(t+h) - (|u|^{q-2}u)(t)} h \to \partial_t (|u|^{q-2}u)(t) \quad \text{ strongly in } L^1(\Omega)
$$
for a.e.~$t > 0$, up to a subsequence, as $h \to 0$ (see, e.g.,~\cite[Corollary 1.4.39]{CaHa}). Hence we infer from \eqref{BC0} that
\begin{align*}
\left\| \partial_t (|u|^{q-2}u)(t) \right\|_{L^1(\Omega)}
\leq 2 \frac{q-1}{|q-2|} \frac{\| |u_0|^{q-2}u_0 \|_{L^1(\Omega)}} t
\end{align*}
for a.e.~$t > 0$. Thus \eqref{BC} follows. In case $1 < q < 2$, first of all, we need to prove $\rho = |u|^{q-2}u \in W^{1,1}_{\rm loc}((0,\infty);L^1(\Omega))$. To this end, we note that
$$
\rho = |u|^{q-2}u = \int^{w}_0 p(r) \, \d r, \quad w = |u|^{(q-2)/2}u,
$$
where $p \in L^1_{\rm loc}(\R)$ is given by $p(r) = \frac 2 q (q-1) |r|^{(q-2)/q}$ for $r \in \R$, and recall the following:
\begin{proposition}[See~{\cite[Theorem 1.1]{BG95}}]\label{P:BV}
Let $(a,b)$ be a nonempty open interval. If $w \in W^{1,1}(a,b;L^1(\Omega))$, $p \in L^1_{\rm loc}(\R)$ and
$$
\rho = \int^w_0 p(r) \, \d r \in BV(a,b;L^1(\Omega)),
$$
then $\rho$ belongs to $W^{1,1}(a,b;L^1(\Omega))$\prf{ and $\partial_t \rho(\cdot,t) = p(w(\cdot,t)) \partial_t w(\cdot,t)$ for a.e.~$t \in (a,b)$}. Here  $BV(a,b;L^1(\Omega))$ stands for the class of functions $f \in L^1(a,b; L^1(\Omega))$ of bounded variation.
\end{proposition}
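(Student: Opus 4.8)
The plan is to reduce the $L^1(\Omega)$-valued assertion to a one-dimensional statement in the time variable by slicing in $x\in\Omega$, to prove that statement through a Banach-indicatrix / area-formula identity, and then to reassemble by Fubini.

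\emph{Reduction to a scalar statement.} Regard $w$ and $\rho$ as functions on $(a,b)\times\Omega$. By the standard slice-wise descriptions of Bochner spaces, $w\in W^{1,1}(a,b;L^1(\Omega))$ implies $w(x,\cdot)\in W^{1,1}(a,b)$ for a.e.\ $x$ with $x\mapsto\|\partial_tw(x,\cdot)\|_{L^1(a,b)}\in L^1(\Omega)$, while $\rho\in BV(a,b;L^1(\Omega))$ implies $\rho(x,\cdot)\in BV(a,b)$ for a.e.\ $x$ with $x\mapsto \mathrm{Var}_{(a,b)}[\rho(x,\cdot)]\in L^1(\Omega)$; in fact $\mathrm{Var}_{(a,b)}(\rho)=\int_\Omega\mathrm{Var}_{(a,b)}[\rho(x,\cdot)]\,\d x$. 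It therefore suffices to prove the scalar claim: if $f\in W^{1,1}(a,b)$ is real valued, $P$ is locally absolutely continuous with $P'=p\in L^1_{\mathrm{loc}}(\mathbb{R})$, and $P\circ f\in BV(a,b)$, then $P\circ f\in W^{1,1}(a,b)$, $(P\circ f)'=(p\circ f)f'$ a.e.\ (with the convention $(p\circ f)f'=0$ wherever $f'=0$), and $\|(p\circ f)f'\|_{L^1(a,b)}=\mathrm{Var}_{(a,b)}[P\circ f]$. Integrating this last identity over $\Omega$ then promotes $\rho\in L^1(a,b;L^1(\Omega))$ to $\rho\in W^{1,1}(a,b;L^1(\Omega))$ with $\partial_t\rho(\cdot,t)=p(w(\cdot,t))\,\partial_tw(\cdot,t)$.

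\emph{The scalar claim.} Let $f$ be the (absolutely) continuous representative, and let $N_f(y):=\#\{t\in(a,b):f(t)=y\}$ be its Banach indicatrix, a measurable function; Banach's theorem gives $\mathrm{Var}_{(a,b)}[g]=\int_{\mathbb{R}}N_g\,\d y$ for every continuous $g$. Since $P$ is continuous, $g:=P\circ f$ is continuous, and $\{g=z\}=\bigcup_{y\in P^{-1}(z)}\{f=y\}$ gives $N_g(z)=\sum_{y\in P^{-1}(z)}N_f(y)$; applying the one-dimensional change-of-variables (area) formula successively to the absolutely continuous maps $P$ and $f$ yields
$$\mathrm{Var}_{(a,b)}[P\circ f]=\int_{\mathbb{R}}N_g(z)\,\d z=\int_{\mathbb{R}}|p(y)|\,N_f(y)\,\d y=\int_a^b|p(f(t))|\,|f'(t)|\,\d t,$$
where the same area formula also shows that $p\circ f$ is defined for a.e.\ $t$ with $f'(t)\neq0$, so that $(p\circ f)f'$ is an a.e.-defined function with $|(p\circ f)f'|=|p(f)|\,|f'|$ a.e. Hence $P\circ f\in BV(a,b)$ forces $(p\circ f)f'\in L^1(a,b)$, with the stated equality. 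To identify the derivative, truncate $p_k:=(p\vee(-k))\wedge k$, whose primitive $P_k$ is Lipschitz, so $P_k\circ f$ is absolutely continuous with $(P_k\circ f)'=(p_k\circ f)f'$ (elementary chain rule); letting $k\to\infty$, $P_k(f(t))\to P(f(t))$ everywhere and $(p_k\circ f)f'\to(p\circ f)f'$ a.e.\ with $|(p_k\circ f)f'|\leq|(p\circ f)f'|\in L^1(a,b)$, so dominated convergence in $P_k(f(t))-P_k(f(s))=\int_s^t(p_k\circ f)f'$ gives $P(f(t))-P(f(s))=\int_s^t(p\circ f)f'$ for all $s<t$, i.e.\ $P\circ f\in W^{1,1}(a,b)$ with the claimed derivative.

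\emph{Reassembly and the main difficulty.} Applying the scalar claim for a.e.\ $x$ and integrating $\|\partial_t\rho(x,\cdot)\|_{L^1(a,b)}=\mathrm{Var}_{(a,b)}[\rho(x,\cdot)]$ over $\Omega$ (finite by the reduction step) gives $\partial_t\rho\in L^1((a,b)\times\Omega)$, hence $\rho\in W^{1,1}(a,b;L^1(\Omega))$. The heart of the argument is the identity $\mathrm{Var}_{(a,b)}[P\circ f]=\int_{\mathbb{R}}|p(y)|\,N_f(y)\,\d y$: making $N_g(z)=\sum_{y\in P^{-1}(z)}N_f(y)$ rigorous requires handling the values $z$ at which $P$ is constant (so $P^{-1}(z)$ is an interval and $N_g(z)=+\infty$), which one treats by noting that such $z$ form a Lebesgue-null set and invoking the Luzin~$N$ property of the absolutely continuous $P$; likewise the validity of Banach's theorem and of the one-dimensional area formula for merely absolutely continuous (not Lipschitz) $P$ and $f$, and the measurability of $N_f$, rest on the standard but nontrivial fine theory of functions of bounded variation. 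The slice-wise characterizations used in the reduction are standard as well but must be quoted with care for $L^1(\Omega)$-valued maps.
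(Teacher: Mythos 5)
Your argument is correct, but note that there is no in-paper proof to compare it with: the paper states this proposition as a direct quotation of B\'enilan--Gariepy~[BG95, Theorem~1.1] and uses it as a black box. What you have written is a self-contained reconstruction of that cited result, and its structure is sound: (1) slicing the $L^1(\Omega)$-valued hypotheses into a.e.-$x$ scalar ones; (2) the scalar identity $\mathrm{Var}_{(a,b)}[P\circ f]=\int_a^b|p(f(t))|\,|f'(t)|\,\d t$ obtained from Banach's indicatrix theorem combined with the one-dimensional area formula for absolutely continuous maps --- this is the genuinely essential step, since it is what converts $P\circ f\in BV$ into integrability of the candidate derivative $(p\circ f)f'$, and it cannot be replaced by naive truncation because $\mathrm{Var}[P_k\circ f]$ is not dominated by $\mathrm{Var}[P\circ f]$ when $p$ changes sign; and (3) identification of the derivative by truncating $p$, applying the Serrin--Varberg chain rule to the Lipschitz primitives $P_k$, and passing to the limit under the domination obtained in (2). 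The points you flag as delicate are exactly the right ones and are all standard: measurability of the indicatrix, Luzin's property (N) of $P$ to dispose of the (countably many) levels $z$ for which $P^{-1}(z)$ contains an interval, the validity of the area formula for merely AC (not Lipschitz) maps, and the slice identity for the variation, of which only the inequality $\int_\Omega\mathrm{Var}_{(a,b)}[\rho(x,\cdot)]\,\d x\le\mathrm{Var}_{(a,b)}(\rho)$ is actually needed and which follows by monotone convergence along refining partitions through Lebesgue points. If this argument were to be inserted into the paper in place of the citation, those standard ingredients should be referenced explicitly; as written, the proposal proves slightly more than the displayed statement (it also establishes the formula $\partial_t\rho=p(w)\,\partial_t w$, which the paper suppresses).
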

We can also observe from \eqref{BV-L1} that $\rho \in BV(a,b;L^1(\Omega))$ for any $0 < a < b < \infty$; indeed, we see that
\begin{align*}
\liminf_{h \to 0} \frac 1h \int^{b-h}_a \left\| (|u|^{q-2}u)(t+h) - (|u|^{q-2}u)(t) \right\|_{L^1(\Omega)} \, \d t \leq C
\end{align*}
(see, e.g.,~\cite[Appendix]{HB1}). Hence recalling $w = |u|^{(q-2)/2}u \in W^{1,2}(0,\infty;L^2(\Omega)) \subset W^{1,1}_{\rm loc}((0,\infty);L^1(\Omega))$ and employing Proposition \ref{P:BV}, one can conclude that
$$
\rho = |u|^{q-2}u \in W^{1,1}_{\rm loc}((0,\infty);L^1(\Omega)).
$$
Therefore \eqref{BC} can also be verified as before.

In addition, if $\rho_0 = |u_0|^{q-2}u_0 \geq 0$ a.e.~in $\Omega$, then $u(t) \geq 0$ a.e.~in $\Omega$ for $t \geq 0$ by virtue of the comparison principle. The same computation as above yields
\begin{align*}
\MoveEqLeft{
\frac{(|u|^{q-2}u)(t+h)-(|u|^{q-2}u)(t)}h
}\\
&= \frac{(|u_\lambda|^{q-2}u_\lambda)(t)-(|u|^{q-2}u)(t)}h
+ \frac{q-1}{q-2} \frac{(|u_\lambda|^{q-2}u_\lambda)(t)}t + O(h)
\end{align*}
a.e.~in $\Omega$ for $t > 0$ as $h \to 0$. Here noting that $h > 0$ if and only if $\lambda > 1$ and recalling the comparison principle, in case $q > 2$, we deduce that $[(|u_\lambda|^{q-2}u_\lambda)(t)-(|u|^{q-2}u)(t)]/h \leq 0$ a.e.~in $\Omega$ from the fact that $u_\lambda(0) \leq u(0)$ a.e.~in $\Omega$ for $\lambda \geq 1$. Therefore the passage to the limit as $h \to 0_+$ (i.e., $\lambda \to 1_+$) implies \eqref{BC-pnt}. In case $1 < q < 2$, we can obtain an inverse inequality as in \eqref{BC-pnt+}. 

\prf{
\begin{remark}
In case $1 < q < 2$, the integrability of $\partial_t (|u|^{q-2}u)(t)$ is more delicate (cf.~see~\cite[Theorem 1.1, Lemma 2.1]{BeGa}).
\end{remark}
}

\section{Monotonicity of Rayleigh quotient and decay estimates}\label{S:Rayleigh}

This section is devoted to proving Theorem \ref{T:Rayleigh}. Let $u = u(x,t)$ be an energy solution and set $\mathsf{R}(t) := R(u(t))$ for $t \geq 0$ as far as $u(t) \neq 0$ (see below for the definition of the Rayleigh quotient $R(\cdot)$). By a simple calculation, we shall derive from \eqref{energyineq1} and \eqref{energyineq2} that $\d \mathsf{R}/\d t \leq 0$ almost everywhere, from which, however, the monotonicity of $\mathsf{R}$ does not follow immediately, since the absolute continuity of $\mathsf{R}$ has not been guaranteed. In order to fill the gap, we shall exploit the theory for functions of bounded variation (see, e.g.,~\cite{AFP}). Moreover, as a corollary, we shall also derive decay and extinction estimates for energy solutions to the Cauchy--Dirichlet problem \eqref{pde}--\eqref{ic} (see Corollary \ref{C:decay}).

\begin{proof}[Proof of Theorem {\rm \ref{T:Rayleigh}}]
Let $u_0\in \X \cap L^q(\Omega)$ satisfy \eqref{ini-hyp+} and $u_0\not\equiv 0$. Let $u$ be the unique (global-in-time) energy solution to the Cauchy--Dirichlet problem \eqref{pde}--\eqref{ic} with the initial datum $\rho_0 = |u_0|^{q-2}u_0$. Define the \emph{Rayleigh quotient} by
$$
R(w) := \frac{\|w\|_{\X}^2}{\|w\|_{L^q(\Omega)}^2} \quad \text{ for } \ w \in ( \X \cap L^q(\Omega) ) \setminus \{0\}.
$$
Since $u_0 \not \equiv 0$ and the function $t \mapsto \|u(t)\|_{L^q(\Omega)}$ is nonincreasing and (absolutely) continuous on $[0,\infty)$, the map $t \mapsto {\mathsf R}(t) := R(u(t))$ can be defined on the interval $[0,t_*)$ where $t_*$ is given by
\begin{equation}\label{t*}
t_* := \sup \left\{ T > 0 \colon \|u(t)\|_{L^q(\Omega)} > 0 \ \text{ for all } \ t \in[0,T] \right\} \in (0,\infty].
\end{equation}
When $t_* < \infty$, the solution $u = u(x,t)$ is said to \emph{extinct in finite time} and $t_*$ is called the \emph{extinction time} of $u$. 

Let $0 < T < t_*$ be fixed. The function $t \mapsto \|u(t)\|_{L^q(\Omega)}^q$ is absolutely continuous and uniformly away from $0$ on $[0,T]$ (indeed, it is nonincreasing and $u(T) \not \equiv 0$). Moreover, due to \eqref{energyineq1} and \eqref{energyineq2} with $s = 0$, the derivative $(\d/\d t) \|u(t)\|_{L^q(\Omega)}^q$ is uniformly bounded a.e.~in $(0,T)$, and hence, the function $g(t) := 1/\|u(t)\|_{L^q(\Omega)}^2$ is Lipschitz continuous on $[0,T]$. Thanks to (ii) of Theorem \ref{T:main}, the function $f(t) := \|u(t)\|_{\X}^2$ is nonincreasing on $[0,T]$ (and hence, of bounded variation in $(0,T)$) and right-continuous on $[0,T]$. Therefore ${\mathsf R} = fg$ is also right-continuous on $[0,T]$ and of bounded variation in $(0,T)$ (see~\cite[Proposition 3.2]{AFP}). Therefore $\mathsf{R}$ is differentiable a.e.~in $(0,T)$. 

We now claim that $\d {\mathsf R}/\d t\leq 0$ a.e.~in $(0,T)$. Indeed, we have
$$
\frac{\d {\mathsf R}}{\d t}(t) = \frac{ \|u(t)\|_{L^q(\Omega)}^2 \frac{\d}{\d t} \|u(t)\|_{\X}^2 - \|u(t)\|_{\X}^2 \frac{\d}{\d t} \|u(t)\|_{L^q(\Omega)}^2 }{ \|u(t)\|_{L^q(\Omega)}^4 }.
$$
Recall from \eqref{energyineq1} and \eqref{en-ineq} that
\begin{align*}
\|u(t)\|_{\X}^2 &= -\frac{1}{q'} \frac{\d}{\d t} \|u(t)\|_{L^q(\Omega)}^q,\\
\frac{\d}{\d t} \|u(t)\|_{\X}^2 &\leq -\frac{8}{qq'} \left\| \partial_t (|u|^{(q-2)/2}u)(t) \right\|_{L^2(\Omega)}^2
\end{align*}
for a.e.~$t \in (0,T)$. Moreover, by a simple computation, since $\|u(t)\|_{L^q(\Omega)} > 0$ for $t \in [0,T]$, we see that
$$
\frac{\d}{\d t} \|u(t)\|_{L^q(\Omega)}^2 = \frac{2}{q} \|u(t)\|_{L^q(\Omega)}^{2-q} \frac{\d}{\d t} \|u(t)\|_{L^q(\Omega)}^q
$$
for a.e.~$t \in (0,T)$. Combining all these facts, we deduce that
\begin{align*}
\lefteqn{
\frac{\d {\mathsf R}}{\d t}(t)
}\\
&\leq \frac{ -\frac{8}{qq'} \| \partial_t (|u|^{(q-2)/2}u)(t) \|_{L^2(\Omega)}^2 \|u(t)\|_{L^q(\Omega)}^2 + \frac{2}{qq'} \|u(t)\|_{L^q(\Omega)}^{2-q} ( \frac{\d}{\d t} \|u(t)\|_{L^q(\Omega)}^q )^2 }{ \|u(t)\|_{L^q(\Omega)}^4 }\\
&= \frac{ -\frac{8}{qq'} \|\partial_t (|u|^{(q-2)/2}u)(t)\|_{L^2(\Omega)}^2 \|u(t)\|_{L^q(\Omega)}^q + \frac{2}{qq'} (\frac{\d}{\d t}\|u(t)\|_{L^q(\Omega)}^q)^2 }{ \|u(t)\|_{L^q(\Omega)}^{q+2} }
\end{align*}
for a.e.~$t \in (0,T)$. Then noting that
\begin{align*}
\frac{\d}{\d t} \|u(t)\|_{L^q(\Omega)}^q
&= \frac{\d}{\d t} \left\| (|u|^{(q-2)/2}u)(t) \right\|_{L^2(\Omega)}^2\\
&= 2 ( \partial_t (|u|^{(q-2)/2}u)(t), (|u|^{(q-2)/2}u)(t) )_{L^2(\Omega)}
\end{align*}
for a.e.~$t \in (0,T)$, we obtain
\begin{align}
\frac{\d {\mathsf R}}{\d t}(t) 
&\leq -\frac{8}{qq'} \frac{ \| \partial_t (|u|^{(q-2)/2}u)(t) \|_{L^2(\Omega)}^2 \| (|u|^{(q-2)/2}u)(t) \|_{L^2(\Omega)}^2 }{ \|u(t)\|_{L^q(\Omega)}^{q+2} }\nonumber\\
&\quad + \frac{8}{qq'}\frac{ ( \partial_t (|u|^{(q-2)/2}u)(t), (|u|^{(q-2)/2}u)(t) )_{L^2(\Omega)}^2 }{ \|u(t)\|_{L^q(\Omega)}^{q+2} } \leq 0 \label{dR-neg}
\end{align}
for a.e.~$t\in(0,T)$. Here the last inequality follows from Schwarz's inequality.

We next prove the monotonicity of the Rayleigh quotient $t \mapsto \mathsf{R}(t)$ on $[0,\infty)$, which is still unclear, since $\mathsf{R}$ is of bounded variation only and may not be absolutely continuous. Since $f(t) = \|u(t)\|_{\X}^2$ is of bounded variation in $(0,T)$ and right-continuous on $[0,T]$, thanks to~\cite[Theorem 3.28]{AFP} and Lebesgue's decomposition theorem, it follows that, for $0 < s < t < T$,
\begin{align*}
 f(t) - f(s) = \mathrm{D}f((s,t]) = \int^t_s \dfrac{\d f}{\d r}(r) \, \d r + (\mathrm{D}f)_s((s,t]),
\end{align*}
where $\mathrm{D}f$ and $(\mathrm{D}f)_s$ denote the distributional derivative of $f$ represented as a finite Radon measure in $(0,T)$ and its singular part (with respect to the one-dimensional Lebesgue measure $\mathcal{L}^1$), respectively. Since $f$ is nonincreasing (see, e.g.,~\cite[Corollary 3.29]{AFP}), we see that
$$
f(t) - f(s) \leq \int^t_s \dfrac{\d f}{\d r} (r) \, \d r \quad \text{ for } \ 0 < s < t < T,
$$
which implies $(\mathrm{D}f)_s((s,t]) \leq 0$ for any $(s,t] \subset (0,T)$. Since $f$ is of bounded variation and $g$ is Lipschitz continuous in $(0,T)$, by virtue of~\cite[Proposition 3.2]{AFP} and Lebesgue's decomposition theorem, we observe that
\begin{equation}\label{DR}
\mathrm{D}{\mathsf R} = g \mathrm{D}f + f \frac{\d g}{\d t}\mathcal{L}^1
= g \dfrac{\d f}{\d t} \mathcal{L}^1 + g (\mathrm{D}f)_s + f \frac{\d g}{\d t}\mathcal{L}^1,
\end{equation}
where $\mathcal{L}^1$ denotes the one-dimensional Lebesgue measure. Therefore as ${\mathsf R}$ is of bounded variation in $(0,T)$ and right-continuous on $[0,T]$, we infer from~\cite[Theorem 3.28]{AFP} and \eqref{DR} that
\begin{align*}
{\mathsf R}(t) - {\mathsf R}(s) 
&= \mathrm{D}{\mathsf R}((s,t])\\
&= \int^t_s g(r) \dfrac{\d f}{\d r}(r) \, \d r + [g (\mathrm{D}f)_s]((s,t]) + \int^t_s f(r) \dfrac{\d g}{\d r}(r) \, \d r\\
&= \int^t_s \dfrac{\d {\mathsf R}}{\d r} (r) \, \d r + \int_{(s,t]}g \, \d(\mathrm{D}f)_s \stackrel{\eqref{dR-neg}} \leq 0
\end{align*}
for $0 < s < t < T$. Indeed, $g$ is (Lipschitz) continuous and positive on $[0,T]$ and $(\mathrm{D}f)_s$ is a nonpositive finite Radon measure in $(0,T)$. Thus the function $t \mapsto {\mathsf R}(t)$ is nonincreasing in $(0,T)$, and hence, it is so on $[0,t_*)$ from the right-continuity of ${\mathsf R}$ on $[0,T]$ and the arbitrariness of $T < t_*$. This completes the proof of Theorem \ref{T:Rayleigh}.
\end{proof}

Thanks to Theorems \ref{T:main} and \ref{T:Rayleigh}, we can prove Corollary \ref{C:decay}.

\begin{proof}[Proof of Corollary {\rm \ref{C:decay}}]
We assume \eqref{hyp-decay} (which also implies $|u_0|^{q-2}u_0 \in L^{(2_\theta^*)'}(\Omega)$). Let $u = u(x,t)$ be the energy solution of \eqref{pde}--\eqref{ic} with initial data $\rho_0 = |u_0|^{q-2}u_0$ and let $t_* \in (0,\infty]$ be the extinction time (see \eqref{t*}). Recalling the energy identity \eqref{energyineq1} and using the Sobolev--Poincar\'e inequality \eqref{SP}, we find that
\begin{equation}\label{ei+SP}
\frac{1}{q'}\frac{\d}{\d t}\|u(t)\|_{L^q(\Omega)}^q + C_q^{-2}\|u(t)\|_{L^q(\Omega)}^2\leq 0
\end{equation}
for a.e.~$t > 0$. In case $1 < q < 2$, we obtain
$$
\|u(t)\|_{L^q(\Omega)} \leq \left( \|u_0\|_{L^q(\Omega)}^{-(2-q)} + \frac{2-q}{q-1} C_q^{-2} t \right)^{-1/(2-q)}
$$
for all $t \geq 0$; whence $\|u(t)\|_{L^q(\Omega)}$ decays algebraically as $t \to \infty$. In case $2 < q \leq 2_\theta^*$, it follows from \eqref{ei+SP} that
$$
\|u(t)\|_{L^q(\Omega)} \leq \left( \|u_0\|_{L^q(\Omega)}^{q-2} - \frac{q-2}{q-1} C_q^{-2} t \right)_+^{1/(q-2)}
$$
for all $t \geq 0$. Therefore $u$ vanishes at a finite time, and therefore, $t_*$ is finite. Moreover, we see that
$$
t_*\leq \frac{q-1}{q-2} C_q^2 \|u_0\|_{L^q(\Omega)}^{q-2} < \infty.
$$
On the other hand, \eqref{ei+SP} also yields
\begin{equation}\label{ODEbis}
\frac{\d}{\d t} \|u(t)\|_{L^q(\Omega)}^{q-2} \leq -\frac{q-2}{q-1} C_q^{-2}
\end{equation}
for a.e.~$t \in (0,t_*)$, since $\|u(t)\|_{L^q(\Omega)} > 0$ for all $t \in [0,t_*)$. Integrating both sides of \eqref{ODEbis} over $(t,t_*)$, we infer that
$$
\|u(t)\|_{L^q(\Omega)} \geq \left( \frac{q-2}{q-1} C_q^{-2} \right)^{1/(q-2)} (t_*-t)_+^{1/(q-2)}
$$
for all $t \in [0,t_*)$.

Next, recalling the energy identity \eqref{energyineq1} and using the nonincrease of the Rayleigh quotient (see Theorem \ref{T:Rayleigh}), namely,
\begin{equation}
\|u(t)\|_{\X}^2 = R(u(t)) \|u(t)\|_{L^q(\Omega)}^2 \leq R(u_0) \|u(t)\|_{L^q(\Omega)}^2 \label{rev-ineq}
\end{equation}
for $t \in [0,t_*)$, we have
\begin{equation}\label{ei+R}
\frac{1}{q'} \frac{\d}{\d t} \|u(t)\|_{L^q(\Omega)}^q + R(u_0) \|u(t)\|_{L^q(\Omega)}^2 \geq 0
\end{equation}
for a.e.~$t > 0$. In case $1 < q < 2$, we derive that
$$
\frac{\d}{\d t} \|u(t)\|_{L^q(\Omega)}^{-(2-q)} \leq \frac{2-q}{q-1} R(u_0)
$$
for a.e.~$t \in (0,t_*)$; whence it follows that
$$
\|u(t)\|_{L^q(\Omega)} \geq \left( \|u_0\|_{L^q(\Omega)}^{-(2-q)} + \frac{2-q}{q-1} R(u_0)t \right)^{-1/(2-q)}
$$
for all $t \in [0,t_*)$. Therefore we see that $t_* = \infty$, since the right-hand side of the above is positive for any $t\geq 0$. Consequently, there exists a constant $c > 0$ such that
$$
c(1+t)^{-1/(2-q)} \leq \|u(t)\|_{L^q(\Omega)} \leq c^{-1}(1+t)^{-1/(2-q)}
$$
for all $t\geq 0$. In case $2 < q \leq 2_\theta^*$, one can derive from \eqref{ei+R} that
$$
\frac{\d}{\d t} \|u(t)\|_{L^q(\Omega)}^{q-2} \geq -\frac{q-2}{q-1} R(u_0)
$$
for a.e.~$t \in (0,t_*)$. Thus it leads us to obtain
$$
\|u(t)\|_{L^q(\Omega)} \leq \left( \frac{q-2}{q-1} R(u_0) \right)^{1/(q-2)} (t_*-t)_+^{1/(q-2)}
$$
for all $t \geq 0$. In particular, substituting $t = 0$ to the above, we find that
$$
t_* \geq \frac{q-1}{q-2} R(u_0)^{-1} \|u_0\|_{L^q(\Omega)}^{q-2}.
$$
Thus \eqref{asymp1} and \eqref{asymp2} have been verified. Finally, using \eqref{SP} and \eqref{rev-ineq}, we can derive \eqref{asymp1} and \eqref{asymp2} with different constants and $\|\cdot\|_{L^q(\Omega)}$ replaced by $\|\cdot\|_{\X}$. Thus we have completed the proof of Corollary \ref{C:decay}.
\end{proof}

\section{Convergence to asymptotic profiles}\label{S:conv}

In this section, we shall prove Theorems \ref{T:ap} and \ref{T:conv}. Throughout this section, we assume \eqref{hyp-ap}. In \S \ref{Ss:rescale}, we shall discuss regularity and energy inequalities for the rescaled solution $v = v(x,s)$ defined by \eqref{def1}, \eqref{def2}. Subsection \ref{Ss:quasiconv} is concerned with a proof for Theorem \ref{T:ap}. In \S \ref{Ss:LS}, we shall provide a variant of the so-called {\L}ojasiewicz--Simon gradient inequality, which will be used in \S \ref{Ss:conv} to prove Theorem \ref{T:conv} for the (fractional) fast diffusion case, i.e., $2 < q < 2^*_\theta$.

\subsection{Rescaled energy solutions}\label{Ss:rescale}

Let $u = u(x,t)$ be the energy solution to \eqref{pde}--\eqref{ic} with the initial datum $u_0$ satisfying \eqref{hyp-decay} and let $v = v(x,s)$ be the rescaled function defined by \eqref{def1}, \eqref{def2}. From Theorem \ref{T:main}, it is straightforward to see that 
\begin{align*}
v &\in C_{\text{weak}}([0,\infty),\X) \cap C_+([0,\infty),\X) \cap C([0,\infty),L^q(\Omega)),\\
|v|^{q-2}v &\in C([0,\infty),L^{q'}(\Omega)),\\
\partial_s (|v|^{q-2}v) &\in C_{\rm weak}([0,\infty);\X^*) \cap C_+([0,\infty),\X^*),\\
|v|^{(q-2)/2}v &\in W^{1,2}(0,\infty;L^2(\Omega)),
\end{align*}
and moreover, by means of Definition \ref{D:sol}, $v$ turns out to be the \emph{energy solution} to the Cauchy--Dirichlet problem \eqref{eq:1.6}--\eqref{eq:1.8} with the initial datum $v_0$ defined by \eqref{v0}. Furthermore, from the fact that $v \in C_+([0,\infty);\X)$, the initial condition \eqref{eq:1.8} can also be replaced with
$$
v(\cdot,0) = v_0 \quad \mbox{ in } \Omega.
$$
By virtue of Corollary \ref{C:decay}, there exist positive constants $c,C$ such that
\begin{equation}\label{v-bdd}
0 < c \leq \|v(s)\|_{L^q(\Omega)} \leq C_q \|v(s)\|_{\X} \leq C < \infty 
\end{equation}
for $s \geq 0$. Furthermore, it follows from \eqref{energyineq1}--\eqref{en-ineq} that

\begin{lemma}[Energy inequalities for the rescaled solution]\label{L:v-enin}
\begin{enumerate}
\item[\rm (i)] The function $s \mapsto \|v(s)\|_{\X}^2$ is of locally bounded variation on $[0,\infty)$, and moreover, the function $s \mapsto \|v(s)\|_{L^q(\Omega)}^q$ is absolutely continuous on $[0,\infty)$. Define a functional $J : \X \to \R$ by
$$
J(w) = \frac{1}{2} \|w\|_{\X}^2 - \frac{\lambda_q}{q} \|w\|_{L^q(\Omega)}^q \quad \text{ for } \ w \in \X,
$$
where $\lambda_q := (q-1)/|q-2| > 0$. Then the function $s \mapsto J(v(s))$ is of locally bounded variation on $[0,\infty)$ and differentiable almost everywhere. It also holds that
\begin{equation}\label{estJ}
\frac{4}{qq'} \left\| \partial_s (|v|^{(q-2)/2}v)(s) \right\|_{L^2(\Omega)}^2 + \frac{\d}{\d s} J(v(s)) \leq 0
\end{equation}
for a.e.~$s > 0$. Furthermore, $J(v(\cdot))$ is nonincreasing on $[0,\infty)$.
\item[\rm (ii)] The function $s \mapsto \|(|v|^{q-2}v)(s)\|_{\X^*}^2$ is also absolutely continuous on $[0,\infty)$. Define a functional $K : L^q(\Omega) \to \R$ by 
$$
K(w) = \frac{1}{q'} \|w\|_{L^q(\Omega)}^q - \frac{\lambda_q}{2} \left\| |w|^{q-2}w \right \|_{\X^*}^2 \quad \text{ for } \ w \in L^q(\Omega).
$$
Then the function $s \mapsto K(v(s))$ is absolutely continuous and nonincreasing on $[0,\infty)$, and moreover, it holds that
\begin{equation}\label{estK}
\left\| \partial_s (|v|^{q-2}v)(s) \right\|_{\X^*}^2 + \frac{\d}{\d s} K(v(s)) = 0
\end{equation}
for a.e.~$s > 0$\/{\rm ;} whence it follows that
\begin{equation}\label{dv-bdd}
\int_0^\infty \left\| \partial_s (|v|^{q-2}v)(s) \right\|_{\X^*}^2 \,\d s < \infty.
\end{equation}
\end{enumerate}
\end{lemma}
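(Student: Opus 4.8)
The plan is to obtain part~(i) by pushing the energy identity~\eqref{energyineq1} and the energy inequality~\eqref{en-ineq}, already available for $u$, forward to the rescaled solution $v$ through the explicit change of variables~\eqref{def1}, \eqref{def2}, and to obtain part~(ii) by testing the rescaled equation~\eqref{eq:1.6} directly. I shall describe~(i) in the porous medium range $1<q<2$; the fast diffusion range $2<q<2_\theta^*$ is entirely analogous, with $t+1$ replaced by $t_*-t$ (finite by Corollary~\ref{C:decay}) and $1/(2-q)$ by $-1/(q-2)$, the constants organizing themselves through the normalization $\lambda_q=(q-1)/|q-2|$.

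For~(i), put $\sigma:=1/(2-q)>0$, so that $t+1=\e^s$, $\d t/\d s=t+1$ and $v(s)=\e^{\sigma s}u(t)$, whence $\|v(s)\|_{\X}^2=\e^{2\sigma s}\|u(t)\|_{\X}^2$, $\|v(s)\|_{L^q(\Omega)}^q=\e^{q\sigma s}\|u(t)\|_{L^q(\Omega)}^q$ and $(|v|^{(q-2)/2}v)(s)=\e^{q\sigma s/2}(|u|^{(q-2)/2}u)(t)$. Since $s\mapsto t(s)$ is a smooth increasing bijection of $[0,\infty)$, the function $s\mapsto\|v(s)\|_{\X}^2$ is a smooth positive multiple of the nonincreasing function $s\mapsto\|u(t(s))\|_{\X}^2$, hence of locally bounded variation, while $s\mapsto\|v(s)\|_{L^q(\Omega)}^q$ is a smooth multiple of $s\mapsto\|u(t(s))\|_{L^q(\Omega)}^q$, which is absolutely continuous by~\eqref{energyineq1}; hence it is locally absolutely continuous, and even absolutely continuous on $[0,\infty)$ once one notes that its derivative is globally controlled via~\eqref{v-bdd} and~\eqref{energyineq1} transferred to $v$. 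Thus $J(v(\cdot))$ is of locally bounded variation and a.e.\ differentiable. To reach~\eqref{estJ} I would differentiate the three displayed relations at each $s$ for which $t\mapsto\|u(t)\|_{\X}^2$ is differentiable at $t(s)$ --- a condition failing only on an $s$-null set because $s\mapsto t$ is a diffeomorphism --- then substitute~\eqref{energyineq1} and~\eqref{en-ineq} and simplify; the prefactors produced by the rescaling cancel precisely because $\lambda_q=(q-1)/|q-2|$, so that~\eqref{estJ} emerges with the constant $4/(qq')$ intact, the only non-equality used being~\eqref{en-ineq}. Finally, to upgrade ``$\tfrac{\d}{\d s}J(v(s))\le 0$ a.e.'' to the monotonicity of $J(v(\cdot))$ I would argue as in the proof of Theorem~\ref{T:Rayleigh}: the distributional derivative of $s\mapsto\tfrac{\lambda_q}{q}\|v(s)\|_{L^q(\Omega)}^q$ is absolutely continuous, while that of $s\mapsto\tfrac12\|v(s)\|_{\X}^2$ has nonpositive singular part, being a smooth positive multiple of the nonincreasing function $s\mapsto\|u(t(s))\|_{\X}^2$; hence $\mathrm{D}\bigl(J(v(\cdot))\bigr)$ is a nonpositive Radon measure on $(0,\infty)$, and the right-continuity of $J(v(\cdot))$, inherited from $v\in C_+([0,\infty);\X)\cap C([0,\infty);L^q(\Omega))$, extends the conclusion to $[0,\infty)$.

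For~(ii), I would work directly with~\eqref{eq:1.6}. By~\eqref{v-bdd} one has $v\in L^\infty(0,\infty;\X)$; since $q<2_\theta^*$, the term $|v|^{q-2}v$ lies in $L^\infty(0,\infty;L^{q'}(\Omega))\hookrightarrow L^\infty(0,\infty;\X^*)$ and $(-\Delta)^\theta v\in L^\infty(0,\infty;\X^*)$, so that, together with the regularity of the energy solution $v$ recorded in~\S\ref{Ss:rescale}, $|v|^{q-2}v\in W^{1,\infty}([0,\infty);\X^*)$ with $\partial_s(|v|^{q-2}v)=-(-\Delta)^\theta v+\lambda_q|v|^{q-2}v$; in particular $s\mapsto\|(|v|^{q-2}v)(s)\|_{\X^*}^2$ is absolutely continuous, and so is $s\mapsto\|v(s)\|_{L^q(\Omega)}^q$ by~(i). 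Testing~\eqref{eq:1.6} by $(-\Delta)^{-\theta}\partial_s(|v|^{q-2}v)(s)\in\X$, using the self-adjointness of $(-\Delta)^\theta$ and the chain-rule computations of~\S\ref{Ss:enid} (with $(-\Delta)^{-\theta}w=\d(\tfrac12\|w\|_{\X^*}^2)$, giving $\langle\partial_s(|v|^{q-2}v)(s),(-\Delta)^{-\theta}(|v|^{q-2}v)(s)\rangle_{\X}=\tfrac12\tfrac{\d}{\d s}\|(|v|^{q-2}v)(s)\|_{\X^*}^2$, and $\langle\partial_s(|v|^{q-2}v)(s),v(s)\rangle_{\X}=\tfrac1{q'}\tfrac{\d}{\d s}\|v(s)\|_{L^q(\Omega)}^q$), one arrives at
\[
\|\partial_s(|v|^{q-2}v)(s)\|_{\X^*}^2+\frac1{q'}\frac{\d}{\d s}\|v(s)\|_{L^q(\Omega)}^q-\frac{\lambda_q}{2}\frac{\d}{\d s}\bigl\|(|v|^{q-2}v)(s)\bigr\|_{\X^*}^2=0
\]
for a.e.\ $s>0$, which is exactly~\eqref{estK}; the monotonicity of $K(v(\cdot))$ is then immediate, and integrating~\eqref{estK} over $(0,\infty)$, noting that $s\mapsto K(v(s))$ is bounded (the $L^q$ and the $\X^*$ contributions being controlled by~\eqref{v-bdd}), yields~\eqref{dv-bdd}.

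The step I expect to be the main obstacle lies in~(i), namely bridging the almost-everywhere differential inequality~\eqref{estJ} and the genuine monotonicity of $J(v(\cdot))$: this is exactly where, as in Theorem~\ref{T:Rayleigh}, the missing absolute continuity of the Dirichlet energy has to be circumvented through the theory of functions of bounded variation, and one must keep careful track of the singular part of $\mathrm{D}\bigl(\|v(\cdot)\|_{\X}^2\bigr)$. Carrying out the change-of-variables differentiation so that~\eqref{estJ} comes out with the correct constant --- which rests on the cross term in $\partial_s(|v|^{(q-2)/2}v)$ being absorbed exactly by the $\|v\|_{L^q(\Omega)}^q$ piece of $J$ --- is also a little delicate, though routine; part~(ii) is, by contrast, a clean application of the chain rule.
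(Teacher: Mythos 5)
Your proposal is correct and follows essentially the same route as the paper: part (i) is obtained by transporting \eqref{energyineq1} and \eqref{en-ineq} through the rescaling \eqref{def1}, \eqref{def2} (the prefactors indeed cancel thanks to $\lambda_q=(q-1)/|q-2|$ and the identity $q\sigma+2=2\sigma+1$), and the monotonicity of $J(v(\cdot))$ is recovered exactly as in the proof of Theorem \ref{T:Rayleigh} via the Lebesgue decomposition of the BV function $s\mapsto\tfrac12\|v(s)\|_{\X}^2$ with nonpositive singular part. Part (ii) likewise coincides with the paper's argument: test \eqref{eq:1.6} by $(-\Delta)^{-\theta}\partial_s(|v|^{q-2}v)(s)$, invoke the chain rules of \S\ref{Ss:enid} and Lemma \ref{L:chainrule}, and integrate \eqref{estK} using the lower bound on $K(v(\cdot))$ furnished by \eqref{v-bdd}.
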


\begin{proof}
We first prove (i). It is clear from Theorem \ref{T:main} along with \eqref{def1}, \eqref{def2} that the function $s \mapsto \|v(s)\|_{\X}^2$ is of locally bounded variation on $[0,\infty)$ and the function $s \mapsto \|v(s)\|_{L^q(\Omega)}^q$ is absolutely continuous on $[0,\infty)$. Thus $J(v(\cdot))$ turns out to be of locally bounded variation on $[0,\infty)$, and hence, it is differentiable a.e.~in $(0,\infty)$. Estimate \eqref{estJ} follows from \eqref{en-ineq}, \eqref{def1}, \eqref{def2} with a simple computation. 

We next verify that the function $s \mapsto J(v(s))$ is nonincreasing on $[0,\infty)$ as in Section \ref{S:Rayleigh}. Let $T > 0$ be arbitrarily fixed and define functions $\phi,\psi : [0,T] \to \R$ of bounded variation by 
$$
\phi(s) = J(v(s)), \quad \psi(s) = \frac 12 \|v(s)\|_{\X}^2
$$
for $s \in [0,T]$. Then due to Lebesgue's decomposition theorem, we observe that
\begin{equation}\label{Dphi-decomp}
\mathrm{D} \phi = (\mathrm{D} \psi)_s + \frac{\d \phi}{\d s} \mathcal{L}^1,
\end{equation}
where $\mathrm{D} \phi$ and $(\mathrm{D} \psi)_s$ denote the distributional derivative of $\phi$ represented as a finite Radon measure in $(0,T)$ and the singular part (i.e., $(\mathrm{D} \psi)_s \perp \mathcal{L}^1$) of the distributional derivative $\mathrm{D} \psi$ of $\psi$, respectively, and $\mathcal{L}^1$ denotes the one-dimensional Lebesgue measure. As in the proof of Theorem \ref{T:Rayleigh}, since $\psi$ is the product of a smooth positive function and a nonincreasing right-continuous function, we can also show that $(\mathrm{D} \psi)_s((s_1,s_2]) \leq 0$ for any $(s_1,s_2] \subset (0,T)$.\prf{\footnote{\UUU Indeed, we can write $\psi(s) = g(s) h(s)$ for a positive smooth function $g$ and a nonincreasing right-continuous function $h$. Since $\psi = gh$ is right-continuous on $[0,T)$ and of bounded variation in $(0,T)$, we have
$$
\mathrm{D} \psi = \frac{\d \psi}{\d s} \mathcal{L}^1 + g (\mathrm{D} h)_s,
$$
which yields $(\mathrm{D} \psi)_s = g (\mathrm{D} h)_s$. Moreover, as $h$ is nonincreasing, one can check that $(\mathrm{D}h)_s ((s_1,s_2]) \leq 0$ for any $(s_1,s_2] \subset (0,T)$. Therefore from the positivity of $g$, we conclude that 
$$
(\mathrm{D} \psi)_s((s_1,s_2]) = \int_{(s_1,s_2]} g \,\d (\mathrm{D} h)_s \leq 0
$$
for any $(s_1,s_2] \subset (0,T)$.}} As $\phi = J(v(\cdot))$ is right-continuous on $[0,T]$ and of bounded variation in $(0,T)$, we infer from \eqref{estJ}, \eqref{Dphi-decomp} and~\cite[Theorem 3.28]{AFP} that
\begin{align*}
\phi(s_2) - \phi(s_1)
&= (\mathrm{D} \phi)((s_1,s_2])\\
&= (\mathrm{D} \psi)_s((s_1,s_2]) + \int_{s_1}^{s_2} \frac{\d\phi}{\d s}(s) \,\d s\\
&\leq - \frac{4}{qq'} \int_{s_1}^{s_2} \left\| \partial_s ( |v|^{(q-2)/2}v )(s) \right\|_{L^2(\Omega)}^2 \, \d s \leq 0
\end{align*}
for any $0 < s_1 \leq s_2 < T$. From the arbitrariness of $T$ and the right-continuity of $\phi$ on $[0,\infty)$, the function $J(v(\cdot))$ turns out to be nonincreasing on $[0,\infty)$.

We next prove (ii). Since $|v|^{q-2}v \in W^{1,2}_{\rm loc}([0,\infty);\X^*)$ and $(-\Delta)^{-\theta}$ is a Riesz map from $\X^*$ to $\X$, the function $s \mapsto \|(|v|^{q-2}v)(s)\|_{\X^*}^2 = \| (-\Delta)^{-\theta} (|v|^{q-2}v)(s)\|_{\X}^2$ is absolutely continuous on $[0,\infty)$ such that
$$
\frac 12 \frac{\d}{\d s} \left\| (|v|^{q-2}v)(s) \right\|_{\X^*}^2 = \left\langle (|v|^{q-2}v)(s), (-\Delta)^{-\theta} \partial_s (|v|^{q-2}v)(s) \right\rangle_{\X}
$$
for a.e.~$s \in (0,\infty)$. Accordingly, $K(v(\cdot))$ is absolutely continuous on $[0,\infty)$ and differentiable a.e.~in $(0,\infty)$. Moreover, \eqref{estK} follows from the direct computation of testing \eqref{eq:1.6} with $(-\Delta)^{-\theta} \partial_s (|v|^{q-2}v)(s) \in \X$ (see also Lemma \ref{L:chainrule}). Moreover, by virtue of \eqref{v-bdd}, we see that $K(v(s))$ is bounded from below for $s \geq 0$. Therefore we conclude that
$$
\int_0^\infty \left\| \partial_s (|v|^{q-2}v)(s) \right\|_{\X^*}^2 \, \d s < \infty.
$$
This completes the proof.
\end{proof}

\subsection{Quasi-convergence to asymptotic profiles}\label{Ss:quasiconv}

In this subsection, we shall prove Theorem \ref{T:ap}, which can be proved as in the classical case, i.e., $\theta = 1$ (see, e.g.,~\cite{BH80, AK13}) but we shall briefly give a proof for the completeness. 

\begin{proof}[Proof of Theorem {\rm \ref{T:ap}}]
Let $(s_n)$ be a sequence in $(0,\infty)$ such that $s_n \to \infty$. Due to \eqref{dv-bdd}, for each $n \in \N$, one can take $\sigma_n \in [s_n, s_n + 1]$ such that
$$
\partial_s (|v|^{q-2}v) (\sigma_n) \to 0 \quad \text{ strongly in } \X^*.
$$
\prf{Indeed, such a sequence $(\sigma_n)$ exists; if not, there exist $\epsilon > 0$ and an infinite number of intervals $[s_n, s_n + 1]$ on which $\| \partial_s (|v|^{q-2}v)(s) \|_{\X^*}^2 > \epsilon$, but this contradicts \eqref{dv-bdd}, since the union of these intervals is of infinite measure.} By use of \eqref{v-bdd}, one can further take a (not relabeled) subsequence of $(n)$ and $\phi \in \X$ such that
\begin{equation}\label{v-wcvg}
v(\sigma_n) \to \phi \quad \text{ weakly in } \X,
\end{equation}
which also implies that
$$
(-\Delta)^\theta v(\sigma_n) \to (-\Delta)^\theta \phi \quad \text{ weakly in } \X^*.
$$
Moreover, due to the compact embedding $\X \hookrightarrow L^q(\Omega)$, we can deduce that
\begin{equation}\label{vq-cvg1}
(|v|^{q-2}v)(\sigma_n) \to |\phi|^{q-2}\phi \quad \text{ strongly in } L^{q'}(\Omega).
\end{equation}
Passing to the limit in \eqref{eq:1.6}--\eqref{eq:1.8}, we get
\begin{equation}\label{limprob}
(-\Delta)^\theta \phi = \lambda_q |\phi|^{q-2}\phi \quad \text{ in } \X^*.
\end{equation}
Moreover, testing \eqref{eq:1.6} by $v(\sigma_n)$, we see that
\begin{align*}
\|v(\sigma_n)\|_{\X}^2 
&= - \left\langle \partial_s (|v|^{q-2}v)(\sigma_n), v(\sigma_n) \right\rangle_{\X} + \lambda_q \|v(\sigma_n)\|_{L^q(\Omega)}^q\\
&\to \lambda_q \|\phi\|_{L^q(\Omega)}^q.
\end{align*}
Testing \eqref{limprob} by $\phi$, we deduce that
$$
\lambda_q \|\phi\|_{L^q(\Omega)}^q = \|\phi\|_{\X}^2.
$$
Thus we obtain
$$
\|v(\sigma_n)\|_{\X}^2 \to \|\phi\|_{\X}^2,
$$
which along with \eqref{v-wcvg} and the uniform convexity of $\|\cdot\|_{\X}$ implies that
\begin{equation}\label{v-scvg}
v(\sigma_n) \to \phi \quad \text{ strongly in } \X.
\end{equation}
Since $J(v(\sigma_n)) \to J(\phi)$ and $J(v(\cdot))$ is nonincreasing on $[0,\infty)$, it follows that $J(v(s_n)) \to J(\phi)$. On the other hand, recalling \eqref{v-bdd}, we infer from \eqref{v-scvg} that $\phi \neq 0$.

From \eqref{v-bdd}, there exist a (not relabeled) subsequence of $(n)$ and $\psi \in \X$ such that
$$
v(s_n) \to \psi \quad \text{ weakly in } \X \ \text{ and strongly in } L^q(\Omega),
$$
which leads us to get
$$
(|v|^{q-2}v)(s_n) \to |\psi|^{q-2}\psi \quad \text{ strongly in } L^{q'}(\Omega).
$$
Moreover, we see that
\begin{align*}
\MoveEqLeft{
\left\| (|v|^{q-2}v)(s_n) - (|v|^{q-2}v)(\sigma_n) \right\|_{\X^*}
}\\
&\leq \int_{s_n}^{\sigma_n} \left\| \partial_s (|v|^{q-2}v)(r) \right\|_{\X^*} \, \d r\\
&\leq \left( \int_{s_n}^{\infty} \left\| \partial_s (|v|^{q-2}v)(r) \right\|_{\X^*}^2 \, \d r \right)^{1/2} \sqrt{\sigma_n - s_n} \to  0,
\end{align*}
which along with \eqref{dv-bdd} and \eqref{vq-cvg1} yields
$$
(|v|^{q-2}v)(s_n) \to |\phi|^{q-2}\phi \quad \text{ strongly in } \X^*.
$$
Thus we obtain $\phi = \psi$. One finds that
\begin{align*}
\frac{1}{2} \| v(s_n) \|_{\X}^2
&= J(v(s_n)) + \frac{\lambda_q}{q} \|v(s_n)\|_{L^q(\Omega)}^q\\
&\to J(\phi) + \frac{\lambda_q}{q} \|\phi\|_{L^q(\Omega)}^q = \frac12 \|\phi\|_{\X}^2.
\end{align*}
From the uniform convexity of $\|\cdot\|_{\X}$, we conclude that
$$
v(s_n) \to \phi \quad\text{ strongly in } \X,
$$
which completes the proof.
\end{proof}

\subsection{{\L}ojasiewicz--Simon gradient inequalities of Feireisl--Simondon type for the restricted fractional Laplacian}\label{Ss:LS}

This subsection is devoted to recalling a fractional variant of the {\L}ojasiewicz--Simon gradient inequality of Feireisl--Simondon type, which is developed in~\cite{ASS19} and will play a crucial role in order to prove Theorem \ref{T:conv} for the (fractional) fast diffusion case $q > 2$. 

The \emph{{\L}ojasiewicz inequality} was proved by S.~{\L}ojasiewicz~\cite{Lo} in 1965 (see also~\cite{LoZu}) and utilized for proving the (full) convergence of gradient flows in $\R^d$ to a single equilibrium as time goes to infinity. This innovative method was then extended by L.~Simon~\cite{Simon83} to infinite-dimensional settings and employed to prove the convergence of gradient flows in function spaces. It is often refereed as the \emph{{\L}ojasiewicz--Simon gradient inequality} (see, e.g.,~\cite{Jen98,HJ98,Har00,HJ01,HJK03,Chill03,CHJ09,HJ11}). An essential assumption for the {\L}ojasiewicz(--Simon) inequality is the \emph{analyticity} of the functional for which the inequality and gradient flows are considered; however, such an assumption may be too restrictive in view of applications to nonlinear PDEs. Indeed, even power nonlinearity may be ruled out. Feireisl and Simondon~\cite{FeiSim00} filled such a defect for some concrete functionals, which are associated with the Dirichlet problem for a wide class of (local) semilinear elliptic equations, with the aid of the classical elliptic regularity theory. Furthermore, the result of~\cite{FeiSim00} was also extended in~\cite{ASS19} to the restricted fractional Laplacian with the Dirichlet condition (we also refer the reader to, e.g.,~\cite{BlaBol18,HaMa19,FeeM20,YagiI} for other recent developments).

Let us consider the Dirichlet problem,
\begin{equation}\label{DPg}
\left\{
\begin{aligned}
(-\Delta)^\theta \varphi + g(\varphi) &= 0 && \text{ in } \Omega,\\
u &= 0 && \text{ on } \R^d \setminus \Omega,
\end{aligned}
\right.
\end{equation}
where $\Omega$ is a bounded domain of $\R^d$, $0 < \theta \leq 1$ and $g : \R \to \R$ is a function. For $0 < a, b \leq \infty$, we introduce assumptions for $g$ as follows:

\begin{description}
\item[(H0)] $g \in C^1(\R)$, $g(0)=0$.
\item[(H1)] {\bf (Analyticity)} $g \in C^\infty(-a,b)$, and moreover, for every $\alpha \in (0,a)$ and $\beta \in (0,b)$, there exist constants $C, M \geq 0$ such that
$$
|g^{(n)}(r)| \leq C M^n n! \quad \text{ for } \ r \in (-\alpha,\beta)
$$
for $n \in \N$ large enough.
\item[(H2)] {\bf (Analyticity with singularity at the origin)} $g \in C^\infty(0,b)$, and moreover, for every $\beta \in (0,b)$, there exist constants $C, M \geq 0$ such that
$$
|g^{(n)}(r)| \leq C \frac{M^n n!}{r^n} \quad \text{ for } \ r \in (0,\beta)
$$
for $n \in \N$ large enough.
\end{description}
In addition, if either $a$ or $b$ is infinite, we also assume the following growth condition for $g$:
\begin{description}
\item[(H3)] There exist $C \geq 0$ and $\sigma \in [1, 2_\theta^* - 1] \cap [1,\infty)$ such that
$$
|g'(r)| \leq C ( |r|^{\sigma-1} + 1 ) \quad \text{ for } \ r \in \R.
$$
\end{description}

Define a functional $I : \X \to (-\infty,\infty]$ by
$$
I(w):=
\begin{cases}
\frac{1}{2} \|w\|_{\X}^2 + \int_\Omega \hat{g}(w(x)) \,\d x & \text{ if } \ w \in \X, \; \hat{g}(w(\cdot)) \in L^1(\Omega),\\
\infty & \text{ otherwise},
\end{cases}
$$
where $\hat{g} : \R \to \R$ is a primitive function of $g$ such that $\hat{g}(0) = 0$. Under the hypothesis (H3), it is easy to see that $I$ is finite over $\X$ and of class $C^1$ with the Fr\'echet derivative,
$$
I'(w) := (-\Delta)^\theta w + g(w) \in \X^* \quad \text{ for } \ w \in \X.
$$
Then a weak form of the equation \eqref{DPg} reads,
$$
\varphi \in \X \ \text{ and } \ I'(\varphi) = 0 \ \text{ in } \X^*.
$$

Now we recall the following

\begin{theorem}[{\L}ojasiewicz--Simon inequalities for $I$ (cf.{~\cite[Theorem 5]{ASS19}})]\label{T:LS}
Let $\Omega$ be a bounded $C^{1,1}$ domain of $\R^d$. Assume {\rm (H0)}. In addition, when $0 < \theta < 1$, suppose that $d > 2\theta$. Let $\varphi \in \X \cap L^\infty(\Omega)$ be a weak solution to \eqref{DPg}.
\begin{enumerate}
 \item[(a)] Assume that either of the following holds\/{\rm :}
 \begin{enumerate}
  \item[(i)] {\rm (H1)} with $a = b = \infty$ and {\rm (H3)},
  \item[(ii)] {\rm (H2)} with $b = \infty$, {\rm (H3)} and $\varphi > 0$ a.e.~in $\Omega$.
 \end{enumerate}
Then there exist an exponent $\sigma \in (0,1/2]$ and constants $\omega, \delta>0$ such that
$$
 | I(w) - I(\varphi) |^{1-\sigma}\leq \omega \| I'(w) \|_{\X^*} = \omega \left\| (-\Delta)^\theta w + g(w) \right\|_{\X^*}
$$
for all $w\in \X$ satisfying $\|w-\varphi\|_{\X}<\delta$.
\item[(b)] Let $\eta > 0$ and assume that either of the following holds\/{\rm :}
 \begin{enumerate}
  \item[(iii)] {\rm (H1)} and $\|\varphi\|_{L^\infty} < \eta$ with $a,b > 0$ satisfying $\eta < \min\{a,b\} < \infty$,
  \item[(iv)] {\rm (H2)} and $0 < \varphi < \eta$ a.e.~in $\Omega$ with $b > 0$ satisfying $\eta < b < \infty$.
 \end{enumerate}
Then there exist an exponent $\sigma \in (0,1/2]$ and constants $\omega, \delta>0$ such that
$$
 | I(w) - I(\varphi) |^{1-\sigma} \leq \omega \| I'(w) \|_{\X^*} = \omega \left\| (-\Delta)^\theta w + g(w) \right\|_{\X^*}
$$
for all $w \in \X \cap L^\infty(\Omega)$ satisfying $\|w-\varphi\|_{\X} < \delta$ and $\|w\|_{L^\infty(\Omega)} < \eta$.
\end{enumerate}
\end{theorem}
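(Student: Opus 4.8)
The plan is to deduce the theorem from its counterpart \cite[Theorem 5]{ASS19} (which itself extends the scheme of Feireisl--Simondon \cite{FeiSim00} from $\theta = 1$ to the restricted fractional Laplacian), after checking that the present functional-analytic framework and the hypotheses \textbf{(H0)}--\textbf{(H3)} match those used there, and then to sketch the structure of the argument for the reader's convenience. First I would record the regularity of the weak solution $\varphi$ that the proof actually exploits: since $\Omega$ is of class $C^{1,1}$, $\varphi \in \X \cap L^\infty(\Omega)$, and (when $0 < \theta < 1$) $d > 2\theta$, the interior and boundary elliptic regularity theory for the restricted fractional Laplacian — as developed and used in \cite{ASS19}, classical when $\theta = 1$ — yields that $\varphi$ is continuous on $\overline{\Omega}$. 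This continuity, together with the range hypotheses on $\varphi$ in cases (ii), (iii), (iv), is precisely what will bring the nonlinearity $g$ into its interval of analyticity.

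Next I would carry out the Lyapunov--Schmidt reduction at $\varphi$. The linearization $L := I''(\varphi) = (-\Delta)^\theta + g'(\varphi) : \X \to \X^*$ is bounded and self-adjoint and, thanks to the compact embedding $\X \hookrightarrow L^2(\Omega)$ of Proposition \ref{P:SP}, Fredholm of index zero, so $N := \ker L$ is finite-dimensional. Writing $\X = N \oplus N^\perp$ with $\Pi$ the orthogonal projection onto $N$, and $w = \varphi + \xi + \zeta$ with $\xi \in N$, $\zeta \in N^\perp$ small, one solves $(\mathrm{Id}-\Pi)I'(\varphi + \xi + \zeta) = 0$ for $\zeta = \zeta(\xi)$ by the implicit function theorem, using invertibility of $L$ on $N^\perp$, and then sets $\Gamma(\xi) := I(\varphi + \xi + \zeta(\xi))$ on the finite-dimensional space $N$. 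In case (a)(i), where $g$ is entire with factorial bounds, $I$ is already real-analytic on $\X$ under \textbf{(H3)}, so the analytic implicit function theorem applies and $\Gamma$ is analytic; in the remaining cases one follows Feireisl--Simondon and replaces $g$ with a globally analytic $\tilde g$ agreeing with $g$ on an open interval strictly containing the range of $\varphi$, checking that $\varphi + \xi + \zeta(\xi)$ stays in that interval for $\xi,\zeta$ small in $\X$ (via the continuity of $\varphi$, the $L^\infty$/positivity hypotheses, and elliptic estimates for $\zeta(\xi)$). The singular case \textbf{(H2)} is the main obstacle: there $g$ blows up at $0$, and one must rule out $w$ reaching the singular set near $\partial\Omega$ by a delicate boundary-behaviour (Hardy-type) estimate — exactly the step that uses the $C^{1,1}$-regularity of $\Omega$ and the restriction $d > 2\theta$.

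Finally I would apply the classical finite-dimensional Łojasiewicz inequality to the analytic function $\Gamma$ at its critical point $\xi = 0$, obtaining $|\Gamma(\xi) - \Gamma(0)|^{1-\sigma} \le \omega\,|\nabla\Gamma(\xi)|$ for some $\sigma \in (0,1/2]$, and transfer it back to $I$. A second-order Taylor expansion gives $I(w) - I(\varphi) = \Gamma(\xi) - \Gamma(0) + O(\|\zeta - \zeta(\xi)\|_{\X}^2)$, while the coercivity of $L$ on $N^\perp$ produces a lower bound for $\|I'(w)\|_{\X^*}$ in terms of $|\nabla\Gamma(\xi)|$ and $\|\zeta - \zeta(\xi)\|_{\X}$; combining these and absorbing the $\zeta$-contributions with Young's inequality yields the asserted estimate on $\{\|w-\varphi\|_{\X} < \delta\}$ in part (a) and on $\{\|w-\varphi\|_{\X} < \delta,\ \|w\|_{L^\infty(\Omega)} < \eta\}$ in part (b). Since all of these estimates are carried out in full detail in \cite{ASS19}, I would present the argument above only as a sketch and refer there for the technical points.
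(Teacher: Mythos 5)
Your overall strategy is the same as the paper's: both arguments are reductions to \cite[Theorem 5]{ASS19}, and your sketch of the Feireisl--Simondon scheme (Lyapunov--Schmidt reduction at $\varphi$, analytic reduced functional on the finite-dimensional kernel of $I''(\varphi)$, finite-dimensional {\L}ojasiewicz inequality, transfer back via coercivity on the complement) is an accurate account of what that reference does. There is, however, one gap, and it is exactly the point the paper's proof is devoted to. The hypotheses (H1) and (H2) as stated here are \emph{local}: the constants $C,M$ may depend on the subinterval $(-\alpha,\beta)$ (resp.\ $(0,\beta)$), so even in case (a)(i) with $a=b=\infty$ the nonlinearity need not satisfy the global factorial bounds assumed in the original statement of \cite[Theorem 5]{ASS19}. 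In particular, your claim that in case (a)(i) ``$I$ is already real-analytic on $\X$ under (H3)'' does not follow (local real-analyticity of $g$ on $\R$ does not give analyticity of the Nemytskii operator on the unbounded functions of $\X$), and a bare citation of \cite{ASS19} does not suffice: one must check that the weaker local hypotheses still yield the analyticity of $w\mapsto g(w)$ used in \cite[Proposition 6.2]{ASS19}. The paper does this by noting that the operator is there considered on the space $X^\theta_p$ of $w\in\X\cap L^p(\Omega)$ with $(-\Delta)^\theta w\in L^p(\Omega)$, and that $X^\theta_p\hookrightarrow L^\infty(\Omega)$ for $p$ large because $d>2\theta$; hence every $w$ in a small $X^\theta_p$-neighbourhood of $\varphi$ obeys a uniform bound $\|w\|_{L^\infty(\Omega)}<r_0$, after which (H1)/(H2) are applied with $\alpha,\beta>r_0$.

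Relatedly, you place the hypothesis $d>2\theta$ in the wrong step: it is not needed for a Hardy-type boundary estimate in the singular case (H2), but precisely for the embedding $X^\theta_p\hookrightarrow L^\infty(\Omega)$ just described, which is required in all cases when $0<\theta<1$ (for $\theta=1$ one has $W^{2,p}(\Omega)\hookrightarrow L^\infty(\Omega)$ for $p>d/2$ with no restriction on $d$, which is why the condition is imposed only for $\theta<1$). Your localization device --- replacing $g$ by a globally analytic $\tilde g$ agreeing with $g$ on an interval containing the range of $\varphi$ --- is in the right spirit and would close the gap once combined with the uniform $L^\infty$ bound above, but as written you do not supply the reason why nearby competitors $w$ remain in that interval in the topology in which the implicit function theorem is applied.
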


\begin{proof}
The case $\theta = 1$ was first studied in~\cite{FeiSim00}. The above-mentioned theorem is almost same as Theorem 5 of~\cite{ASS19}, whose proof is concerned with the fractional case $0 < \theta < 1$ but clearly available for the case $\theta = 1$ as well. On the other hand, in (H1) and (H2) mentioned above, growth conditions are ``local'' in contrast with the original ones in~\cite[Theorem 5]{ASS19}; indeed, in (H1) above, the constants $C, M$ may depend on the choice of $\alpha,\beta$. However, this slight modification does not require any major change of the original proof in~\cite{ASS19}. To be more precise, a simple modification is needed only for the proof of Proposition 6.2 of~\cite{ASS19}, where the analyticity around $\varphi$ of the operator $w \mapsto g(w)$ from $X^\theta_p := \{w \in \X \cap L^p(\Omega) \colon (-\Delta)^\theta w \in L^p(\Omega)\}$ endowed with the norm $\|\cdot\|_{X^\theta_p} := \|\cdot\|_{L^p(\Omega)} + \|\cdot\|_{\X} + \|(-\Delta)^\theta \cdot\|_{L^p(\Omega)}$ into $L^p(\Omega)$ is proved for $p \in (1,\infty)$ large enough. Indeed, as $d > 2 \theta$, the space $X^\theta_p$ is continuously embedded in $L^\infty(\Omega)$ for $p \in (1,\infty)$ large enough.\footnote{In case $\theta = 1$, it is nothing but the $L^p$ elliptic estimate and a Sobolev embedding. Indeed, $X^\theta_p$ coincides with $W^{2,p}(\Omega) \cap H^1_0(\Omega)$. Moreover, we have $W^{2,p}(\Omega) \hookrightarrow L^\infty(\Omega)$ for $p > d/2$ without any restriction of $d$ (cf.~see~\cite{FeiSim00}). On the other hand, such an $L^p$ elliptic estimate may fail for $0 < \theta < 1$.} Hence any $w \in X^\theta_p$ lying on a small neighbourhood of $\varphi$ (in $X^\theta_p$) is uniformly bounded in $L^\infty(\Omega)$, say $\|w\|_{L^\infty(\Omega)} < r_0$ for some $r_0 > 0$. We then apply either (H1) or (H2) by taking $\alpha,\beta > r_0$ to prove the analyticity.
\end{proof}

\subsection{Full-convergence to asymptotic profiles}\label{Ss:conv}

This subsection is devoted to proving Theorem \ref{T:conv}. Assume that $1 < q < 2_\theta^*$ and that $\phi > 0$ a.e.~in $\Omega$. We shall show that 
\begin{equation}\label{fullcvg}
v(s) \to \phi \quad \text{ strongly in } \X \quad \text{ as } \ s \to \infty.
\end{equation}
First of all, assuming that $\Omega$ is a bounded $C^{1,1}$ domain of $\R^d$, one can prove $\phi \in L^\infty(\Omega)$ even for $0 < \theta < 1$ as in Appendix \S \ref{A:moser} by performing a standard bootstrap argument (with the aid of \eqref{EleIne}) and by exploiting a fractional elliptic regularity (see~\cite[Proposition 1.4]{ROS14-2}). The following lemma will be used for the fast diffusion case, i.e., $q > 2$.
\begin{lemma}
Let $2 < q < 2^*_\theta$. Under the same assumptions as in Theorem {\rm \ref{T:conv}}, there exist constants $\delta > 0$, $c_0 > 0$ and $\sigma \in (0,1/2]$ such that 
\begin{equation}\label{enin-LS}
c_0 \left\| \partial_s (|v|^{q-2}v)(s) \right\|_{\X^*} \leq - \frac{\d}{\d s} \left( J(v(s)) - J(\phi) \right)^\sigma,
\end{equation}
whenever $\| v(s) - \phi \|_{\X} < \delta$ and $J(v(s)) - J(\phi) > 0$.
\end{lemma}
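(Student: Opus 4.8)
The plan is to derive \eqref{enin-LS} by combining the {\L}ojasiewicz--Simon inequality of Theorem \ref{T:LS} with the energy inequality \eqref{estJ} of Lemma \ref{L:v-enin}. The first step is to place the problem in the abstract framework of \S \ref{Ss:LS}: the functional $J$ of Lemma \ref{L:v-enin} coincides with the functional $I$ built there from the nonlinearity $g(r) := -\lambda_q |r|^{q-2}r$ (with primitive $\hat g(r) = -(\lambda_q/q)|r|^q$ vanishing at $0$), its Fr\'echet derivative is $J'(w) = (-\Delta)^\theta w - \lambda_q |w|^{q-2}w$, and hence the limit profile $\phi$ --- being a nontrivial solution of \eqref{eq:1.10}, \eqref{eq:1.11} --- is exactly a weak solution of the Dirichlet problem \eqref{DPg} in $\X$, which moreover belongs to $L^\infty(\Omega)$ by the fractional elliptic regularity recalled above. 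I would then verify the hypotheses of Theorem \ref{T:LS}: {\rm (H0)} is immediate since $q>2$, and {\rm (H3)} holds with $\sigma = q-1$, which is admissible \emph{precisely} because $q<2^*_\theta$ (so that $q-1\in[1,2^*_\theta-1]$). In case {\rm (ii)} of Theorem \ref{T:conv} ($q$ even) the map $g$ is a polynomial, so {\rm (H1)} holds with $a=b=\infty$ and Theorem \ref{T:LS}\,(a)(i) applies; in case {\rm (i)} ($\phi\geq 0$ a.e.), since $(-\Delta)^\theta\phi = \lambda_q\phi^{q-1}\geq 0$ and $\phi\not\equiv0$, the strong maximum principle upgrades this to $\phi>0$ a.e.\ in $\Omega$, a short computation of $g^{(n)}(r) = -\lambda_q(q-1)\cdots(q-n)\,r^{q-1-n}$ for $r>0$ gives {\rm (H2)} with $b=\infty$, and Theorem \ref{T:LS}\,(a)(ii) applies. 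Either way one obtains $\sigma\in(0,1/2]$ and $\omega,\delta>0$ such that $|J(w)-J(\phi)|^{1-\sigma}\leq\omega\|J'(w)\|_{\X^*}$ whenever $\|w-\phi\|_{\X}<\delta$; it is important that in case (a) there is no $L^\infty$-restriction on $w$, since we only know $v(s)\in\X$.

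Next I would record two elementary facts about the rescaled flow. Since \eqref{eq:1.6} reads $\partial_s(|v|^{q-2}v)(s) = -J'(v(s))$ in $\X^*$, we have $\|J'(v(s))\|_{\X^*} = \|\partial_s(|v|^{q-2}v)(s)\|_{\X^*}$ for a.e.\ $s$. Secondly, from the pointwise chain-rule identity $\partial_s(|v|^{q-2}v) = \tfrac{2(q-1)}{q}\,|v|^{(q-2)/2}\,\partial_s(|v|^{(q-2)/2}v)$, together with H\"older's inequality, the embedding $L^{(2^*_\theta)'}(\Omega)\hookrightarrow\X^*$, and the uniform bound $\sup_{s\geq0}\|v(s)\|_{\X}<\infty$ from \eqref{v-bdd}, one obtains a constant $C>0$, independent of $s$, with
\[
\bigl\|\partial_s(|v|^{q-2}v)(s)\bigr\|_{\X^*} \ \leq\ C\,\bigl\|\partial_s(|v|^{(q-2)/2}v)(s)\bigr\|_{L^2(\Omega)} \qquad \text{for a.e.\ } s>0,
\]
the H\"older exponent attached to $|v|^{(q-2)/2}$ being admissible exactly because $q<2^*_\theta$. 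With these at hand, fix $s$ with $\|v(s)-\phi\|_{\X}<\delta$ and $J(v(s))-J(\phi)>0$ (and $s$ a differentiability point of the locally-BV function $s\mapsto J(v(s))$). Then $\partial_s(|v|^{q-2}v)(s)\neq0$, since otherwise $J'(v(s))=0$ and the {\L}ojasiewicz--Simon inequality would force $J(v(s))=J(\phi)$; and the chain rule, \eqref{estJ}, the {\L}ojasiewicz--Simon inequality in the form $(J(v(s))-J(\phi))^{\sigma-1}\geq(\omega\|\partial_s(|v|^{q-2}v)(s)\|_{\X^*})^{-1}$, and the displayed estimate give
\[
-\frac{\d}{\d s}\bigl(J(v(s))-J(\phi)\bigr)^\sigma
\ \geq\ \frac{4\sigma}{\omega q q'}\cdot\frac{\bigl\|\partial_s(|v|^{(q-2)/2}v)(s)\bigr\|_{L^2(\Omega)}^2}{\bigl\|\partial_s(|v|^{q-2}v)(s)\bigr\|_{\X^*}}
\ \geq\ \frac{4\sigma}{\omega C^2 q q'}\,\bigl\|\partial_s(|v|^{q-2}v)(s)\bigr\|_{\X^*},
\]
which is \eqref{enin-LS} with $c_0 := 4\sigma/(\omega C^2 q q')$.

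I expect the main point requiring care to be the interplay between this differential inequality and the mere \emph{local bounded variation} of $s\mapsto J(v(s))$ established in Lemma \ref{L:v-enin}: a priori $(J(v(\cdot))-J(\phi))^\sigma$ need not be absolutely continuous. Fortunately this defect works in our favour, since this function is nonincreasing so that the singular part of its distributional derivative is nonpositive; thus the a.e.\ inequality above integrates to
\[
c_0\int_{s_1}^{s_2}\bigl\|\partial_s(|v|^{q-2}v)(s)\bigr\|_{\X^*}\,\d s \ \leq\ \bigl(J(v(s_1))-J(\phi)\bigr)^\sigma - \bigl(J(v(s_2))-J(\phi)\bigr)^\sigma
\]
on any interval where $\|v(s)-\phi\|_{\X}<\delta$ and $J(v(s))>J(\phi)$, which is the form needed afterwards to prove Theorem \ref{T:conv}. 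The remaining genuinely technical ingredients are (a) checking the analyticity/growth hypotheses of Theorem \ref{T:LS} for $g(r)=-\lambda_q|r|^{q-2}r$ --- the place where the restriction $q<2^*_\theta$ and, in the sign-changing case, the passage to $\phi>0$ via the maximum principle are used --- and (b) the estimate relating the two time-derivatives $\partial_s(|v|^{q-2}v)$ and $\partial_s(|v|^{(q-2)/2}v)$, which again leans on $q<2^*_\theta$ and on \eqref{v-bdd}; both are routine but essential.
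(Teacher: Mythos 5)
Your proposal is correct and follows essentially the same route as the paper: apply Theorem \ref{T:LS}(a) to $g(r)=-\lambda_q|r|^{q-2}r$ (checking (H0), (H3) with exponent $q-1\le 2^*_\theta-1$, and (H1) or (H2) with $\phi>0$ in the sign-constrained case), relate $\|\partial_s(|v|^{q-2}v)(s)\|_{\X^*}$ to $\|\partial_s(|v|^{(q-2)/2}v)(s)\|_{L^2(\Omega)}$ via the chain rule, H\"older and the uniform bound \eqref{v-bdd}, and combine with \eqref{estJ} and the gradient-flow identity $\partial_s(|v|^{q-2}v)=-J'(v)$. The only (harmless) difference is that your closing discussion of integrating the differential inequality through the BV singular part belongs to the subsequent proof of Theorem \ref{T:conv} rather than to the lemma itself.
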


\begin{proof}
Recall the energy inequality \eqref{estJ}, i.e.,
$$
\frac{4}{qq'} \left\| \partial_s (|v|^{(q-2)/2}v)(s) \right\|_{L^2(\Omega)}^2 + \frac{\d}{\d s} J(v(s)) \leq 0
$$
for a.e.~$s > 0$. Hence since $|v|^{(q-2)/2}v \in W^{1,2}(0,\infty;L^2(\Omega))$ and $q > 2$, we can deduce that
$$
\partial_s (|v|^{q-2}v)(s) = \frac{2}{q'} |v|^{(q-2)/2} \partial_s (|v|^{(q-2)/2}v)(s) \quad \text{ for a.e. } s > 0.
$$
\prf{Indeed, the map $w \in L^2(\Omega) \mapsto |w|^{(2-q')/q'}w \in L^{q'}(\Omega)$ is of class $C^1$. }Moreover, thanks to H\"older's inequality,
$$
\left\| \partial_s (|v|^{q-2}v)(s) \right\|_{L^{q'}(\Omega)} \leq \frac{2}{q'} \left\| |v|^{(q-2)/2}(s) \right\|_{L^{2q/(q-2)}} \left\| \partial_s (|v|^{(q-2)/2}v)(s) \right\|_{L^2(\Omega)}
$$
for a.e.~$s > 0$. Recalling that $\| |v|^{(q-2)/2}(s) \|_{L^{2q/(q-2)}(\Omega)} = \|v(s)\|_{L^q(\Omega)}^{(q-2)/2}$ is bounded for $s \geq 0$ and using the Sobolev--Poincar\'e inequality \eqref{SP}, we can take a constant $c > 0$ such that
$$
c \left\| \partial_s (|v|^{q-2}v)(s) \right\|_{\X^*}^2 \leq \frac{4}{qq'} \left\| \partial_s (|v|^{(q-2)/2}v)(s) \right\|_{L^2(\Omega)}^2
$$
for a.e.~$s > 0$. Consequently, \eqref{estJ} yields
$$
c \left\|\partial_s (|v|^{q-2}v)(s) \right\|_{\X^*}^2 + \frac{\d}{\d s} J(v(s)) \leq 0
$$
for a.e.~$s > 0$. Noting that $v$ solves
$$
\partial_s (|v|^{q-2}v)(s) = - J'(v(s)) \quad \text{ in } \X^*
$$
for a.e.~$s > 0$ (see Remark \ref{R:GF}), we see that
\begin{equation}\label{C1}
c\|J'(v(s))\|_{\X^*} \left\| \partial_s (|v|^{q-2}v)(s) \right\|_{\X^*} + \frac{\d}{\d s} J(v(s)) \leq 0
\end{equation}
for a.e.~$s > 0$. 

We now apply (a) of Theorem \ref{T:LS}. To this end, we set $g(r) := - \lambda_q |r|^{q-2}r$ and check (H0), (H3) and either (H1) or (H2). We can immediately check (H0), i.e., $g \in C^1(\R)$ and $g(0) = 0$, because $q > 2$. From the assumption $q < 2_\theta^*$, (H3) also follows immediately. We next check (H2) for the case of (i), i.e., $\phi \geq 0$ a.e.~in $\Omega$. We note that $\phi > 0$ in $\Omega$ (see, e.g.,~\cite{ROS14}). For each $\beta > 0$ and $n \in \N$ satisfying $n > q$,
\begin{align*}
|g^{(n)}(r)|
&= \lambda_q \left| (q-1)(q-2) \dots (q-n) \right| r^{q-n-1}\\
&\leq K_q \beta^{q-1} \frac{n!}{r^n} \quad \text{ for } \ r \in (0,\beta)
\end{align*}
for some constant $K_q$ depending only on $q$. Thus (H2) follows with $b = \infty$. We can further check (H1) for the case of (ii), i.e., $q$ is even. Indeed, $g$ is a monomial, whose $n$-th derivative $g^{(n)}$ hence vanishes for $n$ large enough. Thus (H1) follows with $a = b = \infty$. 

Thanks to (a) of Theorem \ref{T:LS}, one can take $\sigma \in (0,1/2]$ and constants $\delta, \omega>0$ such that
$$
\left| J(w) - J(\phi) \right|^{1-\sigma} \leq \omega \| J'(w) \|_{\X^*}
$$
for $w \in \X$ satisfying $\| w - \phi \|_{\X} < \delta$. Therefore we obtain from \eqref{C1},
$$
c \omega^{-1} \left( J(v(s)) - J(\phi) \right)^{1-\sigma} \left\| \partial_s (|v|^{q-2}v)(s) \right\|_{\X^*} + \frac{\d}{\d s} J(v(s)) \leq 0,
$$
whenever $\| v(s) - \phi \|_{\X} < \delta$. Therefore we obtain
$$
c \omega^{-1} \left\| \partial_s (|v|^{q-2}v)(s) \right\|_{\X^*} \leq - \frac{1}{\sigma} \frac{\d}{\d s} \left( J(v(s)) - J(\phi) \right)^{\sigma},
$$
provided $\| v(s) - \phi \|_{\X} < \delta$ and $J(v(s)) - J(\phi) > 0$.
\end{proof}

The next lemma is concerned with the uniqueness of positive solutions to sublinear elliptic equations and will be employed for the porous medium case, i.e., $1 < q < 2$.

\begin{lemma}[Uniqueness of positive solutions for $1 < q < 2$~{\cite[Theorem 1]{BrOs},~\cite[Corollary 2.4]{BSV},~\cite[Lemma 3.2]{FraVol23}}]\label{L:uniq-DP}
Let $\Omega$ be a bounded $C^{1,1}$ domain of $\R^d$ and let $0 < \theta \leq 1$ and $1 < q < 2$. Then the positive solution to the Dirichlet problem \eqref{eq:1.10}, \eqref{eq:1.11} is unique.
\end{lemma}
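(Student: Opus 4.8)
The plan is to prove this by the Brezis--Oswald--type uniqueness argument based on a Picone inequality for $(-\Delta)^\theta$; as indicated in the statement, the result is essentially contained in~\cite{BSV,BrOs}, and I outline a proof that works uniformly for $\theta\in(0,1)$ and $\theta=1$. Suppose $\phi_1,\phi_2\in\X$ are two \emph{positive} solutions to \eqref{eq:1.10}, \eqref{eq:1.11}. First I would record the necessary regularity: by a standard Moser iteration (as in Appendix~\ref{A:moser}), both $\phi_i$ lie in $L^\infty(\Omega)$, and, thanks to the $C^{1,1}$ regularity of $\Omega$ and fractional elliptic boundary estimates (see~\cite{ROS14}), $\phi_i$ behaves like $\mathrm{dist}(\cdot,\partial\Omega)^\theta$ near $\partial\Omega$; in particular the ratios $\phi_2^2/\phi_1$ and $\phi_1^2/\phi_2$ are well defined and belong to $\X$, so they are admissible test functions. (Alternatively, one may regularize by $(\phi_1^2-\phi_2^2)/(\phi_1+\varepsilon)$ and pass to the limit $\varepsilon\to0_+$, avoiding any use of the boundary behaviour.)

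The key device is the \emph{Picone-type inequality} for $(-\Delta)^\theta$: for $u\geq 0$ and $v>0$ in $\X$,
$$
\left\langle (-\Delta)^\theta v, \frac{u^2}{v} \right\rangle_{\X} \leq \|u\|_{\X}^2 ,
$$
with equality only if $u$ and $v$ are proportional. In the fractional case $\theta\in(0,1)$ this follows by integrating against the kernel $|x-y|^{-(d+2\theta)}$ the elementary pointwise inequality $(a-b)(c^2/a-d^2/b)\leq(c-d)^2$, valid for $a,b>0$ and $c,d\geq0$ by the arithmetic--geometric mean inequality; in the local case $\theta=1$ it is the classical pointwise Picone inequality $\nabla v\cdot\nabla(u^2/v)\leq|\nabla u|^2$.

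Then I would test the weak form of \eqref{eq:1.10} for $\phi_1$ with $(\phi_1^2-\phi_2^2)/\phi_1=\phi_1-\phi_2^2/\phi_1\in\X$, test the weak form for $\phi_2$ with $(\phi_2^2-\phi_1^2)/\phi_2\in\X$, and add the two identities. The right-hand side becomes
$$
\int_\Omega \lambda_q \left( \phi_1^{q-2} - \phi_2^{q-2} \right)\left( \phi_1^2 - \phi_2^2 \right) \d x ,
$$
which is $\leq 0$ because $r\mapsto r^{q-2}$ is strictly decreasing on $(0,\infty)$ for $1<q<2$, and vanishes iff $\phi_1=\phi_2$ a.e. The left-hand side equals
$$
\|\phi_1\|_{\X}^2 + \|\phi_2\|_{\X}^2 - \left\langle (-\Delta)^\theta \phi_1, \frac{\phi_2^2}{\phi_1} \right\rangle_{\X} - \left\langle (-\Delta)^\theta \phi_2, \frac{\phi_1^2}{\phi_2} \right\rangle_{\X} \geq 0 ,
$$
by applying the Picone inequality twice, with $(u,v)=(\phi_2,\phi_1)$ and with $(u,v)=(\phi_1,\phi_2)$. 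Hence both sides vanish, and strict monotonicity of $r\mapsto r^{q-2}$ forces $\phi_1=\phi_2$ a.e.\ in $\Omega$. (This is precisely the assertion that $\psi\mapsto \tfrac12\|\sqrt{\psi}\|_{\X}^2-\tfrac{\lambda_q}{q}\int_\Omega\psi^{q/2}\,\d x$ is strictly convex on the cone of nonnegative functions, so its Euler--Lagrange equation has at most one positive solution.)

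The only genuinely delicate point is the technical one already flagged: the admissibility of $\phi_2^2/\phi_1$ and $\phi_1^2/\phi_2$ as elements of $\X$, which relies on the sharp boundary behaviour $\phi_i\asymp\mathrm{dist}(\cdot,\partial\Omega)^\theta$ and is the single place where $C^{1,1}$ regularity of $\Omega$ is used; everything else is algebraic. This obstacle is resolved either by invoking the fractional Hopf lemma and boundary regularity for $C^{1,1}$ domains, or, if one prefers a self-contained route, by the $\varepsilon$-regularization of the test functions together with a routine limiting argument using $\phi_i\in\X\cap L^\infty(\Omega)$ and the elementary inequality \eqref{EleIne}.
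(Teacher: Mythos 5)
Your proposal is correct and follows essentially the same route as the paper: Lemma \ref{L:uniq-DP} is deduced there from the fractional Br\'ezis--Oswald theorem proved in Appendix \S\ref{A:BO}, whose proof likewise establishes $\phi_i\in L^\infty$, positivity and boundedness of the ratios via fractional elliptic regularity and Hopf's lemma (hence $\phi_1^2/\phi_2,\,\phi_2^2/\phi_1\in\X$), tests by $\phi_1^2-\phi_2^2$ divided by the respective solutions, and controls the quadratic form by exactly the pointwise Picone-type inequality you invoke. The only cosmetic difference is that the paper writes the Picone defect as an explicit nonnegative double integral rather than quoting the inequality abstractly.
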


This lemma can be proved as in~\cite[Corollary 2.4]{BSV}. Moreover, it also follows from a more general uniqueness result, which is a fractional variant of Br\'ezis--Oswald's uniqueness result~\cite{BrOs} and which will be provided with a proof in Appendix \S \ref{A:BO}. 

\prf{Here we also give a proof of \eqref{L:uniq-DP} based on~\cite[Corollary 2.4]{BSV}.
\begin{proof}
Let $\phi_1$, $\phi_2$ be positive solutions to \eqref{eq:1.10}, \eqref{eq:1.11}. Set
$$
u_1(x,t) := t^{-1/(2-q)} \phi_1(x), \quad u_2(x,t) := (t+1)^{-1/(2-q)} \phi_2(x)
$$
for $x \in \Omega$ and $t > 0$. Then both $u_1$ and $u_2$ solve \eqref{pde}, \eqref{bc}. From Hopf's lemma as well as elliptic regularity (see, e.g.,~\cite{Evans} for $\theta = 1$ and~\cite{ROS14} for $0 < \theta < 1$), one can take $t_0 > 0$ small enough so that
$$
u_1(x,t_0) \geq u_2(x,t_0)
$$
for $x \in \Omega$. Therefore due to the comparison principle for \eqref{pde}--\eqref{ic} (see Theorem \ref{T:main}) we infer that $u_1 \leq u_2$ in $\Omega \times (t_0,\infty)$, which implies
$$
\phi_2(x) \leq \left( \frac{t+1}t \right)^{1/(2-q)} \phi_1(x) 
$$
for $x \in \Omega$ and $t \geq t_0$. Thus passing to the limit as $t \to \infty$, we obtain $\phi_2 \leq \phi_1$ in $\Omega$. Exchanging the roles of $\phi_1$ and $\phi_2$ and repeating the same argument, we can derive the inverse inequality. Therefore we conclude that $\phi_1 = \phi_2$, that is, the positive solution is unique.
\end{proof}
}

We are now ready for proving Theorem \ref{T:conv}.

\begin{proof}[Proof of Theorem {\rm \ref{T:conv}}]
In case $q > 2$, assume either (i) or (ii). Let $v$ and $\phi$ be an energy solution to \eqref{eq:1.6}--\eqref{eq:1.8} and a nontrivial solution to \eqref{eq:1.10}, \eqref{eq:1.11}, respectively, such that
\begin{equation}\label{hyp-cvg}
 v(s_n) \to \phi \quad \text{ strongly in } \X
\end{equation}
for some $s_n \to \infty$. Assume to the contrary that there exist $\epsilon_0 > 0$ and a sequence $(\sigma_n)$ in $(0,\infty)$ diverging to $\infty$ such that 
\begin{equation}\label{contr}
\| v(\sigma_n) - \phi \|_{\X} \geq \epsilon_0 \quad \text{ for } \  n \in \N.
\end{equation}
Extracting a (not relabeled) subsequence of $(n)$, we can assume, without loss of generality, that
$$
s_n < \sigma_n < s_{n+1} \quad \text{ for } \  n \in \N.
$$
Moreover, thanks to Theorem \ref{T:ap}, one can further extract a (not relabeled) subsequence of $(n)$ such that
$$
v(\sigma_n) \to \psi \quad \text{ strongly in } \X
$$
for some $\psi \in \X \setminus \{0\}$. We see immediately that $\|\psi - \phi\|_{\X} \geq \epsilon_0 > 0$ from \eqref{contr}. From the decrease of $J(v(\cdot))$, we see that
$$
J(v(s)) \geq J(\phi) = J(\psi) \quad \text{ for } \ s \geq 0.
$$
If $J(v(s_0)) - J(\phi) = 0$ for some $s_0 > 0$, then \eqref{estJ} yields $v(s) = v(s_0)$ for all $s \geq s_0$, which contradicts \eqref{hyp-cvg} and \eqref{contr}. Hence we may suppose, without loss of generality, that
$$
J(v(s)) - J(\phi) > 0 \quad \text{ for } \  s \geq 0.
$$
Let $0 < \delta_0 < \min\{\delta,\epsilon_0\}$. From the fact that $v \in C_+([0,\infty);\X) \cap C_{\rm weak}([0,\infty);\X)$, for $n \in \N$ large enough, we can still take $\tilde{s}_n \in (s_n,\sigma_n)$ such that
\begin{equation}\label{v-phi}
\| v(\tilde{s}_n) - \phi \|_{\X} = \delta_0 \quad \text{ and } \quad \| v(s) - \phi \|_{\X} < \delta_0 \ \text{ for } \ s \in [s_n,\tilde{s}_n)
\end{equation}
(see~\cite{A16}). Since $J(v(\cdot))$ is nonincreasing on $[0,\infty)$ thanks to (i) of Lemma \ref{L:v-enin}, so is $(J(v(\cdot))-J(\phi))^\sigma > 0$. Hence recalling~\cite[Corollary 3.29]{AFP}, one can verify from \eqref{enin-LS} that 
$$
c_0 \int_{s_n}^{\tilde{s}_n} \left\|\partial_s (|v|^{q-2}v)(s) \right\|_{X_\theta^*} \, \d s \leq \left( J(v(s_n)) - J(\phi) \right)^\sigma \to 0,
$$
which implies
\begin{equation}\label{diff-conv}
(|v|^{q-2}v)(\tilde{s}_n) - (|v|^{q-2}v)(s_n) \to 0 \quad \text{ strongly in } \X^*.
\end{equation}
On the other hand, by virtue of Theorem \ref{T:ap}, there exists $\tilde{\phi} \in \X \setminus \{0\}$ such that, up to a (not relabeled) subsequence,
$$
v(\tilde{s}_n) \to \tilde{\phi} \quad \text{ strongly in } \X.
$$
Since $\X \hookrightarrow L^q(\Omega)$ and $L^{q'}(\Omega) \hookrightarrow \X^*$ compactly, \eqref{diff-conv} implies $\tilde{\phi} = \phi$. However, \eqref{v-phi} implies
$$
\| \tilde{\phi} - \phi \|_{\X} = \delta_0 > 0,
$$
which is a contradiction. Thus \eqref{fullcvg} follows.

In case $1 < q < 2$, suppose that $\phi \geq 0$ a.e.~in $\Omega$. The full convergence \eqref{fullcvg} has already been proved for nonnegative solutions in~\cite{BSV} by exploiting the B\'enilan--Crandall estimate for \eqref{pde}--\eqref{ic} to guarantee the monotonicity of $v = v(x,s) \geq 0$ in the rescaled time $s$. Moreover, it is also verified in~\cite[Theorem 1.1]{FraVol23} from a variational point of view for (possibly) sign-changing  $v = v(x,s)$ emanating from initial data whose energies are less than the second least energy for \eqref{eq:1.10}, \eqref{eq:1.11}. Here we can also prove \eqref{fullcvg} immediately from the quasi-convergence of $v$ to a nonnegative asymptotic profile $\phi \geq 0$, which turns out to be positive in $\Omega$ due to the fractional strong maximum principle (see, e.g.,~\cite[Theorem 2.1]{GrSe}), as well as the uniqueness of the positive solution to \eqref{eq:1.10}, \eqref{eq:1.11} (see Lemma \ref{L:uniq-DP}). This completes the proof.
\end{proof}

\appendix

\section{Some lemma}

The following lemma may be standard, but it is given just for the convenience of the reader.

\begin{lemma}\label{L:NemOp}
Let $\Omega$ be a measurable subset of $\mathbb{R^N}$. Let $1 < q_1, q_2 < \infty$ and set $\alpha := q_1/q_2$. Then the map $u \mapsto |u|^{\alpha-1}u $ is continuous from $L^{q_1}(\Omega)$ to $L^{q_2}(\Omega)$.
\end{lemma}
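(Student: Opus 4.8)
The plan is to establish the continuity of the Nemytskii operator $u \mapsto |u|^{\alpha-1}u$ from $L^{q_1}(\Omega)$ to $L^{q_2}(\Omega)$, where $\alpha = q_1/q_2$, by the standard subsequence argument combined with a generalized dominated convergence theorem. First I would fix a sequence $u_n \to u$ strongly in $L^{q_1}(\Omega)$ and observe that it suffices to show that every subsequence of $(|u_n|^{\alpha-1}u_n)$ has a further subsequence converging to $|u|^{\alpha-1}u$ in $L^{q_2}(\Omega)$; this reduces the claim to a statement about an arbitrary subsequence, which I relabel as $(u_n)$ itself.

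Next I would pass to a subsequence (not relabeled) such that $u_n \to u$ pointwise a.e.\ in $\Omega$ and, moreover, $|u_n| \le h$ a.e.\ in $\Omega$ for some fixed $h \in L^{q_1}(\Omega)$ — this is the classical consequence of $L^{q_1}$-convergence (see, e.g., the proof of the Riesz--Fischer theorem). By continuity of $r \mapsto |r|^{\alpha-1}r$ on $\mathbb{R}$, we get $|u_n|^{\alpha-1}u_n \to |u|^{\alpha-1}u$ pointwise a.e. Then I would note the pointwise domination
$$
\left| |u_n|^{\alpha-1}u_n \right|^{q_2} = |u_n|^{\alpha q_2} = |u_n|^{q_1} \le h^{q_1} \quad \text{a.e.\ in } \Omega,
$$
and since $h^{q_1} \in L^1(\Omega)$, the classical dominated convergence theorem applied in $L^1(\Omega)$ to the functions $\bigl| |u_n|^{\alpha-1}u_n - |u|^{\alpha-1}u \bigr|^{q_2}$ (which are dominated by $2^{q_2}(h^{q_1} + |u|^{q_1}) \in L^1(\Omega)$) yields $\| |u_n|^{\alpha-1}u_n - |u|^{\alpha-1}u \|_{L^{q_2}(\Omega)} \to 0$ along this subsequence. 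This closes the subsequence argument and hence proves the full convergence.

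There is essentially no serious obstacle here; the only point requiring a little care is the extraction of the dominating function $h \in L^{q_1}(\Omega)$, which is available precisely because strong $L^{q_1}$-convergence implies, along a subsequence, both a.e.\ convergence and domination by a fixed $L^{q_1}$ function. I would state this extraction explicitly rather than leaving it implicit, since it is the mechanism that makes the domination $|u_n|^{q_1} \le h^{q_1}$ uniform in $n$. If one prefers to avoid invoking the domination lemma, an alternative is to use the Vitali convergence theorem: a.e.\ convergence plus uniform integrability of $(|u_n|^{q_1})$ — the latter following from $L^{q_1}$-convergence — gives $L^1$-convergence of $|u_n|^{q_1}$ and then of $\bigl| |u_n|^{\alpha-1}u_n - |u|^{\alpha-1}u\bigr|^{q_2}$ after the subsequence reduction. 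Either route is routine, so the proof will be short.
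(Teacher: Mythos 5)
Your proof is correct, but it takes a genuinely different route from the one in the paper. You use the classical measure-theoretic argument: reduce to subsequences, extract a sub-subsequence converging a.e.\ and dominated by a fixed $h \in L^{q_1}(\Omega)$, observe the pointwise bound $\bigl||u_n|^{\alpha-1}u_n\bigr|^{q_2} = |u_n|^{q_1} \le h^{q_1}$, and conclude by dominated convergence (or Vitali). The paper instead extracts an a.e.\ convergent subsequence, notes that $\| |u_n|^{\alpha-1}u_n \|_{L^{q_2}}^{q_2} = \|u_n\|_{L^{q_1}}^{q_1}$ is bounded, passes to a weakly convergent subsequence in $L^{q_2}(\Omega)$, identifies the weak limit with the pointwise limit $|u|^{\alpha-1}u$ via Mazur's lemma, and then upgrades weak to strong convergence using the convergence of norms together with the uniform convexity of $L^{q_2}(\Omega)$ (the Radon--Riesz property); uniqueness of the limit then yields convergence of the whole sequence. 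The two arguments trade different tools: yours is more elementary (pure measure theory, no weak compactness or uniform convexity) and is essentially the proof of the general Krasnoselskii-type continuity theorem for Nemytskii operators, so it would survive replacing the power nonlinearity by any Carath\'eodory function with the appropriate growth; the paper's proof avoids the domination lemma entirely by exploiting the exact identity $\| |u|^{\alpha-1}u \|_{L^{q_2}}^{q_2} = \|u\|_{L^{q_1}}^{q_1}$, which is special to this power map and makes the norm-convergence step immediate. Both are complete; the only step you should keep explicit, as you note, is the extraction of the dominating function $h$, since that is what makes the bound uniform in $n$.
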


This lemma can be assured with the use of a general theory of Nemytskii operators; here we give a simpler proof, which is specific to power nonlinearities. 

\begin{proof}
Let $u_n, u \in L^{q_1}(\Omega)$ be such that $u_n \to u$ strongly in $L^{q_1}(\Omega)$. Then we can extract a (not relabeled) subsequence such that $u_{n}$ converges to $u$ a.e.~in $\Omega$. Also, $\| |u_{n}|^{\alpha-1}u_{n} \|_{L^{q_2}(\Omega)}^{q_2} = \| u_{n} \|_{L^{q_1}(\Omega)}^{q_1}$ is bounded. Furthermore, we may further extract a (not relabeled) subsequence of $(n)$ such that $|u_{n}|^{\alpha-1}u_{n} \to v$ weakly in $L^{q_2}(\Omega)$ for some $v\in L^{q_2}(\Omega)$. It is then easy to show (e.g., with the aid of Mazur's Lemma, see \cite{B-FA}) that the pointwise limit of $( |u_{n}|^{\alpha-1}u_{n} )$ and its weak limit in $L^{q_2}(\Omega)$ coincide, so that $v = |u|^{\alpha-1}u$. Noting by assumption that
$$
\| |u_{n}|^{\alpha-1}u_{n} \|_{L^{q_2}(\Omega)}^{q_2} = \|u_{n}\|_{L^{q_1}(\Omega)}^{q_1} \to \|u\|_{L^{q_1}(\Omega)}^{q_1} = \| |u|^{\alpha-1}u \|_{L^{q_2}(\Omega)}^{q_2},
$$
we can deduce from the uniform convexity of $L^{q_2}(\Omega)$ that $|u_{n}|^{\alpha-1}u_{n} \to |u|^{\alpha-1}u$ strongly in $L^{q_2}(\Omega)$. Moreover, from the uniqueness of the limit, the whole sequence $(|u_n|^{\alpha-1}u_n)$ also converges to $|u|^{\alpha-1}u$ strongly in $L^{q_2}(\Omega)$.
\end{proof}

\section{Density results in fractional Sobolev spaces}\label{S:apdx}

Let $\Omega$ be a measurable subset of $\R^d$, $p \in [1,\infty)$ and $\theta \in(0,1)$. The fractional Sobolev space on $\Omega$ is defined by
$$
W^{\theta,p}(\Omega) := \left\{ u \in L^p(\Omega) \colon [u]_{W^{\theta,p}(\Omega)} < \infty \right\}, 
$$
where 
$$
[u]_{W^{\theta,p}(\Omega)} := \left( \iint_{\Omega\times\Omega} \frac{|u(x)-u(y)|^p}{|x-y|^{d+p\theta}} \,\d x \d y \right)^{1/p},
$$
and which is endowed with the norm,
$$
\|u\|_{W^{\theta,p}(\Omega)} := \|u\|_{L^p(\Omega)} + [u]_{W^{\theta,p}(\Omega)}.
$$
We also define the space,
\begin{align*}
H^{\theta,p}(\R^d) &:= \Big\{ u \in L^p(\R^d) \colon \\
&\qquad \|u\|_{H^{\theta,p}(\R^d)} := \| \mathcal{F}^{-1} [(1 + |\xi|^2)^{\theta/2} \mathcal{F}u ] \|_{L^p(\R^d)} < \infty \Big\},
\end{align*}
where $\mathcal{F}$ and $\mathcal{F}^{-1}$ are the Fourier transform operator and its inverse, respectively, endowed with the norm $\|\cdot\|_{H^{\theta,p}(\R^d)}$. In particular, we write $H^\theta(\R^d) = H^{\theta,2}(\R^d)$. It is shown in~\cite[Examples 1.8 and 2.12]{Lunardi} that
\begin{align}
W^{\theta,p}(\R^d) &= (L^p(\R^d),W^{1,p}(\R^d))_{\theta,p},\label{intpl}\\
H^{\theta,p}(\R^d) &= [L^p(\R^d),W^{1,p}(\R^d)]_\theta\nonumber
\end{align}
for $\theta \in (0,1)$ and $p \in (1,\infty)$. Here $(X,Y)_{\theta,p}$ and $[X,Y]_\theta$ denote the \emph{real} and \emph{complex interpolation spaces}, respectively, between $X$ and $Y$ for two Banach spaces $X$, $Y$. For more details on the interpolation theory, we refer the reader to, e.g.,~\cite{Lunardi}. The following two theorems are well known:

\begin{theorem}\label{T:density}
Let $\theta \in (0,1)$ and $p \in [1,\infty)$. The space $C_c^\infty(\R^d)$ of compactly supported smooth functions is dense in $W^{\theta,p}(\R^d)$.
\end{theorem}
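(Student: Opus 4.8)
The statement to prove is Theorem \ref{T:density}: density of $C_c^\infty(\R^d)$ in $W^{\theta,p}(\R^d)$ for $\theta\in(0,1)$, $p\in[1,\infty)$.

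\medskip

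The plan is to reduce to classical facts via two standard operations, \emph{truncation} (to achieve compact support) and \emph{mollification} (to achieve smoothness), checking at each stage that these operations are continuous with respect to the Gagliardo seminorm. First I would recall that, by the interpolation identity \eqref{intpl}, $W^{\theta,p}(\R^d)=(L^p(\R^d),W^{1,p}(\R^d))_{\theta,p}$, so that $W^{\theta,p}(\R^d)$ sits between $L^p$ and $W^{1,p}$; however, for a self-contained argument it is cleaner to work directly with the Gagliardo norm $\|u\|_{W^{\theta,p}(\R^d)}=\|u\|_{L^p(\R^d)}+[u]_{W^{\theta,p}(\R^d)}$ rather than invoking the interpolation characterization. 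Fix $u\in W^{\theta,p}(\R^d)$ and $\varepsilon>0$.

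\medskip

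Step 1 (truncation in space). Let $\chi\in C_c^\infty(\R^d)$ with $\chi\equiv 1$ on $B(0;1)$, $0\le\chi\le1$, $\mathrm{supp}\,\chi\subset B(0;2)$, and set $\chi_R(x):=\chi(x/R)$. Put $u_R:=\chi_R u$. One shows $u_R\to u$ in $L^p(\R^d)$ by dominated convergence, and $[u_R-u]_{W^{\theta,p}(\R^d)}\to 0$. For the seminorm one splits the difference $(u_R-u)(x)-(u_R-u)(y)=(\chi_R(x)-1)(u(x)-u(y))+(\chi_R(x)-\chi_R(y))u(y)$ and estimates the two resulting double integrals: the first is controlled by $\int\int_{|x-y|>0}\frac{|\chi_R(x)-1|^p|u(x)-u(y)|^p}{|x-y|^{d+p\theta}}$, which tends to $0$ by dominated convergence since $|\chi_R-1|\to0$ pointwise and is bounded; the second uses the Lipschitz bound $|\chi_R(x)-\chi_R(y)|\le C\min\{1,|x-y|/R\}$, giving a kernel $\min\{1,|x-y|/R\}^p|x-y|^{-d-p\theta}$ against $|u(y)|^p$, integrable in $x$ uniformly and $O(R^{-\min\{p,\ldots\}})$-small — more precisely, after integrating in $x$ one gets a constant times $R^{-p\theta'}$ for a suitable positive power (this is the standard ``Gagliardo seminorm of a bump'' computation), hence $\to0$. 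So we may assume $u$ has compact support.

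\medskip

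Step 2 (mollification). Let $\eta_\delta$ be a standard mollifier and set $u^\delta:=\eta_\delta*u$, which lies in $C_c^\infty(\R^d)$ once $u$ is compactly supported. Then $u^\delta\to u$ in $L^p$, and for the seminorm one uses Minkowski's integral inequality: writing $u^\delta(x)-u^\delta(y)=\int\eta_\delta(z)\big((u(x-z)-u(y-z))-(u(x)-u(y))\big)\,dz+\int\eta_\delta(z)(u(x-z)-u(y-z))\,dz$ is not needed — instead directly $[u^\delta-u]_{W^{\theta,p}}=\big\|\int\eta_\delta(z)\big(\tau_z g-g\big)(x,y)\,dz\big\|$ where $g(x,y)=\frac{u(x)-u(y)}{|x-y|^{(d+p\theta)/p}}\in L^p(\R^d\times\R^d)$ and $\tau_z$ translates both variables by $z$; Minkowski gives $[u^\delta-u]_{W^{\theta,p}}\le\int\eta_\delta(z)\|\tau_z g-g\|_{L^p(\R^{2d})}\,dz\to0$ by continuity of translation in $L^p$. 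This finishes the proof: $u^\delta\in C_c^\infty(\R^d)$ approximates $u$ in $W^{\theta,p}(\R^d)$.

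\medskip

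The main obstacle is Step 1, the truncation estimate on the Gagliardo seminorm of the cutoff term $(\chi_R(x)-\chi_R(y))u(y)$: one must verify that the kernel $\min\{1,|x-y|/R\}^p\,|x-y|^{-d-p\theta}$ integrates in $x$ to something that decays in $R$, which requires splitting the $x$-integral into $|x-y|\le R$ and $|x-y|>R$ and using $p\theta<p$ and $p\theta>0$ respectively; this is where the restriction $\theta\in(0,1)$ is genuinely used. Everything else (Step 2, and the $L^p$ convergences) is routine. If one prefers brevity, an alternative is to quote the interpolation identity \eqref{intpl} together with the classical density of $C_c^\infty$ in each of $L^p(\R^d)$ and $W^{1,p}(\R^d)$ and the fact that $X\cap Y$ is dense in $(X,Y)_{\theta,p}$ for $p<\infty$, but I would include the direct proof above since it is elementary and keeps the appendix self-contained.
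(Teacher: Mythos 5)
Your argument is correct. The paper itself gives no proof of this theorem --- it only cites \cite[Theorem 2.4]{Hitchhiker} --- and your truncation-plus-mollification argument is precisely the standard proof found there: the cutoff estimate with the kernel $\min\{1,|x-y|/R\}^p|x-y|^{-d-p\theta}$ (the power you obtain after integrating in $x$ is $R^{-p\theta}$, not ``$R^{-p\theta'}$'' as written, but your subsequent description of the split $|x-y|\le R$ versus $|x-y|>R$ is the right computation), followed by the Minkowski-inequality argument applied to $g(x,y)=(u(x)-u(y))|x-y|^{-(d+p\theta)/p}\in L^p(\R^{2d})$ and continuity of translations. Both steps are sound, so the proposal is a complete, self-contained substitute for the citation.
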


\begin{theorem}\label{T:ext}
Let $\theta \in (0,1)$ and $p \in [1,\infty)$ and let $\Omega \subset \R^d$ be an open set of class $C^{0,1}$ with bounded boundary $\partial \Omega$. Then there exists an extension operator from $W^{\theta,p}(\Omega)$ to $W^{\theta,p}(\R^d)$. Namely, there exists a bounded linear operator $T : W^{\theta,p}(\Omega) \to W^{\theta,p}(\R^d)$ such that $(Tu)|_{\Omega} = u$ for $u \in W^{\theta,p}(\Omega)$.
\end{theorem}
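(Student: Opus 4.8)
The plan is the classical localization--and--reflection argument. If $\partial\Omega=\varnothing$ the statement is trivial (take $T=\mathrm{id}$), so assume $\partial\Omega\neq\varnothing$. Since $\partial\Omega$ is bounded and $\Omega$ is locally a Lipschitz subgraph, I would cover a bounded open neighbourhood $V\supset\partial\Omega$ by finitely many open sets $U_1,\dots,U_M$ such that, after a rotation, $\Omega\cap U_j=\{x_d>\gamma_j(x')\}\cap U_j$ for a Lipschitz $\gamma_j:\R^{d-1}\to\R$, and add an open set $U_0$ with $\overline{U_0}\cap\partial\Omega=\varnothing$ so that $\{U_j\}_{j=0}^M$ covers $\R^d$. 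Fix a smooth partition of unity $\{\zeta_j\}_{j=0}^M$ subordinate to this cover (so that $\zeta_0$ and $\sum_{j\geq1}\zeta_j$ are constant off $V$), and write $u=\zeta_0u+\sum_{j=1}^M\zeta_ju$ on $\Omega$. It then suffices to construct a bounded linear extension for each of the $M+1$ pieces and to sum the results; linearity of $T$ will be manifest from the construction, and $(Tu)|_\Omega=u$ will hold because each piece is extended so as to agree with $\zeta_ju$ on $\Omega$.

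\textbf{The interior piece and the auxiliary lemmas.} The piece $\zeta_0u\in W^{\theta,p}(\Omega)$ has $\mathrm{dist}(\mathrm{supp}(\zeta_0u),\partial\Omega)>0$, so I would show its zero extension $\overline{\zeta_0u}$ already lies in $W^{\theta,p}(\R^d)$ with $\|\overline{\zeta_0u}\|_{W^{\theta,p}(\R^d)}\lesssim\|u\|_{W^{\theta,p}(\Omega)}$, by splitting the Gagliardo double integral into a near-diagonal part (controlled via a Young/convolution inequality by $\|u\|_{L^p(\Omega)}+[u]_{W^{\theta,p}(\Omega)}$, using $\theta<1$) and a far part (controlled by $\|u\|_{L^p(\Omega)}$ since there $|x-y|$ is bounded below). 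For each boundary piece I would straighten $\partial\Omega\cap U_j$ by the bi-Lipschitz map $\Phi_j(x)=(x',x_d-\gamma_j(x'))$ and use the elementary fact that a bi-Lipschitz change of variables comparing $|\Phi_j(x)-\Phi_j(y)|$ with $|x-y|$ preserves $L^p$ and $W^{\theta,p}$ up to multiplicative constants; together with boundedness of multiplication by Lipschitz cutoffs on $W^{\theta,p}$ (a similar splitting of the double integral), this reduces the task to extending a compactly supported $w=(\zeta_ju)\circ\Phi_j^{-1}\in W^{\theta,p}(\R^d_+)$ from the half-space $\R^d_+=\{x_d>0\}$ to $\R^d$.

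\textbf{Half-space reflection.} For $w\in W^{\theta,p}(\R^d_+)$ define the even reflection $\tilde w(x',x_d):=w(x',|x_d|)$. All contributions to $[\tilde w]_{W^{\theta,p}(\R^d)}^p$ in which $x_d$ and $y_d$ have the same sign reduce directly to $[w]_{W^{\theta,p}(\R^d_+)}^p$ after substituting reflected variables; the only genuinely mixed term, over $\{x_d>0\}\times\{y_d<0\}$, is handled by setting $y^\ast=(y',|y_d|)$ and using the elementary inequality
\[
|x-y|^2=|x'-y'|^2+(x_d+|y_d|)^2\geq|x'-y'|^2+(x_d-|y_d|)^2=|x-y^\ast|^2,
\]
so that after $y\mapsto y^\ast$ this term is dominated by $[w]_{W^{\theta,p}(\R^d_+)}^p$ as well. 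Hence $\|\tilde w\|_{W^{\theta,p}(\R^d)}\lesssim\|w\|_{W^{\theta,p}(\R^d_+)}$. Multiplying $\tilde w$ by a cutoff equal to $1$ on $\mathrm{supp}\,w$ and supported inside the chart, then transporting back by $\Phi_j$, yields an extension of $\zeta_ju$ to $\R^d$ with the right bounds; combining this with $\overline{\zeta_0u}$ gives $T$.

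\textbf{Where the work is.} There is no conceptual obstacle: the main effort is the bookkeeping of the localization, in particular patching the bounded neighbourhood $V$ of $\partial\Omega$ with the possibly unbounded ``far field'' (this is precisely why $C^{0,1}$ regularity with merely \emph{bounded} boundary still suffices), together with the two auxiliary lemmas (bi-Lipschitz invariance of the Gagliardo seminorm, and boundedness of Lipschitz multipliers on $W^{\theta,p}$) and the Young-type estimates for zero extensions. An alternative, shorter route is to invoke Stein's universal extension operator, which is bounded simultaneously on the $L^p$ and $W^{1,p}$ scales for $C^{0,1}$ domains with bounded boundary, and interpolate it via \eqref{intpl}; that, however, presupposes the identity $W^{\theta,p}(\Omega)=(L^p(\Omega),W^{1,p}(\Omega))_{\theta,p}$ for such $\Omega$, whose proof is of comparable difficulty to the localization argument above, so I would present the direct argument as the primary one and mention the interpolation shortcut only as a remark.
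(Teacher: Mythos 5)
The paper does not prove this statement itself; it simply cites~\cite[Theorem 5.4]{Hitchhiker}, and your argument (partition of unity, zero extension of the interior piece, bi-Lipschitz straightening plus the Lipschitz-multiplier lemma, and even reflection across the half-space with the inequality $|x-y|\geq|x-y^\ast|$ for the mixed term) is precisely the proof given in that reference. The proposal is correct and takes essentially the same approach.
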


We refer the reader to~\cite[Theorems 2.4 and 5.4]{Hitchhiker} for proofs of Theorems \ref{T:density} and \ref{T:ext}. In particular, the following theorem holds:

\begin{theorem}
Let $\Omega \subset \R^d$ be an open set of class $C^{0,1}$ with bounded boundary $\partial \Omega$ and let $\theta \in (0,1)$ and $p\in[1,\infty)$. Then $C^\infty(\overline{\Omega})$ is dense in $W^{\theta,p}(\Omega)$.
\end{theorem}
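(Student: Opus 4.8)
The plan is to combine the two tools assembled just above — the density of $C_c^\infty(\R^d)$ in $W^{\theta,p}(\R^d)$ (Theorem \ref{T:density}) and the existence of a bounded extension operator $T : W^{\theta,p}(\Omega) \to W^{\theta,p}(\R^d)$ (Theorem \ref{T:ext}) — in the standard ``extend, approximate globally, restrict back'' scheme. Let $u \in W^{\theta,p}(\Omega)$ be given and let $\vep > 0$. First I would set $U := Tu \in W^{\theta,p}(\R^d)$, which satisfies $U|_\Omega = u$ and $\|U\|_{W^{\theta,p}(\R^d)} \leq C \|u\|_{W^{\theta,p}(\Omega)}$.

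Next, by Theorem \ref{T:density} choose $\varphi \in C_c^\infty(\R^d)$ with $\|U - \varphi\|_{W^{\theta,p}(\R^d)} < \vep$. The key elementary observation is that the restriction map $R : W^{\theta,p}(\R^d) \to W^{\theta,p}(\Omega)$, $Rv := v|_\Omega$, is a bounded linear operator with $\|R\| \leq 1$: indeed $\|v|_\Omega\|_{L^p(\Omega)} \leq \|v\|_{L^p(\R^d)}$ and the Gagliardo seminorm integrates the same nonnegative integrand over the smaller set $\Omega \times \Omega \subset \R^d \times \R^d$, so $[v|_\Omega]_{W^{\theta,p}(\Omega)} \leq [v]_{W^{\theta,p}(\R^d)}$. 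Applying $R$ and using $R U = u$ gives
$$
\| u - \varphi|_\Omega \|_{W^{\theta,p}(\Omega)} = \| R(U - \varphi) \|_{W^{\theta,p}(\Omega)} \leq \| U - \varphi \|_{W^{\theta,p}(\R^d)} < \vep.
$$
Since $\varphi \in C_c^\infty(\R^d)$, its restriction $\varphi|_{\overline{\Omega}}$ belongs to $C^\infty(\overline{\Omega})$ (a smooth function on $\R^d$ is in particular smooth up to the Lipschitz boundary). As $\vep > 0$ was arbitrary, $u$ lies in the $W^{\theta,p}(\Omega)$-closure of $C^\infty(\overline{\Omega})$, which proves the claim.

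There is no real obstacle here; the statement is a soft consequence of the two preceding theorems. The only point deserving a line of care is the convention for $C^\infty(\overline{\Omega})$ when $\Omega$ is merely Lipschitz and possibly unbounded — one should simply record that, under any of the usual definitions (functions on $\Omega$ all of whose derivatives extend continuously to $\overline{\Omega}$, or restrictions to $\overline{\Omega}$ of elements of $C^\infty(\R^d)$), the restrictions $\varphi|_{\overline\Omega}$ of the approximating functions $\varphi \in C_c^\infty(\R^d)$ qualify, so the density is genuinely in $C^\infty(\overline\Omega)$ and not merely in $C^\infty(\Omega) \cap W^{\theta,p}(\Omega)$.
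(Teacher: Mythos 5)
Your proof is correct and follows exactly the route the paper intends: the paper states this theorem as an immediate consequence (``In particular, the following theorem holds'') of Theorems \ref{T:density} and \ref{T:ext}, i.e., the same extend--approximate--restrict scheme, with the contractivity of restriction being the only (elementary) point to check. Your closing remark on the meaning of $C^\infty(\overline{\Omega})$ for a possibly unbounded Lipschitz $\Omega$ is a reasonable extra precaution but not needed beyond what you already wrote.
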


Combining \eqref{intpl} with Theorem \ref{T:ext}, one can show that
$$
W^{\theta,p}(\Omega) = ( L^p(\Omega),W^{1,p}(\Omega) )_{\theta,p},
$$
whenever $\Omega$ is a Lipschitz open subset of $\R^d$.

We focus from now on to the case where $p = 2$. As shown in~\cite[Corollary 4.37]{Lunardi} (or~\cite[Proposition 3.4]{Hitchhiker}), when $p = 2$, the two definitions of fractional Sobolev spaces over $\R^d$ coincide, that is, $W^{\theta,2}(\R^d) = H^\theta(\R^d)$ with an equivalence of the norms. However, when $p \neq 2$, $H^{\theta,p}(\R^d)$ is strictly included in $W^{\theta, p}(\R^d)$, or the contrary, depending on the choice of $p$. This also justifies the following notation used hereafter:
$$
H^\theta(\Omega) := W^{\theta,2}(\Omega), \quad \|\cdot\|_{H^\theta(\Omega)} := \|\cdot\|_{W^{\theta,2}(\Omega)}, \quad [\,\cdot\,]_{H^\theta(\Omega)} := [\,\cdot\,]_{W^{\theta,2}(\Omega)}.
$$
The space $H^\theta(\Omega)$ is a Hilbert space endowed with the inner product,
$$
(u,v)_{H^{\theta}(\Omega)} := (u,v)_{L^2(\Omega)} + \iint_{\Omega\times\Omega} \frac{ (u(x)-u(y))(v(x)-v(y)) }{|x-y|^{d+2\theta}}\, \d x \d y.
$$

\prf{Another characterization of the space $H^\theta(\Omega)$ in terms of fractional powers of a self-adjoint operator is also available. Let us denote by $D(S)$ the set of $u \in H^1(\Omega)$ for which the map
$$
v \in H^1(\Omega) \mapsto (u,v)_{H^1(\Omega)} := (u,v)_{L^2(\Omega)} + (\nabla u,\nabla v)_{L^2(\Omega)} \in \R
$$
is continuous with respect to the topology induced by $L^2(\Omega)$. Then, for all $u \in D(S)$, there exists a unique $Su \in L^2(\Omega)$ such that 
\begin{equation}\label{weakform}
(u,v)_{H^1(\Omega)} = (Su,v)_{L^2(\Omega)} \quad \text{ for } \ v \in H^1(\Omega).
\end{equation}
Equation \eqref{weakform} is a weak form of the elliptic equation,
$$
\left\{
\begin{aligned}
&-\Delta u + u = Su &&\text{ in } \Omega,\\
&\nabla u \cdot \mathbf{n} = 0 &&\text{ on } \partial\Omega.
\end{aligned}\right.
$$
(see, e.g.,~\cite[Chap.~9]{B-FA}). {Here $\mathbf{n}$ stands for the outward unit normal vector field on $\partial \Omega$.}  Hence, by a standard elliptic {regularity} theory (see, e.g.,~\cite[Chap.~9]{B-FA}), we have 
\begin{align*}
D(S) &:= \left\{ u \in H^2(\Omega) \colon \nabla u \cdot \mathbf{n} = 0 \text{ on } \partial\Omega \right\},\\
Su &= -\Delta u + u \quad \text{ for } \ u \in D(S).
\end{align*}
Then $S$ is a positive {unbounded self-adjoint} operator on $L^2(\Omega)$, and it can be shown {(see, e.g.,~\cite[Theorem 9.31]{B-FA})} that there exist a Hilbert basis $(e_n)_{n\geq 1}$ of $L^2(\Omega)$ and a nondecreasing sequence $(\lambda_n)_{n\geq 1}$ of positive reals {satisfying} $\lambda_n \to \infty$ such that
$$
e_n \in D(S), \quad S e_n = \lambda_n e_n.
$$
For $\sigma \geq 0$, the fractional power $S^\sigma$ of $S$ is defined by
\begin{align*}
D(S^\sigma) &:= \left\{ \sum_{n \geq 1} a_n e_n \in L^2(\Omega) \colon \sum_{n \geq 1} a_n^2 \lambda_n^{2\sigma} < \infty \right\},\\
S^\sigma u &:= \sum_{n \geq 1} \lambda_n^\sigma a_n e_n \quad \text{ for } \ u = \sum_{n \geq 1} a_n e_n \in D(S^\sigma),
\end{align*}
and moreover, the {linear space} $D(S^\sigma)$ is endowed with the graph norm $\|u\|_{D(S^{\sigma})} = \|u\|_{L^2(\Omega)} + \|S^{\sigma} u\|_{L^2(\Omega)}$ for $u \in D(S^\sigma)$. Note that $D(S^{1/2}) = H^1(\Omega)$ and $D(S^0) = L^2(\Omega)$ with equivalence of the norms. Then it is shown in~\cite[Theorem 4.36]{Lunardi} that
$$
H^\theta(\Omega) = D(S^{\theta/2})
$$
with equivalence of the norms, whenever $\Omega$ is a Lipschitz open set with bounded boundary.}

In what follows, we assume that $\Omega \subset \R^d$ is bounded and smooth (see (7.10) of~\cite[p.\,34]{LM} for more detail) so that there exists a $C^\infty$-extension $\rho$ of the distance function $\mathrm{dist}(\cdot,\partial\Omega)$ to the boundary, i.e. $\rho \in C^\infty(\overline{\Omega})$, $\rho > 0$ on $\Omega$, $\rho = 0$ on $\partial\Omega$, and moreover, 
$$
\lim_{x \to x_0} \frac{\rho(x)}{\mathrm{dist}(x,\partial\Omega)} = d
\quad \mbox{if } \ x_0 \in \partial\Omega 
$$ 
for some constant $d>0$ uniformly for $x_0 \in \partial \Omega$. We set 
\begin{align*}
H_0^\theta(\Omega) &:= \overline{C_c^\infty(\Omega)}^{\|\cdot\|_{H^\theta(\Omega)}} \ \text{ for } \ 0 < \theta < 1,\\
H^{1/2}_{00}(\Omega) &:= \left\{ u \in H^{1/2}_0(\Omega) \colon \rho^{-1/2}u \in L^2(\Omega) \right\}.
\end{align*}
Moreover, $H^\theta_0(\Omega)$ is endowed with the induced norm of $H^\theta(\Omega)$, while $H^{1/2}_{00}(\Omega)$ is endowed with the norm $\|u\|_{H^{1/2}(\Omega)} + \|\rho^{-1/2}u\|_{L^2(\Omega)}$. The following theorems can be found in~\cite[Chap.\,1, \S 9, \S 11]{LM}. We also refer the reader to~\cite[Chap.\,1, \S 7]{LM} for the definition of the space $H^{\theta}(\partial\Omega)$, whenever $\Omega$ is a bounded open set of $\R^d$ with smooth boundary and $\theta \in (0,1)$.

\begin{theorem}[{\cite[Chap.\,1, Theorem 9.4]{LM}}]\label{T:trace}
Let $\Omega$ be a bounded open subset of $\R^d$ with smooth boundary $\partial \Omega$ and let $\theta > 1/2$. Then the map
$$
u \in C^\infty(\overline{\Omega}) \mapsto u|_{\partial\Omega} \in C^\infty(\partial\Omega)
$$
is linear and continuous from $H^\theta(\Omega)$ to $H^{\theta-1/2}(\partial\Omega)$. Therefore it is uniquely extended to a continuous operator $\gamma$ from $H^\theta(\Omega)$ to $H^{\theta-1/2}(\partial\Omega)$.
\end{theorem}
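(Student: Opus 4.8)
The plan is to prove the trace estimate
\begin{equation*}
\| u|_{\partial\Omega} \|_{H^{\theta-1/2}(\partial\Omega)} \le C \| u \|_{H^\theta(\Omega)} \qquad \text{for } u \in C^\infty(\overline{\Omega})
\end{equation*}
and then extend $\gamma$ by density. I would first settle the model case of a half-space by a short Fourier computation, and then transfer it to the curved boundary $\partial\Omega$ by a chart-and-partition-of-unity localization. Throughout I take $\theta \in (1/2,1)$, so that $\theta-1/2 \in (0,1/2)$ and every fractional space occurring is the Gagliardo space already in use; the general non-integer $\theta>1/2$ is dealt with at the end by combining the integer-order trace theorems with interpolation.

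\emph{Flat restriction lemma.} For $u$ in the Schwartz class on $\R^d$, the slice $v(x') := u(x',0)$ satisfies $\hat v(\xi') = \int_\R \hat u(\xi',\xi_d)\,d\xi_d$, so by the Cauchy--Schwarz inequality
\begin{equation*}
|\hat v(\xi')|^2 \le \left( \int_\R (1+|\xi|^2)^{-\theta}\,d\xi_d \right) \left( \int_\R (1+|\xi|^2)^{\theta} |\hat u(\xi)|^2\,d\xi_d \right) .
\end{equation*}
Since $\theta>1/2$, the substitution $\xi_d = (1+|\xi'|^2)^{1/2}t$ gives $\int_\R (1+|\xi|^2)^{-\theta}\,d\xi_d = c_\theta\,(1+|\xi'|^2)^{1/2-\theta}$ with $c_\theta := \int_\R (1+t^2)^{-\theta}\,dt < \infty$; multiplying by $(1+|\xi'|^2)^{\theta-1/2}$ and integrating in $\xi'\in\R^{d-1}$ yields $\| v \|_{H^{\theta-1/2}(\R^{d-1})} \le c_\theta^{1/2}\| u \|_{H^\theta(\R^d)}$, where I use that the Bessel-potential space $H^s(\R^n)$ coincides with $W^{s,2}(\R^n)$, as recalled in this appendix. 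Precomposing with the even-reflection extension, which is bounded from $H^\theta(\R^d_+)$ to $H^\theta(\R^d)$ for $\theta\in(0,1)$ (split the Gagliardo double integral over the four half-space quadrants and use $(x_d+y_d)^2 \ge (x_d-y_d)^2$ on $x_d,y_d>0$), this bounds the trace on $\R^{d-1}\times\{0\}$ of any $w\in H^\theta(\R^d_+)$ by $C\|w\|_{H^\theta(\R^d_+)}$.

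\emph{Localization.} Using smoothness and compactness of $\partial\Omega$, fix a finite atlas of $C^\infty$-diffeomorphisms $\Phi_j$ flattening the boundary (sending $U_j\cap\Omega$ onto a piece of $\R^d_+$ and $U_j\cap\partial\Omega$ onto a piece of $\R^{d-1}\times\{0\}$, with all derivatives of $\Phi_j,\Phi_j^{-1}$ bounded) together with a subordinate smooth partition of unity $\{\chi_j\}$, one member supported in the interior of $\Omega$. For $u\in C^\infty(\overline{\Omega})$ write $u=\sum_j\chi_j u$: the interior term has vanishing trace, and for each boundary term I would push $\chi_j u$ forward by $\Phi_j^{-1}$, apply the flat lemma, and pull back. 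This uses two auxiliary facts for $\theta\in(0,1)$: (i) multiplication by a smooth compactly supported cutoff and composition with a smooth proper diffeomorphism with bounded derivatives act boundedly on $H^\theta$ --- for the cutoff one splits $\chi(x)u(x)-\chi(y)u(y)=\chi(x)(u(x)-u(y))+(\chi(x)-\chi(y))u(y)$ and absorbs the extra factor $|x-y|^2$ near the diagonal because $2-2\theta>0$ (boundedness of $\Omega$ controls the far part, giving a harmless $L^2$ contribution), and for the change of variables one uses $|\Phi(x)-\Phi(y)| \asymp |x-y|$ with Jacobians bounded above and below; and (ii) the chart-based definition of $H^{\theta-1/2}(\partial\Omega)$ (see \cite[Chap.\,1, \S 7]{LM}), which makes the boundary norm comparable to the sum of the flat norms of the transported pieces. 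Summing the estimates over $j$ gives the trace inequality on $\Omega$; by the density of $C^\infty(\overline{\Omega})$ in $H^\theta(\Omega)$ stated above, the linear map $u\mapsto u|_{\partial\Omega}$ extends uniquely by continuity to the desired operator $\gamma$.

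The main obstacle is not the flat Fourier estimate, which is essentially one line, but the localization bookkeeping: transporting the statement through charts, verifying that cutoffs and diffeomorphisms are bounded on the fractional spaces, and matching the intrinsic norm of $H^{\theta-1/2}(\partial\Omega)$ with the flat ones. For general non-integer $\theta>1/2$ one must in addition make sense of higher-order boundary spaces; there I would bootstrap from the classical integer-order trace theorems using the interpolation identity \eqref{intpl} in the form $(H^{k}(\R^n),H^{k+1}(\R^n))_{s,2}=H^{k+s}(\R^n)$, which leaves the structure of the argument unchanged but adds another layer of bookkeeping.
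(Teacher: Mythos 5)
The paper offers no proof of this statement: Theorem \ref{T:trace} is quoted verbatim from Lions--Magenes \cite[Chap.\,1, Theorem 9.4]{LM} and used as a black box in the proof of Theorem \ref{T:ident}. Your argument is therefore necessarily a different route, and it is the standard direct one: the sharp half-space estimate $\|u(\cdot,0)\|_{H^{\theta-1/2}(\R^{d-1})}\le c_\theta^{1/2}\|u\|_{H^\theta(\R^d)}$ via Cauchy--Schwarz on $\hat v(\xi')=\int_\R\hat u(\xi',\xi_d)\,\d\xi_d$ (where the substitution $\xi_d=(1+|\xi'|^2)^{1/2}t$ correctly produces the weight $(1+|\xi'|^2)^{1/2-\theta}$ and convergence exactly when $\theta>1/2$), combined with the even-reflection extension and a chart/partition-of-unity reduction. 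All the supporting facts you invoke are sound for $\theta\in(0,1)$: boundedness of the reflection via $(x_d+y_d)^2\ge(x_d-y_d)^2$ on the mixed quadrants, boundedness of smooth cutoffs on the Gagliardo space because $d+2\theta-2<d$, bi-Lipschitz invariance for the charts, and the chart-based definition of $H^{\theta-1/2}(\partial\Omega)$ from \cite[Chap.\,1, \S 7]{LM} that makes the glued estimate equal the intrinsic boundary norm. Compared with the interpolation-theoretic machinery Lions--Magenes actually use, your proof is more elementary and self-contained for the range $\theta\in(1/2,1)$, which is the only range in which the paper ever applies the theorem (in the proof of Theorem \ref{T:ident} and via Theorem \ref{T:trace_ker}). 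The one genuinely incomplete piece is the case $\theta\ge 1$ of the stated theorem, which you only sketch by interpolating integer-order trace theorems; that sketch is plausible (one interpolates between $H^k\to H^{k-1/2}$ and $H^{k+1}\to H^{k+1/2}$ for $k\ge1$, never touching the forbidden endpoint $\theta=1/2$), but it would need the higher-order boundary spaces and the corresponding interpolation identities on $\partial\Omega$ to be set up, none of which is required for the purposes of this paper.
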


\begin{theorem}[{\cite[Chap.\,1, Theorem 11.1]{LM}}]\label{T:cas1}
Let $\Omega$ be a bounded open subset of $\R^d$ with smooth boundary. If $\theta \in (0,1/2]$, then $H_0^\theta(\Omega) = H^\theta(\Omega)$. If $\theta \in (1/2,1]$, then $H_0^\theta(\Omega)$ is a strict subspace of $H^\theta(\Omega)$.
\end{theorem}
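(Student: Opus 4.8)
The plan is to treat the two ranges of $\theta$ separately, reducing in each case to the density of $C^\infty(\overline{\Omega})$ in $H^\theta(\Omega)=W^{\theta,2}(\Omega)$ recorded just above.

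For $\theta\in(0,1/2)$, I would approximate a fixed $u\in C^\infty(\overline{\Omega})$ by cut-offs $u_\epsilon:=u\,\zeta_\epsilon$, where $\zeta_\epsilon(x)=\zeta(\mathrm{dist}(x,\partial\Omega)/\epsilon)$ with $\zeta\in C^\infty([0,\infty);[0,1])$, $\zeta\equiv 0$ on $[0,1]$ and $\zeta\equiv 1$ on $[2,\infty)$; since $\partial\Omega$ is smooth, $\mathrm{dist}(\cdot,\partial\Omega)$ is smooth near $\partial\Omega$ and $u_\epsilon$ vanishes on an $\epsilon$-neighbourhood of $\partial\Omega$, so a subsequent interior mollification places the approximants in $C_c^\infty(\Omega)$. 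The $L^2$-convergence $u_\epsilon\to u$ is immediate by dominated convergence. The only substantive point is $[u-u_\epsilon]_{H^\theta(\Omega)}\to 0$: setting $w_\epsilon:=u-u_\epsilon=u(1-\zeta_\epsilon)$, one has $\operatorname{supp}w_\epsilon\subseteq\{\,\mathrm{dist}(\cdot,\partial\Omega)\le 2\epsilon\,\}$, a set of measure $O(\epsilon)$, and $\|\nabla w_\epsilon\|_{L^\infty}\le C/\epsilon$. Splitting the Gagliardo double integral at $|x-y|=\epsilon$, using $|w_\epsilon(x)-w_\epsilon(y)|\le\|\nabla w_\epsilon\|_{L^\infty}|x-y|$ near the diagonal and $|w_\epsilon(x)-w_\epsilon(y)|^2\le 2|w_\epsilon(x)|^2+2|w_\epsilon(y)|^2$ away from it, a routine computation yields $[w_\epsilon]_{H^\theta(\Omega)}^2\le C\epsilon^{1-2\theta}$, which vanishes precisely when $\theta<1/2$.

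The endpoint $\theta=1/2$ is where the genuine difficulty lies, and I expect it to be the main obstacle: the computation above gives only boundedness of $[u-u_\epsilon]_{H^{1/2}(\Omega)}$, not decay. The remedy is to replace the sharp cut-off by a slowly varying one, built from a logarithmically modulated profile of $\mathrm{dist}(\cdot,\partial\Omega)$ (so that the gradient of the cut-off, integrated over the shrinking collar, contributes an extra vanishing factor); this is essentially the construction in Lions–Magenes, and carrying the estimates through carefully is the delicate part.

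For $\theta\in(1/2,1]$, the strict inclusion follows from a trace obstruction. By Theorem~\ref{T:trace} (and, for $\theta=1$, the classical $H^1$ trace theorem) the trace map $\gamma:H^\theta(\Omega)\to H^{\theta-1/2}(\partial\Omega)$ is bounded; it vanishes on $C_c^\infty(\Omega)$, hence by continuity on $H^\theta_0(\Omega)=\overline{C_c^\infty(\Omega)}^{\|\cdot\|_{H^\theta(\Omega)}}$. Since $\Omega$ is bounded, the constant function $1$ lies in $H^\theta(\Omega)$ (indeed in $C^\infty(\overline{\Omega})$) and $\gamma 1=1\not\equiv 0$ in $H^{\theta-1/2}(\partial\Omega)$; thus $1\in H^\theta(\Omega)\setminus H^\theta_0(\Omega)$, so the inclusion $H^\theta_0(\Omega)\subseteq H^\theta(\Omega)$ is proper. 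In short, the order of work is: the reduction to $C^\infty(\overline{\Omega})$, the cut-off/mollification argument for $\theta<1/2$, the refined borderline cut-off for $\theta=1/2$, and finally the trace argument for $\theta>1/2$.
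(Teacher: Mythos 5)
The paper itself does not prove this statement: it is quoted directly from Lions--Magenes (Chap.\,1, Theorem 11.1), so there is no internal argument to compare yours against. Judged on its own terms, your proof is correct and essentially complete for $\theta\in(0,1/2)$ and for $\theta\in(1/2,1]$. The reduction to $C^\infty(\overline{\Omega})$ is legitimate (that density is recorded in the paper just above, for Lipschitz domains); the collar estimate $[u(1-\zeta_\epsilon)]_{H^\theta(\Omega)}^2\le C\epsilon^{1-2\theta}$ is right, since the collar has measure $O(\epsilon)$, the near-diagonal piece contributes $\epsilon^{-2}\cdot\epsilon\cdot\int_0^\epsilon r^{1-2\theta}\,\mathrm{d}r\sim\epsilon^{1-2\theta}$ and the off-diagonal piece $\|w_\epsilon\|_{L^2}^2\cdot\epsilon^{-2\theta}\sim\epsilon^{1-2\theta}$; and the trace obstruction for $\theta>1/2$ --- $\gamma$ vanishes on $C_c^\infty(\Omega)$, hence on its closure $H^\theta_0(\Omega)$, while $\gamma 1=1\neq 0$ --- is the standard and correct route to strictness, covering $\theta=1$ via the classical $H^1$ trace theorem.

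The one genuine gap is the endpoint $\theta=1/2$, which is part of the assertion and which you only sketch. Naming the logarithmically modulated cutoff is the right idea, but the estimate must actually be carried out, since this is exactly where the sharp cutoff fails. Concretely, with $\zeta_\epsilon=\eta_\epsilon(\mathrm{dist}(\cdot,\partial\Omega))$ and $\eta_\epsilon(t)=\min\{1,\max\{0,\log(t/\epsilon^2)/\log(1/\epsilon)\}\}$, a one-dimensional computation in the normal variable (the tangential directions only contribute the surface measure of $\partial\Omega$) yields $[1-\zeta_\epsilon]_{H^{1/2}}^2\le C/\log(1/\epsilon)\to 0$ --- the statement that a hypersurface has zero $H^{1/2}$-capacity --- and a product estimate then controls $[u(1-\zeta_\epsilon)]_{H^{1/2}(\Omega)}$ for $u\in C^\infty(\overline{\Omega})$. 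As written, ``carrying the estimates through carefully is the delicate part'' leaves precisely the delicate part undone; supplying that computation would make the proof self-contained, and arguably more elementary than the interpolation-theoretic treatment in Lions--Magenes.
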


\begin{theorem}[{\cite[Chap.\,1, Theorem 11.4]{LM}}]\label{T:ext0}
Let $\Omega$ be a bounded open subset of $\R^d$ with smooth boundary and let $\theta \in (0,1) \setminus \{1/2\}$. Then the operator
$$
u \in H^\theta_0(\Omega) \mapsto \tilde{u}  \in H^\theta(\R^d)
$$
is continuous from $H_0^\theta(\Omega)$ to $H^\theta(\R^d)$. Here $\tilde u$ stands for the zero-extension of $u$ outside $\Omega$.
\end{theorem}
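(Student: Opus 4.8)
The plan is to reduce the boundedness of extension-by-zero to a fractional Hardy inequality at $\partial\Omega$. Write $\tilde u$ for the zero-extension of $u:\Omega\to\R$ and set $\delta(x):=\mathrm{dist}(x,\partial\Omega)$. Since $H^\theta(\R^d)=W^{\theta,2}(\R^d)$ with equivalent norms, it suffices to bound $\|\tilde u\|_{L^2(\R^d)}+[\tilde u]_{H^\theta(\R^d)}$ by $C\|u\|_{H^\theta(\Omega)}$ for $u$ in a dense subspace of $H^\theta_0(\Omega)$ and then pass to the limit. For this reduction: if $0<\theta<1/2$, Theorem \ref{T:cas1} gives $H^\theta_0(\Omega)=H^\theta(\Omega)$, in which $C^\infty(\overline\Omega)$ is dense; if $1/2<\theta<1$, $C^\infty_c(\Omega)$ is dense in $H^\theta_0(\Omega)$ by definition. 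In either case, once the estimate is known on the dense class, the map $u\mapsto\tilde u$ extends continuously to $H^\theta_0(\Omega)$, and since it is also continuous $L^2(\Omega)\to L^2(\R^d)$ the extension agrees a.e.\ with the zero-extension.

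Next I would split the Gagliardo seminorm over $\Omega$ and its complement: using $\tilde u\equiv0$ on $\R^d\setminus\Omega$,
\[
[\tilde u]_{H^\theta(\R^d)}^2
=[u]_{H^\theta(\Omega)}^2+[\tilde u]_{H^\theta(\R^d\setminus\Omega)}^2
+\iint_{\Omega\times(\R^d\setminus\Omega)}\frac{|u(x)|^2}{|x-y|^{d+2\theta}}\,\d x\,\d y,
\]
where the middle term vanishes and $\|\tilde u\|_{L^2(\R^d)}=\|u\|_{L^2(\Omega)}$. For fixed $x\in\Omega$ one has $\int_{\R^d\setminus\Omega}|x-y|^{-d-2\theta}\,\d y\le\int_{|z|\ge\delta(x)}|z|^{-d-2\theta}\,\d z=c_{d,\theta}\,\delta(x)^{-2\theta}$, so the last term is $\le c_{d,\theta}\int_\Omega|u(x)|^2\delta(x)^{-2\theta}\,\d x$. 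Thus the statement reduces to the fractional Hardy inequality
\[
\int_\Omega\frac{|u(x)|^2}{\delta(x)^{2\theta}}\,\d x\le C\|u\|_{H^\theta(\Omega)}^2,
\]
valid for all $u\in H^\theta(\Omega)$ when $0<\theta<1/2$ and for all $u\in H^\theta_0(\Omega)$ when $1/2<\theta<1$.

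Finally I would prove this Hardy inequality by the standard localization scheme (covering $\partial\Omega$ by finitely many boundary charts that flatten $\partial\Omega$, taking a subordinate partition of unity, and using that in each chart $\delta$ is comparable to the normal coordinate), which reduces matters to the model inequality on a half-space $\R^d_+=\{x_d>0\}$ with weight $x_d^{-2\theta}$; the latter follows from a one-dimensional weighted Hardy inequality in the normal direction — for $\theta<1/2$ one estimates the pointwise value of $u$ directly by difference quotients absorbed into the seminorm, while for $\theta>1/2$ one additionally uses that $u$ vanishes on $\{x_d=0\}$. The main obstacle is exactly this Hardy estimate in the range $1/2<\theta<1$, where the vanishing trace is indispensable and the borderline scaling must be tracked carefully; it is also the point at which $\theta=1/2$ is excluded, since the weight $\delta^{-1}$ is controlled not on $H^{1/2}_0(\Omega)$ but only on the strictly smaller space $H^{1/2}_{00}(\Omega)$. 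Alternatively, the result is an instance of real interpolation: extension-by-zero is bounded $L^2(\Omega)\to L^2(\R^d)$ and $H^1_0(\Omega)\to H^1(\R^d)$, and for $\theta\ne1/2$ one has $H^\theta_0(\Omega)=(L^2(\Omega),H^1_0(\Omega))_{\theta,2}$, so interpolating with \eqref{intpl} yields $\tilde u\in W^{\theta,2}(\R^d)=H^\theta(\R^d)$ — though establishing that interpolation identity itself rests on the same Hardy inequality.
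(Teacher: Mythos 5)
The paper does not prove this statement at all: Theorem \ref{T:ext0} is quoted verbatim from Lions--Magenes \cite[Chap.\,1, Theorem 11.4]{LM}, so there is no in-paper argument to compare against. Your sketch is the standard proof of that cited result and, as far as it goes, it is correct: the splitting of $[\tilde u]_{H^\theta(\R^d)}^2$ into the interior seminorm plus the cross term $\iint_{\Omega\times\Omega^c}|u(x)|^2|x-y|^{-d-2\theta}\,\d x\,\d y$, the pointwise bound of the inner integral by $c_{d,\theta}\,\delta(x)^{-2\theta}$, and the resulting reduction to the fractional Hardy inequality $\int_\Omega|u|^2\delta^{-2\theta}\,\d x\le C\|u\|_{H^\theta(\Omega)}^2$ are all accurate, as is your identification of the correct validity ranges (all of $H^\theta(\Omega)$ for $\theta<1/2$, only $H^\theta_0(\Omega)$ for $\theta>1/2$) and of the precise reason $\theta=1/2$ must be excluded (the weight $\delta^{-1}$ is controlled on $H^{1/2}_{00}(\Omega)$ but not on $H^{1/2}_0(\Omega)=H^{1/2}(\Omega)$, consistent with the paper's Theorems \ref{T:cas1} and \ref{T:undemi}). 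Be aware, though, that the proposal is not self-contained: the entire analytic content has been pushed into the Hardy inequality, whose proof you only gesture at (boundary charts, comparability of $\delta$ with the normal coordinate, and a one-dimensional weighted Hardy inequality split by cases at $\theta=1/2$); if this were to replace the citation you would need to write that part out, or else cite it directly (e.g.\ Grisvard's Theorem 1.4.4.4 or \cite{LM} itself). Your interpolation alternative is correctly flagged as resting on the identity $H^\theta_0(\Omega)=(L^2(\Omega),H^1_0(\Omega))_{\theta,2}$ for $\theta\ne1/2$, which in \cite{LM} is itself established via the same weighted estimates, so it does not offer a genuinely independent route.
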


\begin{theorem}[{\cite[Chap.\,1, Theorem 11.5]{LM}}]\label{T:trace_ker}
Let $\Omega$ be a bounded open subset of $\R^d$ with smooth boundary and let $\theta \in (1/2,1]$. Then the following conditions are equivalent\/{\rm :}
\begin{enumerate}
\item $u \in H^\theta_0(\Omega)$,
\item $u \in H^\theta(\Omega)$ and $\gamma(u)=0$, where $\gamma : H^\theta(\Omega) \to H^{\theta-1/2}(\partial \Omega)$ is the trace operator given as in Theorem {\rm \ref{T:trace}}.
\end{enumerate}
\end{theorem}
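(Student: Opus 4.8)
The plan is to prove the two implications separately. The direction $(i)\Rightarrow(ii)$ is immediate: if $u\in H^\theta_0(\Omega)$, choose $u_n\in C^\infty_c(\Omega)$ with $u_n\to u$ in $H^\theta(\Omega)$; each $u_n$ is supported away from $\partial\Omega$, so $\gamma(u_n)=u_n|_{\partial\Omega}=0$, and since the trace operator $\gamma:H^\theta(\Omega)\to H^{\theta-1/2}(\partial\Omega)$ of Theorem~\ref{T:trace} is bounded (here $\theta-1/2>0$), we get $\gamma(u)=\lim_n\gamma(u_n)=0$ in $H^{\theta-1/2}(\partial\Omega)$, while of course $u\in H^\theta(\Omega)$.

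For the substantive direction $(ii)\Rightarrow(i)$, let $u\in H^\theta(\Omega)$ with $\gamma(u)=0$; the goal is to approximate $u$ in $H^\theta(\Omega)$ by elements of $C^\infty_c(\Omega)$. The engine is the \emph{fractional Hardy inequality}: for $\theta\in(1/2,1]$ there is $C>0$ such that $\int_\Omega \mathrm{dist}(x,\partial\Omega)^{-2\theta}|u(x)|^2\,\d x\le C\|u\|_{H^\theta(\Omega)}^2$ for every $u\in H^\theta(\Omega)$ with $\gamma(u)=0$ (for $\theta=1$ this is the classical Hardy inequality for $H^1_0$). Granting this, the zero-extension $\bar u$ of $u$ to $\R^d$ lies in $H^\theta(\R^d)$: splitting the Gagliardo double integral, the only nontrivial part is the mixed one, and since $x\in\Omega$ forces $\R^d\setminus\Omega\subset\{y:|x-y|\ge \mathrm{dist}(x,\partial\Omega)\}$, one has $\int_{\R^d\setminus\Omega}|x-y|^{-d-2\theta}\,\d y\lesssim \mathrm{dist}(x,\partial\Omega)^{-2\theta}$, whence
\begin{equation*}
[\bar u]_{H^\theta(\R^d)}^2\le [u]_{H^\theta(\Omega)}^2+C\int_\Omega \frac{|u(x)|^2}{\mathrm{dist}(x,\partial\Omega)^{2\theta}}\,\d x<\infty .
\end{equation*}
Now let $V$ be a smooth, compactly supported vector field on $\R^d$ that coincides near $\partial\Omega$ with $\nabla\rho$, $\rho$ being the regularized distance fixed above (so $V$ points inward along $\partial\Omega$), and let $\Phi_t$ be its flow. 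For small $\varepsilon>0$, $\Phi_\varepsilon(\overline\Omega)$ is a compact subset of $\Omega$, so $u_\varepsilon:=\bar u\circ\Phi_{-\varepsilon}$ is supported in $\Phi_\varepsilon(\overline\Omega)\subset\subset\Omega$, belongs to $H^\theta(\R^d)$ (a $C^\infty$ change of variables is bounded on $H^\theta$), and $u_\varepsilon\to\bar u$ in $H^\theta(\R^d)$ as $\varepsilon\to0^+$ by continuity of $\varepsilon\mapsto\bar u\circ\Phi_{-\varepsilon}$; restricting, $u_\varepsilon|_\Omega\to u$ in $H^\theta(\Omega)$. Finally, since $\mathrm{supp}\,u_\varepsilon$ is at positive distance from $\partial\Omega$, mollifying $u_\varepsilon$ at scale $\delta<\mathrm{dist}(\mathrm{supp}\,u_\varepsilon,\partial\Omega)$ produces functions in $C^\infty_c(\Omega)$ converging to $u_\varepsilon$ in $H^\theta(\R^d)$, hence in $H^\theta(\Omega)$. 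A diagonal argument then yields $u\in\overline{C^\infty_c(\Omega)}^{H^\theta(\Omega)}=H^\theta_0(\Omega)$.

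The \emph{main obstacle} is the fractional Hardy inequality, which is precisely where $\theta>1/2$ is used. Using a finite cover of $\partial\Omega$ by coordinate patches straightening the boundary, a subordinate partition of unity, and the facts that multiplication by a $C^\infty(\overline\Omega)$ function is bounded on $H^\theta(\Omega)$ and preserves the vanishing of $\gamma$, and that a $C^\infty$ change of variables intertwines the two traces, one reduces to the half-space model: $v\in H^\theta(\R^d_+)$ compactly supported with $\gamma v=0$ on $\{x_d=0\}$, for which one wants $\int_{\R^d_+}x_d^{-2\theta}|v(x)|^2\,\d x\lesssim [v]_{H^\theta(\R^d_+)}^2$. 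A standard slicing (Fubini-type) inequality bounds $\int_{\R^{d-1}}[v(x',\cdot)]_{H^\theta(\R_+)}^2\,\d x'$ by $[v]_{H^\theta(\R^d_+)}^2$, and since $\theta>1/2$ gives $H^\theta(\R_+)\hookrightarrow C(\overline{\R_+})$ with $v(x',0)=0$ for a.e.\ $x'$, it remains to apply the one-dimensional estimate $\int_0^\infty t^{-2\theta}|w(t)|^2\,\d t\le C[w]_{H^\theta(\R_+)}^2$ for $w\in H^\theta(\R_+)$ vanishing at $0$ and integrate in $x'$; this one-dimensional inequality follows, e.g., by interpolating the trivial $\theta=0$ bound with the classical $\theta=1$ Hardy inequality on a dense class of smooth functions vanishing at the origin (this is also the step that shows the characterization would fail for $\theta\le1/2$ without a trace condition, consistently with Theorem~\ref{T:cas1}, and fail at $\theta=1/2$ even with it, consistently with the role of $H^{1/2}_{00}(\Omega)$). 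The remaining ingredients — boundedness on $H^\theta$ of multiplication and of $C^\infty$ changes of variables, continuity of flows and translations on $H^\theta(\R^d)$, and convergence of mollifications — are routine and will be used without further comment.
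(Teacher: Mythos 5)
The paper itself offers no proof of this statement: Theorem \ref{T:trace_ker} is imported verbatim from Lions--Magenes \cite{LM}, so your proposal must stand on its own. Its architecture is sound and is in fact close to the classical Grisvard/Lions--Magenes route: the implication (i) $\Rightarrow$ (ii) by density and continuity of $\gamma$ is fine, and for the converse the chain (fractional Hardy inequality for trace-free functions) $\Rightarrow$ (the zero extension lies in $H^\theta(\R^d)$, via $\int_{\R^d\setminus\Omega}|x-y|^{-d-2\theta}\,\d y\lesssim \mathrm{dist}(x,\partial\Omega)^{-2\theta}$) $\Rightarrow$ (push the support inward along an interior flow) $\Rightarrow$ (mollify) is correct, as are the localization, flattening and slicing reductions.

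The genuine gap is the justification of the one-dimensional inequality $\int_0^\infty t^{-2\theta}|w(t)|^2\,\d t\le C[w]_{H^\theta(\R_+)}^2$ for $w\in H^\theta(\R_+)$ with $w(0)=0$, which you rightly single out as the crux. Obtaining it ``by interpolating the trivial $\theta=0$ bound with the classical $\theta=1$ Hardy inequality on a dense class of smooth functions vanishing at the origin'' is circular: interpolating the identity between $L^2(\R_+)\to L^2(\R_+)$ and $H^1_0(\R_+)\to L^2(t^{-2}\,\d t)$ gives the weighted bound only on the interpolation space $[L^2(\R_+),H^1_0(\R_+)]_\theta$, and identifying that space with $\{w\in H^\theta(\R_+)\colon w(0)=0\}$ is exactly the one-dimensional instance of the theorem being proved; likewise, passing ``by density'' from smooth functions vanishing at the origin to the whole trace-zero class presupposes that such functions are dense there, which is again the statement. (Note also that the H\"older embedding $|w(t)|\lesssim[w]_{H^\theta}\,t^{\theta-1/2}$ just fails to give the inequality: it produces a logarithmically divergent integral, so the borderline nature of the estimate cannot be waved away.) The inequality is nevertheless true for $\theta>1/2$ and admits a direct proof: telescope $w(t)=\sum_{k\ge0}\bigl(w(2^{-k}t)-w(2^{-k-1}t)\bigr)$ using continuity and $w(0)=0$, note that a change of variables gives $\|t^{-\theta}\bigl(w(2^{-k}\,\cdot\,)-w(2^{-k-1}\,\cdot\,)\bigr)\|_{L^2(\R_+)}=2^{k(1/2-\theta)}A$ with $A^2=\int_0^\infty s^{-2\theta}|w(s)-w(s/2)|^2\,\d s\lesssim[w]_{H^\theta(\R_+)}^2$ (bound $|w(s)-w(s/2)|^2$ by averaging over intermediate points $\sigma\in(s/2,s)$ and use $|s-\sigma|\le s$), and sum the geometric series, which converges precisely because $\theta>1/2$. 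With this lemma repaired, the rest of your argument goes through.
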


\begin{theorem}[{\cite[Chap.\,1, Theorem 11.7]{LM}}]\label{T:undemi}
Let $\Omega$ be a bounded open subset of $\R^d$ with smooth boundary. Then it holds that
$$
H^{1/2}_{00}(\Omega) = [L^2(\Omega),H^1_0(\Omega)]_{1/2} = (L^2(\Omega),H^1_0(\Omega))_{1/2,2}
$$
with equivalence of the norms. Moreover, $H^{1/2}_{00}(\Omega)$ is a strict subspace of $H^{1/2}(\Omega)$ with a strictly finer topology.
\end{theorem}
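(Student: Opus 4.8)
The plan is to realize every space appearing in the statement as a fractional power domain of a single positive self-adjoint operator and then to quote the classical interpolation theorems for such domains. Concretely, let $A$ be the self-adjoint operator on $L^2(\Omega)$ associated with the coercive symmetric form $a(u,v):=\int_\Omega(\nabla u\cdot\nabla v+uv)\,\d x$ on $H^1_0(\Omega)$. Since $\partial\Omega$ is smooth, elliptic regularity (see, e.g.,~\cite[Chap.~9]{B-FA}) gives $D(A)=H^2(\Omega)\cap H^1_0(\Omega)$ with $Au=-\Delta u+u$, while $D(A^{1/2})=H^1_0(\Omega)$ with equivalent norms by construction. First I would record two standard facts. \emph{(a)} For a positive self-adjoint operator $A$ on a Hilbert space $H$ and $0<\theta<1$, $0<\alpha\le1$, the spectral theorem yields $(H,D(A^\alpha))_{\theta,2}=D(A^{\theta\alpha})$ with equivalent norms; moreover the real method $(\cdot,\cdot)_{\theta,2}$ and the complex method $[\cdot,\cdot]_\theta$ coincide on Hilbert couples, so also $[H,D(A^\alpha)]_\theta=D(A^{\theta\alpha})$. \emph{(b)} The Lions--Magenes identification of the fractional powers of the Dirichlet Laplacian, namely $D(A^{1/4})=H^{1/2}_{00}(\Omega)$. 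Taking $\alpha=\theta=\tfrac12$ in \emph{(a)} gives at once
$$
[L^2(\Omega),H^1_0(\Omega)]_{1/2}=(L^2(\Omega),H^1_0(\Omega))_{1/2,2}=D(A^{1/4}),
$$
so the first assertion of the theorem reduces to the identification $D(A^{1/4})=H^{1/2}_{00}(\Omega)$.

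To prove $D(A^{1/4})=H^{1/2}_{00}(\Omega)$ I would localize near $\partial\Omega$ by a boundary partition of unity and flatten the boundary by smooth local charts, which reduces both the description of $D(A^{1/4})=[L^2(\Omega),H^1_0(\Omega)]_{1/2}$ and that of $H^{1/2}_{00}(\Omega)$ (whose defining weight $\rho$ is comparable with $\mathrm{dist}(\cdot,\partial\Omega)$) to the model half-space $\R^d_+=\{x_d>0\}$. On $\R^d_+$ one has $H^1_0(\R^d_+)=\{u\in H^1(\R^d_+):u(\cdot,0)=0\}$, and taking the partial Fourier transform in the tangential variables $x'\in\R^{d-1}$ decouples the interpolation problem into a one-parameter family (parameter $|\xi'|$) of one-dimensional problems in $x_d\in(0,\infty)$. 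The decisive one-dimensional statement is
$$
[L^2(0,\infty),H^1_0(0,\infty)]_{1/2}=\Big\{f\in H^{1/2}(0,\infty)\colon\int_0^\infty t^{-1}|f(t)|^2\,\d t<\infty\Big\},
$$
whose nontrivial half is the Hardy inequality $\int_0^\infty t^{-1}|f(t)|^2\,\d t\le C\,[f]^2_{H^{1/2}(0,\infty)}$ for $f$ in that interpolation space. Reassembling via the partition of unity, and using $\rho\simeq\mathrm{dist}(\cdot,\partial\Omega)$, yields $D(A^{1/4})=H^{1/2}_{00}(\Omega)$ with equivalent norms. \textbf{I expect this half-space reduction together with the sharp Hardy inequality at the boundary to be the main obstacle}; everything else is soft functional analysis. (An equivalent shortcut is simply to invoke~\cite[Chap.~1, Thm.~11.7]{LM}.)

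Finally, for the strict inclusion, let $\chi\in C_c^\infty(\R^d)$ be a cutoff with $\chi\equiv1$ near some point $x_0\in\partial\Omega$. Then $\chi|_\Omega\in C^\infty(\overline\Omega)\subset H^{1/2}(\Omega)$, but $\rho^{-1/2}\chi\notin L^2(\Omega)$ since, by the co-area formula and $\rho\simeq\mathrm{dist}(\cdot,\partial\Omega)$, $\int_\Omega\rho^{-1}\chi^2\,\d x\gtrsim\int_0^\delta t^{-1}\,\d t=\infty$; hence $\chi|_\Omega\in H^{1/2}(\Omega)\setminus H^{1/2}_{00}(\Omega)$ and the inclusion $H^{1/2}_{00}(\Omega)\hookrightarrow H^{1/2}(\Omega)$ is proper. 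It is continuous by the very definition of $\|\cdot\|_{H^{1/2}_{00}(\Omega)}$, but it cannot be a topological isomorphism onto its image: the truncations $u_n:=\min(n\rho,1)\,\chi$ lie in $H^{1/2}_{00}(\Omega)$ and are bounded in $H^{1/2}(\Omega)$ (a scale-invariance argument for $[\,\cdot\,]_{H^{1/2}}$ near $x_0$, the transition layer having width $\sim1/n$), while $\|\rho^{-1/2}u_n\|_{L^2(\Omega)}\to\infty$. Thus no reverse bound $\|u\|_{H^{1/2}_{00}(\Omega)}\le C\|u\|_{H^{1/2}(\Omega)}$ can hold on $H^{1/2}_{00}(\Omega)$, so its topology is strictly finer, which completes the plan.
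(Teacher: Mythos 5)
The paper does not prove this statement at all: Theorem \ref{T:undemi} is quoted verbatim from Lions--Magenes \cite[Chap.\,1, Theorem 11.7]{LM}, exactly as your parenthetical shortcut suggests, and is then only \emph{used} (in Theorem \ref{T:ident} and its corollary) to identify $\mathcal{X}_{1/2}(\Omega)$ with $H^{1/2}_{00}(\Omega)$. So your proposal is not an alternative to the paper's argument but a reconstruction of the classical proof behind the citation. As such it is essentially sound and follows the Lions--Magenes strategy faithfully: realize $(L^2(\Omega),H^1_0(\Omega))_{1/2,2}$ and $[L^2(\Omega),H^1_0(\Omega)]_{1/2}$ as $D(A^{1/4})$ via the spectral theorem and the coincidence of the real and complex methods on Hilbert couples, then reduce the identification $D(A^{1/4})=H^{1/2}_{00}(\Omega)$ to the half-space by charts and a partition of unity. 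Your concrete witnesses for the two strictness claims (the boundary cutoff $\chi$ with $\rho^{-1/2}\chi\notin L^2$, and the truncations $u_n=\min(n\rho,1)\chi$, bounded in $H^{1/2}$ by scale invariance of the seminorm but with $\|\rho^{-1/2}u_n\|_{L^2}^2\sim\log n$) are correct and are a genuine addition, since neither the paper nor the bare citation exhibits them. Incidentally, the paper's proof of Theorem \ref{T:ident}(ii) contains boundary-layer computations of the same flavour as your $u_n$ construction, so your examples fit the paper's toolbox.

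Two points where your outline is thinner than a proof. First, the inequality you call the ``nontrivial half,'' $\int_0^\infty t^{-1}|f|^2\,\d t\le C[f]^2_{H^{1/2}}$, is false for general $f\in H^{1/2}(0,\infty)$ (that failure is precisely why $H^{1/2}_{00}\subsetneq H^{1/2}_{0}=H^{1/2}$); it holds only for $f$ in the interpolation space, as you do say, and the clean way to get it is not a direct $H^{1/2}$ estimate but interpolation of the endpoint Hardy inequality $\|t^{-1}f\|_{L^2}\le 2\|f'\|_{L^2}$ on $H^1_0(0,\infty)$ against the identity on $L^2$, using Stein--Weiss interpolation of the weighted $L^2$ spaces. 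Second, the converse inclusion $\{f\in H^{1/2}\colon t^{-1/2}f\in L^2\}\subset[L^2,H^1_0]_{1/2}$ is the genuinely delicate step and is left entirely to the flagged ``main obstacle''; a complete argument would have to produce it (e.g.\ via the $K$-functional or the trace-method description of the real interpolation space). Since you correctly identify where the work lies and the remaining steps are exactly those carried out in \cite{LM}, the plan stands, but it is a plan plus a citation rather than a self-contained proof.
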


We are now ready to identify the space,
$$
\X := \left\{u \in H^\theta(\mathbb{R}^d) \colon u = 0 \text{ a.e.~in } \R^d \setminus\Omega \right\}
$$
endowed with the seminorm $\|\cdot\|_{\X} := [\,\cdot\,]_{H^\theta(\Omega)}$ (see also Proposition \ref{P:Poincare}).

\begin{theorem}\label{T:ident}
Let $\Omega$ be a bounded open subset of $\R^d$ with smooth boundary and let $\theta \in (0,1)$.
\begin{enumerate}
\item If $\theta \neq 1/2$, then $\X = H^\theta_0(\Omega)$ with equivalence of the norms,
\item $\mathcal{X}_{1/2}(\Omega) = H^{1/2}_{00}(\Omega)$ with equivalence of the norms.
\end{enumerate}
\end{theorem}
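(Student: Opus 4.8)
The plan is to prove both identities at once by realizing $\X$ and the relevant ``$H^\theta_0$-type'' space as the completion of the common core $C_c^\infty(\Omega)$ with respect to two equivalent norms, and then to identify these completions through the natural pair of mutually inverse maps $u\mapsto u|_\Omega$ and $u\mapsto\tilde u$ (zero-extension to $\R^d$), whose compositions are the identity on $C_c^\infty(\Omega)$ and hence, by density and continuity, everywhere. That $C_c^\infty(\Omega)$ is dense in $\X$ is exactly Proposition \ref{P:density}; it is dense in $H^\theta_0(\Omega)$ by definition; and for $\theta=1/2$ it is dense in $H^{1/2}_{00}(\Omega)$ because $H^1_0(\Omega)$ is dense in $(L^2(\Omega),H^1_0(\Omega))_{1/2,2}=H^{1/2}_{00}(\Omega)$ (Theorem \ref{T:undemi}), $C_c^\infty(\Omega)$ is dense in $H^1_0(\Omega)$, and the inclusion $H^1_0(\Omega)\hookrightarrow H^{1/2}_{00}(\Omega)$ is continuous. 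So everything reduces to a two-sided norm estimate on $C_c^\infty(\Omega)$.

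First I would dispatch the ``easy'' inequality, valid for every $\theta\in(0,1)$: for $u\in C_c^\infty(\Omega)$, restricting the Gagliardo double integral from $\R^d\times\R^d$ to $\Omega\times\Omega$ and invoking the Poincar\'e inequality of Proposition \ref{P:Poincare} gives $\|u\|_{H^\theta(\Omega)}\le C\|u\|_{H^\theta(\R^d)}\le C'[u]_{H^\theta(\R^d)}=C'\|u\|_{\X}$. When $\theta=1/2$ one additionally needs to control the weighted term $\|\rho^{-1/2}u\|_{L^2(\Omega)}$ appearing in the norm of $H^{1/2}_{00}(\Omega)$. Since $u$ vanishes on $\R^d\setminus\Omega$ and $\rho\sim\mathrm{dist}(\cdot,\partial\Omega)\sim\mathrm{dist}(\cdot,\R^d\setminus\Omega)$ on $\Omega$, I would use the elementary fractional Hardy estimate
\begin{align*}
\int_\Omega\frac{|u(x)|^2}{\mathrm{dist}(x,\partial\Omega)}\,\d x
&\lesssim \int_\Omega|u(x)|^2\Big(\int_{\R^d\setminus\Omega}\frac{\d y}{|x-y|^{d+1}}\Big)\d x\\
&= \int_\Omega\int_{\R^d\setminus\Omega}\frac{|u(x)-u(y)|^2}{|x-y|^{d+1}}\,\d y\,\d x
\;\le\; 2\,[u]_{H^{1/2}(\R^d)}^2,
\end{align*}
which yields $\|u\|_{H^{1/2}_{00}(\Omega)}\le C\|u\|_{\mathcal{X}_{1/2}(\Omega)}$.

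For the reverse inequality I would split into the two cases. If $\theta\neq1/2$, Theorem \ref{T:ext0} tells us that for $u\in C_c^\infty(\Omega)\subset H^\theta_0(\Omega)$ the zero-extension satisfies $\|\tilde u\|_{H^\theta(\R^d)}\le C\|u\|_{H^\theta_0(\Omega)}=C\|u\|_{H^\theta(\Omega)}$, hence $\|u\|_{\X}=[\tilde u]_{H^\theta(\R^d)}\le C\|u\|_{H^\theta(\Omega)}$. If $\theta=1/2$, I would instead use interpolation: the zero-extension is bounded from $L^2(\Omega)$ into $L^2(\R^d)$ and from $H^1_0(\Omega)$ into $H^1(\R^d)$, so by real interpolation together with Theorem \ref{T:undemi} and \eqref{intpl} it is bounded from $H^{1/2}_{00}(\Omega)=(L^2(\Omega),H^1_0(\Omega))_{1/2,2}$ into $(L^2(\R^d),H^1(\R^d))_{1/2,2}=W^{1/2,2}(\R^d)=H^{1/2}(\R^d)$; thus $\|u\|_{\mathcal{X}_{1/2}(\Omega)}=[\tilde u]_{H^{1/2}(\R^d)}\le C\|u\|_{H^{1/2}_{00}(\Omega)}$. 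Combining the two estimates, $\|\cdot\|_{\X}$ is equivalent on $C_c^\infty(\Omega)$ to $\|\cdot\|_{H^\theta_0(\Omega)}$ when $\theta\neq1/2$ and to $\|\cdot\|_{H^{1/2}_{00}(\Omega)}$ when $\theta=1/2$; passing to completions then yields the claimed isomorphisms, realized concretely by restriction and zero-extension.

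The hard part will be the endpoint $\theta=1/2$: precisely because $H^{1/2}_0(\Omega)\subsetneq H^{1/2}_{00}(\Omega)$, the bare $H^{1/2}(\Omega)$-norm does not suffice, and one must genuinely produce the weighted control via the fractional Hardy inequality above and match it against the interpolation description of $H^{1/2}_{00}(\Omega)$ from Theorem \ref{T:undemi} (and the identification $W^{1/2,2}(\R^d)=H^{1/2}(\R^d)$). A minor but necessary bookkeeping point, used implicitly throughout, is that for a smooth bounded $\Omega$ the conditions ``$u=0$ a.e.\ on $\R^d\setminus\Omega$'' and ``$\mathrm{supp}\,u\subset\overline{\Omega}$'' coincide since $\partial\Omega$ is Lebesgue-null, so that the restriction/zero-extension pair is well defined and bijective between the two function spaces.
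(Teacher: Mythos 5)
Your overall architecture (norm equivalence on the common core $C^\infty_c(\Omega)$, then pass to completions via restriction and zero-extension) is clean, but it is circular as it stands within this paper: you invoke Proposition \ref{P:density} to say that $C^\infty_c(\Omega)$ is dense in $\X$, whereas the paper proves that density \emph{as a corollary of} Theorem \ref{T:ident}. Without an independent proof of this density, your argument only shows that the closure of $C^\infty_c(\Omega)$ in $\X$ is isomorphic to $H^\theta_0(\Omega)$ (resp.\ $H^{1/2}_{00}(\Omega)$), i.e.\ the inclusion $H^\theta_0(\Omega)\hookrightarrow\X$; it does not rule out that $\X$ is strictly larger. This is precisely why the paper's proof of the inclusion $\X\subset H^\theta_0(\Omega)$ for $\theta>1/2$ does \emph{not} go through density but instead shows, via the trace operator applied both to $\Omega$ and to an exterior collar $\Omega''=\Omega'\setminus\overline\Omega$, that every $u\in\X$ has $\gamma(u)=0$ and then applies Theorem \ref{T:trace_ker}; and why for $\theta=1/2$ it proves the inclusion $\mathcal{X}_{1/2}(\Omega)\subset H^{1/2}_{00}(\Omega)$ by a direct lower bound on the cross term $\iint_{\Omega\times\Omega^c}$. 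To repair your proof you must either supply a self-contained density proof for $\X$ (translation/dilation plus mollification, which is nontrivial and essentially as much work as the paper's route) or replace the density step for the inclusion $\X\subset H^\theta_0(\Omega)$ by a trace (or Hardy) argument as the paper does.

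A second, smaller gap: the first ``$\lesssim$'' in your fractional Hardy chain, namely $\mathrm{dist}(x,\partial\Omega)^{-1}\lesssim\int_{\R^d\setminus\Omega}|x-y|^{-d-1}\,\d y$, is not elementary in the direction you need it. The trivial bound (integrating over the complement of $B(x;\mathrm{dist}(x,\partial\Omega))$) gives the \emph{upper} estimate $\int_{\R^d\setminus\Omega}|x-y|^{-d-1}\,\d y\le C/\mathrm{dist}(x,\partial\Omega)$; the lower estimate requires that a fixed proportion of an annulus around $x$ of radius comparable to $\mathrm{dist}(x,\partial\Omega)$ lie outside $\Omega$, i.e.\ an exterior sphere/cone condition, and this geometric construction (the cubes $C_{y,r}$ inside exterior balls $B_{y,r}$) is exactly where the paper spends most of its effort in the case $\theta=1/2$. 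That said, your treatment of the opposite inclusion $H^{1/2}_{00}(\Omega)\hookrightarrow\mathcal{X}_{1/2}(\Omega)$ by real interpolation of the zero-extension operator between $L^2(\Omega)\to L^2(\R^d)$ and $H^1_0(\Omega)\to H^1(\R^d)$, combined with Theorem \ref{T:undemi} and \eqref{intpl}, is correct and genuinely different from the paper's direct upper bound on the cross term; it is a legitimate, and arguably slicker, alternative for that half of the endpoint case.
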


\begin{proof}
From the definition of $\X$, we immediately find the continuous embedding,
\begin{equation}\label{XH-embd}
\X \hookrightarrow H^{\theta}(\Omega).
\end{equation}
In case $0 < \theta < 1/2$, from Theorem \ref{T:cas1}, it follows that $\X \hookrightarrow H^\theta(\Omega) = H_0^\theta(\Omega)$ with a continuous injection. Moreover, from Theorem \ref{T:ext0}, there exists a constant $C > 0$ such that, for any $u \in H_0^\theta(\Omega) = H^\theta(\Omega)$, $\|\tilde{u}\|_{H^\theta(\R^d)} \leq C\|u\|_{H^\theta(\Omega)}$, where $\tilde{u}$ denotes the zero extension of $u$. By definition, we see that $\|\tilde{u}\|_{H^\theta(\R^d)}\geq\|u\|_{\X}$. Thus we obtain $H_0^\theta(\Omega) = H^\theta(\Omega) \hookrightarrow \X$ with a continuous injection.

In case $1/2 < \theta < 1$, we show as in the previous case that $H_0^\theta(\Omega) \hookrightarrow \X$ with a continuous injection, by using Theorem \ref{T:ext0}. To prove the inverse embedding, we use the trace characterization of $H_0^\theta(\Omega)$ provided by Theorem \ref{T:trace_ker}. Letting $u \in \X$, we show that $\gamma(u) = 0 $. Note that, even though $u = 0$ on $\R^d \setminus \Omega$, this relation is not totally straightforward, since $\gamma(u)$ is defined from the restriction $u|_\Omega$ of $u$ to the domain $\Omega$. Since $\Omega$ is bounded, there exists a smooth bounded domain $\Omega' \subset \R^d$ such that $\Omega \Subset \Omega'$. Set $\Gamma = \partial\Omega$ and $\Omega'' =\Omega' \setminus (\Omega \cup \Gamma)$, which is a bounded and smooth subset of $\R^d$. Then $u \in H^\theta(\Omega')$ and $u|_{\Omega''} = 0$. Let $(\varphi_n)$ be a sequence in $C^\infty_c(\R^d)$ such that
$$
\varphi_n \to u \quad\text{ strongly in } H^\theta(\R^d).
$$
Indeed, it is possible by virtue of Theorem \ref{T:density}. In particular, we see that
\begin{alignat*}{3}
\varphi_n|_{\Omega''} &\to u|_{\Omega''} = 0 \quad &&\text{ strongly in } H^\theta(\Omega''),\\
\varphi_n|_{\Omega} &\to u|_{\Omega} \quad &&\text{ strongly in } H^\theta(\Omega).
\end{alignat*}
Applying Theorem \ref{T:trace} to (each connected component of) the open set $\Omega''$, we see that 
\begin{equation}\label{u-Gam-1}
\varphi_n|_{\Gamma} \to 0 \quad \text{ strongly in } H^{\theta-1/2}(\Gamma).
\end{equation}
Moreover, applying now Theorem \ref{T:trace} to the domain $\Omega$, we find that
\begin{equation}\label{u-Gam-2}
\varphi_n|_{\Gamma} \to \gamma(u) \quad \text{ strongly in } H^{\theta-1/2}(\Gamma),
\end{equation}
where $\gamma$ denotes the trace operator from $H^\theta(\Omega)$ to $H^{\theta-1/2}(\partial \Omega)$. Hence it follows from \eqref{u-Gam-1} and \eqref{u-Gam-2} that
$$
\gamma(u) = 0.
$$
By virtue of Theorem \ref{T:trace_ker}, we deduce that
$$
\X \hookrightarrow H_0^\theta (\Omega),
$$
where the injection is continuous due to \eqref{XH-embd}.

In case $\theta=1/2$, let $u \in H^{1/2}_{00}(\Omega)$ and denote by the same letter $u$ the zero extension of $u$ to $\R^d$. We have
\begin{equation}\label{X-half-norm}
\|u\|_{\mathcal{X}_{1/2}(\Omega)}^2 = \|u\|_{H^{1/2}(\Omega)}^2 + 2A
\end{equation}
with
$$
A := \int_{\Omega} \left( \int_{\Omega^c} \frac{|u(x)|^2}{|x-y|^{d+1}} \,\d y \right) \d x.
$$
Let us denote
$$
\rho_1(x) := \mathrm{dist}(x,\partial \Omega) \ \text{ for } \ x \in \Omega.
$$
Then we have
$$
A \leq \int_\Omega |u(x)|^2 \left( \int_{B(x;\rho_1(x))^c} \frac{1}{|x-y|^{d+1}} \,\d y \right) \d x,
$$
where $B(z;r)$ denotes the open ball centered at $z \in \R^d$ of radius $r > 0$. By change of variables, we have
\begin{align*}
A &\leq 
\int_\Omega |u(x)|^2 \left( d \omega_d \int^{\infty}_{\rho_1(x)} \frac 1 {r^2} \,\d r \right) \d x\\
&= d \omega_d \int_\Omega \frac{|u(x)|^2}{\rho_1(x)} \,\d x,
\end{align*}
where $\omega_d$ is the volume of the $d$-dimensional unit ball. We deduce that
$$
\|u\|_{\mathcal{X}_{1/2}(\Omega)}^2\leq \|u\|_{H^{1/2}(\Omega)}^2+2d\omega_d \| \rho_1^{-1/2}u \|_{L^2(\Omega)}^2.
$$
Thus $u \in \mathcal{X}_{1/2}(\Omega)$, and therefore, the continuous embedding $H^{1/2}_{00}(\Omega) \hookrightarrow \mathcal{X}_{1/2}(\Omega)$ follows.

We next derive the inverse embedding. Since $\partial\Omega$ is compact, for any $x \in \Omega$, there exists $p(x) \in \partial\Omega$ such that $\mathrm{dist}(x,\partial\Omega)=|p(x)-x|$. Moreover, for $y \in \partial\Omega$ and $r > 0$, we set
$B_{y,r} := B(y+r{\bf n}(y);r)$, where $\mathbf{n}(y)$ denotes the outer normal at $y \in \partial \Omega$, and denote by $C_{y,r}$ an arbitrary open hypercube centered at $y+r\mathbf{n}(y)$ of side length $\sqrt{2}r$ with a side parallel to $\mathbf{n}(y)$. Then we observe that
$$
C_{y,r} \subset B_{y,r}.
$$
If $\Omega$ complies with a uniform exterior sphere condition (e.g., $\Omega$ is a bounded $C^{1,1}$ domain), there exists $\varepsilon>0$ such that 
$$
B_{y,r} \subset \Omega^c
$$
for all $r \in (0,\varepsilon)$ and $y \in \partial \Omega$. %Finally, we set
%$$
%\rho_2(x):= \begin{cases}
%\mathrm{dist}(x,\partial\Omega) &\text{ if } \ \mathrm{dist}(x,\partial\Omega) < \varepsilon,\\
%\varepsilon &\text{ otherwise}
%\end{cases}
%$$
%for $x \in \Omega$. 
Let $u \in \mathcal{X}_{1/2}(\Omega)$. Then it follows that
\begin{align*}
A &\geq \int_{\{ x \in \Omega \colon \mathrm{dist}(x,\partial\Omega) < \varepsilon\}} |u(x)|^2 \left( \int_{C_{p(x),{\rho_1}(x)}} \frac{1}{|x-y|^{d+1}} \,\d y \right) \d x\\
&\quad + \int_{ \{ x \in \Omega \colon \mathrm{dist}(x,\partial\Omega) \geq \varepsilon \}} |u(x)|^2 \left( \int_{\Omega^c} \frac{1}{|x-y|^{d+1}} \,\d y \right) \d x.
\end{align*}
Let us denote by $I_1$ and $I_2$ the first and second terms in the right-hand side, respectively. Let $x \in \Omega$ be such that $\mathrm{dist}(x,\partial\Omega) < \varepsilon$. By change of variables, 
$$
y = x + \mathbf{n}(p(x)) {\rho_1}(x) \eta_1 + {\rho_1}(x) \tilde \eta
$$
for 
\begin{align*}
\eta_1 &\in I := \left] 2-\tfrac{\sqrt{2}}2, 2+\tfrac{\sqrt{2}}2 \right[,\\
\tilde{\eta} &\in R := \left\{ \tilde y = y_2 \mathbf{e}_2 + \cdots + y_d \mathbf{e}_d \colon |y_j| < \sqrt{2}/2 \ \mbox{ for } \ j = 2,\ldots,d \right\},
%\{0\} \otimes \left] -\tfrac{-\sqrt{2}}{2}, \tfrac{\sqrt{2}}{2} \right[^{d-1} \perp \mathbf{n}(p(x)), 
\end{align*}
where $\{\mathbf{e}_2, \ldots, \mathbf{e}_d\}$ is a basis of the $(d-1)$-dimensional subspace orthogonal to the normal vector $\mathbf{n}(p(x))$, we have
\begin{align*}
\MoveEqLeft{
\int_{C_{p(x),{\rho_1}(x)}} \frac{1}{|x-y|^{d+1}} \,\d y
}\\
&= \int_{{\rho_1}(x)I} \left( \int_{{\rho_1}(x)R} \frac 1 {(y_1^2 + |\tilde{y}|^2)^{(d+1)/2}} \, \d \tilde{y} \right) \d y_1 \\
&= \frac 1 {{\rho_1}(x)} \int_I \left( \int_R \frac{1}{(\eta_1^2+|\tilde{\eta}|^2)^{(d+1)/2}} \,\d \tilde{\eta}\right) \d \eta_1 = \frac{C_1}{{\rho_1}(x)}
\end{align*}
for some constant $C_1$. Hence it follows that
$$
I_1 \geq C_1 \int_{ \{ x \in \Omega \colon \mathrm{dist}(x,\partial\Omega)<\varepsilon\}} |{\rho_1}(x)^{-1/2}u(x)|^2 \,\d x.
$$

On the other hand, since $\Omega$ is bounded, there exists $M > 0$ such that $\Omega \subset B(0;M)$. Let $x \in \Omega$ be such that $\mathrm{dist}(x,\partial\Omega)\geq \varepsilon$. Then we observe that 
\begin{align*}
\int_{\Omega^c}\frac{1}{|x-y|^{d+1}} \,\d y
&\geq \int_{B(0;M)^c}\frac{1}{|x-y|^{d+1}} \,\d y\\
&\geq \int_{B(0;M)^c} \frac{1}{(M+|y|)^{d+1}} \,\d y =: C_2.
\end{align*}
Thus,
\begin{align*}
I_2 &\geq C_2 \int_{ \{x \in \Omega \colon \mathrm{dist}(x,\partial\Omega)\geq\varepsilon \}} |u(x)|^2 \,\d x\\
&\geq C_2 \varepsilon\int_{ \{x \in \Omega \colon \mathrm{dist}(x,\partial\Omega)\geq\varepsilon \}} |{\rho_1}(x)^{-1/2}u(x)|^2 \,\d x.
\end{align*}
Hence,
$$
A \geq \min\{C_1,C_2\varepsilon\} \|{\rho_1}^{-1/2}u\|_{L^2(\Omega)}^2,
$$
and therefore, we conclude from \eqref{X-half-norm} that
$$
\mathcal{X}_{1/2}(\Omega) \hookrightarrow H^{1/2}_{00}(\Omega)
$$
with a continuous injection. This completes the proof.
\end{proof}

Thus we obtain
\begin{corollary}
Let $\Omega$ be a bounded open subset of $\R^d$ with smooth boundary and let $\theta \in(0,1)$. Then $C_c^\infty(\Omega)$ is dense in $\X$.
\end{corollary}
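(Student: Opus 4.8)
The plan is to deduce the statement directly from the identifications of $\X$ established in Theorem \ref{T:ident}, treating separately the two regimes $\theta\neq 1/2$ and $\theta=1/2$, and then transferring the known density of $C^\infty_c(\Omega)$ in the relevant reference spaces along the norm equivalences provided there.

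First suppose $\theta\in(0,1)\setminus\{1/2\}$. By (i) of Theorem \ref{T:ident}, $\X$ coincides with $H^\theta_0(\Omega)$ and the norms $\|\cdot\|_{\X}$ and $\|\cdot\|_{H^\theta(\Omega)}$ are equivalent on this space. Since, by definition, $H^\theta_0(\Omega)=\overline{C^\infty_c(\Omega)}^{\,\|\cdot\|_{H^\theta(\Omega)}}$, the set $C^\infty_c(\Omega)$ is dense in $H^\theta_0(\Omega)$ for the $H^\theta(\Omega)$-norm; by the equivalence of norms it is then also dense in $\X$ for $\|\cdot\|_{\X}$, which settles this case.

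It remains to handle $\theta=1/2$. By (ii) of Theorem \ref{T:ident}, $\mathcal X_{1/2}(\Omega)=H^{1/2}_{00}(\Omega)$ with equivalence of the norms, so it suffices to prove that $C^\infty_c(\Omega)$ is dense in $H^{1/2}_{00}(\Omega)$. By Theorem \ref{T:undemi}, $H^{1/2}_{00}(\Omega)=(L^2(\Omega),H^1_0(\Omega))_{1/2,2}$ with equivalence of the norms. I would then invoke the classical fact that, for a compatible couple $(X_0,X_1)$ of Banach spaces and $1\le p<\infty$, the intersection $X_0\cap X_1$ is dense in the real interpolation space $(X_0,X_1)_{\sigma,p}$ (see, e.g.,~\cite{Lunardi}). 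Applying this with $X_0=L^2(\Omega)$, $X_1=H^1_0(\Omega)$, $\sigma=1/2$, $p=2$, and using $H^1_0(\Omega)\subset L^2(\Omega)$ so that $X_0\cap X_1=H^1_0(\Omega)$, one gets that $H^1_0(\Omega)$ is dense in $(L^2(\Omega),H^1_0(\Omega))_{1/2,2}$. Moreover, the fundamental interpolation inequality $\|w\|_{(L^2,H^1_0)_{1/2,2}}\le C\|w\|_{L^2(\Omega)}^{1/2}\|w\|_{H^1_0(\Omega)}^{1/2}\le C'\|w\|_{H^1_0(\Omega)}$ (the last step by $H^1_0(\Omega)\hookrightarrow L^2(\Omega)$) shows $H^1_0(\Omega)\hookrightarrow (L^2(\Omega),H^1_0(\Omega))_{1/2,2}$ continuously. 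Since $C^\infty_c(\Omega)$ is dense in $H^1_0(\Omega)$ by definition, it follows that $C^\infty_c(\Omega)$ is dense in $(L^2(\Omega),H^1_0(\Omega))_{1/2,2}=H^{1/2}_{00}(\Omega)=\mathcal X_{1/2}(\Omega)$, completing the proof.

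The only genuinely non-formal point is the case $\theta=1/2$, where one must push the density of $C^\infty_c(\Omega)$ in $H^1_0(\Omega)$ through the interpolation-space description of $H^{1/2}_{00}(\Omega)$; the rest is a routine transfer of definitions along the norm equivalences of Theorem \ref{T:ident}.
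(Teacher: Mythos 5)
Your proposal is correct and follows essentially the same route as the paper: for $\theta\neq 1/2$ you identify $\X$ with $H^\theta_0(\Omega)=\overline{C^\infty_c(\Omega)}^{H^\theta(\Omega)}$ via Theorem \ref{T:ident}, and for $\theta=1/2$ you pass through $H^{1/2}_{00}(\Omega)=(L^2(\Omega),H^1_0(\Omega))_{1/2,2}$ and the density of $X_0\cap X_1=H^1_0(\Omega)$ in the real interpolation space, exactly as the paper does by citing \cite[Proposition 1.17]{Lunardi}. The extra detail you give on the continuity of the embedding $H^1_0(\Omega)\hookrightarrow (L^2(\Omega),H^1_0(\Omega))_{1/2,2}$ is a harmless elaboration of the same argument.
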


\begin{proof}
From Theorem \ref{T:ident}, we deduce that $\X = H^\theta_0(\Omega) = \overline{C_c^\infty(\Omega)}^{\|\cdot\|_{H^\theta(\Omega)}}$ as far as $\theta \neq 1/2$. On the other hand, by virtue of Theorems \ref{T:undemi} and \ref{T:ident}, we find that $\mathcal{X}_{1/2}(\Omega) = H^{1/2}_{00}(\Omega) = (L^2(\Omega),H^1_0(\Omega))_{1/2,2}$ with equivalence of norms. By general theory on real interpolation spaces (see~\cite[Proposition 1.17]{Lunardi}), we conclude that
$$
H^1_0(\Omega) \hookrightarrow \mathcal{X}_{1/2}(\Omega)
$$
with a continuous and densely defined injection. Consequently, since $C_c^\infty(\Omega)$ is dense in $H^1_0(\Omega)$, it is so in $\mathcal{X}_{1/2}(\Omega)$.
\end{proof}

\section{Regularization of initial data}\label{S:regu}

Let $\rho_0 \in \X^*$ and let $q > 1$ be fixed. From the density of $\X$ in $L^2(\Omega)$, one can take a sequence $(\tilde{\rho}_{0,n})$ in $L^2(\Omega)$ such that $\tilde{\rho}_{0,n} \to \rho_0$ strongly in $\X^*$. Let $u_{0,n} \in X := \X \cap L^q(\Omega)$ be the unique solution to the equation,
\begin{equation}\label{app-eq}
|u_{0,n}|^{q-2}u_{0,n} + \frac 1n (-\Delta)^\theta u_{0,n} = \tilde{\rho}_{0,n} \ \mbox{ in } X^*.
\end{equation}
Indeed, existence of such $u_{0,n} \in X$ can be assured, e.g., in a variational method, and moreover, since $\tilde \rho_{0,n} \in L^2(\Omega)$, one can check $|u_{0,n}|^{q-2}u_{0,n} \in L^2(\Omega)$ as in \S \ref{Ss:ex} (see Steps 1 and 2). Test \eqref{app-eq} by $u_{0,n}$ to see that
$$
\|u_{0,n}\|_{L^q(\Omega)}^q + \frac 1n \|u_{0,n}\|_{\X}^2 \leq \|\tilde{\rho}_{0,n}\|_{\X^*} \|u_{0,n}\|_{\X},
$$
which yields
$$
\frac 1n \|u_{0,n}\|_{\X} \leq \|\tilde{\rho}_{0,n}\|_{\X^*} \quad \mbox{ and } \quad \frac 1n \|u_{0,n}\|_{L^q(\Omega)}^q \leq \|\tilde{\rho}_{0,n}\|_{\X^*}^2
$$
for $n \in \N$. Since $(\tilde{\rho}_{0,n})$ is bounded in $\X^*$ and $q > 1$, we observe that
$$
\frac 1n u_{0,n} \to 0 \quad \mbox{ weakly in } \X,
$$
and hence, it follows that
$$
|u_{0,n}|^{q-2}u_{0,n} \to \rho_0 \quad \mbox{ weakly in } \X^*.
$$
Testing \eqref{app-eq} by $(-\Delta)^{-\theta} (|u_{0,n}|^{q-2}u_{0,n})$, we deduce that
$$
\left\| |u_{0,n}|^{q-2}u_{0,n} \right\|_{\X^*}^2 + \frac 1n \|u_{0,n}\|_{L^q(\Omega)}^q \leq \|\tilde{\rho}_{0,n}\|_{\X^*} \left\| |u_{0,n}|^{q-2}u_{0,n} \right\|_{\X^*},
$$
whence we obtain
$$
\limsup_{n \to \infty} \left\| |u_{0,n}|^{q-2}u_{0,n} \right\|_{\X^*} \leq \|\rho_0\|_{\X^*}.
$$
Therefore from the uniform convexity of $\|\cdot\|_{\X^*}$ we conclude that
$$
|u_{0,n}|^{q-2}u_{0,n} \to \rho_0 \quad \mbox{ strongly in } \X^*.
$$

\section{Boundedness of weak solutions to \eqref{eq:1.10}, \eqref{eq:1.11}}\label{A:moser}

Let $0 < \theta < 1$ and $1 < q < 2^*_\theta$. We formally test \eqref{eq:1.10} by $|\phi|^{r-2}\phi$ for some $r > 2$, which will be determined later. It then follows from \eqref{EleIne} that
$$
\frac{8}{rr'} \left[ |\phi|^{(r-2)/2}\phi \right]_{H^\theta(\R^d)}^2 \leq \lambda_q \int_\Omega |\phi|^{q+r-2} \, \d x.
$$
Using the Sobolev--Poincar\'e inequality \eqref{SP}, we deduce that
\begin{equation}\label{M-iter}
c \|\phi\|_{L^{2^*_\theta \frac{r}2}(\Omega)}^{r} = c \left\| |\phi|^{(r-2)/2}\phi \right\|_{L^{2^*_\theta}(\Omega)}^2 \leq \lambda_q \|\phi\|_{L^{q+r-2}(\Omega)}^{q+r-2}
\end{equation}
for some constant $c > 0$.

We start with setting $p_0 = 2^*_\theta > q$ and choosing $r_1$ so that $q + r_1 - 2 = p_0$, that is, $r_1 = p_0 - q + 2 > 2$. Then we derive from \eqref{M-iter} with $r = r_1$ that $\phi \in L^{p_1}(\Omega)$ where 
$$
p_1 := 2^*_\theta \, \frac{r_1}2 = 2^*_\theta \, \frac{p_0 - q + 2}2 > p_0.
$$
Let $(p_n)$ be a sequence which is iteratively constructed as in \eqref{pn} below. We claim that $p_n \to \infty$ as $n \to \infty$. Indeed, we let $n \in \N$ (and start with $n = 1$). Suppose that $p_j > p_{j-1}$ for $j = 1,2,\ldots,n$. Then exploiting \eqref{M-iter} with $r = r_{n+1} := p_n - q + 2 > 2$, we infer that $\phi \in L^{p_{n+1}}(\Omega)$ with
\begin{equation}\label{pn}
p_{n+1} := 2^*_\theta \, \frac{r_{n+1}}2 = 2^*_\theta \, \frac{p_n-q+2}2 = \left( 2^*_\theta \, \frac{p_n-q+2}{2p_n} \right) p_n.
\end{equation}
Here we note that
$$
2^*_\theta \, \frac{p_n-q+2}{2p_n} \geq 2^*_\theta \, \frac{p_0-q+2}{2p_0} = \frac{2^*_\theta-q}{2} + 1 > 1 \quad \mbox{ if } \ q > 2
$$
and
$$
2^*_\theta \, \frac{p_n-q+2}{2p_n} = \frac{2^*_\theta}2  \, \frac{p_n-q+2}{p_n} > \frac{2^*_\theta}2 > 1 \quad \mbox{ if } \ 1 < q < 2.
$$
Thus we obtain $p_{n+1} > p_n$. Therefore we conclude that $(p_n)$ is strictly increasing, and moreover, we see that
$$
p_n \geq \left( \frac{2^*_\theta-\max\{q,2\}}{2} + 1 \right)^n p_0 \to \infty
$$
as $n \to \infty$. Hence we can take $n \in \N$ such that
$$
\frac{p_n}{q-1} > \frac{d}{2\theta},
$$
and then, since $\phi \in L^{p_n}(\Omega)$, that is, $|\phi|^{q-2}\phi \in L^{p_n/(q-1)}(\Omega)$, due to~\cite[Proposition 1.4]{ROS14-2}, since $\Omega$ is a bounded $C^{1,1}$ domain of $\R^d$, we can reach the H\"older regularity,
$$
\phi \in C^\beta(\R^d)
$$
for some $\beta \in (0,1)$. Since $\phi = 0$ in $\R^d \setminus \Omega$ and $\Omega$ is bounded, we obtain the boundedness of $\phi$ in $\R^d$. The case where $\theta = 1$ is standard.

\section{Uniqueness of positive solutions to fractional sublinear elliptic equations}\label{A:BO}

Lemma \ref{L:uniq-DP} can be proved through Br\'ezis--Oswald's approach for the restricted fractional Laplacian. Br\'ezis--Oswald's approach (see \cite{BrOs}) allows us to prove uniqueness of positive solutions to general sublinear (nonlocal) elliptic problems of the form,
\begin{alignat}{4}
(-\Delta)^\theta u &= f(x,u) \ &&\text{ in } \ \Omega, \label{BO1}\\
u &\geq 0,\ u\not\equiv 0 \ &&\text{ in } \ \Omega, \label{BO2}\\
u &= 0 \ &&\text{ in } \ \R^d \setminus \Omega,\label{BO3}
\end{alignat}
where $\Omega\subset \R^d$ is a bounded $C^{1,1}$ domain and $f = f(x,u):\Omega\times [0,\infty)\rightarrow \R$. While the original paper \cite{BrOs} is concerned with the local case $\theta = 1$, we extend below the result to the nonlocal case $0 < \theta < 1$. We make the same assumption on $f$ as the original paper \cite{BrOs}:
\begin{description}
 \item[(F1)] For a.e.~$x \in \Omega$, the function $u \mapsto f(x,u)$ is continuous on $[0,\infty)$ and the function $u \mapsto f(x,u)/u$ is strictly decreasing in $(0,\infty)$,
 \item[(F2)] for each $u \in [0,\infty)$, the function $x \mapsto f(x,u)$ belongs to $L^\infty(\Omega)$,
 \item[(F3)] there exists a constant $C > 0$ such that
$$ 
f(x,u) \leq C (|u|+1)
$$
for all $u \geq 0$ and a.e.~$x \in \Omega$.
\end{description}
In particular, \eqref{BO1} with the special choice $f(x,u) = \lambda_q |u|^{q-2}u$ corresponds to \eqref{eq:1.10}.

Now, we have
\begin{theorem}\label{T:BO}
Let $\Omega$ is a bounded $C^{1,1}$ domain of $\R^d$ and assume {\rm (F1)--(F3)}. Then the problem \eqref{BO1}--\eqref{BO2} admits at most one weak solution $u\in \X\cap L^\infty(\Omega)$.
\end{theorem}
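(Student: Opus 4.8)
The plan is to transplant Br\'ezis--Oswald's argument from~\cite{BrOs} to the nonlocal setting, the two new ingredients being a fractional Picone--Di\'az--Saa inequality and a careful check that the test functions are admissible up to the boundary. Let $u_1,u_2\in\X\cap L^\infty(\Omega)$ be two weak solutions of \eqref{BO1}--\eqref{BO3}. First I would record the qualitative properties of such solutions. By (F3) we have $f(\cdot,u_i)\in L^\infty(\Omega)$, so a strong maximum principle for the restricted fractional Laplacian (using $u_i\ge 0$, $u_i\not\equiv 0$) yields $u_i>0$ a.e.~in $\Omega$, and fractional boundary regularity (as in~\cite[Proposition 1.4]{ROS14-2} together with the Hopf-type boundary estimates of~\cite{ROS14}) gives two-sided bounds $c\,\delta_\Omega(x)^\theta\le u_i(x)\le C\,\delta_\Omega(x)^\theta$ with $\delta_\Omega(x):=\mathrm{dist}(x,\partial\Omega)$. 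In particular the ratios $u_1/u_2$ and $u_2/u_1$ are bounded above and below, so the functions $\varphi_1:=u_1-u_2^2/u_1$ and $\varphi_2:=u_2-u_1^2/u_2$ are bounded and have Gagliardo seminorm controlled by those of $u_1,u_2$; hence $\varphi_1,\varphi_2\in\X$ and they are legitimate test functions.

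Next I would test \eqref{BO1} for $u_1$ with $\varphi_1$ and for $u_2$ with $\varphi_2$ and add, which gives
\[
(u_1,\varphi_1)_{\X}+(u_2,\varphi_2)_{\X}
=\int_\Omega\left(\frac{f(x,u_1)}{u_1}-\frac{f(x,u_2)}{u_2}\right)(u_1^2-u_2^2)\,\d x .
\]
For the left-hand side I would use the nonlocal Picone inequality: from the elementary pointwise bound $(a-b)\bigl(c^2/a-d^2/b\bigr)\le(c-d)^2$ for $a,b>0$, $c,d\ge 0$ (which is $\bigl(\sqrt{a/b}\,d-\sqrt{b/a}\,c\bigr)^2\ge 0$), integrated against the kernel $|x-y|^{-d-2\theta}$, one gets $(u_1,u_2^2/u_1)_{\X}\le\|u_2\|_{\X}^2$ and $(u_2,u_1^2/u_2)_{\X}\le\|u_1\|_{\X}^2$, whence $(u_1,\varphi_1)_{\X}+(u_2,\varphi_2)_{\X}\ge(\|u_1\|_{\X}^2-\|u_2\|_{\X}^2)+(\|u_2\|_{\X}^2-\|u_1\|_{\X}^2)=0$. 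For the right-hand side, (F1) forces the integrand to be $\le 0$ for a.e.~$x$, strictly negative wherever $u_1(x)\ne u_2(x)$. Thus the integral is simultaneously $\ge 0$ and $\le 0$, so it vanishes, and strict monotonicity yields $u_1=u_2$ a.e.~in $\Omega$. Specialising to $f(x,u)=\lambda_q|u|^{q-2}u$ with $1<q<2$ (for which (F1)--(F3) are immediate) recovers Lemma~\ref{L:uniq-DP}.

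The main obstacle is making the nonlocal Picone step and the manipulation of $\varphi_i$ rigorous, since $\varphi_i$ is singular where $u_i\to 0$, i.e.\ on $\partial\Omega$. I would handle this by regularisation: replace $u_2^2/u_1$ by $u_2^2/(u_1+\varepsilon)$ (equivalently, truncate the ratio $u_2/u_1$ at a level $n$), for which membership in $\X$ and the tested identity are straightforward and a Picone-type inequality applies with no boundary issue; then pass to the limit $\varepsilon\to 0_+$ (resp.\ $n\to\infty$) using the two-sided boundary bounds from the first step to dominate the nonlinear terms (dominated convergence) and Fatou's lemma / weak lower semicontinuity on the Gagliardo double integrals. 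The external inputs are precisely the strong maximum principle and the boundary Hopf estimate for $0<\theta<1$ drawn from the fractional elliptic regularity theory; with these in hand the remainder of the argument parallels the local proof of~\cite{BrOs} line by line.
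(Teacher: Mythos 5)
Your proposal is correct and follows essentially the same route as the paper: positivity and boundedness of the ratios $u_1/u_2$, $u_2/u_1$ via fractional regularity and a Hopf-type lemma, admissibility of the test functions $u_i - u_j^2/u_i$ in $\X$, and the sign comparison between the (F1)-monotonicity term and the quadratic form. The only cosmetic difference is that you split the key nonnegativity into two separate fractional Picone inequalities, whereas the paper writes the same algebraic identity as a single manifestly nonnegative symmetric double integral; these are the same computation.
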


We start with the following 
\begin{lemma}\label{L:BO}
Assume {\rm (F1)--(F3)} and let $u,v \in \X \cap L^\infty(\Omega)$ be two weak solutions to the problem \eqref{BO1}--\eqref{BO2}. Then $u,v \in C^\theta(\R^d)$, $u > 0$ and $v > 0$ in $\Omega$, and $u^2/v, v^2/u \in \X$.
\end{lemma}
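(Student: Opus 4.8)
The plan is to prove Lemma~\ref{L:BO} in three stages: first establish H\"older regularity and strict positivity of the solutions, then use these to show that the quotients $u^2/v$ and $v^2/u$ lie in $\X$. I would begin with the regularity. Since $u, v \in \X \cap L^\infty(\Omega)$ are weak solutions to \eqref{BO1}, assumption (F3) together with the $L^\infty$-bound yields $f(\cdot, u), f(\cdot, v) \in L^\infty(\Omega) \subset L^p(\Omega)$ for every $p > d/(2\theta)$. Then the fractional elliptic regularity result of~\cite{ROS14-2} (Proposition~1.4 there), already invoked in Appendix~\ref{A:moser}, gives $u, v \in C^\theta(\R^d)$ (more precisely $C^\beta$ up to the boundary for some $\beta \in (0,1)$, and at least $C^\theta$ in the interior; since $u = v = 0$ on $\R^d \setminus \Omega$ this is global H\"older continuity). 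This is essentially the same bootstrap/regularity step used for $\phi$ in Appendix~\ref{A:moser}, so I would cite that argument rather than repeat it.

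Next, strict positivity. Here I would use the maximum principle / Hopf-type lemma for the restricted fractional Laplacian on a $C^{1,1}$ domain. Since $u \geq 0$, $u \not\equiv 0$ and $(-\Delta)^\theta u = f(x,u)$ with $f(x,u) \geq -Cu \geq \text{(bounded below)}$, the strong maximum principle for $(-\Delta)^\theta$ (see, e.g.,~\cite{ROS14}, which is already cited in the excerpt) forces $u > 0$ in $\Omega$; moreover the fractional Hopf lemma gives the sharp boundary behavior $u(x) \gtrsim \mathrm{dist}(x,\partial\Omega)^\theta$ near $\partial\Omega$, and combined with the H\"older regularity also $u(x) \lesssim \mathrm{dist}(x,\partial\Omega)^\theta$. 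The same holds for $v$. Thus there are constants $0 < c_1 \leq c_2$ with $c_1 \delta(x)^\theta \leq u(x), v(x) \leq c_2 \delta(x)^\theta$ on $\Omega$, where $\delta(x) := \mathrm{dist}(x,\partial\Omega)$. This two-sided control is the crucial structural fact.

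The final and most delicate step is showing $w := u^2/v \in \X = H^\theta_0(\Omega)$ (and symmetrically $v^2/u$). The pointwise bounds give $w \in L^\infty(\Omega)$ and $w = 0$ on $\R^d \setminus \Omega$, so the only issue is finiteness of the Gagliardo seminorm $[w]_{H^\theta(\R^d)}$. I would split the double integral over $\R^d \times \R^d$ into the region where both points are ``well inside'' $\Omega$ and the region near (or outside) the boundary. On the interior region one writes, for $x,y$ with $\delta(x), \delta(y)$ comparable and bounded below, $w(x) - w(y) = \frac{u(x)^2 - u(y)^2}{v(x)} + u(y)^2\bigl(\frac{1}{v(x)} - \frac{1}{v(y)}\bigr)$ and estimates using that $u, v \in H^\theta$ with $v$ bounded below there — this is a routine product/quotient estimate. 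Near the boundary one uses the matching power behavior: with $u, v \asymp \delta^\theta$ one shows $w \asymp \delta^\theta$ as well and, more importantly, that the difference quotient $|w(x)-w(y)|/|x-y|^\theta$ is controlled, exploiting that $u$ and $v$ have the \emph{same} boundary exponent so the potentially singular terms cancel to leading order. Concretely I expect to need a lemma of the form: if $g \in C^\theta(\overline\Omega)$ with $g \asymp \delta^\theta$ and $1/g \leq C\delta^{-\theta}$, then $g \in H^\theta_0(\Omega)$ and products/quotients of such functions (with matching exponents) stay in $H^\theta_0(\Omega)$; this may already be available in the regularity literature for the restricted fractional Laplacian (it is closely related to the fact that $\delta^\theta$ itself belongs to $\X$). \textbf{The main obstacle} will be precisely this boundary estimate for the Gagliardo seminorm of the quotient: ensuring that the $\delta^{-\theta}$ blow-up of $1/v$ is exactly compensated by the $\delta^{2\theta}$ vanishing of $u^2$ and that the resulting H\"older-type bound is integrable against the kernel $|x-y|^{-d-2\theta}$. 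Once $u^2/v, v^2/u \in \X$ are known, they are admissible test functions, and Theorem~\ref{T:BO} follows by the standard Br\'ezis--Oswald argument: testing \eqref{BO1} for $u$ by $(u^2-v^2)/u$ and for $v$ by $(v^2-u^2)/v$, adding, using the Picone-type inequality for $(-\Delta)^\theta$ together with the strict monotonicity of $u \mapsto f(x,u)/u$ from (F1), and concluding $u \equiv v$.
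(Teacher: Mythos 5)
Your first two steps (H\"older regularity via the fractional elliptic estimate of Ros-Oton--Serra, and strict positivity together with the two-sided bound $u,v\asymp \delta^\theta$ via Hopf's lemma for the restricted fractional Laplacian) coincide with the paper's proof; the paper records the outcome of the Hopf step precisely as $u/v,\,v/u\in L^\infty(\Omega)$, which is your matching-exponent observation. The genuine gap is in the third step: you never establish $[u^2/v]_{H^\theta(\R^d)}<\infty$, but defer the boundary contribution to a conjectured lemma on products and quotients of functions comparable to $\delta^\theta$, which you yourself flag as ``the main obstacle'' and leave unresolved. Moreover, the two-term splitting you propose, $w(x)-w(y)=\frac{u(x)^2-u(y)^2}{v(x)}+u(y)^2\bigl(\frac1{v(x)}-\frac1{v(y)}\bigr)$, carries the coefficient $u(y)^2/(v(x)v(y))$, which is not uniformly bounded when $x$ approaches $\partial\Omega$ while $y$ stays in the interior; this is exactly what forces you into the region decomposition and the delicate cancellation analysis.

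The missing idea is algebraic rather than analytic. Write
\begin{align*}
\frac{u(x)^2}{v(x)}-\frac{u(y)^2}{v(y)}
&=\frac{u(x)}{v(x)}\bigl(u(x)-u(y)\bigr)+\frac{u(x)}{v(x)}\,\frac{u(y)}{v(y)}\bigl(v(y)-v(x)\bigr)\\
&\quad+\frac{u(y)}{v(y)}\bigl(u(x)-u(y)\bigr),
\end{align*}
where every coefficient is a ratio $u/v$ evaluated at a \emph{single} point and hence bounded by $\|u/v\|_{L^\infty(\Omega)}$. Inserting this into the Gagliardo seminorm gives at once $\|u^2/v\|_{\X}^2\le C\bigl(\|u\|_{\X}^2+\|v\|_{\X}^2\bigr)<\infty$, with no interior/boundary splitting, no separate H\"older estimate for the quotient, and no need for the auxiliary lemma you hoped to find in the literature. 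This is how the paper concludes; with this identity in place of your sketched boundary argument, your proof becomes complete.
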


\begin{proof}
We first note by (F1) and (F3) that
$$
- |f(x,\|u\|_\infty)| \leq f(x,u(x)) \leq C (\|u\|_\infty+1) \quad \text{ for a.e. } x \in \Omega.
$$
Therefore (F1)--(F3) imply that the function $x \mapsto f(x,u(x))$ belongs to $L^\infty(\Omega)$. We deduce from the fractional elliptic regularity result~\cite[Proposition 1.4]{ROS14} that $u \in C^\theta(\R^d)$. We can similarly show that $v \in C^\theta(\R^d)$. Moreover, we see from (F1) and (F2) that there exists a constant $M > 0$ such that 
$$
f(x,u(x)) \geq -M u(x) \quad \text{for a.e. } x \in \Omega.
$$
Therefore it holds that
$$
(-\Delta)^\theta u + Mu \geq 0 \quad \text{in } \Omega,
$$
and the same is true for $v$. Thanks to Hopf's lemma (see~\cite{GrSe}) for the restricted fractional Laplacian, we deduce that $u > 0$ and $v > 0$ in $\Omega$, and that $u/v \in L^\infty(\Omega)$ and $v/u \in L^\infty(\Omega)$. Therefore we have
\begin{align*}
\left\|\frac{u^2}{v}\right\|_{\X}^2 
&= \iint_{\R^d \times \R^d} \frac{|(\frac{u^2}{v})(x) - (\frac{u^2}{v})(y)|^2}{|x-y|^{d+2\theta}} \, \d x\d y\\
&= \iint_{\R^d \times \R^d} \Big| \frac{u(x)}{v(x)}\left( u(x)-u(y) \right) + \frac{u(x)u(y)}{v(x)v(y)} \left(v(y)-v(x)\right)\\
&\quad +\frac{u(y)}{v(y)} \left(u(x)-u(y)\right) \Big|^2 \frac1{|x-y|^{d+2\theta}} \,\d x\d y.
\end{align*}
Therefore we see that
$$
%\left\|\frac{u^2}{v}\right\|_{\X}^2\leq 4\left( \left\|\frac{u}{v}\right\|_\infty^2 + \left\|\frac{v}{u}\right\|_\infty^2 \right)\|u\|_{\X}^2 < +\infty.
\left\|\frac{u^2}{v}\right\|_{\X}^2\leq C \left( \|u\|_{\X}^2 + \|v\|_{\X}^2 \right) < +\infty.
$$
Consequently, we obtain $u^2/v\in \X$, and moreover, we can similarly show that $v^2/u \in \X$. This completes the proof.
\end{proof}

We next give a proof for Theorem \ref{T:BO}.

\begin{proof}[Proof of Theorem {\rm \ref{T:BO}}]
Let $u,v\in \X\cap L^\infty(\Omega)$ be two weak solutions to \eqref{BO1}--\eqref{BO2}. The strategy is the same as the local case (see~\cite{BrOs}): we formally divide by $u$ the PDE for $u$ (equation \eqref{BO1}), and we divide by $v$ the PDE for $v$ (equation \eqref{BO1} with $u$ replaced by $v$). Then we test the difference of the two equations by $u^2-v^2$, and we conclude the proof by using assumption (F1) and some monotony of $(-\Delta)^\theta$. Those computations are rigorously justified with the aid of Lemma \ref{L:BO}. Indeed, by virtue of (F1), we have 
\begin{equation}\label{eq:BOf}
\int_\Omega \left( \frac{f(x,u(x))}{u(x)} - \frac{f(x,v(x))}{v(x)} \right) \left(u^2(x)-v^2(x)\right) \,\d x \leq 0,
\end{equation}
with equality if and only if $u = v$ in $\Omega$. On the other hand, the left-hand side of \eqref{eq:BOf} is equal to 
\begin{align*}
\MoveEqLeft{
\int_\Omega f(x,u(x))u(x) \,\d x - \int_\Omega f(x,u(x)) \left(\frac{v^2}{u}\right)(x) \,\d x 
}\\
&\quad- \int_\Omega f(x,v(x)) \left(\frac{u^2}{v}\right)(x) \,\d x + \int_\Omega f(x,v(x))v(x) \,\d x\\
&= \left\langle (-\Delta)^\theta u, u \right\rangle_{\X} - \left\langle (-\Delta)^\theta u, \frac{v^2}{u} \right\rangle_{\X}\\
&\quad - \left\langle (-\Delta)^\theta v, \frac{u^2}{v} \right\rangle_{\X} + \left\langle (-\Delta)^\theta v, v \right\rangle_{\X}.
\end{align*}
The first two terms in the right-hand side can be rewritten as
$$
\langle (-\Delta )^\theta u, u \rangle_{\X} = \iint_{\R^d \times \R^d} \frac{u^2(x)+u^2(y)-2u(x)u(y)}{|x-y|^{d+2\theta}} \,\d x\d y,
$$
and
\begin{align*}
\left\langle (-\Delta )^\theta u, \frac{v^2}{u} \right\rangle_{\X}
&= \iint_{\R^d \times \R^d} \bigg[ v^2(x)+v^2(y) \\ 
&\quad - \Big( \frac{u(y)}{u(x)} v^2(x) + \frac{u(x)}{u(y)} v^2(y) \Big) \bigg] \frac1{|x-y|^{d+2\theta}}\, \d x\d y
\end{align*}
and the same equations hold when $u$ and $v$ are exchanged. Therefore we can eventually derive that the left-hand side of \eqref{eq:BOf} is equal to
\begin{align*}
\MoveEqLeft{
\iint_{\R^d \times \R^d} \bigg[ \frac{v(y)}{v(x)} \Big( u(x)-\frac{v(x)}{v(y)}u(y) \Big)^2 }\\
&\quad + \frac{u(y)}{u(x)} \Big( v(x)-\frac{u(x)}{u(y)}v(y) \Big)^2 \bigg] \frac1{|x-y|^{d+2\theta}}\, \d x\d y \geq 0.
\end{align*}
As a consequence, \eqref{eq:BOf} is an equality, and therefore, $u = v$ in $\Omega$. This completes the proof.
\end{proof}

\bibliographystyle{plain}
\bibliography{bibliography}

\end{document}